\documentclass[a4paper,10pt,twoside,english]{amsart}
\usepackage[final]{microtype}
\usepackage[utf8]{inputenc}
\usepackage{amsthm}
\usepackage{amsmath}
\usepackage{eucal}
\usepackage[toc]{appendix}
\usepackage{enumitem}
\usepackage{tikz}
\usepackage[sort,nocompress]{cite}
\usetikzlibrary{automata,matrix,arrows,decorations.pathmorphing,calc}
\usepackage{tikz-cd}
\usepackage{tensor}
\usepackage{enumitem}
\usepackage{mathtools, mathdots}
\usepackage{amssymb,graphicx}
\usetikzlibrary{shapes}
\usepackage[colorinlistoftodos]{todonotes}
\usepackage{hyperref}
\usepackage{lipsum}
\usepackage{adjustbox}

\DeclareMathOperator{\defect}{\mathsf{defect}}
\DeclareMathOperator{\Kpac}{\catK_{\pac}}

\DeclareMathOperator{\PProj}{\mathsf{PProj}}
\DeclareMathOperator{\PInj}{\mathsf{PInj}}
\DeclareMathOperator{\Dpure}{\mathsf{D_{pure}}}

\DeclareMathOperator{\Qcoh}{\mathsf{Qcoh}}

\DeclareMathOperator{\Coloc}{\mathsf{Coloc}}

\DeclareMathOperator{\compacts}{\mathsf{c}}
\DeclareMathOperator{\bounded}{\mathsf{b}}

\DeclareMathOperator{\pac}{\mathsf{pac}}
\DeclareMathOperator{\id}{\mathsf{id}}

\DeclareMathOperator{\ev}{\mathsf{ev}}
\DeclareMathOperator{\op}{\mathsf{op}}
\DeclareMathOperator{\Kb}{\mathsf K^{\bounded}}

\DeclareMathOperator{\Kac}{\mathsf K_{\mathsf{ac}}}

\DeclareMathOperator{\catA}{\mathsf A}
\DeclareMathOperator{\catX}{\mathsf X}

\DeclareMathOperator{\catC}{\mathsf C}
\DeclareMathOperator{\catD}{\mathsf D}
\DeclareMathOperator{\catT}{\mathsf T}
\DeclareMathOperator{\catS}{\mathsf S}

\DeclareMathOperator{\Z}{\mathsf Z}
\DeclareMathOperator{\HH}{\mathsf H}
\DeclareMathOperator{\catK}{\mathsf K}
\DeclareMathOperator{\Ab}{\mathsf{Ab}}
\DeclareMathOperator{\Tot}{\mathsf{Tot}}

\DeclareMathOperator{\Loc}{\mathsf{Loc}}
\DeclareMathOperator{\Ker}{\mathsf{Ker}}
\DeclareMathOperator{\Imm}{\mathsf{Im}}
\DeclareMathOperator{\Cok}{\mathsf{Cok}}
\DeclareMathOperator{\modcat}{\mathsf{mod}}
\DeclareMathOperator{\Modcat}{\mathsf{Mod}}
\DeclareMathOperator{\Hom}{\mathsf{Hom}}
\DeclareMathOperator{\End}{\mathsf{End}}

\DeclareMathOperator{\HOM}{\mathcal{H}\mathsf{om}}

\DeclareMathOperator{\colim}{\mathsf{colim}}
\DeclareMathOperator{\limit}{\mathsf{lim}}
\DeclareMathOperator{\holim}{\mathsf{holim}}
\DeclareMathOperator{\hocolim}{\mathsf{hocolim}}
\DeclareMathOperator{\Gstable}{\underline{\mathsf{Gproj}}}
\DeclareMathOperator{\GStable}{\underline{\mathsf{GProj}}}

\DeclareMathOperator{\stablemodules}{\underline{\mathsf{\modcat}}}
\DeclareMathOperator{\stableModules}{\underline{\mathsf{\Modcat}}}
\DeclareMathOperator{\Ktac}{\mathsf K_{\mathsf{tac}}}

\DeclareMathOperator{\proj}{\mathsf{proj}}
\DeclareMathOperator{\Proj}{\mathsf{Proj}}
\DeclareMathOperator{\inj}{\mathsf{inj}}
\DeclareMathOperator{\Inj}{\mathsf{Inj}}

\DeclareMathOperator{\Db}{\mathsf D^{\mathsf b}}
\DeclareMathOperator{\Dsg}{\mathsf D_{\mathsf{sg}}}

\DeclareMathOperator{\perf}{\mathsf{perf}}

\DeclareMathOperator{\rad}{\mathsf{rad}}
\DeclareMathOperator{\thick}{\mathsf{thick}}

\DeclareMathOperator{\Cb}{\mathsf C^{\mathsf b}}

\DeclareMathOperator{\Gproj}{\mathsf{Gproj}}
\DeclareMathOperator{\GProj}{\mathsf{GProj}}

\DeclareMathOperator{\Cone}{\mathsf{Cone}}
\DeclareMathOperator{\coCone}{\mathsf{coCone}}

\DeclareMathOperator{\row}{\mathsf{row}}
\DeclareMathOperator{\bigR}{\mathsf{R}}
\DeclareMathOperator{\bigL}{\mathsf{L}}

\DeclareMathOperator{\GP}{\mathsf{GP}}
\DeclareMathOperator{\boundary}{\mathsf{B}}
\DeclareMathOperator{\CR}{\mathsf{CR}}

\DeclareMathOperator{\tac}{\mathsf{tac}}

\DeclareMathOperator{\MaxSpec}{\mathsf{MaxSpec}}

\renewcommand{\to}{\longrightarrow}
\renewcommand{\hookrightarrow}{\lhook\joinrel\longrightarrow}
\newcommand{\xto}{\xrightarrow}
\newcommand{\doublerightarrow}{\longrightarrow\mathrel{\mkern-14mu}\rightarrow}

\numberwithin{equation}{section}
\newtheorem{theorem}[equation]{Theorem}
\newtheorem{lemma}[equation]{Lemma}
\newtheorem{corollary}[equation]{Corollary}
\newtheorem{proposition}[equation]{Proposition}
\newtheorem*{theorem*}{Theorem}
\newtheorem*{introtheorem1}{Theorem~\ref{theorem partial Serre implies X compact and Y 0-cocompact}}
\newtheorem*{introtheorem2}{Theorem~\ref{theorem partial serre for Kb(mod R)}}
\newtheorem*{introtheorem3}{Theorem~\ref{theorem serre functors in K(Inj) and K(Proj)}}
\newtheorem*{introtheorem4}{Theorem~\ref{theorem our triangle summand of AR triangle}; Theorem~\ref{theorem.S triangle almost split}}
\newtheorem*{theorem1}{Theorem I}
\newtheorem*{theorem2}{Theorem II}
\newtheorem*{theorem3}{Theorem III}

\newtheorem*{1weakdual}{Theorem~\ref{thm:neemanthreeadjoints}${}^{\op}$}
\newtheorem*{2weakdual}{Theorem~\ref{theorem Neeman trick}${}^{\op}$}

\newtheorem*{theoremappendix}{Theorem~\ref{theorem main general result on partial serre}}
\newtheorem*{proposition*}{Proposition}
\newtheorem*{definition*}{Definition}
\newtheorem*{example*}{Example}
\newtheorem*{lemma*}{Lemma}
\newtheorem*{remark*}{Remark}
\theoremstyle{definition}
\newtheorem{observation}[equation]{Observation}
\newtheorem{definition}[equation]{Definition}
\newtheorem{example}[equation]{Example}
\newtheorem{construction}[equation]{Construction}
\newtheorem*{construction*}{Construction}
\newtheorem{remark}[equation]{Remark}

\newtheorem*{notation*}{Notation}
\newtheorem*{ackn}{Acknowledgments}

\let\amsamp=&

\begingroup
\catcode`\&=13
\gdef\pampmatrix{%
  \begingroup
  \let&=\amsamp
  \begin{smallmatrix}%
}
\gdef\endpampmatrix{\end{smallmatrix}\endgroup}
\endgroup

\begin{document}

\title[Partial Serre duality and cocompact objects]{Partial Serre duality and cocompact objects}
\author[Oppermann, Psaroudakis and Stai]{Steffen Oppermann, Chrysostomos Psaroudakis and Torkil Stai}
\address{Institutt for matematiske fag, NTNU, 7491 Trondheim, Norway}
\email{steffen.oppermann@ntnu.no}
\email{torkil.stai@ntnu.no}
\address{Department of Mathematics, Aristotle University of Thessaloniki, Thessaloniki 54124, Greece}
\email{chpsaroud@math.auth.gr}
\subjclass[2020]{18G80; 18E40; 16G50; 16G70; 16E35; 16E30; 16E65.}
\keywords{Cocompact object, Cocompactly cogenerated triangulated category, Partial Serre functor, Brown representability, Almost split triangle, Pure resolution, Gorenstein projective module, Singularity category, Homotopy category of injectives and of projectives.}

\dedicatory{Dedicated to Henning Krause on the occasion of his sixtieth birthday}

\thanks{The first and third named authors are supported by Norges forskningsr{\aa}d (grants 250056 and 302223).}

\begin{abstract}
A successful theme in the development of triangulated categories has been the study of compact objects. A weak dual notion called $0$-cocompact objects was introduced in \cite{MR3946864}, motivated by the fact that sets of such objects cogenerate co-t-structures, dual to the t-structures generated by sets of compact objects. In the present paper, we show that the notion of $0$-cocompact objects also appears naturally in the presence of certain dualities.

We introduce ``partial Serre duality'', which is shown to link compact to 0-cocompact objects. We show that partial Serre duality gives rise to an Auslander--Reiten theory, which in turn implies a weaker notion of duality which we call ``non-degenerate composition'', and throughout this entire hierarchy of dualities the objects involved are \( 0 \)-(co)compact.

Furthermore, we produce explicit partial Serre functors for multiple flavors of homotopy categories, thus illustrating that this type of duality, as well as the resulting \( 0 \)-cocompact objects, are abundant in prevalent triangulated categories.
\end{abstract}

\maketitle

\setcounter{tocdepth}{1}
\tableofcontents
\section{Introduction}
The concept of a triangulated category, introduced independently by Verdier \cite{MR1453167} and Puppe \cite{MR0150183}, appears naturally in various branches of mathematics. It is omnipresent in areas like algebraic geometry, stable homotopy theory, and representation theory; triangulated categories give a common framework for doing modern homological algebra in very different contexts.

Now, it might happen that we want to examine a triangulated category $\catT$ which is somehow `too big', making it hopless to really understand certain facets off the bat. For instance, $\catT$ might have coproducts. In this scenario compact objects can be helpful: Following Beligiannis--Reiten~\cite{MR2327478}, any set $\catX$ of compact objects gives rise to a decomposition of the ambient category, in the form of a (stable) t-structure
\begin{align} \label{align the classical t-structure}
\left({}^\perp(\catX^\perp), \catX^\perp\right). \tag{$\text{t}_1$}
\end{align}

If $\catT$ is even generated by compact objects, then more tools become available: Neeman~\cite{MR1308405} proved that $\catT$ satisfies Brown representatibility, which in turn unveils a fairly constructive localization theory. This covers many categories that occur in nature, for instance derived categories of rings and of (most) schemes are compactly generated.

Alas, the naive dual approach leads to a rather empty theory, in the sense that the resulting notion of cocompactness seldom appears in categories we wish to study: We show in Theorem~\ref{theorem cocompacts in derived category} that if $\catA$ is Grothendieck abelian, then the only cocompact object in $\catD(\catA)$ is $0$. On the other hand, non-trivial cocompact objects can only exist in \( \catK (\Modcat R) \) if we put certain subtle restrictions on the underlying set-theory (see Remark~\ref{rem cocompacts in homotopy category}).

So it becomes a natural goal to identify a more applicable variant of cocompactness, that is, a notion which not only allows for far-reaching theorems, but also actually shows up in categories one might be interested in. 

In \cite{MR3946864}, coveting a more potent dual of (\ref{align the classical t-structure}), we introduced the weaker notion of 0-cocompactness ad hoc, and showed that if $\catX$ is a set of $0$-cocompact objects, then
\begin{align} \label{align our t-structure}
\left({}^\perp\catX, ({}^\perp\catX)^\perp\right) \tag{$\text{t}_2$}
\end{align}
is a (stable) t-structure in $\catT$. The point was that the definition was forgiving enough to apply to the homotopy categories studied in that paper.

In fact, the analogy to the classical theory has recently been strengthened with a Brown representability theorem for $0$-cocompactly cogenerated triangulated categories \cite{modoi2020weight}. Appendix~\ref{section dual brown rep} offers an enhanced version which additionally provides an explicit construction of the representing objects.

\medskip

In the current manuscript we explain how $0$-cocompact objects---as well as their duals, the $0$-compacts---come up in connection with certain dualities. In particular, there is an abundance of such objects in several categories of interest.

Below we present the main results of the paper, divided into three areas. All categories are $k$-categories for some commutative ring $k$. In this brief introduction we suppress any assumptions on existence of (co)products in the categories that appear; the full picture can be found in the pertinent sections.

\subsection*{I. Partial Serre duality}

The study and use of Serre functors goes back to the work of Bondal--Kapranov \cite{MR1039961} on mutations of exceptional collections. Since its introduction, the concept has become important in several areas. For instance, in representation theory the presence of a Serre functor is equivalent to the existence of almost split triangles \cite{MR910167,MR1112170}, and identifying objects along the Serre functor is the idea behind cluster categories \cite{MR2249625}; for geometers the Serre functor is a most utile weapon for handling the derived category of coherent sheaves \cite{MR1818984}. This classical concept of a Serre functor is limited to triangulated categories which are $\Hom$-finite over some field; we extend the notion in the following sense.

Fix an injective cogenerator $I$ of $\Modcat k$, and write $(-)^\ast=\Hom_k(-,I)$. A \emph{partial Serre functor} for a subcategory $\catX$ of a triangulated category $\catT$, is a functor $\mathbb S\colon \catX\to\mathbb \catT$ such that \[\catT(X,T)^\ast\cong\catT(T,\mathbb SX)\] naturally in $X\in\catX$ and in $T\in\catT$.

Over a field $k$, \cite{BIKP,MR3914144} established such duality formulae in certain singularity categories of algebras and in stable module categories of finite group schemes. On the other hand, for $k=\mathbb Z$ and $I=\mathbb Q/\mathbb Z$ the object $\mathbb SX$ is known by topologists as the `Brown--Comenetz dual' of $X$ \cite{MR405403}.

Experts may recall that the fact that a Serre functor in the sense of \cite{MR1039961} is exact, is a non-trivial result. In Theorem~\ref{theorem main general result on partial serre} (proven in Appendix~\ref{section exactness of serre}) we offer an enhancement to our setup: The collection $\catX$ of all the `Serre dualizable objects' in $\catT$ is a triangulated subcategory. Moreover, there is a partial Serre functor $\mathbb S \colon\catX\to\catT$ which is exact.

However, our main motivation for studying this concept is that it links the notions of compactness and 0-cocompactness.

\begin{introtheorem1}
If $\mathbb S\colon \catX\to\catT$ is a partial Serre functor, then $\catX$ consists of compact objects while $\mathbb S(\catX)$ consists of 0-cocompact objects.
\end{introtheorem1}

And so begins the important task of finding examples of partial Serre functors; this becomes a method for identifying 0-cocompact objects in practice.

\subsection*{II. Homotopy categories} If $\catT$ is a compactly generated triangulated category, then by Brown representability each compact object is Serre dualizable, and so there is a partial Serre functor $\mathbb  S\colon\catT^{\compacts}\to\catT$. It follows that $\catT$ is also 0-cocompactly cogenerated, by the essential image $\mathbb S(\catT^{\compacts})$ --- see Corollary~\ref{corollary compact generation gives partial serre}.

However, such an approach does not reveal any explicit information about the induced 0-cocompact objects. We would really prefer to actually \emph{construct} partial Serre functors, not least in categories where Brown representability fails. 


Let $R$ be any ring (if no other base ring is available, we can always choose $k=\mathbb Z$). Write $(-)^\vee=\Hom_R(-,R)$ and $\nu=(-^\vee)^\ast$, and let $\modcat R$ be the category of finitely presented $R$-modules.

\medskip

Each bounded complex $M$ over $\modcat R$ appears in an exact sequence \[P_1\stackrel{p}\to P_0\to M  \to 0,\] where the $P_i$ are contractible and belong to $\Cb(\proj R)$. Indeed, this is nothing but a projective presentation in the category of complexes. Let $\mathbb S_{\mathsf M} M = \Ker\left(\nu(p)\right)[2]$.


\begin{introtheorem2}
  Let $R$ be a ring. Then $\mathbb S_{\mathsf M}$ defines a partial Serre functor \[\mathbb  S_{\mathsf M} \colon  \Kb(\modcat R)\to\catK(\Modcat R)\]
\end{introtheorem2}

In Proposition~\ref{proposition AR translate without contractibles} we show how to calculate $\mathbb S_{\mathsf M}M$ using just complexes of projectives, and not projective objects in the category of complexes. In particular, if $M$ is a module, then the 0-cocompact object $\mathbb S_{\mathsf M} M$ is simply the complex \[\tau M\hookrightarrow \nu P'_1 \to \nu P'_0,\] where $P'_1\to P'_0\to M\to0$ is a projective presentation in $\modcat R$ and $\tau M$ is the `usual' AR-translate of $M$.

By construction, the colocalizing subcategory of $\catK(\Inj R)$ generated by the essential image $\mathbb S_{\mathsf M}(\modcat R)$, consists of complexes of pure-injectives. This spurs Corollary~\ref{corollary existence of pure resolutions}, which shows that each complex of $R$-modules admits a pure-injective (and a pure-projective) resolution.


On the other hand, if $\Lambda$ is an Artin algebra then, choosing $I$ as an injective envelope of the semisimple $k/\!\rad k$, the functor $\mathbb S_{\mathsf M}$ becomes an auto-equivalence. In particular, $\Kb(\modcat \Lambda)$ is a set of 0-cocompact objects in $\catK(\Modcat\Lambda)$ (Observation~\ref{observation 0-cocompacts in Kb(Lambda)}).


\medskip

Recall from \cite{MR3212862,MR2923949} that the inclusion $\catK(\Inj R)\hookrightarrow\catK(\Modcat R)$ admits a left adjoint $\lambda$. If $X$ is a left bounded complex, then $\lambda X$ is an injective resolution of $X$.

Dually, by e.g.\ \cite{MR2680406} the inclusion $\catK(\Proj R)\hookrightarrow\catK(\Modcat R)$ admits a right adjoint $\rho$. If $X$ is a right bounded complex, then $\rho X$ is a projective resolution of $X$.

\begin{introtheorem3}
Let \( R \) be a ring, and let \( \catK^{\bounded}_{\mathsf{fpr}}(R) \) be the subcategory of \( \Kb(\Modcat R) \) consisting of complexes admitting degree-wise finitely generated projective resolutions.
\begin{enumerate}
\item There is a partial Serre functor \( \mathbb S_{\mathsf I} \colon \lambda( \catK^{\bounded}_{\mathsf{fpr}}(R)) \to \catK(\Inj R) \) given by \[\mathbb S_{\mathsf I}(\lambda X) = \nu \rho X. \]
\item There is a partial Serre functor \( \mathbb S_{\mathsf P} \colon \rho(\catK^{\bounded}_{\mathsf{fpr}}(R^{\op}))^{\vee} \to \catK(\Proj R) \) given by \[ \mathbb S_{\mathsf P}((\rho X)^{\vee}) = \rho(X^\ast). \]
\end{enumerate}
\end{introtheorem3}

Recently, the authors of \cite{BIKP} showed that if $A$ is a Gorenstein algebra which is finite dimensional over a field, then there are Serre functors \[ \mathbb S_{\mathsf{sg}}\colon\Dsg(A)\to\Dsg(A) \text{ and } \mathbb S_{\mathsf G}\colon\Gstable A  \to \Gstable A.\] We now realize that there is a bigger picture here: In Section~\ref{section transferring serre} we explain how partial Serre duality may be transported along an adjoint triple, and thus
\begin{itemize}
  \item if $R$ is a noetherian ring, then $\mathbb S_{\mathsf I}$ induces a partial Serre functor \[\mathbb S_{\mathsf{sg}} \colon \Dsg(R)\to\Kac(\Inj R) \text{ (Theorem~\ref{theorem partial serre functor on Dsg}) and }\]
  \item if $R$ also has a dualizing complex, then $\mathbb S_{\mathsf P}$ induces a partial Serre functor \[\mathbb S_{\mathsf{G}} \colon(\GStable R)^{\compacts}\to \GStable R \text{ (Theorem~\ref{theorem partial serre on gproj}). }\]
\end{itemize}

\subsection*{III. Auslander--Reiten theory} The concept of an almost split triangle in a triangulated category $\catT$, due to Happel \cite{MR910167}, is a powerful combinatorial tool: In fortunate cases, the collection of such triangles determines all the morphisms in $\catT$.

Existence theorems in this direction can thus be of some impact. For instance, if $\catT$ satisfies Brown representability and $X$ is a compact object with local endomorphism ring, then there is an almost split triangle
\begin{align*} \label{align intro almost split triangle}
  \tau X \to M \to X \to \tau X [1]. \tag{$\ast$}
\end{align*}
Here $\tau X [1]$ is the representing object of $\Hom_{\End(X)}(\catT(X,-), I_X)$, where $I_X$ is an injective envelope of the simple $\End(X)$-module---see e.g.\ Beligiannis~\cite{MR2079606} or Krause~\cite{MR1803642}. However, $\tau X$ can sometimes be calculated using a more global approach:

\begin{introtheorem4}
Suppose that the triangulated category $\catT$ is idempotent complete, and that $\mathbb S\colon\catX\to\catT$ is a partial Serre functor.

For each object $X$ in $\catX$ with local endomorphism ring, the triangle (\ref{align intro almost split triangle}) exists and appears as a summand of a triangle
\begin{align} \label{align our triangle}
\mathbb SX[-1]\to N \to X \to \mathbb SX. \tag{$\ast\ast$}
\end{align}

Suppose moreover that $k$ is noetherian, and that $I$ is the direct sum of the injective envelopes of the simple $k$-modules. If $\End(X)$ is $k$-finite, then (\ref{align our triangle})$=$(\ref{align intro almost split triangle}).
\end{introtheorem4}

By their definition, AR-triangles are completely self-dual. Partial Serre duality, on the  other hand, is not self-dual: The Serre dual of a compact object is just 0-cocompact. In Section~\ref{section non-degeneracy} we introduce the notion that \emph{composition from $X$ to $Y$ is non-degenerate} if any non-zero $\catT$-submodule of $\catT(X,-)$ or $\catT(-,Y)$ contains a non-zero morphism $X\to Y$.

In the light of Proposition~\ref{proposition comp to T(X,SX) is non-degenerate}, this is a weaker form of partial Serre duality: If $X$ admits a Serre dual, then composition from $X$ to $\mathbb S X$ is non-degenerate. Still, there is an existence criterion for almost split triangles even in these terms: Suppose that \( X \) and \( Y \) have local endomorphism rings, and that either $\End(X)$ or $\End(Y)$ is artinian. In Corollary~\ref{corollary non-deg implies almost split triangle} we show that if composition from $X$ to $Y$ is non-degenerate, then there is an almost split triangle of the form
\begin{align*} \label{align generic almost split triangle}
  Y[-1]\to E \to X \to Y. \tag{$\Delta$}
\end{align*}

Conversely, one might ask what the existence of such a triangle in general tells us about $X$. Clearly, composition from $X$ to $Y$ must be non-degenerate. Now the point is that this weaker form of duality \emph{is} self-dual, capturing more than classical compactness: In Theorem~\ref{theorem non-degeneracy implies 0-cocompactness with correct def} we show that if composition from $X$ to $Y$ is non-degenerate, then $X$ is 0-compact and $Y$ is 0-cocompact. It follows in particular that any object $X$ which appears in an almost triangle of the form (\ref{align generic almost split triangle}), is 0-compact---see Corollary~\ref{corollary almost split forces 0-(co)compactness}.

It might be helpful to summarize some of the connections between these versions of compactness and duality in a graph.

\[\begin{tikzpicture}
  \node (uu) at (6.5,2) {$X$ is compact};
  \node (ur) at (0,2) {$X$ is `Serre dualizable'};
  \node (ul) [text width=4cm] at (6.5,0) {$X$ appears in almost split $\tau X\to M \to X \to $};
  \node (d) at (0,0) {$X$ has a `non-degenerate partner'};
  \node (dd) at (0,-2) {$X$ is 0-compact};
  \draw[-implies, double equal sign distance] (uu) to node[right,text width=2cm]{\scriptsize if local \( \End \) and Brown rep., \cite{MR2079606,MR1803642},Thm~\ref{theorem Krause AR-theorem}}(ul);
  \draw[-implies, double equal sign distance,bend right=30] (uu) to node[above]{\scriptsize$\text{if Brown rep.}$}(ur);
  \draw[-implies, double equal sign distance] (ur) to node [above]{\scriptsize Thm~\ref{theorem partial Serre implies X compact and Y 0-cocompact}} (uu);
  \draw[-implies, double equal sign distance] (ul) to node [below] {\scriptsize Thm~\ref{theorem almost split vs non deg}} (d);
  \draw[-implies, double equal sign distance] (ur) to node [left] {\scriptsize Thm~\ref{proposition comp to T(X,SX) is non-degenerate}} (d);
  \draw[-implies, double equal sign distance] (d) to node [right] {\scriptsize Thm~\ref{theorem non-degeneracy implies 0-cocompactness with correct def}} (dd);
  \draw[-implies, double equal sign distance] (ur) to node [below left=-2mm,text width=2cm] {\scriptsize if local \( \End \) Thm~\ref{theorem our triangle summand of AR triangle}}(ul);
\end{tikzpicture}\]

\begin{ackn}
The authors thank Georgios Dalezios, Lidia Angeleri H\"ugel, Martin Kalck, Rosanna Laking, and Jorge Vit\'oria for discussions, comments and questions.
\end{ackn}

\section{(Co)compactness and 0-(co)compactness}

In a triangulated category $\catT$ with coproducts, an object $X$ is \emph{compact} if the natural morphism \[\coprod \catT(X,Y_i)\to\catT(X,\coprod Y_i)\] is invertible for each family $\{Y_i\}$. The subcategory of all compact objects in $\catT$ is denoted by $\catT^{\compacts}$. Dually---albeit appearing far less frequently in  the literature---if $\catT$ admits products, then an object $Y$ is \emph{cocompact} if the natural morphism \[\coprod\catT(X_i,Y)\to\catT(\prod X_i, Y)\] is an isomorphism for each collection $\{X_i\}$.

We will now recall the more recent notion of $0$-cocompactness as introduced in \cite{MR3946864}, together with its dual. A bit of terminology is involved:

For a class of objects $\catX$ in $\catT$, an object $G$ is said to be a \emph{contravariant $\catX$-ghost} if $\catT(G,\catX)=0$; a morphism $g$ is a \emph{contravariant $\catX$-ghost} if $\catT(g,\catX)=0$.  Dually, an object $G$ is a \emph{covariant $\catX$-ghost} if $\catT(\catX,G)=0$. In an abelian category, a diagram \[A_0 \stackrel{a_0}{\to} A_1 \stackrel{a_1}{\to} A_2 \to \cdots\] is \emph{dual Mittag-Leffler} (\emph{dual ML}) if the increasing chain $\Ker a_i \subset \Ker a_{i+1} a_i \subset \cdots$ stabilizes for each $i$.


\begin{definition} \label{definition 0-cocompactness and 0-compactness}
  An object $X\in\catT$ is \emph{$0$-cocompact} if $\holim\mathbb T$ is a contravariant $X$-ghost for each sequence \[\mathbb T\colon \cdots\to T_2\to T_1 \to T_0\] such that $\colim \catT(\mathbb T,X)=0$ and $\catT(\mathbb T,X[1])$ is dual ML.

  Dually, $X$ is called \emph{$0$-compact} if $\hocolim\mathbb T$ is a covariant $X$-ghost for each sequence \[\mathbb T\colon T_0\to T_1 \to T_2 \to \cdots\] such that $\colim \catT(X,\mathbb T)=0$ and $\catT(X[1],\mathbb T)$ is dual ML.
\end{definition}

It is clear that any (co)compact object is $0$-(co)compact; the question of the converse is more interesting. Indeed, the fact that $0$-cocompactness in practice does appear more often than cocompactness, is the mainspring of our work.

\begin{example} \label{example K(vect)}
Consider the category \( \catK( \Modcat k ) \) for a field \( k \). Then the only cocompact object is \( 0 \), while all objects are \( 0 \)-cocompact.

Since any complex is homotopy equivalent to a complex with zero differential we have \( \catK( \Modcat k ) \simeq (\Modcat k)^{\mathbb{Z}} \), and can mainly argue in the category of vector spaces. For the first claim, note that \( \coprod_I \Hom_k(k, X) \subsetneq \Hom_k( \prod_I k, X) \) whenever the index set \( I \) is infinite and \( X \) is non-zero.

For the second claim let \( X \in \catK( \Modcat k) \) and \( \mathbb{T} \) be as in the definition of \( 0 \)-cocompact. We assume that all complexes have zero differential. Let \( d \) be a degree such that \( X^d \neq 0 \). Pick an element \( ( \cdots, t_2, t_1, t_0 ) \in \limit T_i^d \). If \( t_n \neq 0 \) then we can find a linear map \( T_n^d \to X^d \) which does not send \( t_n \) to \( 0 \). This linear map will give rise to a non-zero element of \( \colim \Hom_k(T_i^d, X^d) \), and hence of \( \colim \Hom_{\catK(\Modcat k)}(T_i, X) \), contradicting the first assumption on the sequence. It follows that \(  \limit T_i^d = 0 \).

Now assume that the sequence \( \cdots \to T_2^{d-1} \to T_1^{d-1} \to T_0^{d-1} \) does not satisfy the Mittag-Leffler condition. That means there is a subsequence
\[ \cdots \xto{\varphi_3} T_{n_2}^{d-1} \xto{\varphi_2} T_{n_1}^{d-1} \xto{\varphi_1} T_{n_0}^{d-1} \]
such that \( T_{n_0}^{d-1} \supsetneq \Imm \varphi_1 \supsetneq \Imm \varphi_1\varphi_2 \supsetneq \cdots \). It follows that the maps in the sequence
\[ \Hom(T_{n_0}^{d-1}, X^d) \to \Hom(\Imm \varphi_1, X^d) \to \Hom(\Imm \varphi_1\varphi_2, X^d) \to \cdots \]
are all proper epimorphisms. Note that \( X^d = X[1]^{d-1} \), thus we have a contradiction to the assumption that \( \Hom(\mathbb{T}, X[1]) \) is dual ML. It follows that the sequence \( \cdots \to T_2^{d-1} \to T_1^{d-1} \to T_0^{d-1} \) does satisfy the Mittag-Leffler condition, and in particular \( \limit^1 T_i^{d-1} = 0 \).

Finally note that \( \holim \mathbb{T} = \limit \mathbb{T} \oplus \limit^1 \mathbb{T}[-1] \). Thus we have \( \Hom( \holim \mathbb{T}, X) = 0 \) if and only if for any \( d \) such that \( X^d \neq 0 \) we have \( \limit \mathbb{T}^d = 0 \) and \( \limit^1 \mathbb{T}^{d-1} = 0 \), which are exactly the two points established above.
\end{example}

\begin{theorem}\label{theorem cocompacts in derived category}
Let \( \catA \) be a Grothendieck abelian category which has exact products. Then the only cocompact object in \( \catD( \catA ) \) is \( 0 \).
\end{theorem}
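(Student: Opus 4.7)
The plan is to argue by contradiction: suppose $Y \in \catD(\catA)$ is non-zero and cocompact. Using that $\catA$ is Grothendieck, fix a generator $G$ and an injective cogenerator $E$ of $\catA$, and let $I$ be an infinite set. Because products in $\catA$ are exact, products of stalk complexes in $\catD(\catA)$ are computed degreewise in $\catA$; in particular the canonical short exact sequence $0 \to G^{(I)} \to G^I \to Q \to 0$ in $\catA$, with $Q \ne 0$ (since for infinite $I$ the coproduct is a proper subobject of the product), yields a distinguished triangle $G^{(I)} \to G^I \to Q \to G^{(I)}[1]$ in $\catD(\catA)$.

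The core of the argument is cleanest in the special case $Y = E$ (stalk in degree $0$). Since $E$ is injective, applying $\Hom_{\catA}(-, E)$ to the short exact sequence preserves exactness, giving
\[ 0 \to \Hom_{\catA}(Q, E) \to \Hom_{\catA}(G^I, E) \to \Hom_{\catA}(G^{(I)}, E) \to 0. \]
Cocompactness of $E$ applied to the family $\{G\}_{i \in I}$ would force $\Hom_{\catA}(G^I, E) = \bigoplus_I \Hom_{\catA}(G, E)$, whereas $\Hom_{\catA}(G^{(I)}, E) = \prod_I \Hom_{\catA}(G, E)$ holds automatically. The induced surjection $\bigoplus_I \twoheadrightarrow \prod_I$ is impossible for infinite $I$ and non-zero $\Hom_{\catA}(G, E)$, the latter being non-zero because $E$ cogenerates and $G \ne 0$. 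This falsifies cocompactness of $E$.

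For an arbitrary non-zero $Y$, I would apply the cohomological functor $\catD(\catA)(-, Y)$ to the same triangle, obtaining a long exact sequence whose pertinent fragment is
\[ \catD(\catA)(Q, Y[m]) \to \catD(\catA)(G^I, Y[m]) \to \catD(\catA)(G^{(I)}, Y[m]) \to \catD(\catA)(Q, Y[m+1]). \]
Cocompactness of $Y$ at each shift $m$ (applied to $\{G[-m]\}_{i \in I}$) forces the middle term to be $\bigoplus_I \catD(\catA)(G, Y[m])$, while the right-hand term is universally $\prod_I \catD(\catA)(G, Y[m])$; the connecting map between them is the canonical inclusion of a coproduct into its product, and its cokernel $\prod_I/\bigoplus_I$ embeds into $\catD(\catA)(Q, Y[m+1])$. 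Since $\{G[n]\}_{n \in \mathbb{Z}}$ generates $\catD(\catA)$, non-vanishing of $Y$ provides some $m$ with $\catD(\catA)(G, Y[m]) \ne 0$, so this cokernel is non-zero. The contradiction is then concluded by leveraging the cogenerator $E$ to upgrade this non-vanishing to an explicit non-factoring morphism $G^I \to Y[m+1]$, contradicting cocompactness of $Y$ at shift $m+1$.

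The main obstacle is exactly this last step: rigorously passing from non-vanishing of $\catD(\catA)(Q, Y[m+1])$ to a morphism in $\catD(\catA)(G^I, Y[m+1])$ not in the image of $\bigoplus_I \catD(\catA)(G, Y[m+1])$, uniformly over all Grothendieck $\catA$. Here the derived structure matters crucially: $\catK(\Modcat R)$ admits non-zero cocompact objects under certain set-theoretic hypotheses, whereas the $\Ext$-classes available in $\catD(\catA)$, combined with the cogenerator $E$ and exactness of products, always supply the required non-factoring morphism.
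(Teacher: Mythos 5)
Your special case---ruling out the stalk complex of an injective cogenerator $E$---is correct, and it is in fact the same mechanism as the endgame of the paper's proof: there, too, the decisive point is that injectivity lets one extend a map off the coproduct to the product (the paper extends the sum map $\Sigma$ on $(C^0)^{(\mathbb N)}$ to $\hat\Sigma$ on $(C^0)^{\mathbb N}$). The general case, however, has a genuine gap, and the fix you sketch is provably a dead end. From the triangle $G^{(I)} \to G^I \to Q \to G^{(I)}[1]$ and the cocompactness hypothesis you correctly deduce that the cokernel of $\bigoplus_I \catD(\catA)(G, Y[m]) \hookrightarrow \prod_I \catD(\catA)(G, Y[m])$ embeds into $\catD(\catA)(Q, Y[m+1])$, so the latter is non-zero. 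But by exactness of that same long exact sequence, the image of $\catD(\catA)(G^{(I)}, Y[m]) \to \catD(\catA)(Q, Y[m+1])$ is precisely the kernel of restriction along $G^I \to Q$: every non-zero class you have produced in $\catD(\catA)(Q, Y[m+1])$ dies when pulled back to $G^I$. So precomposition cannot yield the non-factoring morphism $G^I \to Y[m+1]$ you need, and no appeal to the cogenerator changes this; a genuinely different source of morphisms out of $G^I$ is required. (Two smaller points: the assertion $Q \neq 0$ is not as immediate in a general Grothendieck category as in $\Modcat R$, though your argument does not actually need it; and the fact that $\{G[n]\}_{n\in\mathbb Z}$ generates $\catD(\catA)$ is true but is itself a non-trivial theorem that should be cited.)

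The paper closes exactly this gap by a reduction rather than a direct attack. Exactness of products gives left completeness of $\catD(\catA)$, so every object is the homotopy limit of its canonical left truncations; Lemma~\ref{lemma limits not cocompact} shows a cocompact homotopy limit is a direct summand of a finite sum of the terms, so a cocompact object is left bounded, and a second application (to the brutal truncations of an injective resolution) shows it is a finite complex of injectives. Only after this reduction is the injectivity of the degree-zero term available, and then an argument of the shape of your special case---the extension $\hat\Sigma$ of the sum map cannot lie in the image of the coproduct because $\hat\Sigma - \varphi$ stays a split epimorphism while $C^{-1}\to C^0$ is not one---finishes the proof. If you want to salvage your framework, you must first establish some such reduction: testing cocompactness of a general $Y$ only against products of shifts of the generator $G$ does not supply enough morphisms to reach a contradiction.
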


\begin{remark}
The assumptions of the theorem are slightly stronger than what we need: For the proof, it suffices to assume that \( \catA \) is an abelian category with countable products and coproducts, and with enough injectives, which satisfies that the natural morphism from countable coproducts to products is monic , and that \( \catD( \catA ) \) is left complete in the sense of Neeman, see \cite{MR2875857}.
\end{remark}

In the proof we will utilize the following observation.

\begin{lemma} \label{lemma limits not cocompact}
Let  $C$ be a cocompact object in a triangulated category $\catT$. If we have $C = \holim C_i$ then \( C \) is a direct summand of a finite direct sum \( \oplus_{i=1}^n C_i \).
\end{lemma}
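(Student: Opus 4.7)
The plan is to extract the claim directly from the defining triangle
\[ C = \holim C_i \xrightarrow{p} \prod_i C_i \xrightarrow{1-\sigma} \prod_i C_i \xrightarrow{q} C[1], \]
where $\sigma$ denotes the shift induced by the structure maps $\sigma_i \colon C_i \to C_{i-1}$ of the inverse system. The main step is to show that the connecting morphism $q$ vanishes; once this is done, the triangle splits and the claim follows from a second, easier application of cocompactness.

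To see $q = 0$: since consecutive morphisms of a triangle compose to zero, $q \circ (1-\sigma) = 0$, meaning $q$ lies in the kernel of the precomposition map $(1-\sigma)^{\ast} \colon \catT(\prod_i C_i, C[1]) \to \catT(\prod_i C_i, C[1])$. Cocompactness of $C$ (hence of $C[1]$) identifies this group with $\bigoplus_i \catT(C_i, C[1])$, so $q$ corresponds to a finitely supported tuple $(q_i)$. Unwinding the definition of $\sigma$, the operator $(1-\sigma)^{\ast}$ acts on such a tuple by sending $(q_i)$ to the tuple whose $0$-th component is $q_0$ and whose $j$-th component for $j \geq 1$ is $q_j - q_{j-1}\sigma_j$. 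Requiring this to vanish forces $q_0 = 0$ and then inductively $q_j = 0$ for all $j$; hence $q = 0$.

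With $q = 0$, the triangle splits and produces a retraction $r \colon \prod_i C_i \to C$ with $r \circ p = 1_C$. Cocompactness applied once more gives $r \in \bigoplus_i \catT(C_i, C)$, so $r$ factors as $\tilde r \circ \pi_F$ for some finite subset $F$ and some $\tilde r \colon \bigoplus_{i \in F} C_i \to C$. Then $\tilde r \circ (\pi_F \circ p) = 1_C$ exhibits $C$ as a summand of $\bigoplus_{i \in F} C_i$, which after reindexing is the statement of the lemma. The main obstacle is the kernel computation above: it is the dual side of the Milnor description of $\holim$, but the asymmetry between products (where Mittag--Leffler subtleties arise) and coproducts (finitely supported tuples) is precisely what makes the kernel collapse to zero and the rest of the argument go through cleanly.
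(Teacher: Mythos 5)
Your proof is correct, but it reaches the splitting by a different route than the paper, and the comparison is worth making explicit. The paper never shows that the connecting map $q \colon \prod_i C_i \to C[1]$ vanishes outright; instead it splits off the finite subcoproduct $\bigoplus_{i=1}^n C_i$ from both middle terms of the defining triangle (using that the restriction of $1-\sigma$ to an initial segment is unipotent, hence invertible), deduces that the ``tail'' $C \to \prod_{i>n} C_i \to \prod_{i>n} C_i \to C[1]$ is again a triangle, and only then uses cocompactness of $C[1]$ to kill the connecting map of that tail for $n$ large. You instead exploit the relation $q\circ(1-\sigma)=0$ coming from the triangle itself: writing $q=\sum_{i} q_i\pi_i$ as a finitely supported sum via cocompactness of $C[1]$, the induced recursion $q_0=0$, $q_j=q_{j-1}\sigma_j$ forces $q=0$ entirely, so the original triangle already splits. (The one point to be careful about, which you handle correctly, is that this uses \emph{both} halves of the isomorphism $\coprod_i\catT(C_i,C[1])\to\catT(\prod_i C_i,C[1])$: surjectivity to get the finite support, and injectivity to read off the recursion from $\sum_j(q_j-q_{j-1}\sigma_j)\pi_j=0$.) The remaining step --- applying cocompactness of $C$ to the retraction $r\colon\prod_i C_i\to C$ to factor it through a finite subproduct --- is identical in both arguments. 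Your version buys a stronger intermediate statement ($q=0$, not merely $q$ vanishing on a tail) and avoids the three-row diagram and the ``coherent splitting'' verification; the paper's version is less computational and generalizes more readily to situations where one only controls the maps out of a cofinal tail.
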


\begin{proof}
Consider the triangle \( C \to \prod_i C_i \to \prod_i C_i \to C[1] \) defining the homotopy limit. Note that any for any \( n \) we have the following short exact sequence.
\[ \begin{tikzcd}
0 \ar[r] \ar[d, >->] & \bigoplus_{i=1}^n C_i \ar[r]  \ar[d, >->] & \bigoplus_{i=1}^n C_i \ar[r]  \ar[d, >->] & 0  \ar[d, >->] \\
C \ar[r] \ar[d,->>] & \prod C_i \ar[r] \ar[d,->>] & \prod C_i \ar[r] \ar[d,->>] & C[1] \ar[d,->>]  \\
C \ar[r] & \prod_{i>n} C_i \ar[r] & \prod_{i>n} C_i \ar[r] & C[1]
\end{tikzcd} \]
The induced map in the top row is an isomorphism, whence the monomorphism splits coherently. It follows that the bottom row also is a direct summand of the middle row, and in particular is a triangle.

By cocompactness of \( C \), and hence also \( C[1] \), the map \( \prod_i C_i \to C[1] \) vanishes on all but finitely many factors. Thus by choosing \( n \) sufficiently large we can ensure that the rightmost map in the bottom row vanishes. It follows that \( C \) is a direct summand of \( \prod_{i>n} C_i \). Again using that \( C \) is cocompact it is in fact a direct summand of a finite subproduct.
\end{proof}

\begin{proof}[Proof of Theorem~\ref{theorem cocompacts in derived category}]
Note first that any object \( C \) is the homotopy limit of a sequence of bounded complexes (this property is called being ``left complete''): Since products in \( \catA \) are exact, products in \( \catD( \catA ) \) are calculated componentwise. Since the sequence of canonical left truncations is eventually constant in any given component, it follows that the homotopy limit of the canonical truncations is indeed \( C \).

If \( C \) is cocompact, then it follows by Lemma~\ref{lemma limits not cocompact} that \( C \) is a direct summand of a finite direct sum of its canonical left truncations, hence a left bounded complex. (Here we note that a complex \( X \) is left bounded if and only if \( \exists n \forall m > n \forall I \in \inj(\catA) \mid \catD( \catA )( X, I[m]) = 0 \), hence left bounded complexes are closed under direct summands in \( \catD( \catA ) \).)

Next we note that any left bounded complex is isomorphic in \( \catD( \catA ) \) to a left bounded complex of injectives. Such a complex is the homotopy limit of its brutal right truncations, which are finite complexes of injectives. Thus, envolking Lemma~\ref{lemma limits not cocompact} again, we see that any cocompact \( C \) is a direct summand of a finite direct sum of finite complexes of injectives, and thus itself a finite complex of injectives. (The finite complexes of injectives are characterized by the fact that \( \exists n \forall m > n \forall A \in \catA \mid \Hom(X, A[m]) = 0 \), hence this collection of complexes is closed under direct summands in \( \catD( \catA ) \).)

So we consider finite complexes of injectives. If the complex is not contractible, then we may assume that \( C \) is concentrated in non-positive degrees and that the map \( C^{-1} \to C^0 \) is not a split epimorphism.

We consider the map \( \Sigma \colon (C^0)^{(\mathbb N)} \to C^0 \), giving by identity on every component. Note that for any map \( \varphi \colon (C^0)^{(\mathbb N)} \to C^0 \) which factors through projection to a finite subcoproduct \( (C^0)^{(\mathbb N)} \to (C^0)^{ \{1, \ldots, n\} } \), the difference \( \Sigma - \varphi \) is still a split epimorphism.

Now, since \( C^0 \) is injective we may extend \( \Sigma \) to a map \( \hat \Sigma \colon (C^0)^{ \mathbb N } \to C^0 \). It follows immediately that also \( \hat \Sigma - \varphi \) is a split epimorphism for any \( \varphi \in \Hom_{\catA}(C^0, C^0)^{(\mathbb{N})} \). Consequently, the natural map
\[ \Hom_{\catD( \catA)}( C^0, C)^{(\mathbb N)} \to \Hom_{\catD(\catA)}((C^0)^{\mathbb{N}}, C) \]
does not hit \( \hat \Sigma \), hence is not an isomorphism.
\end{proof}

\begin{remark} \label{rem cocompacts in homotopy category}
For homotopy categories the situation is more subtle, and depends on our model of set theory. We will use methods we learned from \cite{MR1914985}---see this book also for a full account of the unexplained notation.

On the one hand, one can see that if there is a measurable cardinal, then no category \( \catK( \Modcat R ) \) can contain a non-zero cocompact object: If \( I \) is a set admitting a non-principal \( \omega_1 \)-ultrafilter, then an easy adaptation of \cite[Example~3.1]{MR1914985} gives an element of \( \Hom_{\catK(\Modcat R)}( X^I, X) \) which does not lie in the image of the  canonical map from \( \Hom_{\catK(\Modcat R)}(X, X)^{(I)} \) for any object \( X \).

On the other hand, if there is no measurable cardinal, and \( R \) is a slender ring (for instance \( R = \mathbb Z \), see \cite[Section~III.2]{MR1914985}), then the {\L}o\'s--Eda-theorem (see \cite[Corollary~3.3]{MR1914985}) implies that \( R \), considered as a complex concentrated in one degree, is cocompact.
\end{remark}

When it comes to closure properties, (co)compact objects are much better behaved than \( 0 \)-(co)compacts: The triangulated subcategory $\catT^{\compacts}$ is thick, and often easy to describe completely in concrete examples. The collection of $0$-cocompact objects in $\catT$, on the other hand, is typically much more difficult to control. For instance, a summand of a $0$-cocompact object need not be $0$-cocompact again.

We do however have the following closure property.

\begin{lemma} \label{lemma products}
Let \( X_i \) be a set-indexed collection of \( 0 \)-cocompact objects. Then \( \prod X_i \) is \( 0 \)-cocompact.
\end{lemma}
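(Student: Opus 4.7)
The plan is to verify the defining property of \( 0 \)-cocompactness for \( X := \prod X_i \) by descending the hypotheses on a test sequence to each factor. Fix a sequence \( \mathbb{T}\colon \cdots \to T_2 \to T_1 \to T_0 \) with \( \colim \catT(\mathbb{T}, X) = 0 \) and \( \catT(\mathbb{T}, X[1]) \) dual ML; the goal is \( \catT(\holim \mathbb{T}, X) = 0 \). Since the shift functor preserves products and \( \catT(-, X) \cong \prod_i \catT(-, X_i) \), the sequence \( \catT(\mathbb{T}, X[k]) \) decomposes at each level \( n \) as the product \( \prod_i \catT(T_n, X_i[k]) \). Hence both assumptions on \( \mathbb{T} \) are really statements about product sequences, and it is natural to try to extract the corresponding statements about each factor.

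The next step is to show that each factor sequence inherits these properties. For the vanishing of the filtered colimit, the projection \( \prod_j \catT(T_n, X_j) \twoheadrightarrow \catT(T_n, X_i) \) is, naturally in \( n \), a split epimorphism: a splitting is given by the morphism \( X_i \to \prod_j X_j \) corresponding under the universal property of the product to \( \id_{X_i} \) in slot \( i \) and \( 0 \) elsewhere. Since filtered colimits are exact, \( \colim \catT(\mathbb{T}, X_i) \) is a quotient of \( 0 \), hence zero. For the dual Mittag--Leffler condition, kernels commute with products, so the chain \( \Ker a_n \subset \Ker a_{n+1} a_n \subset \cdots \) for the product sequence is precisely the product (over \( i \)) of the corresponding chains for the factor sequences; because \( \prod A_i = \prod B_i \) with \( A_i \subseteq B_i \) forces equality in each slot, stabilization of the product chain entails simultaneous stabilization in each factor. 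Thus each \( \catT(\mathbb{T}, X_i[1]) \) is dual ML.

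With the hypotheses verified factorwise, the \( 0 \)-cocompactness of each \( X_i \) yields \( \catT(\holim \mathbb{T}, X_i) = 0 \) for every \( i \), and taking the product gives \( \catT(\holim \mathbb{T}, X) = \prod_i \catT(\holim \mathbb{T}, X_i) = 0 \), finishing the argument. The only mildly subtle point will be the dual ML descent, and even that reduces to the routine observation that a strict containment between subgroups of a product must be visible in at least one factor; the rest is juggling of the adjunction between \( \Hom \) and \( \prod \).
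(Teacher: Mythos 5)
Your proof is correct and follows exactly the paper's (much terser) argument: identify $\catT(-,\prod X_i)$ with $\prod_i\catT(-,X_i)$, observe that vanishing of the colimit and the dual ML condition descend from a product of sequences to each factor, and conclude from the $0$-cocompactness of each $X_i$. The details you supply (the split surjection onto each factor and the factorwise stabilization of the kernel chains) are the right justifications for the two descent steps the paper leaves as "it suffices to observe".
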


\begin{proof}
Since \( \Hom \) commutes with products in the second argument, it suffices to observe that a product of sequences has vanishing colimit only if each factor has vanishing colimit, and similarly is dual ML only if each factor is dual ML.
\end{proof}

\subsection*{Compact generation}
For each collection $\catS$ of objects in $\catT$ we consider \[\catS^\perp=\{T\in\catT \vert \catT(\catS,T[n])=0 \text{ for each } n\}.\] $\catS$ is said to \emph{generate} $\catT$ if $\catS^\perp=0$; if $\catT$ admits a generating set consisting of compact objects, then $\catT$ is \emph{compactly generated}. If $\catT$ is compactly generated by $\catS$, then $\catT$ coincides with its smallest triangulated subcategory which  contains $\catS$ and is closed under coproducts.

The following is Neeman's Brown representability theorem from \cite{MR1812507}.

\begin{theorem} \label{theorem brown representability}
  If $\catT$ is a compactly generated triangulated category, then $\catT$ satisfies Brown representability, that is, each cohomological functor $\catT^{\op}\to\Ab$ which takes coproducts of $\catT$ to products in $\Ab$, is isomorphic to $\catT(-,T)$ for some $T\in\catT$.
\end{theorem}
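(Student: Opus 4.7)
My plan is to build the representing object $T$ as a homotopy colimit $\hocolim T_n$ of a sequence equipped with a compatible family of ``universal'' elements $t_n \in H(T_n)$, following Neeman~\cite{MR1812507}. I would first fix a set $\catS$ of compact generators, closed under shifts, and set $T_0 = \coprod_{S \in \catS,\ s \in H(S)} S$. The hypothesis that $H$ converts coproducts to products gives $H(T_0) = \prod_{S,s} H(S)$, and there is a tautological element $t_0 \in H(T_0)$ whose $(S,s)$-component is $s$; by Yoneda the corresponding natural transformation $\catT(-,T_0) \to H$ is already surjective on every object of $\catS$.

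Inductively, given $(T_n, t_n)$ for which $\catT(-,T_n) \to H$ is surjective on $\catS$, I would set $K_n = \coprod_{S \in \catS,\ f \in \Ker(\catT(S,T_n) \to H(S))} S$ with the tautological map $K_n \to T_n$ and complete to a triangle $K_n \to T_n \to T_{n+1} \to K_n[1]$. By construction, each summand of $K_n$ pulls $t_n$ back to $0$, so the long exact sequence associated to the cohomological functor $H$ furnishes a lift $t_{n+1} \in H(T_{n+1})$ of $t_n$; the induced natural transformation $\catT(-,T_{n+1}) \to H$ remains surjective on $\catS$ while the kernel present at stage $n$ has been killed.

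Setting $T = \hocolim T_n$, the defining triangle $\coprod T_n \to \coprod T_n \to T \to (\coprod T_n)[1]$, together with the fact that $H$ sends coproducts to products, yields an exact sequence
\[ H(T) \to \prod H(T_n) \xto{1 - \mathsf{shift}} \prod H(T_n), \]
into whose kernel the coherent family $(t_n)$ fits; it thus lifts to some $t \in H(T)$, producing a natural transformation $\eta \colon \catT(-,T) \to H$. For $S \in \catS$ I would check $\eta_S$ is an isomorphism: surjectivity is immediate from the composition $T_0 \to T$, while for injectivity any $f \colon S \to T$ with $\eta_S(f) = 0$ factors through some $T_n$ by compactness of $S$, giving $f' \in \Ker(\catT(S,T_n) \to H(S))$; but $f'$ is one of the tautological summands in $K_n \to T_n$, so $S \xto{f'} T_n \to T_{n+1}$ vanishes in a triangle, whence $f = 0$. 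Both $\catT(-,T)$ and $H$ are cohomological and send coproducts to products, so the full subcategory on which $\eta$ is an isomorphism is triangulated and closed under coproducts; containing $\catS$, it must be all of $\catT$.

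The step I expect to be most delicate is the lifting of the coherent family $(t_n)$ to an element of $H(T)$: this is where the coproduct-to-product hypothesis on $H$ is indispensable, being precisely what turns the $\hocolim$-triangle into a Milnor-type exact sequence computing the possible lifts. The rest is careful bookkeeping, but this lifting step is where the hypotheses on $H$ genuinely come into play.
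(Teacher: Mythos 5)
Your argument is correct and is essentially Neeman's original proof from \cite{MR1812507}, which is precisely the source the paper cites here---the paper gives no proof of its own for this theorem. The only points worth tightening are the standard ones: the subcategory on which $\eta$ is invertible should be taken to consist of objects all of whose shifts satisfy this (so the five lemma applies to triangles), and the factorization of a map $S\to\hocolim T_n$ through some $T_n$ for compact $S$ deserves the usual short argument via the triangle defining the homotopy colimit.
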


Some useful consequences of Theorem~\ref{theorem brown representability}, extracted from \cite{MR1308405, MR1812507}, are:

\begin{theorem} \label{thm:neemanthreeadjoints}
Suppose $F\colon \catT'\to\catT$ is an exact functor between triangulated categories with $\catT'$ compactly generated.
\begin{enumerate}
\item $F$ admits a right adjoint if and only if $F$ preserves coproducts.
\item $F$ admits a left adjoint if and only if $F$ preserves products.
\item If $F$ admits a right adjoint $G$, then $F$ preserves compact objects if and only if $G$ preserves coproducts.
\end{enumerate}
\end{theorem}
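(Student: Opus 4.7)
The plan is to treat the three statements in order, leveraging the Brown representability theorem stated just above (Theorem~\ref{theorem brown representability}) together with its covariant analog for compactly generated triangulated categories, which is the other main result of \cite{MR1812507}.

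For (1), the ``only if'' direction is formal: any functor admitting a right adjoint preserves colimits, in particular coproducts. For the ``if'' direction, assume $F$ preserves coproducts. For each $T\in\catT$ the functor $\catT(F(-),T)\colon(\catT')^{\op}\to\Ab$ is cohomological, and since $F$ preserves coproducts it sends coproducts in $\catT'$ to products in $\Ab$. By Theorem~\ref{theorem brown representability}, this functor is represented by some object $G(T)\in\catT'$, and the naturality of the representing isomorphism in $T$ promotes $G$ to a functor right adjoint to $F$.

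For (2), again the ``only if'' direction is automatic since right adjoints preserve limits. The ``if'' direction is the main point and the only place where genuinely new input is needed: one applies the covariant version of Brown representability for compactly generated triangulated categories (established by Neeman in \cite{MR1812507}), which asserts that any cohomological functor $H\colon\catT'\to\Ab$ sending products in $\catT'$ to products in $\Ab$ is representable. If $F$ preserves products, then for each $T\in\catT$ the functor $\catT(T,F(-))\colon\catT'\to\Ab$ satisfies exactly these hypotheses, yielding an object $L(T)\in\catT'$ with $\catT(T,F(-))\cong\catT'(L(T),-)$ naturally in $T$; this builds the desired left adjoint $L$.

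For (3), assume $F$ has a right adjoint $G$. If $F$ preserves compact objects and $\{Y_i\}\subset\catT$, then for any compact $X\in(\catT')^{\compacts}$ the chain
\[\catT'(X,G(\coprod Y_i))\cong\catT(F(X),\coprod Y_i)\cong\coprod\catT(F(X),Y_i)\cong\coprod\catT'(X,G(Y_i))\cong\catT'(X,\coprod G(Y_i))\]
shows that the canonical map $\coprod G(Y_i)\to G(\coprod Y_i)$ becomes an isomorphism after applying $\catT'(X,-)$; since $\catT'$ is compactly generated this forces the map itself to be an isomorphism. Conversely, if $G$ preserves coproducts, then for $X\in(\catT')^{\compacts}$ and any family $\{Y_i\}\subset\catT$ the same chain of isomorphisms (read in reverse) shows $\catT(F(X),\coprod Y_i)\cong\coprod\catT(F(X),Y_i)$, so $F(X)$ is compact.

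The only non-routine ingredient is the covariant Brown representability invoked in part (2); once that is granted, everything else is a direct manipulation of adjunctions and compactness. I would therefore simply quote \cite{MR1812507} for that statement rather than reproducing its proof.
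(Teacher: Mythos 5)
Your proposal is correct: parts (1) and (3) are the standard Brown representability and adjunction arguments, and part (2) correctly identifies that the covariant (dual) Brown representability for compactly generated categories from \cite{MR1812507} is the essential extra input. The paper itself offers no proof of this theorem, simply extracting it from \cite{MR1308405, MR1812507}, so your argument is precisely the standard one from those sources and there is nothing to compare against.
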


Let us also recall a trick from \cite[Theorem~2.1]{MR1191736}.

\begin{theorem} \label{theorem Neeman trick}
 Let $\catT$ be a compactly generated triangulated category, and let $X\in\catT^{\compacts}$. Then the subcategory $X^\perp$ of $\catT$ is compactly generated again.

 Moreover, the left adjoint to the inclusion $X^\perp\hookrightarrow\catT$ induces an equivalence \[(X^\perp)^{\compacts}\simeq\catT^{\compacts}/\thick X\] up to direct summands.
\end{theorem}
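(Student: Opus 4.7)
The approach is to exploit the Bousfield localization determined by the compact object $X$. Since $X$ is compact, $X^\perp$ is closed under coproducts in $\catT$, so the inclusion $\iota\colon X^\perp\hookrightarrow\catT$ preserves coproducts; Theorem~\ref{thm:neemanthreeadjoints} then furnishes a left adjoint $L\colon\catT\to X^\perp$ which, by the same theorem applied to $\iota$, preserves compact objects. Fix a set $\mathcal G$ of compact generators of $\catT$ containing $X$. I would first show that $\{LG:G\in\mathcal G\}$ compactly generates $X^\perp$: each $LG$ is compact by the above, and if $Y\in X^\perp$ satisfies $(X^\perp)(LG,Y[n])=0$ for all $G,n$, then adjunction yields $\catT(G,Y[n])\cong\catT(G,\iota Y[n])=0$, forcing $Y\in\mathcal G^\perp=0$. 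This settles the first assertion.

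For the ``moreover'' statement, observe that $L(X)=0$ via the Bousfield triangle $X\xto{\id}X\to 0$ (the left term lies in $\Loc(X)$ and the right in $X^\perp$), so $L$ annihilates $\thick(X)$ and descends to an exact functor
\[ \Phi\colon\catT^{\compacts}/\thick(X)\longrightarrow (X^\perp)^{\compacts}.\]
Full faithfulness of $\Phi$ would be established by computing, for $A,B\in\catT^{\compacts}$,
\[(X^\perp)(LA,LB)\;\cong\;\catT(A,\iota LB)\;\cong\;(\catT/\Loc(X))(A,B)\]
via adjunction and the universal property of Bousfield localization, and then identifying this with morphisms in the Verdier quotient $\catT^{\compacts}/\thick(X)$. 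The latter identification proceeds through the calculus of fractions: a morphism in $\catT/\Loc(X)$ from the compact $A$ to $B$ is represented by a roof $A\xleftarrow{s}A'\to B$ with $\Cone(s)\in\Loc(X)$, and compactness of $A$ allows one to refine such a roof so that the cone lies in $\thick(X)$.

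Density up to summands is then immediate from the first part: the essential image of $\Phi$ is a thick subcategory of $(X^\perp)^{\compacts}$ containing the compact generators $\{LG\}$ of $X^\perp$, hence coincides with $(X^\perp)^{\compacts}$ up to direct summands. The main technical hurdle is the compactness-driven reduction from $\Loc(X)$-cones to $\thick(X)$-cones underpinning full faithfulness; this is the familiar Neeman--Thomason device and is typically executed by writing an object of $\Loc(X)$ as a $\hocolim$ of objects in $\thick(X)$ and invoking compactness of $A$ to factor the relevant morphism through a finite stage of the filtered system.
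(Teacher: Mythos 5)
The paper does not actually prove this statement---it is recalled from Neeman \cite[Theorem~2.1]{MR1191736} without proof---so what you have written is a reconstruction of the standard Neeman--Thomason localization argument, and in outline it is correct. Two caveats. First, you cannot obtain the left adjoint $L$ by applying Theorem~\ref{thm:neemanthreeadjoints} to the inclusion $\iota\colon X^\perp\hookrightarrow\catT$: that theorem requires the \emph{source} of the functor to be compactly generated, which for $\iota$ is precisely what you are trying to establish. The adjoint comes instead from the stable t-structure $\left({}^\perp(X^\perp),X^\perp\right)$ cogenerated by the single compact $X$ (Theorem~\ref{theorem the two t-structures}(1), i.e.\ Bousfield localization at $\Loc(X)$, whose inclusion has a right adjoint by Brown representability for the compactly generated $\Loc(X)$). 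Once $L$ exists, your appeal to part (3) of Theorem~\ref{thm:neemanthreeadjoints} to see that $L$ preserves compacts is legitimate, since there the source of $L$ is $\catT$ itself. Second, the two steps you defer are not routine: refining a roof $A\xleftarrow{s}A'\to B$ with $\Cone(s)\in\Loc(X)$ to one whose cone lies in $\thick(X)$, and identifying $(X^\perp)^{\compacts}$ with the thick closure of the $LG$ up to summands, together constitute essentially all of the cited theorem. In particular an object of $\Loc(X)$ is \emph{not} in general a homotopy colimit of objects of $\thick(X)$; it is a homotopy colimit of a sequence built by coning off maps from (possibly infinite) coproducts of shifts of $X$, and one must factor the relevant morphism both through a finite stage of that sequence and through a finite subcoproduct. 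As a reduction to the literature your argument is fine; as a self-contained proof it leaves the genuinely hard part---which you do explicitly acknowledge---unproved.
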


\subsection*{0-cocompact cogeneration}
For a class of objects $\catS$ in $\catT$ we also consider \[{}^\perp\catS=\{T\in\catT \vert \catT(T[n],\catS)=0 \text{ for each } n\}.\] $\catS$ \emph{cogenerates} $\catT$ if ${}^\perp\catS=0$; if $\catT$ admits a cogenerating set which consists of 0-cocompact objects, then $\catT$ is \emph{0-cocompactly cogenerated}. By \cite[Theorem~6.6]{MR3946864}, if $\catT$ is $0$-cocompactly cogenerated by $\catS$, then $\catT$ coincides with its smallest triangulated subcategory which contains $\catS$ and is closed under products.

We end this section with some observations which are (weak) duals of results from the previous subsection. As one would expect, this story is far less complete than its classical counterpart.

For our dual version of Theorem~\ref{theorem brown representability} we refer to Appendix~\ref{section dual brown rep}.

As for Theorem~\ref{thm:neemanthreeadjoints}, we have the following partial `$0$-cocompact dual':

\begin{1weakdual} \label{theorem 1weakdual}
  Let $F\colon\catT'\to\catT$ be a triangle functor with a right adjoint $G$. If $F$ preserves countable products, then $G$ preserves $0$-cocompact objects.

  In particular, if $F$ additionally reflects $0$-objects, then $G$ takes any set of $0$-cocompact cogenerators for $\catT$ to a set of $0$-cocompact cogenerators for $\catT'$.
\end{1weakdual}

\begin{remark}
In contrast to the situation of Theorem~\ref{thm:neemanthreeadjoints}, here we do not get an ``if and only if'' statement. Indeed we have seen in Example~\ref{example K(vect)} that all objects in \( \catK(\Modcat k) \) are \( 0 \)-cocompact for a field \( k \). Thus any endofunctor of \( \catK(\Modcat k) \) preserves \( 0 \)-cocompacts, but clearly not every left adjoint endofunctor preserves countable products.
\end{remark}

\begin{proof} Let $X\in\catT$ be $0$-cocompact and consider a sequence \[\mathbb T \colon\cdots \to T_2\to T_1 \to T_0\] in $\catT'$ such that $\colim\catT'(\mathbb T, GX)=0$ and $\catT'(\mathbb T, GX[1])$ is dual ML. By adjunction we have that $\colim\catT(F\mathbb T, X)=0$ and that $\catT(F\mathbb T, X[1])$ is dual ML, hence $\catT(\holim F\mathbb T, X)=0$. If $F$ preserves countable products then it preserves homotopy limits, so in particular \[0=\catT(\holim F\mathbb T,X)=\catT(F\holim\mathbb T,X)\cong\catT'(\holim\mathbb T, GX)\] i.e.\ $GX$ is $0$-cocompact. The last claim follows immediately.
\end{proof}

Finally, our statement corresponding to Theorem~\ref{theorem Neeman trick} is

\begin{2weakdual} \label{theorem 2weakdual}
Suppose $\catT$ is 0-cocompactly cogenerated by $\catS$, and let $X\in\catS$. Then the subcategory ${}^\perp X$ of $\catT$ is $0$-cocompactly cogenerated again.
\end{2weakdual}
\begin{proof}
  The stable t-structure (\ref{align our t-structure}) --- see page~\pageref{align our t-structure} --- shows that the subcategory ${}^\perp X$ is an aisle, so by \cite{MR907948} the inclusion ${}^\perp X \hookrightarrow\catT$ admits a right adjoint $G$. We now observe that ${}^\perp X$ is closed under countable products: Take a countable subset $\{T_i\}$ of ${}^\perp X$. Then $\prod T_i = \holim \mathbb T$ for the obvious system
  \[\mathbb T\colon \cdots \to T_2\oplus T_1\oplus T_0 \to T_1\oplus T_0 \to T_0,\]
 and $\catT(\mathbb T,X[i])$ is the zero-sequence for each \( i \). In particular it is dual ML with vanishing colimit, so $\prod T_i\in{}^\perp X$ by the $0$-cocompactness of $X$. It follows from Theorem~\ref{thm:neemanthreeadjoints}${}^{\op}$ that the essential image $G(\catS)$ is a set of $0$-cocompact cogenerators for ${}^\perp X$.
\end{proof}

\section{Partial Serre functors}\label{section rel serre}

Recall that \( \catT \) is a \( k \)-category for some commutative ring \( k \). Let us choose an injective \( k \)-module \( I \), which cogenerates \( \Modcat k \). We write \( (-)^\ast = \Hom_k (-,I) \). Typical examples include $k$ being artinian and $I$ the injective envelope of $k/\!\rad k$, or $k$ being the ring of integers and $I=\mathbb Q/\mathbb Z$.

\begin{observation}
We collect a few central properties of the functor \( (-)^\ast \), all of which are immediate.
\begin{itemize}
\item \( (-)^\ast \) is contravariant and exact.
\item \( (-)^\ast \) reflects \( 0 \)-objects and isomorphisms.
\item There is a natural monomorphism \( \id \to (-)^{\ast\ast} \).
\end{itemize}
\end{observation}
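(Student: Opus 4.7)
The plan is to verify each bullet, all of which follow formally from the definition of \( (-)^\ast = \Hom_k(-, I) \) together with the two assumptions on \( I \), namely injectivity and that it cogenerates \( \Modcat k \).

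For the first point, contravariance is immediate from the fact that \( \Hom_k \) is contravariant in the first slot. Exactness is a standard consequence of the injectivity of \( I \): applying \( \Hom_k(-, I) \) to a short exact sequence in \( \Modcat k \) yields a short exact sequence precisely when \( I \) is injective, which is the defining property.

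For the second point, I would first show that \( (-)^\ast \) reflects \( 0 \)-objects. If \( M \neq 0 \), then since \( I \) cogenerates \( \Modcat k \), pick any nonzero \( m \in M \); by definition of cogenerator, there is a morphism \( f\colon M \to I \) with \( f(m) \neq 0 \), so \( M^\ast \neq 0 \). Reflecting isomorphisms then follows quickly: given \( f \colon M \to N \) with \( f^\ast \) an isomorphism, factor \( f \) through \( \Imm f \) and apply the already-established exactness. This gives short exact sequences whose duals express \( (\Ker f)^\ast \) as the cokernel of \( f^\ast \) and \( (\Cok f)^\ast \) as the kernel of \( f^\ast \). Both therefore vanish, so \( \Ker f \) and \( \Cok f \) vanish by the first half of the point.

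For the third point, define the natural transformation \( \eta_M \colon M \to M^{\ast\ast} \) as the usual evaluation \( m \mapsto (\varphi \mapsto \varphi(m)) \); naturality in \( M \) is a direct check. To show \( \eta_M \) is injective, suppose \( m \in M \) with \( \eta_M(m) = 0 \); then \( \varphi(m) = 0 \) for every \( \varphi \in \Hom_k(M,I) \), and by the cogenerator property (used exactly as above), this forces \( m = 0 \).

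The only point requiring even minor care is reflecting isomorphisms, where one must be slightly careful to argue with both kernel and cokernel via the factorization through the image; everything else is formal.
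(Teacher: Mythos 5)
Your proof is correct, and it fills in exactly the standard verifications that the paper leaves implicit (the Observation is stated without proof, as ``all of which are immediate''): exactness from injectivity of \( I \), reflection of \( 0 \)-objects from the cogenerator property, reflection of isomorphisms by identifying \( (\Ker f)^\ast \) and \( (\Cok f)^\ast \) with the cokernel and kernel of \( f^\ast \) via the image factorization, and injectivity of the evaluation map again from the cogenerator property. Nothing is missing.
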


\begin{definition}
A \emph{partial Serre functor} for a subcategory \( \catX\subset\catT \) is a functor \( \mathbb S \colon \catX \to \catT \) such that
\[ \catT(X,T)^\ast \cong \catT(T,\mathbb SX) \]
naturally in \( X \in \catX \) and in \( T \in \catT \).
\end{definition}

In the special case of a compactly generated triangulated category $\catT$, and $X= \catT^{\compacts}$, the existence of a partial Serre functor was observed by Rouquier in \cite[Corollary~4.23]{Rouquier} (see also our Corollary~3.6 below). In \cite[Example~5.12]{Ballard}, Ballard applied these functors to quasi-projective schemes, calling them ``Rouquier functors".

The functorial properties of partial Serre functors are interesting in their own right. In particular, in the case that $\catT$ is a triangulated category, a partial Serre functor is automatically a triangle functor. However, here we are mostly concerned with the connection between partial Serre functors and notions of (co)compactness. Therefore the proof of the following theorem, which summarizes the functorial properties, is postponed to Appendix~\ref{section exactness of serre}.

\begin{theorem} \label{theorem main general result on partial serre}
Suppose \( \catT \) is a triangulated category, and let \( \catX \) be the full subcategory consisting of all objects \( X \) such that \( \catT(X, -)^\ast \) is representable.

Then \( \catX \) is a triangulated subcategory of \( \catT \), and there is a partial Serre functor \( \mathbb S \colon \catX \to \catT \). Moreover, \( \mathbb S \) is a triangle functor.
\end{theorem}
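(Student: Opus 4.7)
The plan is to build $\mathbb S$ object-by-object from the defining representability, and then verify that $\catX$ is closed under the shift and under completion of morphisms to triangles, from which the triangle functor structure follows. For each $X \in \catX$ fix a representing object $\mathbb S X$ together with a natural isomorphism $\eta_X \colon \catT(X,-)^\ast \xrightarrow{\sim} \catT(-, \mathbb S X)$. For each morphism $f \colon X \to Y$ in $\catX$, the natural transformation $\catT(f,-)^\ast$ corresponds, via $\eta$ and Yoneda, to postcomposition with a unique morphism $\mathbb S f \colon \mathbb S X \to \mathbb S Y$. This makes $\mathbb S$ a functor whose defining isomorphism is natural in both variables.

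Closure under shifts is immediate from the natural chain
\[ \catT(X[1], T)^\ast = \catT(X, T[-1])^\ast \cong \catT(T[-1], \mathbb S X) = \catT(T, \mathbb S X[1]), \]
so $X[\pm 1] \in \catX$ and we may take $\mathbb S(X[\pm 1]) = (\mathbb S X)[\pm 1]$.

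The heart of the argument is closure under cones. Given a triangle $X \xto{f} Y \to Z \to X[1]$ with $X, Y \in \catX$, complete $\mathbb S f$ to a triangle $\mathbb S X \to \mathbb S Y \to W \to \mathbb S X[1]$ in $\catT$. For each $T$, applying $\catT(-, T)$ to the first triangle and then the exact contravariant functor $(-)^\ast$ yields a long exact sequence which, after using $\eta$ to rewrite the terms involving $X$ and $Y$, agrees with the long exact sequence obtained from applying $\catT(T, -)$ to the second triangle, except at the position involving $Z$ versus $W$. The differentials on either side of this unknown term match, essentially by construction of $\mathbb S f$; the five lemma therefore yields a pointwise isomorphism $\catT(Z, T)^\ast \cong \catT(T, W)$. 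Setting $\mathbb S Z := W$ makes $\mathbb S$ a triangle functor, because $\mathbb S X \to \mathbb S Y \to W \to \mathbb S X[1]$ is distinguished by construction.

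The main obstacle is upgrading the pointwise isomorphism just obtained to one that is natural in $T$, as this is precisely what the definition of $\catX$ demands. The plan is to exhibit the comparison explicitly: by chasing the connecting morphism of the first triangle through the $\eta$-identifications one produces a candidate natural transformation $\catT(-, W) \to \catT(Z, -)^\ast$, whose components the pointwise five lemma then confirms to be isomorphisms. This is standard Yoneda bookkeeping but requires care with signs and shift-compatibilities, which is presumably why the authors defer the full argument to the appendix.
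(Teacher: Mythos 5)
There is a genuine gap at the closure-under-cones step, and it is precisely the point you flag and then set aside as ``standard Yoneda bookkeeping.'' To invoke the five lemma you must first have a commutative ladder, i.e.\ a comparison map $\catT(-,W)\to\catT(Z,-)^\ast$ (or its reverse), natural in $T$, whose squares against the maps induced by $Y\to Z$ and $Z\to X[1]$ commute. A pointwise diagram chase does produce, for each fixed $T$, \emph{some} filler making that $T$-component of the ladder commute, but the filler involves choices and carries no naturality --- and naturality in $T$ is exactly what membership of $Z$ in $\catX$ requires, so nothing ``pointwise'' can be the end of the story. By Yoneda a natural transformation $\catT(-,W)\to\catT(Z,-)^\ast$ amounts to a linear form on $\catT(Z,W)$, so your plan is at least well-posed; but producing a form for which the two adjacent squares commute, with the correct signs and compatibly with the identification $\mathbb S\circ[1]\cong[1]\circ\mathbb S$, is the real mathematical content here, not bookkeeping.

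The paper sidesteps this by separating two issues that your proposal merges. Representability of $\catT(Z,-)^\ast$ is proved abstractly in the Freyd category: $\catT(Z,-)^\ast$ is always an injective object of $\Modcat\catT$; the exact sequence $\catT(X,-)^\ast\to\catT(Y,-)^\ast\to\catT(Z,-)^\ast\to\catT(X[1],-)^\ast\to\catT(Y[1],-)^\ast$ exhibits it as an extension of finitely presented functors, hence finitely presented; and a finitely presented injective is a direct summand of a representable functor, hence representable once $\catT$ is idempotent complete (the general case is reduced to this one via the idempotent completion $\widehat\catT$, using that a triangle of $\widehat\catT$ with two terms in $\catT$ has its third term in $\catT$). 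Only \emph{after} $\mathbb SZ$ is known to exist does the paper address exactness of $\mathbb S$; there the comparison morphism is an honest morphism $h$ in $\catT$ supplied by a dualized fill-in statement (Proposition~\ref{prop.dualize_TR3}), so Yoneda applies to representable functors on both sides and the five lemma legitimately upgrades $h$ to an isomorphism. To salvage your route you would essentially have to prove that fill-in statement first, which is where the difficulty actually lives.
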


\begin{observation}
Any partial Serre functor $\mathbb S\colon \catX\to\catT$ is faithful, since we have \[\catT(X,Y)\hookrightarrow\catT(X,Y)^{\ast \ast} \cong \catT(\mathbb SX, \mathbb SY)\] for each $X,Y\in\catX$ by applying the duality formula twice.

On the other hand, $\mathbb S$ is full only when the subcategory $\catX$ has `sufficiently small' $\Hom$-sets: If $k$ is a field, this amounts to $\catX$ being $\Hom$-finite; if $k=\mathbb Z$ and $I=\mathbb Q/\mathbb Z$, then $\mathbb S$ is full provided that $\catT(X,Y)$ is a finite abelian group for each $X,Y\in\catX$.
\end{observation}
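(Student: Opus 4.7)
The plan is to chase two applications of the duality formula defining $\mathbb S$, reducing everything to the biduality map $\id \to (-)^{\ast\ast}$ noted in the preceding observation.

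For faithfulness, fix $X,Y\in\catX$. First I apply the partial Serre duality with the distinguished variable $X\in\catX$ and $T=Y$, obtaining a natural isomorphism $\catT(X,Y)^\ast\cong\catT(Y,\mathbb SX)$. Then I apply the duality again, now with the distinguished variable $Y\in\catX$ and $T=\mathbb SX$, giving $\catT(Y,\mathbb SX)^\ast\cong\catT(\mathbb SX,\mathbb SY)$. Combining these yields a natural isomorphism
\[\catT(X,Y)^{\ast\ast}\cong\catT(\mathbb SX,\mathbb SY).\]
The key bookkeeping step is to verify that, under this isomorphism, the biduality morphism $\catT(X,Y)\to\catT(X,Y)^{\ast\ast}$ is identified with the action $\mathbb S_{X,Y}\colon\catT(X,Y)\to\catT(\mathbb SX,\mathbb SY)$ on morphisms. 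This amounts to a Yoneda-style diagram chase using naturality of the defining isomorphism in both arguments. Since biduality is a monomorphism, this exhibits $\mathbb S_{X,Y}$ as a monomorphism, i.e.\ $\mathbb S$ is faithful.

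For the fullness claim, the same identification shows that $\mathbb S_{X,Y}$ is surjective if and only if the biduality $\catT(X,Y)\to\catT(X,Y)^{\ast\ast}$ is surjective, i.e.\ if and only if $\catT(X,Y)$ is reflexive with respect to $I$ as a $k$-module. This reduces the problem to a purely module-theoretic criterion, which I would then verify in the two stated cases: if $k$ is a field and $I=k$, biduality of vector spaces is an isomorphism exactly on finite-dimensional ones, so $\mathbb S$ is full iff $\catX$ is $\Hom$-finite; if $k=\mathbb Z$ and $I=\mathbb Q/\mathbb Z$, Pontryagin duality gives that biduality is an isomorphism on every finite abelian group, so $\mathbb S$ is full whenever all $\catT(X,Y)$ are finite.

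The only real obstacle is the bookkeeping that matches the twice-dualised isomorphism with the functorial action of $\mathbb S$; this requires naturality of the defining isomorphism in both variables simultaneously, together with a Yoneda argument. Once this identification is in hand, faithfulness and the sufficient conditions for fullness fall out directly from the biduality properties of $k$-modules.
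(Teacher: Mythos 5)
Your proposal is correct and follows essentially the same route as the paper: apply the duality formula twice to identify $\catT(X,Y)^{\ast\ast}$ with $\catT(\mathbb SX,\mathbb SY)$, match this with the action of $\mathbb S$ on morphisms, and then read off faithfulness from the monomorphism $\id\to(-)^{\ast\ast}$ and fullness from reflexivity of the $\Hom$-modules with respect to $I$. The only difference is that you make explicit the naturality/Yoneda bookkeeping that the paper's one-line observation leaves implicit, which is a reasonable thing to insist on.
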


Partial Serre duality links $0$-cocompact objects to compact objects:

\begin{theorem} \label{theorem partial Serre implies X compact and Y 0-cocompact}
  Let $\catT$ be a triangulated category. If $X,Y\in\catT$ satisfy \[\catT(X,-)^\ast \cong \catT(-,Y),\] then $X$ is compact and $Y$ is $0$-cocompact.
\end{theorem}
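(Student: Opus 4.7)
The statement packages two independent claims, and both will follow from the hypothesis $\catT(X,-)^\ast \cong \catT(-,Y)$ after transporting the problem across $(-)^\ast$. The key general facts I would use throughout are those collected in the observation preceding the definition of a partial Serre functor: $(-)^\ast$ reflects both $0$-objects and isomorphisms, and $I$ cogenerates $\Modcat k$.

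For compactness of $X$, I would consider the natural comparison map $\alpha\colon \coprod_i \catT(X,Z_i) \to \catT(X,\coprod_i Z_i)$ for any family $\{Z_i\}$ and show that $\alpha^\ast$ identifies, via the given duality isomorphism, with the canonical isomorphism $\catT(\coprod_i Z_i,Y)\cong \prod_i \catT(Z_i,Y)$. This identification is just naturality in $T$ of $\catT(X,T)^\ast\cong\catT(T,Y)$ applied to the coproduct inclusions $Z_i\hookrightarrow \coprod_i Z_i$. Since $(-)^\ast$ reflects isomorphisms, $\alpha$ itself is an isomorphism.

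For $0$-cocompactness of $Y$, take $\mathbb{T}\colon \cdots\to T_1\to T_0$ with $\colim \catT(\mathbb T,Y)=0$ and $\catT(\mathbb T,Y[1])$ dual ML; the target is $\catT(\holim\mathbb T,Y)=0$. Via the duality, this equals $\catT(X,\holim\mathbb T)^\ast$, so --- since $(-)^\ast$ reflects $0$-objects --- it suffices to show $\catT(X,\holim\mathbb T)=0$. Applying $\catT(X,-)$ to the triangle defining $\holim\mathbb T$ produces a Milnor-style short exact sequence
\[ 0\to {\limit}^1 \catT(X[1],T_i)\to \catT(X,\holim\mathbb T)\to \limit \catT(X,T_i)\to 0, \]
so it is enough to check that both outer terms vanish. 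Using duality, the hypotheses translate respectively to $\colim \catT(X,T_i)^\ast = 0$ and $\catT(X[1],T_i)^\ast$ being dual ML. I would derive $\limit \catT(X,T_i)=0$ by an element chase: any compatible tuple $(a_i)$ lies in every image $\Imm(\catT(X,T_j)\to \catT(X,T_i))$, the colimit-vanishing provides for each functional on $\catT(X,T_i)$ a stage $j$ at which it vanishes on that image, so every functional annihilates $a_i$ and cogeneration forces $a_i=0$. For $\limit^1=0$ I would argue via ML of $\catT(X[1],T_i)$ itself: dualizing the surjections between the quotients $\catT(X[1],T_i)/\Imm(\catT(X[1],T_j)\to \catT(X[1],T_i))$ identifies the relevant kernels with an ascending chain of subobjects of $\catT(X[1],T_i)^\ast$, which stabilizes by dual ML; since $(-)^\ast$ reflects isos, the corresponding surjections become isomorphisms, giving ML.

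The main obstacle is precisely these two translations. There is no formal identity like $\colim A_i^\ast = (\limit A_i)^\ast$, and ML is not literally the dual of dual ML under $(-)^\ast$; both arguments depend essentially on $I$ being an injective cogenerator, so that $(-)^\ast$ is faithful enough to reflect isos and zero objects. Once the two vanishing statements are in hand, the Milnor sequence together with the duality-plus-reflect-$0$ reduction closes the $0$-cocompactness argument.
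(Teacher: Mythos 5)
Your proposal is correct and follows essentially the same route as the paper: compactness via dualizing the comparison map and using that $(-)^\ast$ reflects isomorphisms, and $0$-cocompactness via the Milnor sequence for $\holim$, with the dual-ML hypothesis forcing the system $\catT(X[1],\mathbb T)$ to be ML and the colimit-vanishing forcing $\limit\catT(X,\mathbb T)=0$. The only cosmetic difference is that you establish $\limit\catT(X,\mathbb T)=0$ by an element chase using that $I$ cogenerates, whereas the paper embeds $\limit\catT(X,\mathbb T)$ into $\limit\bigl(\catT(X,\mathbb T)^{\ast\ast}\bigr)\cong\bigl(\colim\catT(X,\mathbb T)^{\ast}\bigr)^{\ast}=0$ using left exactness of $\limit$; these are the same argument in different clothing.
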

\begin{proof}
  Let us first show that $X$ is compact. Consider a set of objects $\{T_i\}\subset \catT$, and for each $i$ let $\mu_i \colon T_i \to \coprod T_i$ be the canonical morphism. By assumption we have
  \[ \begin{tikzcd}
  \catT(X,\coprod T_i)^\ast \ar[r,"\cong"]\ar[d,"{\catT(X,\mu_i)^\ast}"] & \catT(\coprod T_i, Y)  \ar[d,"\mu_i^\ast"] \\
  \catT(X, T_i)^\ast \ar[r,"\cong"]  & \catT(T_i, Y)
  \end{tikzcd} \]
  and taking products in the lower row gives another commutative diagram:
  \[ \begin{tikzcd}
  \catT(X,\coprod T_i)^\ast \ar[r,"\cong"]\ar[d] & \catT(\coprod T_i, Y)  \ar[d] \\
  \prod\catT(X, T_i)^\ast \ar[r,"\cong"]  & \prod\catT(T_i, Y)
  \end{tikzcd} \]
    Since the right hand vertical morphism is invertible, so is the left hand vertical one. Moreover, the left hand vertical morphism is the dual of the natural morphism \[\coprod \catT(X, T_i) \to \catT(X, \coprod T_i),\] so the claim follows since $(-)^\ast$ reflects isomorphisms.

    We now show that $Y$ is $0$-cocompact. Let \[\mathbb T\colon \cdots \to T_2 \to T_1 \to T_0\] be a sequence such that $\colim\catT(\mathbb T,Y)=0$ and $\catT(\mathbb T, Y[1])$ is dual ML. We want to conclude that $\holim \mathbb T$ is a contravariant $Y$-ghost. Equivalently, we can show that $\holim\mathbb T$ is a covariant $X$-ghost. The triangle \[\holim \mathbb T \to \prod T_i\to\prod T_i\to\holim\mathbb T[1]\] induces a long exact sequence
  \[\begin{tikzpicture}[descr/.style={fill=white,inner sep=1.5pt}]
        \matrix (m) [
            matrix of math nodes,
            row sep=1em,
            column sep=2.5em,
            text height=1.5ex, text depth=0.25ex
        ]
        { \cdots & \catT(X,\prod T_i[-1]) & \catT(X,\prod T_i[-1]) & \catT(X,\holim\mathbb T)\\
            & \catT(X,\prod T_i) & \catT(X,\prod T_i) & \cdots. \\
        };

        \path[overlay,->, font=\scriptsize]
        (m-1-1) edge (m-1-2)
        (m-1-2) edge (m-1-3)
        (m-1-3) edge (m-1-4)
        (m-1-4) edge[out=355,in=175] (m-2-2)
        (m-2-2) edge (m-2-3)
        (m-2-3) edge (m-2-4);
    \end{tikzpicture}\]
  In particular there is a short exact sequence \[0\to\limit^1\catT(X,\mathbb T[-1])\to\catT(X,\holim\mathbb T)\to\limit\catT(X,\mathbb T)\to0,\] and  it suffices to show that the outer terms vanish. Since the system $\catT(\mathbb T,Y[1])$ is the dual of the system $\catT(X,\mathbb T[-1])$, it follows that the latter is ML, so its derived limit vanishes. On the other hand, the vanishing of $\colim\catT(\mathbb T,Y)\cong\colim\left(\catT(X,\mathbb T)^\ast\right)$ implies the vanishing of $\colim\left(\catT(X,\mathbb T)^\ast\right)^\ast\cong \limit\left(\catT(X,\mathbb T)^{\ast\ast}\right)$. Moreover, since $\limit$ is left exact the monomorphism of diagrams $\catT(X,\mathbb T)\to\catT(X,\mathbb T)^{\ast\ast}$ induces a monomorphism $\limit\catT(X,\mathbb T)\to\limit\left(\catT(X,\mathbb T)^{\ast\ast}\right)$, whence $\limit\catT(X,\mathbb T)=0$.
\end{proof}

In particular, Theorem~\ref{theorem partial Serre implies X compact and Y 0-cocompact} says that if $\mathbb S\colon\catX\to\catT$ is a partial Serre functor, then $\catX\subset\catT^{\compacts}$, while the essential image $\mathbb S(\catX)$ consists of $0$-cocompact objects.

\begin{corollary} \label{corollary compact generation gives partial serre}
  Let $\catT$ be a compactly generated triangulated category. Then there is a partial Serre functor $\mathbb S\colon \catT^{\compacts}\to\catT$, and the essential image $\mathbb S(\catT^{\compacts})$ is a set of $0$-cocompact cogenerators for $\catT$.
\end{corollary}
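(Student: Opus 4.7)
The plan is to appeal to two earlier results and carry out a short verification. Theorem~\ref{theorem main general result on partial serre} supplies a partial Serre functor \(\mathbb S\) defined on the full subcategory \(\catX\) of all objects \(X\) for which \(\catT(X,-)^\ast\) is representable, and Theorem~\ref{theorem partial Serre implies X compact and Y 0-cocompact} tells us that the essential image lands inside the \(0\)-cocompacts. So the task reduces to (a) showing \( \catT^{\compacts} \subset \catX \), and (b) verifying that \(\mathbb S(\catT^{\compacts})\) cogenerates \(\catT\).

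For (a), I would fix a compact \(X\) and consider the contravariant functor \(\catT(X,-)^\ast\colon\catT\to\Ab\). By compactness, \(\catT(X,-)\) sends coproducts in \(\catT\) to coproducts of \(k\)-modules; since \((-)^\ast=\Hom_k(-,I)\) turns coproducts into products, the composite sends coproducts in \(\catT\) to products in \(\Ab\). It is cohomological because \((-)^\ast\) is exact on the long exact sequence produced by \(\catT(X,-)\). Neeman's Brown representability (Theorem~\ref{theorem brown representability}) then yields an object \(\mathbb S X\) with \(\catT(X,-)^\ast\cong\catT(-,\mathbb S X)\). Hence \(X\in\catX\), and the restriction of the partial Serre functor from Theorem~\ref{theorem main general result on partial serre} gives \(\mathbb S\colon\catT^{\compacts}\to\catT\). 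By Theorem~\ref{theorem partial Serre implies X compact and Y 0-cocompact} each \(\mathbb S X\) is \(0\)-cocompact.

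For (b), take \(T\in{}^\perp\mathbb S(\catT^{\compacts})\), meaning \(\catT(T[n],\mathbb S X)=0\) for every \(X\in\catT^{\compacts}\) and every integer \(n\). Partial Serre duality rewrites this as \(\catT(X,T[n])^\ast=0\). Since \((-)^\ast\) reflects zero objects, \(\catT(X,T[n])=0\) for all compact \(X\) and all \(n\); as \(\catT\) is compactly generated, this forces \(T=0\). Consequently \(\mathbb S(\catT^{\compacts})\) is a cogenerating class of \(0\)-cocompact objects. To speak of a \emph{set} of cogenerators one invokes the standard fact that in a compactly generated triangulated category \(\catT^{\compacts}\) is essentially small, so its essential image under \(\mathbb S\) is a set.

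I do not expect any serious obstacle here; all the work has been done in the preceding theorems, and the only point of care is the direction reversal introduced by \((-)^\ast\) when verifying that \(\catT(X,-)^\ast\) satisfies the hypotheses of Brown representability.
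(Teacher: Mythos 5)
Your proof is correct and follows essentially the same route as the paper: Brown representability shows every compact object is Serre dualizable, Theorem~\ref{theorem main general result on partial serre} packages this into a functor, Theorem~\ref{theorem partial Serre implies X compact and Y 0-cocompact} gives $0$-cocompactness, and the cogeneration claim follows because $(-)^\ast$ reflects vanishing. The only difference is that you spell out the verification of the hypotheses of Brown representability and the cogeneration step, which the paper leaves implicit.
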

\begin{proof}
  For each compact object $X$, the functor $\catT(X,-)^\ast$ is representable by Brown representability (Theorem~\ref{theorem brown representability}), so by Theorem~\ref{theorem main general result on partial serre} there is a partial Serre functor $\mathbb S \colon \catT^{\compacts}\to \catT$. The set $\mathbb S(\catT^{\compacts})$ consists of $0$-cocompact objects by Theorem~\ref{theorem partial Serre implies X compact and Y 0-cocompact}, hence the last claim follows from the fact that $(-)^\ast$ reflects the vanishing of $k$-modules.
\end{proof}

\begin{example} \label{example 0-cocompacts in D(Lambda)}
  Let $\Lambda$ be an Artin algebra. Then $\catD(\Modcat \Lambda)^{\compacts}=\perf\Lambda$, and \[\mathbb S=-\otimes_\Lambda^{\mathbb L}D\Lambda\colon\perf\Lambda\to\catD(\Modcat\Lambda)\] is a partial Serre functor (we will give a more general argument in Example~\ref{ex Serre for derived}) inducing an equivalence $\perf\Lambda\simeq\mathbb S(\perf\Lambda)=\{\text{bounded complexes over }\inj\Lambda\}$ of subcategories of $\catD(\Modcat\Lambda)$. Note that $\mathbb S$ is an autoequivalence on $\perf\Lambda$ if and only if $\Lambda$ is Gorenstein, a fact already observed in \cite{MR910167}, in which case each perfect complex is 0-cocompact in $\catD(\Modcat\Lambda)$.

\end{example}

Recall that a triangle $X\to Y\to Z\to X[1]$ is called \emph{pure} if the morphism $Z\to X[1]$ is a covariant $\catT^{\compacts}$-ghost. Equivalently, for each compact object $C$, the induced sequence $0\to\catT(C,X)\to\catT(C,Y)\to\catT(C,Z)\to0$ is exact. An object $E\in\catT$ is \emph{pure-injective} if $\catT(-,E)$ takes pure triangles to short exact sequences.

\begin{remark}
If $\mathbb S\colon\catX\to\catT$ is a partial Serre functor, then $\mathbb SX$ is pure-injective for each $X\in\catX$. Indeed, we only need to check that $\catT(-,\mathbb SX)\cong\catT(X,-)^\ast$ takes pure triangles to short exact sequences. As $X$ is compact, the functor $\catT(X,-)$ does enjoy this property by definition. Since $(-)^\ast$ is exact, the claim follows.
\end{remark}

We do not know if $0$-cocompactness in general implies pure-injectivity. On the other hand, as was pointed out to us by Angeleri H\"ugel, there are pure-injective objects which are not $0$-cocompact:

\begin{example}
  Since $\mathbb Q$ is injective as $\mathbb Z$-module, it is pure-injective in $\catD(\Modcat \mathbb Z)$. However, $\mathbb  Q$ is not $0$-cocompact: In the proof of Theorem~\ref{theorem Neeman trick}${}^{\op}$ we saw that if an object $X$ is $0$-cocompact, then the subcategory ${}^\perp X$ is closed under countable products, and it is not hard to realize that ${}^\perp \mathbb Q$ does not enjoy this property. Indeed, let $p$ be a prime, and consider the Prüfer $p$-group $P= \colim \mathbb Z/(p^i)$. Then $\Hom_{\mathbb Z}(P,\mathbb Q)=0$, since $\Hom_{\mathbb Z}(\mathbb Z/(p^i),\mathbb Q)=0$ for each $i$. However, the torsion submodule of $P^\mathbb N$ is a proper submodule, so $\Hom_{\mathbb Z}(P^\mathbb N, \mathbb Q)\ne0$.

   Similarly, for a field $k$, the pure-injective $k(X)$ is not $0$-cocompact in $\catD(\Modcat k[X])$.
\end{example}

\section{Partial Serre functors in homotopy categories} \label{section construction  of rel serre}

The aim of this section is to construct, in elementary terms, partial Serre functors for homotopy categories of module categories (Theorem~\ref{theorem partial serre for Kb(mod R)}), as well as for homotopy categories of injective or of projective modules (Theorem~\ref{theorem serre functors in K(Inj) and K(Proj)}).

The homotopy category of all modules is typically not compactly generated, and does not even satisfy Brown representability. Thus we cannot apply the general abstract existence result from Corollary~\ref{corollary compact generation gives partial serre}. Also for the homotopy categories of injectives or of projectives, the results in the current section apply beyond the cases where these categories are known to be compactly generated.

Even in cases where the abstract existence result does apply, having an explicit construction can be useful: For instance if we want to explicitly describe almost split triangles, rather than just claim their existence, we need to have an explicit description of the corresponding partial Serre functor first.

Before actually constructing anything, we record some facts from homological algebra that will be useful in the sequel.

\subsection*{Totalization}Let $R$ be a ring. By a double complex $X$ we mean a diagram
\[ \begin{tikzcd}
& [-1.6em]\ar[d,-,thick,dotted] & \ar[d,-,thick,dotted]&[-1.6em] \\[-1em]
\ar[r,-,thick,dotted]& X^{i,j}\ar[r,"r_X"]\ar[d,"c_X"]&[4em] X^{i+1,j} \ar[d,"c_X"]& \ar[l,-,thick,dotted] \\
\ar[r,-,thick,dotted]& X^{i,j+1}\ar[r,"r_X"]& X^{i+1,j+1} & \ar[l,-,thick,dotted] \\[-1em]
& \ar[u,-,thick,dotted] & \ar[u,-,thick,dotted]&
\end{tikzcd} \]
in which the rows and columns are complexes of $R$-modules, and moreover each square commutes. In the following discussion, we denote by $\Tot^\amalg(X)$ the totalization of $X$ with respect to coproducts, while $\Tot^\Pi(X)$ is the totalization of $X$ with respect to products. We denote by $\partial_X$ the differential on either variant of the total complex. Recall that \[\partial_X\vert_{\text{\tiny$X^{i,j}$}}=r_X + (-1)^i c_X.\]

Let $f\colon X\to Y$ be a morphism of double complexes. Then we can form a double complex $\Cone^{\row}(f)$ by taking the row-wise mapping cones of $f$ and equipping the columns of this object with the obvious differentials. In explicit terms, if $X=(X^{i,j}, r_X, c_X)$ and $Y=(Y^{i,j}, r_Y, c_Y)$, then $\Cone^{\row}(f)$ is as follows.
\[ \begin{tikzcd}
&[-1.6em] \ar[d,-,thick,dotted] &[2em] \ar[d,-,thick,dotted]&[-1.6em] \\[-1em]
\ar[r,-,thick,dotted] & Y^{i,j}\oplus X^{i+1,j}\ar[r,"\left(\begin{pampmatrix}r_Y& f\\0& -r_X\end{pampmatrix}\right)"]\ar[d,"\left(\begin{pampmatrix}c_Y& 0\\0& c_X\end{pampmatrix}\right)"]& Y^{i+1,j}\oplus X^{i+2,j} \ar[d,"\left(\begin{pampmatrix}c_Y& 0\\0& c_X\end{pampmatrix}\right)"]& \ar[l,-,thick,dotted] \\[.5em]
\ar[r,-,thick,dotted]& Y^{i,j+1}\oplus X^{i+1,j+1} \ar[r,"\left(\begin{pampmatrix}r_Y& f\\0& -r_X\end{pampmatrix}\right)"]& Y^{i+1,j+1}\oplus X^{i+2,j+1} & \ar[l,-,thick,dotted] \\[-1em]
& \ar[u,-,thick,dotted] & \ar[u,-,thick,dotted]&
\end{tikzcd} \]
\begin{lemma} \label{lemma totalization is associative}
  Let $f\colon X \to Y$ be a morphism of double complexes. Then \[\Cone\left(\Tot^\ast(X)\xto{\Tot^\ast(f)}(\Tot^\ast(Y))\right)=\Tot^\ast\left(\Cone^{\row}(f)\right)\] as complexes for $\ast\in\{\amalg,\Pi\}$.
\end{lemma}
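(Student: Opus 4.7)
The plan is a direct, essentially combinatorial calculation: I will write out both complexes explicitly as bigraded objects with differentials, and verify that they coincide via the obvious re-indexing. Both constructions only involve bounded totalization in each degree in the sense that individual summands are unambiguous (only the direct sum/product convention might differ), so the case $\ast = \amalg$ and the case $\ast = \Pi$ are formally identical and can be handled simultaneously.

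First I would identify the underlying graded module in degree $n$ on both sides. On the left, the mapping cone has
\[
\Cone(\Tot^\ast(f))^n = \Tot^\ast(Y)^n \oplus \Tot^\ast(X)^{n+1} = \bigl({\textstyle\bigoplus^\ast_{i+j=n}} Y^{i,j}\bigr) \oplus \bigl({\textstyle\bigoplus^\ast_{i'+j=n+1}} X^{i',j}\bigr),
\]
where $\bigoplus^\ast$ stands for coproduct or product as appropriate. On the right, since $\Cone^{\row}(f)^{i,j} = Y^{i,j} \oplus X^{i+1,j}$, we have
\[
\Tot^\ast(\Cone^{\row}(f))^n = {\textstyle\bigoplus^\ast_{i+j=n}} (Y^{i,j} \oplus X^{i+1,j}).
\]
The substitution $i' = i+1$ in the $X$-summands on the right yields the same bigraded module as on the left.

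Next I would check that the differentials agree on each summand. On the $Y^{i,j}$ summand, the differential on the left is $r_Y + (-1)^i c_Y$ (the restriction of $\partial_{\Tot^\ast(Y)}$), while on the right it is $r_Y$ (from the row differential $\left(\begin{smallmatrix}r_Y & f\\0 & -r_X\end{smallmatrix}\right)$ applied to the first component) plus $(-1)^i c_Y$ (from the column differential, which is diagonal); these match. On the $X^{i+1,j}$ summand (corresponding to the $X^{i',j}$ summand with $i' = i+1$ on the left), the row differential sends it to $Y^{i+1,j}$ via $f$ and to $X^{i+2,j}$ via $-r_X$, and the column differential sends it to $X^{i+1,j+1}$ via $(-1)^i c_X$. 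On the left, the cone differential sends $X^{i',j}$ to $Y^{i',j}$ via $f$ and to $\Tot^\ast(X)^{n+2}$ via $-\partial_{\Tot^\ast(X)} = -r_X - (-1)^{i'} c_X = -r_X + (-1)^i c_X$. Comparing term-by-term, both expressions agree.

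The only real point to be careful about is the sign bookkeeping: that the factor $-1$ introduced by shifting $X$ in the mapping cone exactly cancels the sign shift induced by moving from bidegree $(i+1,j)$ to bidegree $(i',j)$ in the Koszul sign $(-1)^{i'} = -(-1)^i$ used in the totalization of $\Cone^{\row}(f)$. This is the heart of the lemma and the step that motivates the sign choice $(-1)^i c$ in the convention for $\partial$, together with the sign $-r_X$ in the row-wise cone; once those conventions are in place the equality is forced. Finally, since these verifications are componentwise and depend only on the individual terms $X^{i,j}, Y^{i,j}$ and the maps $r,c,f$, they are insensitive to whether we totalize by coproducts or products, so the argument covers both $\ast = \amalg$ and $\ast = \Pi$ at once.
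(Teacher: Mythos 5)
Your proof is correct and follows essentially the same route as the paper's: a direct componentwise identification of the underlying graded modules followed by a term-by-term comparison of the differentials, with the key point being that the cone's sign $-\partial_X$ matches the Koszul sign arising from the bidegree shift $X^{i+1,j}$ sitting in position $(i,j)$ of $\Cone^{\row}(f)$. The observation that the argument is indifferent to $\amalg$ versus $\Pi$ is also exactly how the paper concludes.
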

\begin{proof}
  Recall that \[\Cone(\Tot^\Pi(f))^n = \left(\prod_{i+j=n} Y^{i,j} \right)\oplus \left(\prod_{i+j=n+1}X^{i,j}\right),\] and that the differentials $d_{\catC}$ of this complex are given by \[d_{\catC}\vert_{\text{\tiny$Y^{i,j}$}} = \partial_Y \text{ and } d_{\catC}\vert_{\text{\tiny$X^{i,j}$}}=f-\partial_X.\] On the other hand, $\Cone^{\row}(f)^{i,j}=Y^{i,j}\oplus X^{i+1,j}$, and the differentials $d_{\catC^{\row}}$ of this double complex are given as $d_{\catC^{\row}}\vert_{\text{\tiny$Y^{i,j}$}} = r_Y + c_Y $ and $\text{ and } d_{\catC^{\row}}\vert_{\text{\tiny$X^{i,j}$}} = f - r_X + c_X$. In particular, \[\Tot^\Pi(\Cone^{\row}(f))^n = \prod_{i+j=n}(Y^{i,j}\oplus X^{i+1,j}),\] so the two complexes of the proposition do coincide in each degree. Moreover, the differentials $\partial_{\catT}$ of the latter complex are given by \[\partial_{\catT}\vert_{\text{\tiny$Y^{i,j}$}}= r_Y + (-1)^i c_Y = \partial_Y\] and, since $X^{i,j}$ lives in degree $(i-1,j)$ of this complex, \[\partial_{\catT}\vert_{\text{\tiny$X^{i,j}$}} = f - r_X + (-1)^{i-1}c_X = f-\partial_X.\]

  Of course, the same can be said using $\coprod$ instead of $\prod$.
\end{proof}

If $f\colon X\to Y$ is a morphism of double complexes, then for each $n$ we have a chain map $f^{\bullet, n}\colon X^{\bullet, n}\to Y^{\bullet, n}$. Visually, $f^{\bullet, n}$ is the `horizontal layer' at height $n$ in the triple complex $f$. Notice that $f^{\bullet, n}$ is a quasi-isomorphism if and only if the $n$'th row of the double complex $\Cone^{\row}(f)$ is acyclic.

\begin{lemma} \label{lemma our acyclic assembly}
  Let $f\colon X\to Y$ be a morphism of double complexes.
  \begin{enumerate}
    \item If $X$ and $Y$ are left bounded and each $f^{\bullet, n}$ is a quasi-isomorphism, then the chain map \[\Tot^\amalg(f)\colon\Tot^\amalg(X)\to\Tot^\amalg(Y)\] is a quasi-isomorphism.
    \item If $X$ and $Y$ are right bounded and each $f^{\bullet, n}$ is a quasi-isomorphism, then the chain map \[\Tot^\Pi(f)\colon\Tot^\Pi(X)\to\Tot^\Pi(Y)\] is a quasi-isomorphism.
  \end{enumerate}
\end{lemma}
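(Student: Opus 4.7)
The plan is to reduce both parts to the classical acyclic assembly lemma via Lemma~\ref{lemma totalization is associative}. That lemma identifies the mapping cone of $\Tot^\ast(f)$ with $\Tot^\ast(\Cone^{\row}(f))$ for $\ast \in \{\amalg, \Pi\}$, so $\Tot^\ast(f)$ is a quasi-isomorphism if and only if $\Tot^\ast(\Cone^{\row}(f))$ is acyclic. The hypothesis that each $f^{\bullet, n}$ is a quasi-isomorphism says precisely that the $n$th row of the double complex $\Cone^{\row}(f)$ is exact, and the double complex $\Cone^{\row}(f)$ inherits the (left- or right-)boundedness of $X$ and $Y$ up to a harmless shift. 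Both assertions therefore reduce to the following statement: a left-bounded (resp.\ right-bounded) double complex $Z$ with exact rows has acyclic $\Tot^\amalg(Z)$ (resp.\ $\Tot^\Pi(Z)$).

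To prove this reduced statement I would filter $\Tot^\ast(Z)$ by the row filtration $F^p = \Tot^\ast\bigl(Z^{\bullet, \geq p}\bigr)$; since $r_Z$ preserves $j$ and $c_Z$ increases $j$, this is a decreasing filtration by subcomplexes, and the associated graded pieces $F^p/F^{p+1}$ are exactly the individual rows of $Z$, which are acyclic by assumption. In case (1), the finite-support behavior of $\Tot^\amalg$ makes the filtration exhaustive on each total degree; combined with left-boundedness, which makes the filtration stationary from below, the resulting spectral sequence converges and has $E_1 = 0$, forcing $\Tot^\amalg(Z)$ to be acyclic. For case (2), right-boundedness makes $F^p = 0$ for $p$ sufficiently large, so the complementary tower of quotients $\Tot^\Pi(Z)/F^p$ exhausts $\Tot^\Pi(Z)$ and the spectral sequence converges in this bounded-above regime; again $E_1 = 0$, yielding the desired acyclicity.

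The main obstacle is purely technical: one has to verify convergence of the row spectral sequence in each of the two settings, and the asymmetry between $\amalg$ and $\Pi$ is exactly what dictates the direction of boundedness required. An entirely elementary alternative, which bypasses spectral sequences, is a direct cycle-lifting argument---decompose a cycle $\xi \in \Tot^\ast(Z)$ along antidiagonals, use exactness of the extremal nonzero row to realize the extremal component as a row-coboundary, subtract the resulting total boundary from $\xi$, and iterate; boundedness of $Z$ guarantees that this procedure terminates in the $\amalg$ case and converges in the product topology in the $\Pi$ case. Either route reduces the remainder of the proof to routine bookkeeping.
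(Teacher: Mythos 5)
Your reduction is exactly the paper's: both proofs invoke Lemma~\ref{lemma totalization is associative} to identify $\Cone(\Tot^\ast(f))$ with $\Tot^\ast(\Cone^{\row}(f))$, observe that the hypothesis on the $f^{\bullet,n}$ says precisely that the rows of $\Cone^{\row}(f)$ are acyclic, and thereby reduce to the acyclicity of the totalization of a suitably bounded double complex with exact rows. The only divergence is at the final step: the paper simply cites the Acyclic Assembly Lemma (Weibel, Lemma~2.7.3 and the subsequent remark, which covers the ``each diagonal bounded on one side'' situation), whereas you re-prove it. Of your two suggested routes, the direct cycle-lifting one is the right one and is essentially Weibel's own argument: in case (1) the extremal component gets pushed along the antidiagonal and falls off the bounded end after finitely many steps, and in case (2) the infinitely many correction terms occupy distinct positions on a single antidiagonal and hence assemble into an element of the product totalization. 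Your spectral-sequence route contains a misstatement in case (2): with $F^p = \Tot^{\Pi}(Z^{\bullet,\ge p})$ and $Z$ right bounded, $F^p$ is \emph{not} zero for $p\gg 0$ in a fixed total degree --- it is the tail product over $j\ge p$, which is generally nonzero; what actually holds is that the filtration is exhaustive, Hausdorff and \emph{complete} ($\Tot^{\Pi}(Z)\cong\varprojlim \Tot^{\Pi}(Z)/F^p$), so one must appeal to the complete convergence theorem rather than boundedness. Since the cycle-lifting argument is correct and the cited lemma is standard, this does not affect the validity of the proposal, but if you keep the spectral-sequence version you should repair that convergence claim.
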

\begin{proof}
  Claim $(2)$ is dual to claim $(1)$, so it suffices to prove the latter.

  By Lemma~\ref{lemma totalization is associative}, it suffices to show that $\Tot^\amalg(\Cone^{\row}(f))$ is acyclic. But by assumption, $\Cone^{\row}(f)$ is a double complex in which each row is acyclic and each diagonal is bounded on the lower left. The claim now follows from the Acyclic Assembly Lemma, see e.g.\ \cite[Lemma 2.7.3 and subsequent Remark]{MR1269324}.
\end{proof}

When $R$ is a ring and $X,Y\in\catC(\Modcat R)$, we write $\HOM_R(X,Y)$ for the double complex having $\Hom_R(X^i,Y^j)$ in position $(i,j)$. In particular \[\Hom_R(X,Y)=\Tot^\Pi(\HOM_R(X,Y)),\] meaning $\Hom_{\catC}(X,Y)=\Z^0 \Hom_R(X,Y)$, and $\Hom_{\catK}(X,Y)=\HH^0 \Hom_R(X,Y)$.

If $Z\in\catC(\Modcat R^{\op})$, then $X\tilde\otimes_R Z$ denotes the double complex having $X^j \otimes_R Z^i$ in position $(i,j)$---note the choice of coordinates---and \[X\otimes_R Z = \Tot^\amalg(X\tilde\otimes_R Z).\] The usual $\otimes$--$\Hom$-adjunction extends to an isomorphism of double complexes
\begin{align} \label{align adjunction type iso for double cpxs}
(X\tilde\otimes_R Z)^\ast \cong \HOM_R(X,Z^\ast).
\end{align}

Let us write $(-)^\vee=\Hom_R(-,R)$. Note that this functor induces an equivalence \( (\proj R)^{\op} \to \proj R^{\op} \), and hence also \[ \catC(\proj R)^{\op} \to \catC(\proj R^{\op}) \text{ and }\catK(\proj R)^{\op} \to \catK(\proj R^{\op}).\]

\begin{lemma} \label{lemma hom alg fact}
  Let $R$ be a ring, let $M$ be a complex of $R$-modules, and let \( P \) be a complex of finitely generated projective $R$-modules. Then we have an isomorphism \[\HOM_R(P,M)\cong M\tilde\otimes_R P^\vee \] of double complexes,  which is natural in $P$ and in $M$.
\end{lemma}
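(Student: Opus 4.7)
The plan is to build the isomorphism one bidegree at a time from the classical natural isomorphism for finitely generated projectives, and then to verify that these componentwise maps respect the row and column differentials of the two double complexes.

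For any finitely generated projective $R$-module $Q$ and any $R$-module $N$, there is a natural (in both arguments) isomorphism
\[ \alpha_{Q,N} \colon N \otimes_R \Hom_R(Q, R) \xto{\sim} \Hom_R(Q, N), \quad n \otimes \varphi \mapsto (q \mapsto \varphi(q)\, n), \]
with inverse sending $f$ to $\sum_k f(q_k)\otimes q_k^{*}$ for any finite dual basis $\{q_k, q_k^{*}\}$ of $Q$. Applying $\alpha^{-1}$ in each bidegree, with $Q = P^i$ and $N = M^j$, produces $R$-linear isomorphisms
\[ \Hom_R(P^i, M^j) \xto{\sim} M^j \otimes_R (P^i)^\vee \]
which assemble, after matching up the indexing convention for $(P^\vee)^i$ with that used in the definition of $\tilde\otimes$, into the desired bigraded isomorphism $\HOM_R(P,M) \to M \tilde\otimes_R P^\vee$.

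It remains to verify compatibility with the two kinds of differentials. The row differential of $\HOM_R(P,M)$ is induced by precomposition with $d_P$, whereas that of $M \tilde\otimes_R P^\vee$ is given by $1_M \otimes d_P^\vee$; naturality of $\alpha$ in its first argument, applied to the $R$-linear map $d_P$, delivers the required commutativity, with the signs matching because the formula $\partial_X|_{X^{i,j}} = r_X + (-1)^i c_X$ is applied in the same bidegree on both sides. Naturality of $\alpha$ in its second argument, applied to $d_M$, handles the column differentials in the same fashion. Bifunctoriality of $\alpha$ then promotes the isomorphism to one that is natural in $P$ and in $M$, where naturality in $P$ is interpreted through the equivalence $\catC(\proj R)^{\op} \simeq \catC(\proj R^{\op})$ induced by $(-)^\vee$.

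I expect the main obstacle to be purely notational: keeping track of how $(-)^\vee$ shifts degree conventions so that $(P^\vee)^i$ lands where the definition of $\tilde\otimes$ wants it, and checking that the sign conventions align on both sides. Once these bookkeeping points are pinned down, the lemma reduces to a direct, functorial application of the dual-basis isomorphism for finitely generated projective modules.
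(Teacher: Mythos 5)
Your proof is correct and follows essentially the same route as the paper: both build the isomorphism from the standard natural map $M\otimes_R P^\vee\to\Hom_R(P,M)$, $m\otimes\phi\mapsto[p\mapsto \phi(p)m]$, in each bidegree and let naturality handle the differentials. The only cosmetic difference is that you invert it via a dual basis, whereas the paper checks invertibility for $P=R$ and extends by additivity.
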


\begin{proof}
It suffices to observe that for $M\in\Modcat R$ and $P\in\proj R$, there is a natural morphism $M\otimes_R P^\vee \to \Hom_R(P,M)$ given by $m\otimes \phi\longmapsto\left[p\mapsto m\cdot\phi(p)\right]$, which is invertible when $P=R$.
\end{proof}

\begin{remark}
If \( P \) (or \( M \)) is a bounded complex then this isomorphism clearly implies that
\[ \Hom_R(P, M) \cong M \otimes_R P^{\vee}. \]
However, in general these two complexes are different, since the left hand side is a product totalization while the right hand side is a coproduct totalization.
\end{remark}

\subsection*{The homotopy category of modules} \label{subsection rel serre duality in K(Mod)} Let $R$ be a ring. Akin to the classical Auslander--Reiten translation of a finitely presented module, we find for each bounded complex $M$ of finitely presented $R$-modules a complex $\mathbb{S} M$ as follows. Let \[P_1 \stackrel{p}\to P_0 \to M \to 0\] be a projective presentation in the category $\Cb(\modcat R)$. Explicitly, this means that $P_1$ and $P_0$ belong to $\Cb(\proj R)$, and are moreover contractible. Define
\[ \mathbb{S} M = \Ker(\nu(p))[2],\]
where $\nu=(-^\vee)^\ast$.

The fact that this \( \mathbb{S} \) defines a partial Serre functor \( \Kb(\modcat R) \to \catK(\Modcat R) \) is a relatively straightforward extension of the familiar results from $\modcat R$, but we give a thorough account for the convenience of the reader.

We first remark that the required projective presentation exists:

\begin{construction} \label{construction proj pres by contractibles}
For a bounded complex $M$ over $\modcat R$, say \[M = \left(M^0 \to M^1\to M^2 \to \cdots \to M^n\right),\] pick epimorphisms $P_0^i\to M^i$ with $P_0^i\in\proj R$. Form the commutative diagram
\[ \begin{tikzcd}
P_0^0 \ar[r] \ar[d,->>]& P_0^0\oplus P_0^1 \ar[r] \ar[d,->>] & P_0^1\oplus P_0^2 \ar[r]\ar[d,->>] & \cdots \ar[r] &  P_0^{n-1}\oplus P_0^n \ar[r]\ar[d,->>]& P^n_0\ar[d,->>]\\
M^0 \ar[r] & M^1 \ar[r]  & M^2 \ar[r] & \cdots \ar[r] &  M^n \ar[r]& 0
\end{tikzcd} \]
where the top row is equipped with the obvious differentials making it a contractible complex. Now repeat, replacing $M$ by the induced complex of kernels, to obtain a contractible complex $P_1$, and hence a projective presentation of $M$ in $\Cb(\modcat R)$.
\end{construction}

Now we work towards a version of Auslander's defect formula.

\begin{lemma} \label{lemma.prep_for_defect}
Let \( R \) be a ring, \( P \) a finite contractible complex of finitely generated projectives. Then
\[ \Hom_{\catC(\Modcat R)}(P, -)^\ast \cong \Hom_{\catC(\Modcat R)}(-, \nu P[1]) \]
functorially in \( P \).
\end{lemma}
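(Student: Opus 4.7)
The plan is to establish a natural isomorphism of $k$-complexes $\Hom_R(P,M)^\ast \cong \Hom_R(M,\nu P)$, and then exploit the contractibility of $P$ to transfer the identity between $0$-cocycles on one side and $1$-cocycles (equivalently, $0$-cocycles of the shift $\nu P[1]$) on the other.

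First, I would invoke Lemma~\ref{lemma hom alg fact} to obtain $\HOM_R(P,M) \cong M \tilde\otimes_R P^\vee$ as double complexes, functorially in $P$ and natural in $M$. Since $P$ is bounded, so is $P^\vee$, and the totalizations with respect to products and coproducts agree, giving $\Hom_R(P,M) \cong M \otimes_R P^\vee$ as $k$-complexes.

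Next, applying the double-complex adjunction~(\ref{align adjunction type iso for double cpxs}) with $X = M$ and $Z = P^\vee$ produces $(M \tilde\otimes_R P^\vee)^\ast \cong \HOM_R(M, \nu P)$ as double complexes. Passing to totalizations---and noting that the $k$-dual of a coproduct totalization is the product totalization of the degreewise $k$-dual of the double complex, because $\Hom_k$ converts coproducts in its source into products---yields $(M \otimes_R P^\vee)^\ast \cong \Hom_R(M, \nu P)$. Chaining with the previous step gives the desired natural isomorphism $\Hom_R(P, M)^\ast \cong \Hom_R(M, \nu P)$ of $k$-complexes.

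The last ingredient is contractibility: composing with a contracting homotopy of $P$ turns $\Hom_R(P,M)$ into a contractible $k$-complex for every $M$. For any contractible $k$-complex $C$, the sequence $0 \to \Z^{n-1} C \to C^{n-1} \xto{d} \Z^n C \to 0$ is short exact (surjectivity uses $\B^n C = \Z^n C$), and dualizing by the exact functor $(-)^\ast$ and identifying the embedded kernel with cycles in the dualized complex produces a natural isomorphism $(\Z^n C)^\ast \cong \Z^{1-n}(C^\ast)$. Specializing to $n = 0$ and assembling everything:
\[
\Hom_{\catC(\Modcat R)}(P, M)^\ast = (\Z^0 \Hom_R(P, M))^\ast \cong \Z^1(\Hom_R(P, M)^\ast) \cong \Z^1 \Hom_R(M, \nu P) = \Z^0 \Hom_R(M, \nu P[1]) = \Hom_{\catC(\Modcat R)}(M, \nu P[1]).
\]
Functoriality in $P$ and naturality in $M$ propagate through each step.

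The main obstacle I anticipate is sign and index bookkeeping: to make the combined isomorphism a genuine equality at the chain level (rather than merely up to homotopy), I need to align sign conventions in the differentials of the two totalizations, in the shift $[1]$, and in the dualization of double complexes, and also verify that the identification $(\Z^n C)^\ast \cong \Z^{1-n}(C^\ast)$ really arises from the dualized short exact sequence rather than depending on a choice of splitting.
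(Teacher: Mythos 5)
Your proposal is correct and follows essentially the same route as the paper: reduce $\HOM_R(P,-)$ to $-\tilde\otimes_R P^\vee$ via Lemma~\ref{lemma hom alg fact}, dualize using~(\ref{align adjunction type iso for double cpxs}) to land on $\HOM_R(-,\nu P)$, use finiteness of $P$ to identify the two totalizations with their duals, and finally use exactness (coming from contractibility) to turn $(\Z^0(-))^\ast$ into $\Z^1$ of the dual, which is $\Z^0$ of the shift. The sign/index bookkeeping you flag at the end is exactly the content of the identification $(\Z^0 C)^\ast \cong \Z^1(C^\ast)$ for an exact complex $C$, which works as you describe and requires no choice of splitting.
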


\begin{proof}
As double complexes, we know that
\begin{align*}
   \HOM_R(P, -)^\ast & \cong (- \tilde\otimes_R P^{\vee})^\ast && \text{Lemma~\ref{lemma hom alg fact}} \\
   & \cong \HOM_R(-, (P^{\vee})^\ast) && \text{(\ref{align adjunction type iso for double cpxs})} \\
   & = \HOM_R(-, \nu P).
\end{align*}
Since \( P \) is a finite complex, all these double complexes have finite diagonals. So in particular \( \Tot^{\amalg} \) and \( \Tot^{\Pi} \) coincide and commute with dualizing. Therefore, as complexes we have
\[ \Hom_R(P, -)^{\ast} \cong \Hom_R(-, \nu P). \]
Note that since \( P \) is contractible these two complexes are exact. It follows that
\begin{align*}
 \Hom_{\catC(\Modcat R)}(P, -)^\ast &= (Z^0(\Hom_R(P, -)))^{\ast} \\
 & \cong Z^1(\Hom_R(P, -)^{\ast}) \\
& \cong Z^1(\Hom_R(-, \nu P)) \\ 
& = Z^0(\Hom_R(-, \nu P[1])) = \Hom_{\catC(\Modcat R)}(-, \nu P[1]). \qedhere
\end{align*}
\end{proof}

Let $\mathbb E \colon 0\to A\to B\to C\to0$ be an extension in $\catC(\Modcat R)$. The \emph{defects} $\mathbb E_{\defect}$ and $\mathbb E^{\defect}$ are defined, respectively, by exactness of the sequences
\begin{align*}
& 0\to\Hom_{\catC}(C,-)\to\Hom_{\catC}(B,-)\to\Hom_{\catC}(A,-)\to\mathbb E_{\defect}\to0; \\
& 0\to\Hom_{\catC}(-,A)\to\Hom_{\catC}(-,B)\to\Hom_{\catC}(-,C)\to\mathbb E^{\defect} \to0.
\end{align*}

\begin{proposition}[Auslander's defect formula] \label{proposition defect formula}
Let $R$ be a ring and take an extension $\mathbb E \colon 0\to A\to B\to C\to0$ in $\catC(\Modcat R)$. For each $M\in\Cb(\modcat R)$ there is an isomorphism  \[\mathbb E^{\defect}(M)^\ast\cong \mathbb E_{\defect} (\mathbb{S} M [-1])\] which is natural in $M$.
\end{proposition}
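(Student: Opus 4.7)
The strategy is to rewrite $\mathbb{S} M[-1]$ as the $\ast$-dual of a cokernel (via Lemma~\ref{lemma.prep_for_defect}), and then extract $\mathbb{E}^{\defect}(M)$ from a snake-lemma sequence on the projective presentation $P_1 \xrightarrow{p} P_0 \to M \to 0$. The essential ingredient is that, because the contractible complexes $P_0, P_1$ are projective objects of the abelian category $\catC(\Modcat R)$, the functors $\Hom_{\catC}(P_i, -)$ are exact.

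The first step would be to establish the natural isomorphism
\[ \Hom_{\catC(\Modcat R)}(X, \mathbb{S} M[-1]) \cong E(X)^\ast, \quad\text{where}\quad E(X) := \Cok\bigl(\Hom_{\catC}(P_0, X) \to \Hom_{\catC}(P_1, X)\bigr), \]
for each $X \in \catC(\Modcat R)$. Indeed, by definition of $\mathbb{S} M$ the sequence $0 \to \mathbb{S} M[-2] \to \nu P_1 \xto{\nu p} \nu P_0$ is exact in $\catC(\Modcat R)$; applying $\Hom_{\catC}(X, -)$ and shifting by $1$, then using Lemma~\ref{lemma.prep_for_defect} together with the exactness of $(-)^\ast$, produces the stated formula.

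Second, by projectivity of the $P_i$ in $\catC(\Modcat R)$, applying the exact functors $\Hom_{\catC}(P_0, -)$ and $\Hom_{\catC}(P_1, -)$ to $\mathbb E$ gives two short exact rows connected by the vertical maps $\Hom_{\catC}(p, -)$. These vertical maps have kernel $\Hom_{\catC}(M, -)$ (by left-exactness of $\Hom_{\catC}(-, -)$ applied to $P_1 \xrightarrow{p} P_0 \to M \to 0$) and cokernel $E(-)$, so the snake lemma yields
\[ 0 \to \Hom_{\catC}(M, A) \to \Hom_{\catC}(M, B) \to \Hom_{\catC}(M, C) \to E(A) \to E(B) \to E(C) \to 0, \]
whence $\mathbb{E}^{\defect}(M) \cong \Ker(E(A) \to E(B))$.

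Combining the two steps, and using that $(-)^\ast$ is exact and contravariant,
\[ \mathbb{E}_{\defect}(\mathbb{S} M[-1]) \cong \Cok\bigl(E(B)^\ast \to E(A)^\ast\bigr) \cong \Ker\bigl(E(A) \to E(B)\bigr)^\ast \cong \mathbb{E}^{\defect}(M)^\ast. \]
The whole argument is a clean diagram chase rather than a difficult calculation; the main thing to be careful about is tracking the contravariances and the $[1]$-shift in Lemma~\ref{lemma.prep_for_defect}, and confirming that the naturality in $X$ from step~1, combined with the (up-to-homotopy) functoriality of the chosen projective presentation, delivers the required naturality in $M$.
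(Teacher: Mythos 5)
Your proof is correct and follows essentially the same route as the paper's: both rest on Lemma~\ref{lemma.prep_for_defect} together with a snake-lemma argument built from the projective presentation $P_1 \xrightarrow{p} P_0 \to M \to 0$ applied to $\mathbb E$, using that the contractible $P_i$ are projective in $\catC(\Modcat R)$. The only (cosmetic) difference is that you run the snake lemma on the undualized diagram and apply $(-)^\ast$ at the end, whereas the paper dualizes first and then applies the snake lemma to the resulting $4\times 3$ grid.
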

\begin{proof}
Let $P_1 \stackrel{p}\to P_0\to M\to0$ be a projective presentation. Note that the \( P_i \) satisfy the assumptions of Lemma~\ref{lemma.prep_for_defect} above. The exact sequence
\[ 0 \to \Hom_{\catC(\Modcat R)}(M, -) \to \Hom_{\catC(\Modcat R)}(P_0, -) \to \Hom_{\catC(\Modcat R)}(P_1, -) \]
dualizes to the upper row in the following diagram of functors on $\catC(\Modcat R)$ with exact rows, where we write \( (-,-) \) for \( \Hom_{\catC(\Modcat R)}(-, -) \).

\[ \begin{tikzcd}
& (P_1, -)^\ast \ar[r] \ar[d,"\cong"]& (P_0, -)^\ast \ar[r,->>] \ar[d,"\cong"]&  (M, -)^\ast \\
(-, \Ker \nu(p)[1]) \ar[r,>->]& (-,\nu P_1[1])\ar[r]  & (-, \nu P_0[1])
 \end{tikzcd} \]
The vertical isomorphisms are precisely the ones provided by Lemma~\ref{lemma.prep_for_defect}. Note that \( \Ker \nu(p)[1] = \mathbb{S}M[-1] \) by definition of \( \mathbb{S} \).

From $\mathbb E$ we thus get the following commutative diagram with exact rows and columns.
\[ \begin{tikzcd}
(C, \mathbb SM[-1]) \ar[r,>->] \ar[d,>->] & (B,\mathbb S M[-1]) \ar[r] \ar[d,>->] & (A,\mathbb SM[-1]) \ar[d,>->] \\
(P_1, C)^\ast \ar[r,>->] \ar[d] & (P_1,B)^\ast \ar[r,->>] \ar[d] & (P_1,A)^\ast  \ar[d] \\
(P_0, C)^\ast \ar[r,>->] \ar[d,->>] & (P_0,B)^\ast \ar[r,->>] \ar[d,->>] & (P_0,A)^\ast \ar[d,->>] \\
(M, C)^\ast \ar[r]  & (M,B)^\ast \ar[r,->>] & (M,A)^\ast
\end{tikzcd} \]
The Snake Lemma yields that the cokernel in the first row, which is \( \mathbb{E}_{\defect}(\mathbb{S}M[-1]) \), coincides with the kernel in the last row, which is \( \mathbb{E}^{\defect}(M)^\ast \).
\end{proof}

\begin{theorem}  \label{theorem partial serre for Kb(mod R)}
Let \( R \) be a ring. Then \( \mathbb{S} \) defines a partial Serre functor
\[ \mathbb{S} \colon \Kb(\modcat R) \to \catK(\Modcat R) \colon M \longmapsto \Ker(\nu(p))[2]. \]
\end{theorem}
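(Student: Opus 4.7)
The plan is to establish the natural isomorphism
\[ \catK(\Modcat R)(M,T)^\ast \cong \catK(\Modcat R)(T, \mathbb{S}M), \]
naturally in $M \in \Kb(\modcat R)$ and $T \in \catK(\Modcat R)$, by applying Auslander's defect formula (Proposition~\ref{proposition defect formula}) to a carefully chosen short exact sequence depending on $T$.

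For fixed $T$, I would take the short exact sequence in $\catC(\Modcat R)$
\[ \mathbb{E}_T \colon 0 \to T[-1] \to \Cone(\id_T)[-1] \to T \to 0, \]
whose middle term is contractible. The key observation, verified by a routine chain-level calculation, is that chain maps into or out of $\Cone(\id_T)[-1]$ encode null-homotopies: a chain map $M \to \Cone(\id_T)[-1]$ amounts to the data of a chain map $\alpha\colon M \to T$ together with a null-homotopy of $\alpha$, while a chain map $T[-1] \to N$ lifts to $\Cone(\id_T)[-1] \to N$ precisely when it is null-homotopic. These observations translate into the identifications
\[ \mathbb{E}_T^{\defect}(M) \cong \catK(\Modcat R)(M, T), \qquad \mathbb{E}_{T, \defect}(N) \cong \catK(\Modcat R)(T, N[1]). \]

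With these in hand, Proposition~\ref{proposition defect formula} directly yields
\[ \catK(\Modcat R)(M, T)^\ast \cong \mathbb{E}_T^{\defect}(M)^\ast \cong \mathbb{E}_{T, \defect}(\mathbb{S}M[-1]) \cong \catK(\Modcat R)(T, \mathbb{S}M). \]
Naturality in $M$ is furnished by the defect formula, while naturality in $T$ follows from the manifest functorality of $T \mapsto \mathbb{E}_T$ combined with the naturality of defects under morphisms of short exact sequences. Finally, well-definedness of $\mathbb{S}M$ in $\catK(\Modcat R)$ and functorality of $\mathbb{S}$ can be obtained either by the standard lifting argument—exploiting that the contractible $P_0, P_1$ are projective in $\catC(\Modcat R)$—or, more abstractly, by feeding the representability of $\catK(\Modcat R)(M,-)^\ast$ into Theorem~\ref{theorem main general result on partial serre}.

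The main obstacle is the chain-level identification of the two defects with homotopy-category Hom groups; once the contractibility of $\Cone(\id_T)[-1]$ is leveraged to encode null-homotopies, everything else is a formal consequence of the defect formula.
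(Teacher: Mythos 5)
Your proposal is correct and follows essentially the same route as the paper's proof: the paper also takes the short exact sequence $0 \to T[-1] \to \coCone(\id_T) \to T \to 0$, identifies its two defects with $\Hom_{\catK}(-,T)$ and $\Hom_{\catK}(T[-1],-)$ via the null-homotopy interpretation of maps through the contractible middle term, and then applies Proposition~\ref{proposition defect formula}. The only cosmetic difference is notation ($\Cone(\id_T)[-1]$ versus $\coCone(\id_T)$).
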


\begin{proof}
Let \( M \in \Kb(\modcat R) \) and \( X \in \catK(\Modcat R) \). We need to show that
\[ \Hom_{\catK}(M, X)^\ast \cong \Hom_{\catK}(X, \mathbb{S}M) \]
naturally in \( M \) and \( X \).

Let \( C_X =\coCone(\id_X) \). Then we have the short exact sequence
\[ \mathbb{E} \colon  0 \to X[-1] \to C_X \to X \to 0 \]
in \( \catC(\Modcat R) \). Moreover, a map to \( X \) is null-homotopic if and only if it factors through \( C_X \). In other words,
\[ \Hom_{\catK}(-, X) = \mathbb{E}^{\defect}. \]
Similarly \( \Hom_{\catK}(X[-1], -) = \mathbb{E}_{\defect} \). We know from Proposition~\ref{proposition defect formula} that
\begin{align*} \Hom_{\catK}(M, X)^\ast  &= \mathbb E^{\defect}(M)^\ast \\ & \cong \mathbb E_{\defect} (\mathbb{S} M [-1]) =  \Hom_{\catK}(X[-1], \mathbb{S} M[-1])  = \Hom_{\catK}(X, \mathbb{S} M). \qedhere
\end{align*}
\end{proof}

\begin{observation} \label{observation 0-cocompacts in Kb(Lambda)}
  Let $\Lambda$ be an Artin algebra.
    We can now offer an arguably more conceptual explanation than the one given in \cite[Corollary~6.10]{MR3946864} of the fact that bounded complexes of finitely generated modules are $0$-cocompact objects in the homotopy category $\catK(\Modcat \Lambda)$: We choose \( I \) to be an injective envelope of \( k /\!\rad k \).

    In this setup $\mathbb{S}$ admits a quasi-inverse \( \mathbb{S}^- \). Explicitly, for each $M$ choose an injective copresentation $0\to M \to I^0 \to I^1$ in $\Cb(\modcat \Lambda)$, and let \[\mathbb{S}^-(M)= \Cok\left(\nu^- I^0 \to \nu^-I^1\right)[-2].\] Here we use the fact that $\nu^- = \Hom_{\Lambda^{\op}}((-)^\ast,\Lambda)$ is a quasi-inverse of \( \nu \) as a functor from finitely generated projective to finitely generated injective modules. It is immediate from the construction that \( \mathbb{S} \) and \( \mathbb{S}^- \) are mutually quasi-inverse.

    In particular, the partial Serre functor $\mathbb S$ is an auto-equivalence on the subcategory $\Kb(\modcat\Lambda)$, which thus consists of $0$-cocompact objects by Theorem~\ref{theorem partial Serre implies X compact and Y 0-cocompact}.
  \end{observation}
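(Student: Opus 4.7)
The goal is to verify that the recipe proposed for $\mathbb S^-$ really does produce a quasi-inverse to $\mathbb S$, and then to deduce the \(0\)-cocompactness statement. The argument falls into four short steps.

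\emph{Step 1: The Nakayama equivalence.} First I would check that, with the chosen $I$, the duality $(-)^\ast$ restricts to a contravariant equivalence $\modcat\Lambda\simeq\modcat\Lambda^{\op}$, and that $(-)^\vee$ and $\Hom_{\Lambda^{\op}}(-,\Lambda)$ give mutually quasi-inverse contravariant equivalences $\proj\Lambda\simeq\proj\Lambda^{\op}$. Composing, $\nu=(-^\vee)^\ast$ and $\nu^-=\Hom_{\Lambda^{\op}}((-)^\ast,\Lambda)$ are mutually quasi-inverse equivalences $\proj\Lambda\simeq\inj\Lambda$. Applied componentwise, they promote to quasi-inverse equivalences between bounded contractible complexes of finitely generated projectives and bounded contractible complexes of finitely generated injectives in $\Cb(\modcat\Lambda)$.

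\emph{Step 2: Injective copresentations in $\Cb(\modcat\Lambda)$.} Next, I would dualize Construction~\ref{construction proj pres by contractibles}. For $M\in\Cb(\modcat\Lambda)$, pick injective envelopes $M^i\hookrightarrow I_0^i$ componentwise and assemble these into a contractible complex $I^0\in\Cb(\inj\Lambda)$ receiving a monomorphism $M\hookrightarrow I^0$; iterating on the cokernel produces $I^1$ and an exact sequence $0\to M\to I^0\to I^1$ in $\Cb(\modcat\Lambda)$ with $I^0,I^1$ injective objects there. As for $\mathbb S$, standard homotopy arguments show that the formula $\mathbb S^-(M)=\Cok(\nu^- I^0\to\nu^- I^1)[-2]$ is independent of the chosen copresentation up to canonical isomorphism, hence defines a functor $\Kb(\modcat\Lambda)\to\Kb(\modcat\Lambda)$.

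\emph{Step 3: Mutual quasi-inverseness.} This is where I would execute the advertised ``immediate from the construction'' calculation. Given a projective presentation $P_1\xrightarrow{p} P_0\to M\to 0$ in $\Cb(\modcat\Lambda)$, by definition $\mathbb SM[-2]=\Ker(\nu p)$, and $\nu P_0,\nu P_1$ are contractible complexes of injectives. Thus
\[ 0\to\mathbb SM\to (\nu P_1)[2]\to(\nu P_0)[2] \]
is an injective copresentation of $\mathbb SM$ in $\Cb(\modcat\Lambda)$. Plugging this into the formula for $\mathbb S^-$, using $\nu^-\nu\simeq\id$ from Step~1, yields
\[ \mathbb S^-(\mathbb SM)=\Cok\bigl(\nu^-\nu P_1[2]\to\nu^-\nu P_0[2]\bigr)[-2]\cong\Cok(p)=M. \]
The opposite composite $\mathbb S\mathbb S^-\cong\id$ follows by the dual bookkeeping, starting from an injective copresentation and using $\nu\nu^-\simeq\id$. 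Some care must be taken that the isomorphisms are natural in $M$, but this is formal from the naturality of the Nakayama equivalence and the uniqueness of (co)presentations up to homotopy.

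\emph{Step 4: Conclusion.} Since $\mathbb S\colon\Kb(\modcat\Lambda)\to\catK(\Modcat\Lambda)$ factors through $\Kb(\modcat\Lambda)$ (the kernel of a map of bounded complexes is bounded), Step~3 shows $\mathbb S$ is an auto-equivalence on $\Kb(\modcat\Lambda)$. Theorem~\ref{theorem partial Serre implies X compact and Y 0-cocompact} gives that $\mathbb S(\Kb(\modcat\Lambda))$ consists of \(0\)-cocompact objects in $\catK(\Modcat\Lambda)$, and this essential image equals all of $\Kb(\modcat\Lambda)$. The main subtlety, already contained in Step~3, is checking that the equivalence intertwines the shift $[2]$ produced by $\mathbb S$ with the shift $[-2]$ produced by $\mathbb S^-$ in the right way; everything else is a matter of unwinding definitions.
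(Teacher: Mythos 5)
Your proposal is correct and follows the same route as the paper: the paper defines $\mathbb S^-$ via an injective copresentation and the inverse Nakayama functor exactly as in your Steps 1–2, asserts the mutual quasi-inverseness as "immediate from the construction" (your Step 3 is precisely the computation being alluded to, including the bookkeeping that a projective presentation of $M$ dualizes under $\nu$ to an injective copresentation of $\mathbb SM[-2]$ and vice versa), and concludes via Theorem~\ref{theorem partial Serre implies X compact and Y 0-cocompact}. No gaps.
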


For explicit calculation, it is sometimes convenient to not need projective objects in the category of complexes, but rather just complexes of projectives. The following gives an alternative way of calculating $\mathbb S M$ using these.

\begin{proposition} \label{proposition AR translate without contractibles}
If $Q\stackrel{q}\to P \to M \to 0$ is an exact sequence in $\Cb(\modcat R)$ with $Q$ and $P$ consisting of projective modules, then we have \[\mathbb{S} M = \Tot\left(\Ker(\nu(q)) \hookrightarrow\nu Q \xto{\nu(q)} \nu P \right) \]
in \( \catK(\Modcat R) \). Here \( \nu P \) is the \( 0 \)-th column of the double complex.
\end{proposition}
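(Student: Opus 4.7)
I would compare the given presentation to the canonical contractible projective presentation used in the definition of $\mathbb{S}M$, and identify the associated total complexes in $\catK(\Modcat R)$. Fix a contractible projective presentation $P_1\xto{p}P_0\to M\to 0$ as furnished by Construction~\ref{construction proj pres by contractibles}, so that by definition $\mathbb{S}M=\Ker(\nu p)[2]$. Since $P_0$ and $P_1$ are projective objects in the abelian category $\Cb(\modcat R)$, the identity on $M$ lifts to chain maps $\alpha\colon P_1\to Q$ and $\beta\colon P_0\to P$ satisfying $q\alpha=\beta p$; by functoriality there is an induced map $\Ker\nu p\to\Ker\nu q$. Applying $\nu$ then produces a morphism of 3-column double complexes
\[ f\colon B=(\Ker\nu p\hookrightarrow\nu P_1\xto{\nu p}\nu P_0)\longrightarrow A=(\Ker\nu q\hookrightarrow\nu Q\xto{\nu q}\nu P). \]

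I would first identify the source: since $\nu$ is additive and $P_0,P_1$ are contractible, so are $\nu P_0$ and $\nu P_1$, and hence $\Cone(\nu p)$. The short exact sequence of complexes $0\to\Cone(\nu p)\to\Tot(B)\to\Ker(\nu p)[2]\to 0$ then yields $\Tot(B)\simeq\Ker(\nu p)[2]=\mathbb{S}M$ in $\catK(\Modcat R)$.

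Next I would show that $\Tot(f)\colon\Tot(B)\to\Tot(A)$ is a homotopy equivalence. By Lemma~\ref{lemma totalization is associative}, $\Cone(\Tot f)=\Tot(\Cone^{\row}(f))$. At each row $j$, right-exactness of $\nu$ (applied termwise to both presentations) gives that the rows of $B$ and $A$ are the three-term tails of the exact sequences $0\to\Ker\nu p^j\to\nu P_1^j\to\nu P_0^j\to\nu M^j\to 0$ and $0\to\Ker\nu q^j\to\nu Q^j\to\nu P^j\to\nu M^j\to 0$; the map $\nu\beta^j$ induces $\id_{\nu M^j}$ on the rightmost cohomology because $\beta^j$ lifts $\id_{M^j}$, while the other cohomologies vanish. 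Hence each row of $\Cone^{\row}(f)$ is acyclic, and Lemma~\ref{lemma our acyclic assembly} yields that $\Cone(\Tot f)$ is acyclic.

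\textbf{Main obstacle.} The delicate step is upgrading this acyclicity to contractibility, in order to obtain a genuine isomorphism in $\catK(\Modcat R)$ rather than only in the derived category. I would leverage the explicit block-triangular form of the differentials of $P_0$ and $P_1$ provided by Construction~\ref{construction proj pres by contractibles}---whose building blocks are identity maps on repeated copies of the chosen projective covers---to promote each row-wise quasi-isomorphism in $\Cone^{\row}(f)$ to an honest chain-homotopy equivalence. An inductive splitting argument over the bounded row-range of the double complex then assembles these row-wise contractions into a global contraction of $\Cone(\Tot f)$, yielding $\Tot(A)\simeq\Tot(B)\simeq\mathbb{S}M$ in $\catK(\Modcat R)$.
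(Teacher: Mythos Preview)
Your approach differs from the paper's and has a genuine gap at exactly the point you flag as the ``main obstacle''.

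The paper does \emph{not} compare an arbitrary presentation $Q\to P\to M\to0$ to a chosen contractible one via a single lift. Instead it proceeds in two independent reduction steps: first replace $P$ by $\overline P=\coCone(\id_P)$ and pull back to obtain $\tilde Q\to\overline P\to M\to0$; then replace the first term by $\overline Q=\coCone(\id_Q)$. In each step the two presentations sit in a diagram whose columns are \emph{degree-wise split} short exact sequences (because $\coCone(\id)$ always gives such). Degree-wise splitness survives the additive functor $\nu$ and forces the two total complexes to differ only by a \emph{contractible summand}. No appeal to acyclicity is ever needed. After both reductions one has contractible $\overline P$ and $\overline Q$, where the formula collapses to the definition of $\mathbb SM$.

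Your argument correctly shows that $\Tot(B)\simeq\mathbb SM$ and that $\Cone(\Tot f)$ is acyclic. But the step from acyclic to contractible does not follow from what you have. The row complexes
$\Ker(\nu p^j)\hookrightarrow\nu P_1^j\to\nu P_0^j$ and $\Ker(\nu q^j)\hookrightarrow\nu Q^j\to\nu P^j$
are three-term complexes with $H^{-2}=H^{-1}=0$ and the same $H^0$; however their terms are neither projective nor injective $R$-modules in general (for arbitrary $R$ the modules $\nu(P_0^j)$ are merely pure-injective, and the kernel terms have no special property at all), so a row-wise quasi-isomorphism need not be a homotopy equivalence. The phrases ``explicit block-triangular form'' and ``inductive splitting argument over the bounded row-range'' do not constitute a proof: contractibility of $P_0,P_1$ tells you nothing about the individual module maps $\nu\beta^j$, and there is no evident mechanism by which the cocone-type structure of Construction~\ref{construction proj pres by contractibles} produces the required splittings after applying $\nu$ and passing to kernels. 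If you want to salvage the direct-comparison route, you would need a genuinely new argument here; the paper's two-step cocone reduction is precisely what circumvents this difficulty.
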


\begin{proof} Observe that if \( P \) and \( Q \) happen to be projective in the category of complexes, then the claimed formula is just a restatement of the definition of \( \mathbb{S} \). Indeed, in that case \( \nu P \) and \( \nu Q \) are contractible, and thus the total complex is isomorphic to \( \Ker \nu(q)[2] \). The proof now consists of two independent steps, showing respectively that we may replace \( P \) and \( Q \) by projectives in the category of complexes, without changing the result of the formula.

\emph{Step 1:} Let $Q\stackrel{q}\to P \to M \to 0$ be an exact sequence of complexes with $Q$ and $P$ consisting of projectives.

Choose $\overline P = \coCone(\id_P)$. Then $\overline P$ is projective in the category of complexes, and appears in a canonical degree-wise split exact sequence \[0 \to P[-1] \stackrel{f}\to \overline P \stackrel{g}\to P \to 0.\] Since $g$ is an epimorphism, the pullback of $g$ and $q$ is even bicartesian. In particular, the middle row of the following diagram is also exact.
\[ \begin{tikzcd}
Q \ar[r,"q"] & P \ar[r,->>]  & M \ar[d,equal] \\
\tilde Q \ar[r,"\tilde q"] \ar[u] & \overline P \ar[r,->>] \ar[u,swap,"g"] & M  \\
P[-1] \ar[r,equal] \ar[u,swap,"h"] & P[-1] \ar[u,swap,"f"]
\end{tikzcd} \]
Notice that $\tilde Q$ is again a complex of projectives; in fact, $\tilde Q = \coCone(q)$. Moreover, since the middle column is degree-wise split, so is the leftmost column. Indeed, a splitting of $h$ is obtained by composing $\tilde q$ with a splitting of $f$. It follows that application of $\nu$ and then totalization, gives
\begin{align*} \label{align first step}
  \Tot\left(\Ker(\nu(q)) \hookrightarrow \nu Q \xto{\nu(q)} \nu P\right) = \Tot\left(\Ker(\nu(\tilde q)) \hookrightarrow \nu \tilde Q \xto{\nu(\tilde q)} \nu \overline P\right) \tag{$\ast$}
\end{align*}
up to a contractible summand.

\emph{Step 2:} Let $Q\stackrel{q}\to P \to M \to 0$ be an exact sequence of complexes with $Q$ and $P$ consisting of projectives.

Choose $\overline Q = \coCone(\id_Q)$. Then $\overline Q$ is projective in the category of complexes, and appears in a canonical degree-wise split exact sequence \[0 \to Q[-1] \stackrel{f}\to \overline Q \stackrel{g}\to Q \to 0.\] Form the commutative diagram
\[ \begin{tikzcd}
Q \ar[r,"q"] & P \ar[r,->>]  & M \ar[d,equal] \\
\overline Q \ar[r,"p"] \ar[u] & P \ar[r,->>] \ar[u,equal] & M  \\
Q[-1] \ar[u,swap,"f"]
\end{tikzcd} \]
and note that also the bottom row is exact: Since $g$ is an epimorphism, $p$ and $q$ have the same image. Application of $\nu$ yields the commutative diagram below; the fact that the bottom left corner is $\nu Q[-1]$, follows from the Snake Lemma.
\[ \begin{tikzcd}
\Ker(\nu(q)) \ar[r,>->] & \nu Q \ar[r,"\nu(q)"]  & \nu P \ar[d,equal] \\
\Ker(\nu(p)) \ar[r,>->] \ar[u] & \nu \overline Q \ar[r,"\nu(p)"] \ar[u] & \nu P  \\
\nu Q[-1] \ar[r,equal] \ar[u,swap,"h"] & \nu Q[-1] \ar[u,swap,"\nu(f)"]
\end{tikzcd} \]
Since the middle column is degree-wise split, so is the leftmost one: A splitting of $h$ is given by composing $\Ker(\nu(p))\hookrightarrow\nu \overline Q$ with a splitting of $\nu(f)$. It follows that, up to contractible summands,
\begin{align*} \label{align second step}
\Tot\left(\Ker(\nu(q))\hookrightarrow\nu Q \xto{\nu(q)} \nu P\right) = \Tot\left(\Ker(\nu(p))\hookrightarrow \nu \overline Q\xto{\nu(p)} \nu P\right). \tag{$\ast\ast$}
\end{align*}


Now to complete the proof of the proposition, suppose $Q\stackrel{p}\to P \to M \to 0$ is an exact sequence in $\Cb(\modcat R)$ and that $Q$ and $P$ consists of projective modules. Successive application of the above two steps reveals a commutative diagram
\[ \begin{tikzcd}
Q \ar[r,"q"] & P \ar[r,->>]  & M \ar[d,equal] \\
\tilde Q \ar[r,"\tilde q"] \ar[u] & \overline P \ar[r,->>] \ar[u] \ar[d,equal] & M \ar[d,equal] \\
\overline Q \ar[r,"p"] \ar[u] & \overline P \ar[r,->>] & M
\end{tikzcd} \]
with exact rows, where $\overline Q$ and $\overline P$ are projective objects in the category of complexes. Then (\ref{align first step}) and (\ref{align second step}) give us, up to contractible summands,
\begin{align*}
 \Tot\left(\Ker(\nu(q))\hookrightarrow \nu Q \xto{\nu(q)}\nu P\right)  & =  \Tot\left(\Ker(\nu(\tilde q))\hookrightarrow \nu \tilde Q \xto{\nu(\tilde q)}\nu \overline P\right)  \\
 & =  \Tot\left(\Ker(\nu(p))\hookrightarrow \nu \overline Q \xto{\nu(p)}\nu \overline P \right) \\
 & = \Ker(\nu (p))[2] \\ & = \mathbb S M. \qedhere
\end{align*}
\end{proof}

\begin{example}
If $M$ is an $R$-module, then we can simply use a projective presentation in $\modcat R$ in order to calculate $\mathbb{S} M$. That is, an exact sequence $P_1\stackrel{p}\to P_0 \to M$ of modules with $P_0, P_1 \in\proj R$, resulting in \[\mathbb{S} M =  \Tot\left(\tau M \hookrightarrow \nu P_1 \to \nu P_0\right)=\left(\tau M \hookrightarrow \nu P_1 \to \nu P_0\right),\] where $\tau$ denotes the `usual' Aulander--Reiten translation on the category of finitely presented modules.

Note that \( \nu \) is right exact, so in this case \( \mathbb{S} M \) is quasi-isomorphic to \( \nu M \).

In particular, if \( M \) is a finitely generated projective module, then our formula degenerates and \( \mathbb{S} M = \nu M \).
\end{example}

\subsection*{Homotopy categories of injectives and of projectives} Let $R$ be a ring. It follows from \cite{MR2439608} that the inclusion $\catK(\Proj R)\hookrightarrow\catK(\Modcat R)$ admits a right adjoint $\rho$. Note that if $X$ is right bounded, then $\rho X$ is a projective resolution of $X$. Indeed, let $Q\in\catK(\Proj R)$ and suppose $P$ is a right bounded complex which is quasi-isomorphic to $X$. Denoting by $Q^{\leq n} $ the brutal truncation of $Q$, which is trivially a quotient of $Q$, we get 
\[\Hom_{\catK}(Q,X) =\Hom_{\catK}(Q^{\leq n},X) \cong\Hom_{\catK}(Q^{\leq n},P)=\Hom_{\catK}(Q,P) \] for sufficiently large $n$, where the isomorphism in the middle holds because $Q^{\leq n}$ is homotopically projective.

On the other hand, by \cite{MR3212862}, the inclusion $\catK(\Inj R)\hookrightarrow\catK(\Modcat R)$ admits a left adjoint $\lambda$. If $X$ is left bounded, then $\lambda X$ is an injective resolution of $X$.

\begin{theorem} \label{theorem serre functors in K(Inj) and K(Proj)}
Let \( R \) be a ring and let \( \catK^{\bounded}_{\mathsf{fpr}}(R) \) denote the subcategory of \( \Kb(\Modcat R) \) consisting of complexes admitting degree-wise finitely generated projective resolutions.
\begin{enumerate}
\item There is a partial Serre functor \( \mathbb{S} \colon \lambda( \catK^{\bounded}_{\mathsf{fpr}}(R) ) \to \catK(\Inj R) \) given by \[ \mathbb{S}(\lambda X) = \nu \rho X.\]
\item There is a partial Serre functor \( \mathbb{S} \colon \rho(\catK^{\bounded}_{\mathsf{fpr}}(R^{\op}))^{\vee} \to \catK(\Proj R) \) given by \[ \mathbb{S}((\rho X)^{\vee}) = \rho(X^\ast). \]
\end{enumerate}
\end{theorem}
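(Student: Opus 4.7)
The plan is to prove part (1); part (2) follows by the dual argument, interchanging $\catK(\proj R)$ and $\catK(\proj R^\op)$ via the equivalence $(-)^\vee$. Fix $X \in \catK^{\bounded}_{\mathsf{fpr}}(R)$ with bounded above projective resolution $\rho X$ of degree-wise finitely generated projectives, and $Y \in \catK(\Inj R)$. First I would observe that $\nu\rho X$ lies in $\catK(\Inj R)$: each $(\rho X^i)^\vee$ is fg projective, hence flat, over $R^\op$, and $(-)^\ast$ sends flats to injective $R$-modules via the tensor-hom adjunction. Combined with full faithfulness of the inclusion and the $\lambda\dashv\incl$ adjunction, the target iso reduces to
\[
\catK(\Modcat R)(X, Y)^\ast \cong \catK(\Modcat R)(Y, \nu\rho X).
\]

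The second task is to replace $X$ by $\rho X$ on the left. The counit $\rho X \to X$ is a quasi-iso with cone bounded above and acyclic; the induced map $\catK(\Modcat R)(X, Y) \to \catK(\Modcat R)(\rho X, Y)$ is then an iso provided bounded above acyclic complexes are right-orthogonal to $\catK(\Inj R)$ in $\catK(\Modcat R)$. I would establish this by constructing a null-homotopy of any chain map from a bounded above acyclic complex to a complex of injectives by downward induction: at each degree the obstruction lies in the image of the differential of the target by acyclicity of the source, and one completes the lift using injectivity. Lemma~\ref{lemma hom alg fact} then identifies $\HOM_R(\rho X, Y) \cong Y\tilde\otimes_R (\rho X)^\vee$ as double complexes, and dualizing via~(\ref{align adjunction type iso for double cpxs}) yields $\HOM_R(\rho X, Y)^\ast \cong \HOM_R(Y, \nu\rho X)$ as double complexes.

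The hardest step is the final passage to single complexes. As noted in the remark after Lemma~\ref{lemma hom alg fact}, $\Tot^\amalg$ and $\Tot^\Pi$ of $Y\tilde\otimes_R (\rho X)^\vee$ can differ, since $\rho X$ is typically unbounded below and $Y$ unbounded. Concretely, $\Hom_R(\rho X, Y) = \Tot^\Pi \HOM_R(\rho X, Y)$, while $\Hom_R(Y, \nu\rho X) \cong (\Tot^\amalg \HOM_R(\rho X, Y))^\ast$ via the formal identity $(\Tot^\amalg D)^\ast \cong \Tot^\Pi(D^\ast)$. Passing to $H^0$, which commutes with $(-)^\ast$ by exactness, the remaining task is to verify that the natural map $\Tot^\amalg \to \Tot^\Pi$ induces an iso in degree zero for this specific double complex. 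I would approach this by filtering $\rho X$ by bounded truncations $\tau^{\ge -n}\rho X$---on which the two totalizations agree because the double complex becomes bounded---and then passing to the limit via a dual Mittag--Leffler argument in the spirit of Lemma~\ref{lemma our acyclic assembly}.
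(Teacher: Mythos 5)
Your argument breaks at the step where you replace $X$ by $\rho X$ at the level of the homotopy category. The claim that $\catK(\Modcat R)(X,Y)\to\catK(\Modcat R)(\rho X,Y)$ is an isomorphism for every $Y\in\catK(\Inj R)$ — equivalently, that bounded-above acyclic complexes admit no non-zero maps to arbitrary complexes of injectives — is false. The null-homotopy you sketch only works when the \emph{target} is a left-bounded complex of injectives: there the induction runs upward, $f^m$ vanishes on $\Ker d_C^m=\Imm d_C^{m-1}$ by acyclicity of the source, and injectivity lets one extend along $\Imm d_C^m\hookrightarrow C^{m+1}$. Running downward from the top degree of $C$, the first obstruction is that $f^n\colon C^n\to Y^n$ must factor through $d_Y^{n-1}$, and neither acyclicity of $C$ nor injectivity of $Y$ produces such a factorization. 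Concretely, take $R=k[x]/(x^2)$, let $Y=(\cdots\to R\xto{x}R\xto{x}R\to\cdots)$ (a complex of injectives, as $R$ is self-injective), and let $C=\tau^{\le 0}Y=(\cdots\to R\xto{x}R\xto{x}(x)\to 0)$, which is acyclic and bounded above. A null-homotopy of the inclusion $C\to Y$ would have to write $(x)\hookrightarrow R=Y^0$ as $x\cdot h^0$ for some $h^0\colon(x)\to R$, but every such composite is zero since $\Imm h^0\subseteq\Ker(x)$. The same failure occurs for the cones actually arising: for $X=k$ and $\rho X=(\cdots\to R\xto{x}R\to 0)$ one checks that $\catK(\Modcat R)(k,Y)\to\catK(\Modcat R)(\rho X,Y)$ is the \emph{zero} map between two copies of $k$. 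Your final step inherits the same defect: in this example the natural map $\HH^0\Tot^\amalg\HOM_R(\rho X,Y)\to\HH^0\Tot^\Pi\HOM_R(\rho X,Y)$ is again zero between two copies of $k$, so it is not an isomorphism and no truncation/Mittag--Leffler argument can make it one.

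The paper's proof never identifies $\Hom_{\catK}(X,I)$ with $\Hom_{\catK}(\rho X,I)=\HH^0\Tot^\Pi\HOM_R(\rho X,I)$. Instead it applies Lemma~\ref{lemma our acyclic assembly}(1) to the map of left-bounded double complexes $\HOM_R(X,I)\to\HOM_R(\rho X,I)$ (row-wise a quasi-isomorphism because each $I^j$ is injective), obtaining a quasi-isomorphism of \emph{coproduct} totalizations; since $X$ itself is bounded, $\Tot^\amalg\HOM_R(X,I)=\Tot^\Pi\HOM_R(X,I)=\Hom_R(X,I)$, whence $\Hom_{\catK}(X,I)\cong\HH^0\Tot^\amalg\HOM_R(\rho X,I)$. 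The two totalizations are compared only for $\HOM_R(X,I)$, where they trivially agree, and from then on only $\Tot^\amalg\HOM_R(\rho X,I)$ appears, which dualizes correctly via $(\bigoplus)^\ast\cong\prod(-)^\ast$. If you replace your two middle steps by this single application of Lemma~\ref{lemma our acyclic assembly}, the remaining identifications in your outline (Lemma~\ref{lemma hom alg fact} and (\ref{align adjunction type iso for double cpxs})) go through as in the paper.
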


\begin{remark}
\begin{enumerate}
\item In particular it follows that \( \lambda( \catK^{\bounded}_{\mathsf{fpr}}(R) ) \) is a set of compact objects in \( \catK(\Inj R) \). Note that if \( R \) is right coherent then \(  \lambda(\catK^{\bounded}_{\mathsf{fpr}}(R)) \) is nothing but the bounded derived category of finitely presented modules, realized inside \( \catK(\Inj R) \) via injective resolutions. If \( R \) is even noetherian then it is shown in \cite{MR2157133} that these are in fact all the compact objects, and that \( \catK(\Inj R) \) is compactly generated.
\item Similarly, the set \(  \rho(\catK^{\bounded}_{\mathsf{fpr}}(R^{\op}))^{\vee} \) consists of compact objects in \( \catK( \Proj R ) \). In fact, by \cite[Proposition~7.12]{MR2439608} these are precisely all compact objects. If \( R \) is left coherent then  \(  \rho(\catK^{\bounded}_{\mathsf{fpr}}(R^{\op}))^{\vee} \)  is equivalent to the opposite of the bounded derived category of finitely presented left \( R \)-modules and by \cite[Proposition~7.14]{MR2439608} the category \( \catK(\Proj R ) \) is compactly generated.
\end{enumerate}
\end{remark}

\begin{proof}
(1): Let $I\in\catK(\Inj R)$, and let $X\in  \catK^{\bounded}_{\mathsf{fpr}}(R)$. Pick a projective resolution with finitely generated terms $\rho X$ of $X$, and let $\rho X \to X$ be a quasi-isomorphism. In particular, $\rho X$ is right bounded. Hence, by Lemma~\ref{lemma our acyclic assembly}, the induced morphism $\HOM_R(X,I) \to \HOM_R(\rho X, I)$ of left bounded double complexes totalizes to a quasi-isomorphism \[\Tot^\amalg(\HOM_R(X,I)) \to \Tot^\amalg(\HOM_R(\rho X, I)).\] Since $X$ is a bounded complex, this means in particular that
  \begin{align*}
    \Hom_{\catK}(X,I) & = \HH^0\Tot^\Pi(\HOM_R(X, I)) \\
    & = \HH^0\Tot^\amalg(\HOM_R(X, I)) \\
    & \cong \HH^0\Tot^\amalg(\HOM_R(\rho X, I)).
  \end{align*}
  Moreover, since $\rho X$ consists of finitely generated projective $R$-modules, Lemma~\ref{lemma hom alg fact} gives an isomorphism of double complexes\[\HOM_R(\rho X,I)\cong I\tilde\otimes_R(\rho X)^\vee.\]
With these observations we can verify the claim of the theorem as follows.
  \begin{align*}
  \Hom_{\catK}(\lambda X, I)^\ast & \cong \Hom_{\catK}(X, I)^\ast && \text{\( \lambda \) is left adjoint} \\
  & \cong \HH^0 \Tot^\amalg(\HOM_R(\rho X,I))^\ast \\
  & \cong \HH^0 \Tot^\amalg (I\tilde\otimes_R (\rho X)^\vee)^\ast \\
  & \cong \HH^0 \Tot^\Pi \left((I\tilde\otimes_R (\rho X)^\vee)^\ast\right) && \text{dual of \( \amalg \) is \( \Pi \)} \\
  & \cong \HH^0 \Tot^\Pi \HOM_R(I, ((\rho X)^\vee)^\ast) && \text{by (\ref{align adjunction type iso for double cpxs})} \\
  & = \Hom_{\catK}(I, \nu\rho X).
  \end{align*}

(2): Let $P\in\catK(\Proj R)$, and let $Y\in\catK^{\bounded}_{\mathsf{fpr}}(R^{\op})$. Pick a projective resolution with finitely generated terms $\rho Y$ of $Y$, and let $\rho Y \to Y$ be a quasi-isomorphism. Since $Y$ and $\rho Y$ are right bounded, so are $P\tilde\otimes_R Y $ and $P \tilde\otimes_R \rho Y$. In particular, by Lemma~\ref{lemma our acyclic assembly} the induced morphism $P\tilde\otimes_R\rho Y \to P \tilde\otimes_R Y$ of double complexes totalizes to a quasi-isomorphism \[\Tot^\Pi(P\tilde\otimes_R\rho Y)\to\Tot^\Pi(P\tilde\otimes_R Y).\] Moreover, since $\rho Y$ consists of finitely generated $R^{\op}$-modules, we have \[\HOM_R((\rho Y)^\vee, P) \cong P \tilde\otimes_R \rho Y\] by Lemma~\ref{lemma hom alg fact}. Combining these observations yields
\begin{align*}
  \Hom_{\catK}((\rho Y)^\vee, P) & = \HH^0\Tot^\Pi(\HOM_R(\rho Y^\vee, P)) \\
  & \cong \HH^0\Tot^\Pi(P \tilde\otimes_R \rho Y) \\
  & \cong \HH^0\Tot^\Pi(P \tilde\otimes_R Y) \\
  & \cong \HH^0\Tot^\amalg(P \tilde\otimes_R Y),
\end{align*}
where the last equality holds since $Y$ is a bounded complex. The claim now follows by the following calculation.
\begin{align*}
(\HH^0\Tot^\amalg(P \tilde\otimes_R Y))^\ast & \cong \HH^0\Tot^\Pi\left((P \tilde\otimes_R Y)^\ast \right) && \text{dual of \( \amalg \) is \( \Pi \)}\\
& \cong \HH^0\Tot^\Pi\HOM_R(P, Y^\ast) && \text{by (\ref{align adjunction type iso for double cpxs})} \\
& = \Hom_{\catK}(P, Y^\ast) \\
& \cong \Hom_{\catK}(P, \rho (Y^\ast)) && \text{\( \rho \) is right adjoint} \qedhere
\end{align*}
\end{proof}

\section{Transferring partial Serre functors to subcategories} \label{section transferring serre}
Let $R$ be a noetherian ring. Recall that an $R$-module is \emph{Gorenstein projective} if it appears as a boundary of a totally acyclic complex over $\Proj R$. Such modules form the full subcategory $\GProj R$ of $\Modcat R$. We write $\Gproj R = \GProj R \cap \modcat R$.

The categories $\GProj R$ and $\Gproj R$ are Frobenius exact, so the stabilizations $\GStable R$ and $\Gstable R$ are triangulated. The \emph{singularity category} of $R$ is the Verdier quotient $\Dsg(R)=\Db(\modcat R)/\perf R$, and there is an exact embedding \[\Gstable R\hookrightarrow\Dsg(R).\]

If $R$ is Gorenstein, then each finitely generated $R$-module admits a `Gorenstein projective approximation'. This means that the inclusion $\Gstable R\hookrightarrow\stablemodules R$ admits a right adjoint functor $\GP$---the reader is referred to \cite{buchweitz1986maximal,MR1044344}, see also \cite[Proposition~2.5]{BIKP}.

The following is \cite[Theorem~2.9]{BIKP}.

\begin{theorem} \label{theorem BIKP}
  Let $A$ be a Gorenstein algebra which is finite dimensional over a field. There are (classical) Serre functors \[ \Gstable A\xto{[-1]\circ\GP\circ\nu} \Gstable A \text{\,\,  and \,\,\,} \Dsg(A)\xto{[-1]\circ\overline{\mathbb{L} \nu}}\Dsg(A).\]

 In \( \nu \) here we dualize with respect to the base field. Note that the derived functor \( \mathbb L \nu \) induces a well-defined functor on singularity categories, denoted by \( \overline{ \mathbb L \nu} \) above.
\end{theorem}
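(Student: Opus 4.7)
The plan is to derive Theorem~\ref{theorem BIKP} from the partial Serre functors $\mathbb S_{\mathsf{sg}}$ and $\mathbb S_{\mathsf G}$ announced in the introduction (proven later as Theorem~\ref{theorem partial serre functor on Dsg} and Theorem~\ref{theorem partial serre on gproj}) by showing that the two standing hypotheses force these partial Serre functors to become honest Serre functors. Being finite-dimensional over a field makes $\Dsg(A)$ and $\Gstable A$ $\Hom$-finite, so in particular the codomain $\Kac(\Inj A)$ of $\mathbb S_{\mathsf{sg}}$ collapses to $\Kac(\inj A) \simeq \Dsg(A)$; being Gorenstein guarantees that $\mathbb L\nu = -\otimes_A^{\mathbb L}\nu A$ restricts to an autoequivalence of $\perf A$ (since $\nu A$ then has finite projective dimension) and therefore descends to a well-defined autoequivalence $\overline{\mathbb L\nu}$ of $\Dsg(A)$.

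First, I would reduce the two assertions to one via Buchweitz's theorem: under the Gorenstein hypothesis the inclusion $\Gstable A \hookrightarrow \Dsg(A)$ is a triangle equivalence. It then suffices to establish the Serre functor on $\Dsg(A)$ and to match $[-1] \circ \overline{\mathbb L\nu}$ with $[-1] \circ \GP \circ \nu$ under this equivalence. The latter match is essentially bookkeeping: for a Gorenstein projective $M$, the ordinary $\nu M$ and the derived $\mathbb L\nu M$ differ by a bounded complex of injectives, which is perfect in the Gorenstein setting, so $\GP(\nu M)$ represents the same object in $\Gstable A$ that $\mathbb L\nu M$ does in $\Dsg(A)$.

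Second, to produce the Serre functor on $\Dsg(A)$ concretely, I would push the partial Serre functor $\mathbb S_{\mathsf M} \colon \Kb(\modcat A) \to \catK(\Modcat A)$ of Theorem~\ref{theorem partial serre for Kb(mod R)} down along the Verdier localisation $\Db(\modcat A) \to \Dsg(A)$. The formula $\mathbb S_{\mathsf M} M = \Ker(\nu(p))[2]$ simplifies radically modulo $\perf A$: the contractible presentation $P_1 \xto{p} P_0$ is perfect, applying $\nu$ preserves perfectness under the Gorenstein hypothesis, and the defining sequence $0 \to \Ker(\nu(p)) \to \nu P_1 \to \nu P_0 \to 0$ together with a shift count identifies $\mathbb S_{\mathsf M} M$ in $\Dsg(A)$ with $\overline{\mathbb L\nu} M[-1]$. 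Since $\mathbb S_{\mathsf M}$ already provides $\Hom_{\catK}(M,X)^\ast \cong \Hom_{\catK}(X, \mathbb S_{\mathsf M} M)$ for all $X \in \catK(\Modcat A)$, restricting to $X$ coming from $\Db(\modcat A)$ and descending both sides to $\Dsg(A)$ yields the Serre formula $\Hom_{\Dsg}(M,N)^\ast \cong \Hom_{\Dsg}(N, \overline{\mathbb L\nu} M[-1])$.

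The main obstacle I anticipate lies precisely in this descent: verifying that $\mathbb S_{\mathsf M}$ induces an endofunctor of $\Dsg(A)$ (vanishing on perfect complexes on both sides) and that the resulting shift is $[-1]$ rather than some other degree. The Gorenstein hypothesis is essential here---without it, $\nu$ of a perfect complex need not be perfect, and no single shift of $\overline{\mathbb L\nu}$ can describe the partial Serre functor modulo perfect complexes. A secondary, more routine obstacle is naturality of the identification $\mathbb S_{\mathsf M} M \cong \overline{\mathbb L\nu} M[-1]$ in $M$, which requires comparing two construction recipes---contractible-complex presentation on one side, derived tensor with $\nu A$ on the other---up to coherent quasi-isomorphism.
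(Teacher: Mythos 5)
The first thing to say is that the paper does not prove this statement at all: it is quoted as \cite[Theorem~2.9]{BIKP}, with the surrounding section devoted to \emph{generalizing} it (Theorem~\ref{theorem partial serre functor on Dsg}, Corollary~\ref{cor partial serre for Dsg(gorenstein)}, Theorem~\ref{theorem partial serre on gproj}). So you are supplying a derivation where the paper supplies a citation. Your overall strategy --- specialize the paper's partial Serre functors to the finite-dimensional Gorenstein case, where $\Hom$-finiteness upgrades a partial Serre functor to a classical one, reduce to a single statement via Buchweitz's equivalence $\Gstable A\simeq\Dsg(A)$, and identify the answer with $[-1]\circ\overline{\mathbb L\nu}$ --- is exactly the relationship the authors intend between their results and \cite{BIKP}, and Corollary~\ref{cor partial serre for Dsg(gorenstein)} does essentially all of the identification work in your third paragraph. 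Your observation that $\mathbb L\nu M\simeq\nu M$ for $M$ Gorenstein projective (since $\Ext^{>0}_A(M,A)=0$) makes the comparison of $\GP\circ\nu$ with $\overline{\mathbb L\nu}$ unproblematic.

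The genuine gap is the descent step: ``restricting to $X$ coming from $\Db(\modcat A)$ and descending both sides to $\Dsg(A)$ yields the Serre formula.'' A natural isomorphism $\Hom_{\catK}(M,X)^\ast\cong\Hom_{\catK}(X,\mathbb S_{\mathsf M}M)$ in $\catK(\Modcat A)$ does not pass to the Verdier localisations $\catK\to\catD\to\Dsg$: morphism spaces change under localisation (they become filtered colimits over roofs), dualizing does not commute with those colimits, and neither side of your formula computes the corresponding $\Hom$-space in $\Dsg(A)$. The paper's mechanism for transporting partial Serre duality into such a quotient is Lemma~\ref{lemma serre duality from an adjoint triple}: one realizes $\Dsg(A)$ fully faithfully as the compact objects of $\Kac(\Inj A)$ via $\bigL\lambda$ (Krause's theorem) and conjugates the partial Serre functor $\mathbb S_{\mathsf I}$ of Theorem~\ref{theorem serre functors in K(Inj) and K(Proj)} by the adjoint triple around $\Kac(\Inj A)\hookrightarrow\catK(\Inj A)$; it is the full faithfulness of $\bigL\lambda$ onto the compacts, together with the fact (Corollary~\ref{cor partial serre for Dsg(gorenstein)}) that in the Gorenstein case the Serre dual of a compact object is again compact, that lets one read off an honest Serre duality \emph{inside} $\Dsg(A)$. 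If you insist on starting from $\mathbb S_{\mathsf M}$ and $\catK(\Modcat A)$, you would need an analogous fully faithful embedding of $\Dsg(A)$ into $\catK(\Modcat A)$ together with the relevant adjoints, which you have not provided. A secondary slip: $\Kac(\Inj A)$ does not ``collapse to'' $\Kac(\inj A)\simeq\Dsg(A)$ for a finite-dimensional algebra --- it is a large compactly generated category whose compacts are $\Dsg(A)$ up to summands; what you actually need is the statement about compacts being preserved, not a collapse of the ambient category.
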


The goal of the current subsection is to extend Theorem~\ref{theorem BIKP}, by relaxing the size condition, lifting the homological restriction of Gorensteinness, and providing partial Serre functors inside larger ambient categories. This will be achieved in Theorem~\ref{theorem partial serre functor on Dsg} and Theorem~\ref{theorem partial serre on gproj}. Our strategy is to investigate how the partial Serre functors of Theorem~\ref{theorem serre functors in K(Inj) and K(Proj)} induce partial Serre functors in certain subcategories of $\catK(\Inj R)$ and of $\catK(\Proj R)$. This will rely on the following general observation.

\begin{lemma} \label{lemma serre duality from an adjoint triple}
Take an adjoint triple of triangle functors
\[ \begin{tikzcd}
\catT' \ar[r,"e"] \ar[r,<-,"\bigR",bend right=40] \ar[r,<-,"\bigL",bend left=40] &[2em] \catT
\end{tikzcd} \]
and let $\mathbb S\colon \catX \to \catT$ be a partial Serre functor for a subcategory $\catX\subset\catT$.

Then the essential image $\bigL(\catX)\subset\catT'$ admits a partial Serre functor $\mathbb S'\colon \bigL(\catX) \to \catT'$ given by $\mathbb S'(\bigL X)= \bigR\mathbb S X$ for each $X\in\catX$.
\end{lemma}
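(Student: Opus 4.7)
The plan is to verify the defining natural isomorphism of a partial Serre functor directly, by chaining the adjunction isomorphisms of the triple with the partial Serre duality of $\mathbb{S}$. For each $X \in \catX$ and $T' \in \catT'$, I would compute
\begin{align*}
\catT'(\bigL X, T')^{\ast} & \cong \catT(X, e T')^{\ast} \\
& \cong \catT(e T', \mathbb{S} X) \\
& \cong \catT'(T', \bigR \mathbb{S} X),
\end{align*}
invoking $\bigL \dashv e$, the partial Serre duality for $\mathbb{S}$ applied to $X \in \catX$ and $eT' \in \catT$, and $e \dashv \bigR$ in turn. Each isomorphism is natural in both $X$ and $T'$, so the composite is as well.

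Given this, it remains to promote the assignment $\bigL X \longmapsto \bigR \mathbb{S} X$ into an honest functor on the essential image $\bigL(\catX)$. I would invoke Theorem~\ref{theorem main general result on partial serre}: the calculation above shows that for every object $A \in \bigL(\catX)$ the functor $\catT'(A, -)^{\ast}$ is representable, with representing object $\bigR \mathbb{S} X$ whenever $A \cong \bigL X$. The general theorem then packages this data into the desired partial Serre functor $\mathbb{S}' \colon \bigL(\catX) \to \catT'$, acting on objects by the claimed formula.

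I do not anticipate a real obstacle here; at its core the lemma is a formal consequence of the adjunctions. The one mild subtlety worth flagging is that $\bigL \colon \catX \to \bigL(\catX)$ need not be an equivalence, so a priori morphisms in $\bigL(\catX)$ are not all of the form $\bigL \varphi$. This ambiguity is what makes the appeal to Theorem~\ref{theorem main general result on partial serre} convenient; alternatively, one could handle it by a direct Yoneda argument, using that the representing object of $\catT'(A,-)^{\ast}$ is determined up to canonical isomorphism by $A$ and that morphisms between representing objects correspond to natural transformations between the represented functors.
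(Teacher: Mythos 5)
Your proof is correct and is essentially identical to the paper's, which consists of exactly the same three-step chain $\catT'(\bigL X,-)^\ast \cong \catT(X,e-)^\ast \cong \catT(e-,\mathbb S X) \cong \catT'(-,\bigR\mathbb S X)$. The functoriality point you flag is handled in the paper by Observation~\ref{obs.dualizable_gives_functor} (representability of $\catT'(A,-)^\ast$ for all $A$ in the subcategory automatically yields the partial Serre functor), so your appeal to the appendix is the intended resolution.
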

\begin{proof}
$\catT' (\bigL X,-)^\ast \cong \catT(X, e-)^\ast \cong \catT(e-, \mathbb S X) \cong \catT'(-, \bigR \mathbb S X)$.
\end{proof}

\begin{example} \label{ex Serre for derived}
Note that in Lemma~\ref{lemma serre duality from an adjoint triple} it suffices for \( \bigL \) to be defined on \( \catX \) and \( \bigR \) to be defined on \( \mathbb{S} (\catX) \). An instance employing such a partially defined right adjoint is the following.

Consider \( \catD(\Modcat R) \), identified (via injective Cartan--Eilenberg resolutions) with the full subcategory of homotopically injective complexes in \( \catK(\Inj R) \). Note that this inclusion has a left adjoint: the natural projection \( q \colon \catK(\Inj R) \to \catD(\Modcat R) \). In general \( q \) does not preserve compacts. Therefore our inclusion cannot have a right adjoint in general.

However if we consider \( \catX = \lambda(\Kb(\proj R)) \), the category of perfect complexes as a subcategory of \( \catK(\Inj R) \) via injective resolutions, then the situation improves: By Theorem~\ref{theorem serre functors in K(Inj) and K(Proj)} we have a partial Serre functor
\[ \mathbb{S}_{\catK(\Inj R)} \colon \catX \to \catK(\Inj R) \colon (\lambda X) \longmapsto \nu X \]
for \( X \in \Kb(\proj R) \). Of course now the essential image \( \mathbb{S}_{\catK(\Inj R)} (\catX) \) consists of finite complexes of injectives, and thus lies inside our subcategory of homotopically injectives. Trivially we get a right adjoint defined on \( \mathbb{S}_{\catK(\Inj R)} (\catX) \), namely the identity.

Thus we obtain the partial Serre functor \( \Kb(\proj R) \to \catD(\Modcat R) \) given by \( \nu \).
\end{example}

\begin{observation} \label{observation induced rel serre along left adjoint}
  Consider a compactly generated triangulated category $\catT$ and some $X\in\catT^{\compacts}$. By Theorem~\ref{theorem Neeman trick} the subcategory $X^\perp\subset\catT$ is compactly generated again, so Corollary~\ref{corollary compact generation gives partial serre} yields the existence of partial Serre functors \[\mathbb S\colon\catT^{\compacts}\to\catT \text{ and } \mathbb S'\colon (X^{\perp})^{\compacts}\to X^{\perp}.\] We may now observe that the latter is induced by the former: The subcategory $X^\perp$ is closed under products and, since $X$ is compact, also under coproducts. By Theorem~\ref{thm:neemanthreeadjoints} the inclusion functor $X^\perp \hookrightarrow \catT$ thus admits a left adjoint $\bigL$ and a right adjoint $\bigR$. Moreover, $(X^\perp)^{\compacts}= \bigL(\catT^{\compacts})$ by Theorem~\ref{theorem Neeman trick} (up to direct summands). In particular, since the partial Serre functor $\mathbb S'$ is unique up to natural isomorphism, Lemma~\ref{lemma serre duality from an adjoint triple} yields the following commutative diagram.
  \[ \begin{tikzcd}
  (X^\perp)^{\compacts} \ar[d,"\mathbb S' "]  & \catT^{\compacts} \ar[d,"\mathbb S"] \ar[l,swap,"\bigL"]  \\
  X^\perp & \catT  \ar[l,swap,"\bigR"]
  \end{tikzcd} \]
\end{observation}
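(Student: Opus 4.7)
The plan is to verify, in order, each of the structural claims made in the statement, culminating in an appeal to Lemma~\ref{lemma serre duality from an adjoint triple}. The observation is essentially a book-keeping exercise, assembling ingredients already established: Theorem~\ref{theorem Neeman trick}, Theorem~\ref{thm:neemanthreeadjoints}, Corollary~\ref{corollary compact generation gives partial serre}, and uniqueness of the Serre dual.

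First I would justify the adjoint triple. Since $\catT(X,-)$ commutes with products, $X^\perp$ is closed under products in $\catT$; since $X$ is compact, $\catT(X,-)$ also commutes with coproducts, so $X^\perp$ is closed under coproducts as well. By Theorem~\ref{theorem Neeman trick} the subcategory $X^\perp$ is compactly generated, hence Theorem~\ref{thm:neemanthreeadjoints}(1)(2) applies to the inclusion $e \colon X^\perp \hookrightarrow \catT$ (viewed as an exact functor out of a compactly generated category) and yields both a right adjoint $\bigR$ (since $e$ preserves coproducts) and a left adjoint $\bigL$ (since $e$ preserves products). Theorem~\ref{theorem Neeman trick} moreover identifies $(X^\perp)^{\compacts}$ with $\bigL(\catT^{\compacts})$ up to direct summands.

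Next I invoke Lemma~\ref{lemma serre duality from an adjoint triple} applied to the adjoint triple $(\bigL, e, \bigR)$ and to the partial Serre functor $\mathbb{S} \colon \catT^{\compacts} \to \catT$ supplied by Corollary~\ref{corollary compact generation gives partial serre}. This gives a partial Serre functor on the essential image $\bigL(\catT^{\compacts}) \subseteq (X^\perp)^{\compacts}$ defined on objects by $\bigL Y \longmapsto \bigR \mathbb{S} Y$. On the other hand, Corollary~\ref{corollary compact generation gives partial serre} applied inside $X^\perp$ yields the partial Serre functor $\mathbb S' \colon (X^\perp)^{\compacts} \to X^\perp$. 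Partial Serre functors are determined up to unique natural isomorphism by their defining formula (they are representing objects), so the two must agree on $\bigL(\catT^{\compacts})$, giving the desired commutativity $\mathbb{S}' \circ \bigL \cong \bigR \circ \mathbb{S}$.

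The only subtlety, and the mildest of obstacles, is the phrase ``up to direct summands'' in Theorem~\ref{theorem Neeman trick}: the functor $\bigL$ need not be essentially surjective onto $(X^\perp)^{\compacts}$ on the nose. However, both $\mathbb{S}'$ and $\bigR \mathbb{S}$ are additive, and additive functors are determined on an essentially full subcategory by their values on direct summands; uniqueness of representing objects then transports the isomorphism to the idempotent completion. No further work is required.
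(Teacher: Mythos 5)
Your proposal is correct and follows essentially the same route as the paper: closure of $X^\perp$ under (co)products plus Theorem~\ref{thm:neemanthreeadjoints} to obtain the adjoint triple, Theorem~\ref{theorem Neeman trick} to identify $(X^\perp)^{\compacts}$ with $\bigL(\catT^{\compacts})$, and then Lemma~\ref{lemma serre duality from an adjoint triple} together with uniqueness of partial Serre functors. Your extra remark about the ``up to direct summands'' caveat is fine but not even needed for the diagram as stated, since the diagram only asserts $\mathbb S'\circ\bigL\cong\bigR\circ\mathbb S$ on $\catT^{\compacts}$, where both sides represent the same functor.
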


\subsection*{The singularity category}
Let $\Kac(\Inj R)$ be the subcategory of acyclic complexes in $\catK(\Inj R)$. Note that $\lambda (R)$ is compact when considered as an object in $\catK(\Inj R)$, and moreover that $\Kac(\Inj R) = (\lambda (R))^\perp$ as subcategories of $\catK(\Inj R)$. Using this fact, one may construct --- see e.g.\ \cite{MR2157133} --- for $R$ noetherian, an adjoint triple
\[ \begin{tikzcd}
\Kac(\Inj R) \ar[r,hookrightarrow] & \catK(\Inj  R).  \ar[l,bend right=45,swap,"\bigL"] \ar[l,bend left=45,swap,"\bigR"]
\end{tikzcd} \]
The left adjoint $\bigL$ takes the subcategory $\catK(\Inj R)^{\compacts}\simeq\Db(\modcat R)$ to a compact generating set for $\Kac(\Inj R)$; up to direct summands, $\Kac(\Inj R)^{\compacts}$ is equivalent to the singularity category $\Dsg(R)$: More explicitly, the sequence of constructions
\[ \Dsg(R) \xto{\text{pick preimage}} \Db(\modcat R) \xto{\bigL \lambda} \Kac(\Inj R)^{\compacts} \]
is a well-defined functor, which is fully faithful and dense up to summands.

\begin{theorem}  \label{theorem partial serre functor on Dsg}
Let $R$ be a noetherian ring. There is a partial Serre functor \[\mathbb S\colon \Dsg(R)\to\Kac(\Inj R)\] given as follows. For an object \( \overline{X} \) in \( \Dsg(R) \), pick a representative \( X \in \Db(\modcat R) \).  Then $\mathbb S \overline{X} =\bigR \nu \rho (X)$.
\end{theorem}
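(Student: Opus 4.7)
The plan is to obtain this theorem as a direct application of Lemma~\ref{lemma serre duality from an adjoint triple} to the adjoint triple $\bigL \dashv \inc \dashv \bigR$ between $\Kac(\Inj R)$ and $\catK(\Inj R)$ recalled immediately before the statement, feeding it the partial Serre functor supplied by Theorem~\ref{theorem serre functors in K(Inj) and K(Proj)}(1). The theorem is really a packaging of those two results together with the embedding $\Dsg(R) \hookrightarrow \Kac(\Inj R)$ described in the preceding paragraph.

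First, since $R$ is noetherian every finitely generated module admits a degree-wise finitely generated projective resolution, so $\Kb_{\mathsf{fpr}}(R) = \Db(\modcat R)$ and Theorem~\ref{theorem serre functors in K(Inj) and K(Proj)}(1) provides a partial Serre functor
\[ \mathbb S_{\mathsf I}\colon \lambda(\Db(\modcat R)) \to \catK(\Inj R), \qquad \lambda X \longmapsto \nu\rho X. \]
Applying Lemma~\ref{lemma serre duality from an adjoint triple} to $\mathbb S_{\mathsf I}$ and the adjoint triple above then yields a partial Serre functor $\mathbb S'$ on the essential image $\bigL(\lambda(\Db(\modcat R))) \subset \Kac(\Inj R)$, given by
\[ \mathbb S'(\bigL\lambda X) \;=\; \bigR\,\mathbb S_{\mathsf I}(\lambda X) \;=\; \bigR\nu\rho X. \]

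Second, I would invoke the identification set up immediately before the theorem: the composite $\Dsg(R) \xrightarrow{\text{pick preimage}} \Db(\modcat R) \xrightarrow{\bigL\lambda} \Kac(\Inj R)^{\compacts}$ is a well-defined, fully faithful functor whose essential image is exactly $\bigL(\lambda(\Db(\modcat R)))$. Transporting $\mathbb S'$ along this embedding produces a partial Serre functor $\mathbb S\colon \Dsg(R) \to \Kac(\Inj R)$ with value $\bigR\nu\rho X$ on any representative $X$ of $\overline X \in \Dsg(R)$. The fact that $\bigL\lambda$ kills $\perf R$ and therefore descends to $\Dsg(R)$ (already noted before the statement) guarantees independence of the chosen representative. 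There is no real obstacle here; the whole content is bookkeeping around the identification of the domain of $\mathbb S'$ with $\Dsg(R)$, and the expected difficulty lies not in this proof but rather in having constructed the relevant adjoint triple and the embedding $\Dsg(R) \hookrightarrow \Kac(\Inj R)$ in advance.
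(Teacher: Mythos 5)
Your proposal is correct and follows essentially the same route as the paper: the paper's proof likewise cites Theorem~\ref{theorem serre functors in K(Inj) and K(Proj)} together with Observation~\ref{observation induced rel serre along left adjoint}, and that observation is precisely the application of Lemma~\ref{lemma serre duality from an adjoint triple} to the adjoint triple around $\Kac(\Inj R)\hookrightarrow\catK(\Inj R)$ that you carry out directly. Your extra remarks on $\catK^{\bounded}_{\mathsf{fpr}}(R)=\Db(\modcat R)$ for $R$ noetherian and on well-definedness over $\perf R$ are exactly the bookkeeping the paper compresses into its one-line proof.
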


\begin{proof}
In view of the adjoint triple above, the claim follows immediately from Theorem~\ref{theorem serre functors in K(Inj) and K(Proj)} and Observation~\ref{observation induced rel serre along left adjoint} --- note here that \( \overline{X} \) is identified with \( \bigL \lambda X \) when considering it as an object of \( \Kac(\Inj R) \).
\end{proof}

It may not be completely obvious that Theorem~\ref{theorem partial serre functor on Dsg} is in fact a generalization of Theorem~\ref{theorem BIKP}. However, If \( R \) is Gorenstein in the sense of Iwanaga \cite{MR597688}, then we have the following more direct description.

\begin{corollary} \label{cor partial serre for Dsg(gorenstein)}
Let \( R \) be an Iwanaga--Gorenstein noetherian ring. The partial Serre functor $\mathbb S\colon \Dsg(R)\to\Kac(\Inj R)$ of Theorem~\ref{theorem partial serre functor on Dsg} is induced by the functor \( \mathbb{L} \nu[-1] \colon \Db(\modcat R) \to \Db(\Modcat R) \). More explicitly we have
\[ \mathbb{S} \bigL \lambda X = \bigL \lambda \mathbb{L} \nu X[-1] \]
for \( X \in \Db(\modcat R) \).
\end{corollary}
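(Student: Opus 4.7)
The plan is to leverage the recollement
\[ \Kac(\Inj R) \xrightarrow{\iota} \catK(\Inj R) \xrightarrow{q} \catD(\Modcat R) \]
directly, together with the partial Serre formula from Theorem~\ref{theorem partial serre functor on Dsg}, which already gives $\mathbb{S}\bigL\lambda X = \bigR\nu\rho X$. The task is therefore to identify this with $\bigL\lambda(\mathbb{L}\nu X)[-1]$ inside $\Kac(\Inj R)$.

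First, set $Y = \nu\rho X$. Since $\rho X$ is a projective resolution of $X$, we have $qY = \mathbb{L}\nu X$ in $\catD(\Modcat R)$. The Iwanaga--Gorenstein hypothesis forces $\mathbb{L}\nu X$ to be bounded, so its K-injective replacement $tqY$ is the classical injective resolution, which agrees with $\lambda(\mathbb{L}\nu X)$ when $\lambda$ is applied to a bounded representative. The canonical recollement triangle associated to the right adjoint of $\iota$ then reads
\[ \iota\bigR(\nu\rho X) \to \nu\rho X \to \lambda(\mathbb{L}\nu X) \to \iota\bigR(\nu\rho X)[1]. \]
Applying $\bigL$ and using that $\bigL\iota = \id_{\Kac(\Inj R)}$ (because $\iota$ is fully faithful) yields
\[ \bigR(\nu\rho X) \to \bigL(\nu\rho X) \to \bigL\lambda(\mathbb{L}\nu X) \to \bigR(\nu\rho X)[1]. \]
If the middle term vanishes, this triangle collapses to the desired isomorphism $\bigR(\nu\rho X) \cong \bigL\lambda(\mathbb{L}\nu X)[-1]$, which combined with Theorem~\ref{theorem partial serre functor on Dsg} completes the proof.

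The main obstacle is therefore to verify $\bigL(\nu\rho X) = 0$, equivalently that $\Hom_{\catK(\Inj R)}(\nu\rho X, A) = 0$ for every acyclic complex of injectives $A$. The starting observation is that $\rho X$, as a bounded-above complex of finitely generated projectives, is K-projective in $\catK(\Modcat R)$, so $\Hom_{\catK}(\rho X, -)$ kills every acyclic complex. The remaining (and harder) step is to transport this orthogonality across $\nu$ using the Iwanaga--Gorenstein hypothesis. Concretely, one route is via the Iyengar--Krause style equivalence which, for Iwanaga--Gorenstein rings, intertwines $\catK(\Proj R)$ with $\catK(\Inj R)$ and identifies their acyclic subcategories; alternatively, one can argue directly by combining Lemma~\ref{lemma hom alg fact} with the finite injective dimension of $R$ to rewrite $\Hom(\nu\rho X, A)$ as a $\Hom$-expression involving $\rho X$ against an acyclic complex, which then vanishes by K-projectivity.
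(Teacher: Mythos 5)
Your overall strategy --- feeding the formula $\mathbb S \bigL\lambda X = \bigR\nu\rho X$ of Theorem~\ref{theorem partial serre functor on Dsg} into the localization triangle $\iota\bigR(\nu\rho X)\to\nu\rho X\to\lambda(\mathbb{L}\nu X)\to\iota\bigR(\nu\rho X)[1]$ and then applying the triangle functor $\bigL$ --- is sound and genuinely different from the paper's argument, which first reduces to the case of a finitely generated Gorenstein projective module $X$ and then identifies both $\bigR\nu\rho X$ and $\bigL\lambda\nu X[-1]$ with the complex $\nu P$ built from a complete resolution $P$ of $X$. Your route treats all of $\Db(\modcat R)$ uniformly and isolates the entire content of the corollary in a single vanishing statement, $\bigL(\nu\rho X)=0$; the boundedness of $\mathbb{L}\nu X$, needed to identify the third term of the triangle with $\lambda$ of a bounded representative, is correctly attributed to the Gorenstein hypothesis via the vanishing of $\Ext^n_R(X,R)$ for large $n$.

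However, the proof is not complete: the decisive step $\bigL(\nu\rho X)=0$, i.e.\ $\Hom_{\catK}(\nu\rho X, A)=0$ for every acyclic complex of injectives $A$, is exactly where all the work lies, and you only gesture at two possible arguments without carrying either out. Your ``direct'' route does not in fact reduce to the K-projectivity of $\rho X$ as stated: writing $\nu\rho X\cong \rho X\otimes_R R^\ast$ and adjointing gives $\Hom_R(\nu\rho X, A)\cong\Hom_R\bigl(\rho X,\Hom_R(R^\ast,A)\bigr)$, but K-projectivity of $\rho X$ only helps if $\Hom_R(R^\ast,A)$ is again acyclic --- and proving that is precisely the point where one must use that the injective module $R^\ast$ has finite projective dimension (Iwanaga's theorem) together with dimension shifting against the cycle modules of $A$. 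This is the same computation the paper performs directly, by iteratively constructing a null-homotopy for any chain map from a right bounded complex of injectives to an acyclic complex of injectives; your first route, via a $\catK(\Proj R)\simeq\catK(\Inj R)$ equivalence, would also work but imports machinery not developed here. So the gap is real but local: supply the lemma that, over an Iwanaga--Gorenstein ring, every right bounded complex of injectives lies in ${}^\perp\Kac(\Inj R)$, and your argument closes.
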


\begin{proof}
Note first that \( \mathbb{L} \nu \) maps finite complexes of projectives to finite complexes of injectives, which in turn vanish when pushed to \( \Kac(\Inj R) \). In particular both sides of the above equation vanish on \( \Kb(\proj R) \), and we may replace \( X \) by a Gorenstein projective finitely generated module concentrated in degree \( 0 \). Note in particular that after this replacement \( \mathbb{L} \nu X = \nu X \).

By definition of Gorenstein projective there is a totally acyclic complex \( P \) of finitely generated projectives, such that \( X = \boundary^1 P \). After applying \( \nu \) we obtain the following complex, which is still exact.
\[ \nu P \colon \underbrace{\cdots \to \nu P^{-1} \to \nu P^0}_{\nu \rho X} \to \nu P^1 \to \nu P^2 \to \cdots \]
Recall that there are no maps from acyclic complexes to left bounded complexes of injectives in the homotopy category. Therefore the natural map
\[ \Hom_{\catK}(-, \nu P) \to \Hom_{\catK}(-, \nu \rho X) \]
induces an isomorphism of functors on \( \Kac(\Inj R) \). Since \( \nu P \in \Kac(\Inj R) \) this means that \( \bigR \nu \rho X = \nu P \).

On the other hand, observe that the right half of \( \nu P \) is an injective resolution of \( \nu X \), more precisely
\[ \lambda \nu X [-1] = \left( \cdots \to 0 \to 0 \to \nu P^1 \to \nu P^2 \to \cdots\right), \]
where the shift is due to the fact that the complex on the right hand side begins in degree \( 1 \). Similarly to before, we can observe that \( \bigL \lambda \nu X [-1] = \nu P\): Here we use that \( \Hom_{\catK}(I, \Kac(\Inj R)) = 0 \) for any right bounded complex of injectives \( I \). Indeed, given any map of complexes \( I \to \Kac(\Inj R) \), using the fact that the terms of \( I \) have finite projective dimension (see \cite[Theorem~2]{MR597688}) one iteratively from right to left constructs a null-homotopy.

In particular \( \nu P \) lies in the image of \( \bigL \lambda \) again, and we have
\[ \mathbb{S} \bigL \lambda X \overset{\text{Thm~\ref{theorem partial serre functor on Dsg}}}{=} \bigR \nu \rho (X) = \nu P = \bigL \lambda \nu X [-1]. \qedhere \]
\end{proof}

\subsection*{Gorenstein projectives} Before we go on, let us briefly revisit the Gorenstein projective approximation functor. Let $\Ktac(\Proj R)$ denote the subcategory of totally acyclic complexes in $\catK(\Proj R)$, i.e.\ those exact complexes which remain exact under $\Hom_R(-,\Proj R)$. Recall e.g.\ from \cite[Observation~2.21]{MR3946864} that if the noetherian ring $R$ admits a dualizing complex, then there is an adjoint triple
\[ \begin{tikzcd}
\Ktac(\Proj R) \ar[r,hookrightarrow] & \catK(\Proj R).  \ar[l,bend right=45,swap,"\bigL"] \ar[l,bend left=45,swap,"\bigR"]
\end{tikzcd} \]
The construction of this triple utilizes the fact that if \( D_R \) is a dualizing complex then \[\Ktac(\Proj R)=\left(R\oplus \rho\Hom(D_R,\lambda R)\right)^\perp\] as subcategories of $\catK(\Proj R)$, and that both $R$ and $\rho\Hom(D_R, \lambda R)$ are compact in $\catK(\Proj R)$. See  \cite[Proposition~2.15]{MR3946864}.
\begin{proposition} \label{proposition description of the functor GP}
Let $R$ be a noetherian ring which admits a dualizing complex. The inclusion $\GStable R\hookrightarrow\stableModules R$ admits a right adjoint $\GP\colon\stableModules R\to\GStable R$, which is given by choosing a preimage in $\Modcat R$ before applying \[\catK(\Modcat R)\stackrel{\rho}\to\catK(\Proj R)\stackrel{\bigR}\to\Ktac(\Proj R)\stackrel{\boundary^1}\to\GStable R.\]
\end{proposition}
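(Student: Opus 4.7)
The strategy is to identify $\GP M$ via an explicit chain of adjunctions, using the classical triangle equivalence $\boundary^1 \colon \Ktac(\Proj R) \xrightarrow{\sim} \GStable R$ (\cite{buchweitz1986maximal}) to translate between the homotopy-categorical and the stable-module-categorical pictures.

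First I would set up the composite adjunction. Since $\Ktac(\Proj R) \hookrightarrow \catK(\Proj R)$ has right adjoint $\bigR$ and $\catK(\Proj R) \hookrightarrow \catK(\Modcat R)$ has right adjoint $\rho$, the composition $\bigR \rho$ is right adjoint to the composite inclusion. Hence, given $P \in \Ktac(\Proj R)$ and a preimage $M \in \Modcat R$ of a class in $\stableModules R$, one has natural isomorphisms
\[
\Hom_{\Ktac(\Proj R)}(P, \bigR \rho M) \cong \Hom_{\catK(\Proj R)}(P, \rho M) \cong \Hom_{\catK(\Modcat R)}(P, M).
\]
So it remains to compare $\Hom_{\catK(\Modcat R)}(P, M)$ with $\Homstable(\boundary^1 P, M)$.

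The central technical point is the natural isomorphism
\[
\Hom_{\catK(\Modcat R)}(P, M) \cong \Homstable(\boundary^1 P, M)
\]
for $P \in \Ktac(\Proj R)$ and $M \in \Modcat R$ viewed as a complex concentrated in degree $0$. A chain map $P \to M[0]$ is precisely a morphism $\phi \colon P^0 \to M$ with $\phi d^{-1} = 0$; such $\phi$ factors canonically through the epimorphism $\pi \colon P^0 \twoheadrightarrow \boundary^1 P$ induced by $d^0$, giving $\bar\phi \colon \boundary^1 P \to M$. Null-homotopies correspond to morphisms $h \colon P^1 \to M$ with $\phi = h d^0$, i.e., to maps $\bar\phi$ that extend along the inclusion $\boundary^1 P \hookrightarrow P^1$. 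I would then observe that such an extension exists exactly when $\bar\phi$ factors through a projective: one direction is immediate (take $P^1$ itself as the projective), and the converse uses total acyclicity of $P$, which forces $\Ext^1_R(\Imm d^1, Q) = 0$ for every projective $Q$, so any morphism $\boundary^1 P \to Q$ extends along $\boundary^1 P \hookrightarrow P^1$.

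Combining the two displays yields the adjunction between the functor $M \longmapsto \boundary^1 \bigR \rho M$ and the inclusion $\GStable R \hookrightarrow \stableModules R$, as claimed. Well-definedness on $\stableModules R$ follows because any projective $Q \in \Modcat R$ satisfies $\rho Q = Q[0] \in \Loc(R) \subseteq {}^\perp \Ktac(\Proj R)$, so $\bigR \rho Q = 0$; two preimages of the same stable class thus produce the same Gorenstein projective object. The main obstacle I foresee is the extension-lifting step in the second paragraph, which is where total acyclicity of $P$ is really consumed; everything else is formal manipulation of adjoints.
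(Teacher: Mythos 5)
Your argument is correct and matches the paper's proof in all essentials: both reduce to the composite adjunction for $\bigR\rho$ together with the natural isomorphism $\Hom_{\catK}(P,M)\cong\Homstable(\boundary^1 P,M)$ for $P$ totally acyclic, proved by identifying null-homotopic chain maps with morphisms factoring through projectives via the left $\Proj R$-approximation $\boundary^1 P\hookrightarrow P^1$. Your additional observation that $\bigR\rho$ annihilates projective modules, so the recipe is independent of the chosen preimage and descends to $\stableModules R$, is a point the paper leaves implicit.
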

\begin{proof}
  Recall that taking (e.g.\ first) boundaries gives a triangle equivalence \[\boundary^1\colon\Ktac(\Proj R)\stackrel{\simeq}\to\GStable R.\] The quasi-inverse $\CR$ of $\boundary^1$ takes a Gorenstein projective $R$-module $X$ to its \emph{complete resolution} $\CR X$, i.e.\ a totally acyclic complex over $\Proj R$ with $\boundary^1(\CR X)=X$.

  Now, take $X\in\GProj R$ and $M\in\Modcat R$. We first observe that \[\underline\Hom_R(X,M)\cong\Hom_{\catK}(\CR X,  M).\] Indeed, there is an epimorphism $\phi\colon \Hom_R(X,M)\to\Hom_{\catK}(\CR X,M)$, as indicated by the following diagram.
  \[ \begin{tikzcd}[row sep=4mm]
  \CR X\colon  \cdots \ar[r] & P^{-1} \ar[r] \ar[dd] &  P^0  \ar[dr,->>] \ar[dd] \ar[rr] && P^1 \ar[r]\ar[dd] & \cdots  \\
  &&&X\ar[ur,>->] \ar[dl]&&& \\
  M\colon  \cdots \ar[r] & 0 \ar[r]  &  M  \ar[rr] && 0 \ar[r] & \cdots
  \end{tikzcd} \]
  The total acyclicity of $\CR X$ means in particular that the morphism $\iota\colon X\hookrightarrow P^1$ is a left $\Proj R$-approximation of $X$. Hence, a morphism $f\colon X\to M$ factors through $\Proj R$ if and only if it factors through $\iota$, which is further equivalent to $f\in\Ker\phi$.

  To complete the proof, it now suffices to use the right adjointness of $\rho$ and $\bigR$, and the fact that $\boundary^1$ and $\CR$ are quasi-inverse:
  \begin{align*}
     \underline\Hom_R(X,M) & \cong \Hom_{\catK}(\CR X, M) \\
   & \cong \Hom_{\catK}(\CR X, \bigR\rho M) \\
   & \cong \underline\Hom_R(X, \boundary^1\bigR\rho M) \qedhere
 \end{align*}
  \end{proof}
\begin{remark}
  In the case of an Artin algebra $\Lambda$, this description of the functor $\GP$ is fairly explicit. Indeed, the standard dual $D\Lambda$ is a dualizing complex, and $\bigR(X)$ can be calculated by taking `iterated approximations' of $X$ by products of copies of $\Lambda \oplus \rho D\Lambda$. For details, the reader is referred to \cite[Theorem~6.6; Corollary~6.12]{MR3946864}
\end{remark}

We are now ready to complete this subsection with the following extension of Theorem~\ref{theorem BIKP} for categories of Gorenstein projectives.

\begin{theorem} \label{theorem partial serre on gproj}
  Let $R$ be a noetherian ring which admits a dualizing complex. There is a partial Serre functor \[\mathbb S\colon (\GStable R)^{\compacts}\to\GStable R.\]
For \( X \in \Gstable R \) it is given by $\mathbb S(X)=\GP\nu(X)[-1]$.
\end{theorem}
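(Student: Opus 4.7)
The plan is to combine Theorem~\ref{theorem serre functors in K(Inj) and K(Proj)}(2), the adjoint triple \( \bigL \dashv \inc \dashv \bigR \) between \( \Ktac(\Proj R) \) and \( \catK(\Proj R) \) recalled before the statement, and the triangle equivalence \( \boundary^1 \colon \Ktac(\Proj R) \xto{\simeq} \GStable R \) used in the proof of Proposition~\ref{proposition description of the functor GP}.

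For existence, I would apply Observation~\ref{observation induced rel serre along left adjoint} to this adjoint triple: since \( R \) is noetherian, \( \catK(\Proj R)^{\compacts} = \rho(\catK^{\bounded}_{\mathsf{fpr}}(R^{\op}))^{\vee} \) up to summands (the remark after Theorem~\ref{theorem serre functors in K(Inj) and K(Proj)}), and \( \Ktac(\Proj R)^{\compacts} = \bigL(\catK(\Proj R)^{\compacts}) \) up to summands by Theorem~\ref{theorem Neeman trick}. Hence the partial Serre functor \( \mathbb{S}_{\mathsf P} \) from Theorem~\ref{theorem serre functors in K(Inj) and K(Proj)}(2) descends to a partial Serre functor \( \mathbb{S}' \colon \Ktac(\Proj R)^{\compacts} \to \Ktac(\Proj R) \) with \( \mathbb{S}'(\bigL Q) = \bigR \mathbb{S}_{\mathsf P} Q \), and transporting through \( \boundary^1 \) yields the desired \( \mathbb{S} \colon (\GStable R)^{\compacts} \to \GStable R \).

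The main obstacle, in my view, is verifying the explicit formula for \( X \in \Gstable R \), due to shift bookkeeping. Writing \( \CR X = (\cdots \to P^0 \to P^1 \to P^2 \to \cdots) \) with \( \boundary^1(\CR X) = X \), I would take \( Y = X^{\vee}[1] \) in \( \catK^{\bounded}_{\mathsf{fpr}}(R^{\op}) \) and verify by direct calculation with \( (-)^{\vee} \) and double duality on finitely generated projectives that \( Q = (\rho Y)^{\vee} \) coincides with the subcomplex \( (\cdots \to 0 \to P^1 \to P^2 \to \cdots) \) of \( \CR X \). This gives a short exact sequence of complexes \( 0 \to Q \to \CR X \to \rho X \to 0 \). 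Since \( \rho X \) is K-projective and quasi-isomorphic to the module \( X \), while every object of \( \Ktac(\Proj R) \) is acyclic, we have \( \Hom_{\catK(\Proj R)}(\rho X, T) \cong \Hom_{\catD(R)}(X, T) = 0 \) for every \( T \in \Ktac(\Proj R) \); hence \( \rho X \in {}^{\perp}\Ktac(\Proj R) = \ker \bigL \). Applying \( \bigL \) to the associated triangle therefore gives \( \bigL Q \cong \CR X \). To conclude: \( \mathbb{S}_{\mathsf P}(Q) = \rho(Y^{\ast}) = \rho(\nu X)[-1] \), using \( (X^{\vee}[1])^{\ast} = \nu X[-1] \); so \( \mathbb{S}'(\CR X) = \bigR \rho(\nu X)[-1] \), and pushing through \( \boundary^1 \) together with Proposition~\ref{proposition description of the functor GP} delivers \( \mathbb{S}(X) = \GP(\nu X)[-1] \).
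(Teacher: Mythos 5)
Your proposal is correct and follows essentially the same route as the paper: existence via Observation~\ref{observation induced rel serre along left adjoint} applied to the adjoint triple on \( \Ktac(\Proj R) \), identification of \( \CR X \) with \( \bigL\bigl(\rho(X^{\vee}[1])^{\vee}\bigr) \) via the vanishing of maps from right bounded complexes of projectives to acyclic complexes, and then the same chain of identifications ending with Proposition~\ref{proposition description of the functor GP}. The only cosmetic difference is that you package the key vanishing as ``\( \rho X \in \ker \bigL \)'' and apply \( \bigL \) to the triangle \( Q \to \CR X \to \rho X \to Q[1] \), whereas the paper argues directly with the induced natural transformation of \( \Hom \)-functors on \( \Ktac(\Proj R) \).
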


\begin{proof}[Proof of Theorem~\ref{theorem partial serre on gproj}]
By Observation~\ref{observation induced rel serre along left adjoint}, the adjoint triple ensures the existence of a partial Serre functor \[\mathbb S_{\tac} \colon \Ktac(\Proj R)^{\compacts}\to\Ktac(\Proj R),\] induced by the partial Serre functor $\catK(\Proj R)^{\compacts}\to\catK(\Proj R)$ of Theorem~\ref{theorem serre functors in K(Inj) and K(Proj)}.

 It is clear that we can transfer this partial Serre functor to \( \GStable R \) as the composition \[\mathbb S\colon(\GStable R)^{\compacts}\xto{\CR}\Ktac(\proj R)^{\compacts}\xto{\mathbb S_{\tac}} \Ktac(\Proj R)\stackrel{\boundary^1}\to\GStable R.\]

\medskip
Now let \( X \in \Gproj R \), and consider
\[ \begin{tikzcd}
\CR X\colon \cdots \ar[r] & P^{-1} \ar[r]  &  P^0  \ar[dr,->>] \ar[rr] && P^1 \ar[r] & P^2 \ar[r]& \cdots  \\
&&&X\ar[ur,>->] &&&
\end{tikzcd} \]
Observe that $\left(\cdots \to (P^2)^{\vee} \to (P^1)^{\vee} \to 0 \to \cdots\right)$ is a projective resolution of \( X^{\vee}[1] \) (when considering \( (P^i)^{\vee} \) in homological degree \( -i \)). It follows that the right half of the above complex is \( \rho( X^{\vee}[1] )^{\vee} \). Since there are no maps in the homotopy category from right bounded complexes of projectives to acyclic complexes we see that
\[ \Hom_{\catK}( \CR X, -) \to \Hom_{\catK}(\rho(X^{\vee}[1])^{\vee}, -) \]
induces a natural isomorphism on \( \Ktac(\Proj R) \), i.e.\ \( \CR X =  \bigL (\rho( X^{\vee}[1])^{\vee}) \).

Now
\begin{align*}
\mathbb{S}(X) & =  \boundary^1 \mathbb S_{\tac}\CR(X) =  \boundary^1 \mathbb S_{\tac} \bigL (\rho( X^{\vee}[1])^{\vee}) \\
& = \boundary^1 \bigR \mathbb{S}_{\catK(\Proj R)} (\rho( X^{\vee}[1])^{\vee}) && \text{by Observation~\ref{observation induced rel serre along left adjoint}} \\
& = \boundary^1 \bigR \rho \underbrace{(X^{\vee}[1])^*)}_{= \nu X[-1]} && \text{by Theorem~\ref{theorem serre functors in K(Inj) and K(Proj)}} \\
& =  \GP\nu(X)[-1] && \text{Proposition~\ref{proposition description of the functor GP}} \qedhere
\end{align*}
\end{proof}

\section{Pure resolutions}

Let $R$ be a ring. Following Cohn \cite{MR106918}, a sequence \[\mathbb E\colon0\to A\to B\to C\to0\] in $\Modcat R$ is called \emph{pure exact} if $\mathbb E\otimes_R N$ is exact for each $N\in\Modcat R^{\op}$ or, equivalently, if $\Hom_R(M,\mathbb E)$ is exact for each $M\in\modcat R$. The pure exact sequences define an exact structure on $\Modcat R$; an $R$-module is \emph{pure-projective} (resp. \emph{pure-injective}) if it is projective (injective) with respect to this exact structure. We denote by $\PProj R$ (resp. $\PInj R$) the subcategory of pure-projectives (pure-injectives) in $\Modcat R$.

In this section we will utilize the following theorem. For a collection of objects $\catS$ in a triangulated category, we denote by $\Loc(\catS)$ the smallest triangulated subcategory which contains $\catS$ and is closed under coproducts. On the other hand, $\Coloc(\catS)$ is the smallest triangulated subcategory which contains $\catS$ and is closed under products.

\begin{theorem} \label{theorem the two t-structures}
  Let $\catT$ be a triangulated category.
  \begin{enumerate}
    \item If $\catT$ admits coproducts and $\catS$ is a set of compact objects, then \[\left({}^\perp(\catS^\perp), \catS^\perp\right)\] is a stable t-structure in $\catT$.  Moreover, ${}^\perp(\catS^\perp)=\Loc(\catS)$.
    \item If $\catT$ admits products and $\catS$ is a set of 0-cocompact objects, then \[\left({}^\perp\catS, ({}^\perp\catS)^\perp\right)\] is a stable t-structure in $\catT$.  Moreover, $({}^\perp\catS)^\perp=\Coloc(\catS)$.
  \end{enumerate}
 \end{theorem}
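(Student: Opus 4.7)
The plan is to address parts (1) and (2) in parallel, treating them as formally dual. In both cases $\Hom(\mathcal{U}, \mathcal{V}) = 0$ and the shift-invariance of the candidate aisles are immediate from the perp definitions, so what remains is to construct, for each $T \in \catT$, an approximation triangle $U \to T \to V \to U[1]$ with $U$ on the left and $V$ on the right, and then to identify the left (respectively right) class with $\Loc(\catS)$ (respectively $\Coloc(\catS)$).

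For part (1) I would use the standard Neeman cellular tower. Starting from $A_0 = 0$, iteratively attach coproducts of shifts of objects in $\catS$ to produce a tower $A_n \to T$ designed to kill all maps from shifts of $\catS$ into the current cofibre. Then $A = \hocolim A_n$ lies in $\Loc(\catS)$ by construction, and compactness of the objects in $\catS$ guarantees that every map from a shift of $\catS$ into $\Cone(A \to T)$ factors through some finite stage and has therefore been trivialised, placing $\Cone(A \to T)$ in $\catS^\perp$.

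For part (2) I would dualise. Set $T^0 = T$ and inductively define $T^{n+1}$ as the fibre of the canonical morphism into a product: form the triangle $T^{n+1} \to T^n \to \prod_f S_f \to T^{n+1}[1]$ where $f$ runs over all morphisms from $T^n$ to shifts of objects of $\catS$. Let $A = \holim T^n$. The crucial observation is that every transition map in the forward sequence $\catT(T^0, S[i]) \to \catT(T^1, S[i]) \to \cdots$ is zero, since each element of $\catT(T^n, S[i])$ factors through $\prod_f S_f$ and hence becomes zero when composed with $T^{n+1} \to T^n$. Consequently this sequence has vanishing colimit and is, trivially, dual Mittag-Leffler, so $0$-cocompactness of each $S \in \catS$ (Definition~\ref{definition 0-cocompactness and 0-compactness}) delivers exactly $\catT(A, S[i]) = 0$, i.e.\ $A \in {}^\perp \catS$. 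The cofibre $B = \Cone(A \to T)$ is then analysed via the octahedral axiom applied along $A \to T^{n+1} \to T^n$: the cones $B_n = \Cone(A \to T^n)$ fit into a tower with successive cones $\prod_f S_f \in \Coloc(\catS)$, and assembling this tower places $B = B_0$ in $\Coloc(\catS)$.

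The ``Moreover'' statements fall out of the approximation triangles. The inclusions $\Loc(\catS) \subseteq {}^\perp(\catS^\perp)$ and $\Coloc(\catS) \subseteq ({}^\perp \catS)^\perp$ are immediate from the definitions. Conversely, if $X$ lies in the larger class then the component of the approximation triangle landing in the ``wrong'' side must vanish by orthogonality, so $X$ becomes a direct summand of an object in $\Loc(\catS)$ or $\Coloc(\catS)$, and a dual Eilenberg swindle using countable coproducts (for (1)) or countable products (for (2)) closes these subcategories under summands. The principal obstacle I anticipate is in part (2): the construction has to be engineered so that both clauses in the definition of $0$-cocompactness hold simultaneously for the tower $\mathbb{T}$, and an honest verification is needed that the cofibre actually lies in $\Coloc(\catS)$ rather than merely in the larger orthogonal $({}^\perp \catS)^\perp$.
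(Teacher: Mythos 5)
Your proposal is correct and follows essentially the same route as the sources the paper cites for this theorem (the paper gives no proof beyond the references: the Neeman/Beligiannis--Reiten cellular tower for (1), and \cite[Theorem~6.6]{MR3946864} for (2), whose dual co-cellular tower is built exactly so that the zero transition maps on $\catT(-,\catS[i])$ make both clauses in the definition of $0$-cocompactness hold trivially). The one point worth tidying is the identification of the cofibre in (2): it is cleanest to show $\Cone(A\to T)\cong\holim\Cone(T^n\to T)$, where each $\Cone(T^n\to T)$ is a finite iterated extension of products of objects of $\catS$ and hence lies in $\Coloc(\catS)$, rather than to argue via the tower of the $\Cone(A\to T^n)$, whose transition triangles run in the less convenient direction.
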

 \begin{proof}
   For (1) see for example \cite{MR2327478,MR2927802,MR1191736}; (2) is \cite[Theorem~6.6]{MR3946864}.
 \end{proof}

Since any finitely presented $R$-module is pure-projective and, as with any notion of projectivity, $\PProj R$ is closed under summands and coproducts, we have
\begin{equation} \label{eqn inclusion 1}
  \Loc(\modcat R)\subset\catK(\PProj R), \tag{$\text{i}_1$}
\end{equation} up to isomorphism.

On the other hand, recall from Section~\ref{section rel serre} the functor $(-)^\ast=\Hom_k(-,I)$ (note that we may always choose \( k = \mathbb Z \) if there is no other base ring). For each $N\in\Modcat R^{\op}$, the dual $N^\ast$ belongs to $\PInj R$. Indeed, if $\mathbb E$ is a pure exact sequence, then the sequence $\Hom_R(\mathbb E, N^\ast)\cong \Hom_k(\mathbb E\otimes_R N, I)$ is exact. From the description in Theorem~\ref{theorem partial serre for Kb(mod R)} of the partial Serre functor $\mathbb S\colon \Kb(\modcat R)\to\catK(\Modcat R)$, we infer
\begin{equation} \label{eqn inclusion 2}
  \Coloc(\mathbb S(\modcat R))\subset\catK(\PInj R). \tag{$\text{i}_2$}
\end{equation}

A complex $X$ of $R$-modules is \emph{pure acyclic} if $\Hom_R(M,X)$ is acyclic for each $M\in\modcat R$. Let $\Kpac(\Modcat R)$ be the subcategory of $\catK(\Modcat R)$ consisting of pure acyclic complexes. A chain map $f\colon X \to Y$ is a \emph{pure quasi-isomorphism} if $\Cone(f)\in\Kpac(\Modcat R)$, and the \emph{pure derived category} of $R$ is the Verdier quotient \[\Dpure(R)=\frac{\catK(\Modcat R)}{\Kpac(\Modcat R)}.\] The first point of this section is that $\Dpure(R)$ may be realized both as a subcategory of $\catK(\PProj R)$ and as a subcategory of $\catK(\PInj R)$:

\begin{theorem} \label{theorem Dpure as subcat of KPProj and KPInj}
Let \( R \) be a ring.
\begin{enumerate}
\item The pair
\[ ( \Loc( \modcat R ), \Kpac(\Modcat R) ) \]
is a stable t-structure in \( \catK(\Modcat R) \). In particular \( \Dpure(R) \) is equivalent to \( \Loc( \modcat R) \), and the natural quotient functor \( \catK(\Modcat R) \to \Dpure(R) \) has a fully faithful left adjoint.
\item The pair
\[ ( \Kpac(\Modcat R), \Coloc(\mathbb S(\modcat R)) ), \]
is a stable t-structure in \( \catK(\Modcat R) \). In particular \( \Dpure(R) \) is equivalent to \( \Coloc( \mathbb S( \modcat R)) \), and the natural quotient functor \( \catK(\Modcat R) \to \Dpure(R) \) has a fully faithful right adjoint.
\end{enumerate}
\end{theorem}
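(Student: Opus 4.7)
The plan is to deduce both items from Theorem~\ref{theorem the two t-structures}, after checking the appropriate (co)compactness hypothesis and after computing the relevant orthogonal to be $\Kpac(\Modcat R)$.

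For (1), I would first record that $\modcat R$ is essentially small and consists of compact objects in $\catK(\Modcat R)$: finite presentedness ensures that $\Hom_R(M,-)$ commutes with coproducts of modules, and since coproducts in $\catK(\Modcat R)$ are formed componentwise this passes to chain complexes. To identify the orthogonal, unwind definitions: $X\in \modcat R^\perp$ iff $\Hom_{\catK}(M, X[n])=0$ for every $M\in\modcat R$ and $n\in\mathbb{Z}$, which is exactly the condition that $\Hom_R(M,X)$ is acyclic for every $M\in\modcat R$, i.e.\ pure acyclicity of $X$. Hence $\modcat R^\perp=\Kpac(\Modcat R)$, and Theorem~\ref{theorem the two t-structures}(1) yields the stable t-structure of item (1).

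For (2), I would use the partial Serre functor $\mathbb S\colon \Kb(\modcat R)\to\catK(\Modcat R)$ of Theorem~\ref{theorem partial serre for Kb(mod R)}, restricted to the essentially small class $\modcat R$. By Theorem~\ref{theorem partial Serre implies X compact and Y 0-cocompact} each object of $\mathbb S(\modcat R)$ is $0$-cocompact. For every $X\in\catK(\Modcat R)$, $M\in\modcat R$ and $n\in\mathbb Z$, the partial Serre isomorphism gives
\[ \Hom_{\catK}(X[n],\mathbb S M)\cong \Hom_{\catK}(M, X[n])^{\ast}. \]
As $(-)^{\ast}$ reflects zero objects, simultaneous vanishing on the left for all $M$ and $n$ is equivalent to simultaneous vanishing on the right, which is again pure acyclicity of $X$. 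Thus ${}^\perp\mathbb S(\modcat R)=\modcat R^\perp=\Kpac(\Modcat R)$, and Theorem~\ref{theorem the two t-structures}(2) delivers the stable t-structure of item (2).

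The remaining statements are formal consequences of the standard theory of stable t-structures: given a stable t-structure $(\mathcal U,\mathcal V)$ in a triangulated category $\catT$, the composites $\mathcal V\hookrightarrow\catT\to\catT/\mathcal U$ and $\mathcal U\hookrightarrow\catT\to\catT/\mathcal V$ are triangle equivalences, exhibiting the inclusions as fully faithful right and left adjoints to the respective quotient functor. Applied to both t-structures above (each having $\Kpac(\Modcat R)$ as one of its two aisles), this produces the equivalences $\Dpure(R)\simeq\Loc(\modcat R)$ and $\Dpure(R)\simeq\Coloc(\mathbb S(\modcat R))$, together with the claimed fully faithful left and right adjoints to $\catK(\Modcat R)\to\Dpure(R)$. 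I do not anticipate a serious obstacle; the mildly subtle point is the reflection-of-zeros step in the orthogonality computation for part (2), but this is already isolated in the observations about $(-)^{\ast}$ recorded at the start of Section~\ref{section rel serre}.
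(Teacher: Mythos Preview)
Your proposal is correct and follows essentially the same route as the paper: identify $(\modcat R)^{\perp}=\Kpac(\Modcat R)$ by definition, use the partial Serre isomorphism together with the fact that $(-)^\ast$ reflects zero to get ${}^{\perp}\mathbb S(\modcat R)=\Kpac(\Modcat R)$, invoke Theorem~\ref{theorem the two t-structures}, and read off the equivalences and adjoints from the general formalism of stable t-structures. The only cosmetic difference is that the paper obtains compactness of the objects of $\modcat R$ from Theorem~\ref{theorem partial Serre implies X compact and Y 0-cocompact} rather than by your direct argument via finite presentation.
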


\begin{proof}
Observe that \( \Kpac(\Modcat R) = (\modcat R)^{\perp} \) by definition, and thus it follows that we also have \( \Kpac(\Modcat R) = {}^{\perp}\mathbb S(\modcat R) \). Moreover, by Theorem~\ref{theorem partial Serre implies X compact and Y 0-cocompact} we know that \( \modcat R \) consists of compact objects while \( {}^{\perp}\mathbb S(\modcat R) \) consists of \( 0 \)-cocompact objects.

Now the stable t-structures exist by Theorem~\ref{theorem the two t-structures}. For the final claims, note that for a stable t-structure we always have that the localization by the aisle is the co-aisle and vice-versa.
\end{proof}

\begin{definition} Let $R$ be a ring. An object of $\catK(\Modcat R)$ is called
\begin{enumerate}
  \item \emph{homotopically pure-projective} if it belongs to ${}^\perp\Kpac(\Modcat R)$; and
  \item \emph{homotopically pure-injective} if it belongs to $\Kpac(\Modcat R)^\perp$.
\end{enumerate}
\end{definition}

\begin{corollary} \label{cor homotopically pure proj} Let $R$ be a ring.
\begin{enumerate}
\item The subcategory of homotopically pure-projectives coincides with the subcategory \( \Loc(\modcat R) \). In particular it is contained in \( \catK( \PProj R) \) (up to isomorphism).
\item The subcategory of homotopically pure-injectives coincides with the subcategory \( \Coloc(\mathbb{S}(\modcat R)) \). In particular it is contained in \( \catK( \PInj R) \) (up to isomorphism).
\end{enumerate}
\end{corollary}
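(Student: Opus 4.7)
The plan is to derive the corollary directly from Theorem~\ref{theorem Dpure as subcat of KPProj and KPInj} and the inclusions (\ref{eqn inclusion 1}) and (\ref{eqn inclusion 2}), essentially by unwinding definitions. The key observation I would exploit is that, for a stable t-structure $(\catU,\catV)$ in which both $\catU$ and $\catV$ are triangulated subcategories (hence closed under the shift), the ``perpendicular with all shifts'' $\catU^\perp$ used in this paper agrees with the ordinary Hom-orthogonal $\{T : \catT(\catU,T)=0\}$, and one has the semiorthogonality $\catV=\catU^\perp$ as well as $\catU={}^\perp\catV$.

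For part (1), the definition of homotopically pure-projective is precisely ${}^\perp\Kpac(\Modcat R)$. By Theorem~\ref{theorem Dpure as subcat of KPProj and KPInj}(1) we have the stable t-structure $(\Loc(\modcat R),\Kpac(\Modcat R))$, and since $\Loc(\modcat R)$ is triangulated the orthogonality of the t-structure gives $\Loc(\modcat R)={}^\perp\Kpac(\Modcat R)$. This already identifies homotopically pure-projectives with $\Loc(\modcat R)$; the inclusion into $\catK(\PProj R)$ (up to isomorphism) is then exactly (\ref{eqn inclusion 1}). Part (2) is handled symmetrically: the homotopically pure-injectives are $\Kpac(\Modcat R)^\perp$ by definition, Theorem~\ref{theorem Dpure as subcat of KPProj and KPInj}(2) gives the stable t-structure $(\Kpac(\Modcat R),\Coloc(\mathbb S(\modcat R)))$, so $\Coloc(\mathbb S(\modcat R))=\Kpac(\Modcat R)^\perp$, and (\ref{eqn inclusion 2}) delivers the inclusion into $\catK(\PInj R)$.

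There is essentially no obstacle here beyond bookkeeping; the heavy lifting has already been done in Theorem~\ref{theorem Dpure as subcat of KPProj and KPInj}, which in turn rests on Theorem~\ref{theorem the two t-structures} applied to the compact generators $\modcat R$ and the $0$-cocompact cogenerators $\mathbb S(\modcat R)$ produced by the partial Serre functor. The only point worth stating explicitly in the write-up is why the shifted and unshifted perps coincide for the classes in question; this is immediate from the fact that $\Kpac(\Modcat R)$ is a triangulated subcategory of $\catK(\Modcat R)$, so that the first arguments of the $\Hom$-spaces defining the orthogonals may be shifted at will.
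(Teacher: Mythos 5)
Your proposal is correct and follows essentially the same route as the paper: the identities \( \Loc(\modcat R)={}^\perp\Kpac(\Modcat R) \) and \( \Coloc(\mathbb S(\modcat R))=\Kpac(\Modcat R)^\perp \) are already built into Theorem~\ref{theorem the two t-structures} (whose aisle and coaisle are literally \( {}^\perp(\catS^\perp) \) and \( ({}^\perp\catS)^\perp \)), after which \eqref{eqn inclusion 1} and \eqref{eqn inclusion 2} finish the argument exactly as you say. One small caveat: if you instead deduce the reverse inclusion \( {}^\perp\Kpac(\Modcat R)\subseteq\Loc(\modcat R) \) from the abstract orthogonality of a stable t-structure, you also need the aisle to be closed under direct summands (automatic here, since a localizing subcategory of a category with coproducts is thick), so it is cleaner to quote the explicit description of the aisle from the theorem rather than re-derive it.
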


\begin{proof}
By Theorem~\ref{theorem Dpure as subcat of KPProj and KPInj}, we have \( \Loc( \modcat R) = {}^{\perp}\Kpac(\Modcat R) \). The ``in particular''-statement follows with \eqref{eqn inclusion 1}. The dual argument proves the second point.
\end{proof}

\begin{remark}
By \cite[Theorem~3.6]{MR3537821} the class of homotopically pure-projectives in $\catK(\Modcat R)$ actually \emph{coincides} with $\catK(\PProj R)$. Combining this fact with our discussion it follows immediately that there is a triangle equivalence
\[\Dpure(R)\simeq\catK(\PProj R)\]
for any ring $R$.
\end{remark}


Let $X$ be a complex of $R$-modules. A \emph{pure-projective resolution} of \( X \) is a pure quasi-isomorphism $P \to X$, with \( P \) homotopically pure-projective. Dually, a \emph{pure-injective resolution} of \( X \) is a pure quasi-isomorphism \( X \to I \), with \( I \) homotopically pure-injective.

The existence of pure-projective (resp. pure-injective) resolutions was established for left (resp. right) bounded complexes in \cite{MR3473427}. We can now get rid of these restrictions:

\begin{corollary} \label{corollary existence of pure resolutions}
Let $R$ be a ring. Each complex of $R$-modules admits
\begin{enumerate}
\item a pure-projective resolution; and
\item a pure-injective resolution.
\end{enumerate}
\end{corollary}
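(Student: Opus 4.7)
The plan is to deduce both statements directly from the two stable t-structures established in Theorem~\ref{theorem Dpure as subcat of KPProj and KPInj}, together with the identification of homotopically (pure-)projective/injective complexes in Corollary~\ref{cor homotopically pure proj}. Given any complex $X \in \catK(\Modcat R)$, a stable t-structure yields a canonical triangle decomposition of $X$ whose connecting morphism is exactly what we need for a resolution.

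For part (1), I would apply the t-structure $(\Loc(\modcat R), \Kpac(\Modcat R))$ to obtain a triangle
\[ P \to X \to V \to P[1] \]
with $P \in \Loc(\modcat R)$ and $V \in \Kpac(\Modcat R)$. By Corollary~\ref{cor homotopically pure proj}(1), the object $P$ is homotopically pure-projective (and in particular lies in $\catK(\PProj R)$ up to isomorphism). Since $V$ is pure acyclic and $\Cone(P \to X) = V$, the morphism $P \to X$ is a pure quasi-isomorphism by definition. Hence $P \to X$ is the desired pure-projective resolution.

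For part (2), dually, I would apply the t-structure $(\Kpac(\Modcat R), \Coloc(\mathbb S(\modcat R)))$ to obtain a triangle
\[ W \to X \to I \to W[1] \]
with $W \in \Kpac(\Modcat R)$ and $I \in \Coloc(\mathbb S(\modcat R))$. By Corollary~\ref{cor homotopically pure proj}(2), the object $I$ is homotopically pure-injective (and in particular lies in $\catK(\PInj R)$ up to isomorphism). Since $\Cone(X \to I) = W[1]$ is pure acyclic, the morphism $X \to I$ is a pure quasi-isomorphism, giving the desired pure-injective resolution.

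There is really no main obstacle here: all of the work is done by Theorem~\ref{theorem Dpure as subcat of KPProj and KPInj}, which in turn rests on the partial Serre functor of Section~\ref{section construction  of rel serre} providing the $0$-cocompact cogenerators needed to invoke Theorem~\ref{theorem the two t-structures}(2). The only point worth emphasizing is that the triangles supplied by the t-structures are exactly pure-projective and pure-injective resolutions in the sense defined just before the corollary, since being pure acyclic is literally the condition that the cone is a pure quasi-isomorphism.
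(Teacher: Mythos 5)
Your proof is correct and is essentially identical to the paper's own argument: both extract the triangle $P \to X \to A \to P[1]$ from the stable t-structure of Theorem~\ref{theorem Dpure as subcat of KPProj and KPInj}, identify $P$ as homotopically pure-projective via Corollary~\ref{cor homotopically pure proj}, and observe that pure acyclicity of the cone makes $P \to X$ a pure quasi-isomorphism, with part (2) handled dually. No issues.
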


\begin{proof}
Take $X\in\catK(\Modcat R)$. By Theorem~\ref{theorem Dpure as subcat of KPProj and KPInj}, there is a triangle
\[ P \to X \to A \to P[1] \]
with \( P \in \Loc(\modcat R) \) and \( A \in \Kpac(\Modcat R) \). By Corollary~\ref{cor homotopically pure proj}, \( P \) is homotopically projective. Since \( A \) is pure acyclic the map \( P \to X \) is a pure quasi-isomorphism.

The proof of the second point is dual.
\end{proof}

In \cite{MR3473427}, a \emph{pure-projective resolution} of $X$ is defined to be a pure quasi-isomorphism $P \to X$ where $P\in\catK(\PProj R)$ is such that $\Hom_R(P,-)\colon \catK(\Modcat R)\to\catK(\Modcat k)$ preserves pure acyclicity. Dually, a \emph{pure-injective resolution} of $X$ is a pure quasi-isomorphism $X \to I$ where $I\in\catK(\PInj R)$ is such that $\Hom_R(-,I)$ preserves pure acyclicity. We finish this section by showing that the definition of pure-projective and -injective resolutions we worked with here is in fact equivalent to the one in \cite{MR3473427}.

\begin{lemma} \label{closure prop of pure acyclics}
Let $R$ be a ring and let $X$ be a pure acyclic complex of $R$-modules.
\begin{enumerate}
\item The complex $\Hom_k(F, X)$ is pure acyclic for each $F\in\modcat k$.

\item The complex $F \otimes_k X$ is pure acyclic for each \( k \)-module \( F \).
\end{enumerate}
\end{lemma}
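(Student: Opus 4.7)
The plan is to route both parts through the \( (-)^* \)-dual, by means of the following characterization, which I would first establish: a complex \( Y \) of \( R \)-modules is pure acyclic if and only if \( Y^* = \Hom_k(Y, I) \) is a contractible complex of \( R^{\op} \)-modules. At the level of a single short exact sequence \( \mathbb E \) in \( \Modcat R \), this amounts to the Lambek-style criterion that \( \mathbb E \) is pure exactly when \( \mathbb E^* \) is a split short exact sequence of \( R^{\op} \)-modules. For the non-trivial direction I would invoke the adjunction \( (\mathbb E \otimes_R N)^* \cong \Hom_{R^{\op}}(N, \mathbb E^*) \) together with the fact that \( (-)^* \) reflects zero-objects to see that \( \mathbb E \) is pure precisely when \( \Hom_{R^{\op}}(N, \mathbb E^*) \) is exact for every \( N \in \Modcat R^{\op} \); setting \( N = A^* \) then produces the desired splitting. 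Applying this term-by-term to the cycle sequences \( 0 \to Z^n(Y) \to Y^n \to Z^{n+1}(Y) \to 0 \) of an acyclic complex \( Y \) then gives the complex-level statement.

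With this characterization in hand, part (2) will be immediate from the standard tensor--hom adjunction \( (F \otimes_k X)^* \cong \Hom_k(F, X^*) \) of complexes of \( R^{\op} \)-modules: since \( X \) is pure acyclic, \( X^* \) is contractible, and because any additive functor preserves chain contractions the right-hand side is contractible as well, whereupon \( F \otimes_k X \) is pure acyclic by the reverse direction of the characterization.

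For part (1), I would need the dual isomorphism \( \Hom_k(F, X)^* \cong F \otimes_k X^* \), which I expect to hold precisely because \( F \) is finitely presented over \( k \). I would produce it by picking a presentation \( k^a \to k^b \to F \to 0 \), applying the exact functor \( (-)^* \) to the induced left-exact sequence of complexes \( 0 \to \Hom_k(F, X) \to X^{\oplus b} \to X^{\oplus a} \), and matching the resulting cokernel description with the analogous right-exact computation of \( F \otimes_k X^* \) coming from the same presentation. Since \( X^* \) is contractible and \( F \otimes_k - \) preserves this, \( \Hom_k(F, X)^* \) is contractible, whence \( \Hom_k(F, X) \) is pure acyclic. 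The main obstacle will be the very first step, but the Lambek-type criterion goes through for our \( (-)^* = \Hom_k(-, I) \) over the arbitrary base ring \( k \) without essential modification, resting only on the exactness of \( (-)^* \) and its reflection of zero-objects recorded at the start of Section~\ref{section rel serre}.
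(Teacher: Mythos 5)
Your proposal is correct, but it takes a genuinely different route from the paper. The paper's proof is a two-line adjunction computation staying entirely within the defining conditions of pure acyclicity: for (1) it uses $\Hom_R(M,\Hom_k(F,X))\cong\Hom_R(F\otimes_kM,X)$ together with the observation that $F\otimes_kM$ is again a finitely presented $R$-module; for (2) it uses associativity and the centrality of $k$ to rewrite $(F\otimes_kX)\otimes_RM$ as $X\otimes_R(M\otimes_kF)$. You instead route everything through a Lambek-type characterization, namely that $Y$ is pure acyclic if and only if $Y^\ast$ is contractible, and then transport contractibility along $(F\otimes_kX)^\ast\cong\Hom_k(F,X^\ast)$ and $\Hom_k(F,X)^\ast\cong F\otimes_kX^\ast$. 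This is a valid argument: the short-exact-sequence version of the criterion follows exactly as you say from $(\mathbb E\otimes_RN)^\ast\cong\Hom_{R^{\op}}(N,\mathbb E^\ast)$ plus the fact that $(-)^\ast$ is exact and reflects zero-objects, the passage to complexes uses the (standard, though worth stating) equivalence of ``pure acyclic'' with ``acyclic with pure cycle sequences'', and your finite-presentation comparison of the two right-exact functors $F\mapsto\Hom_k(F,X)^\ast$ and $F\mapsto F\otimes_kX^\ast$ is exactly where the hypothesis $F\in\modcat k$ enters, matching the paper's use of finite presentation in (1). What your approach buys is a reusable and conceptually appealing criterion (essentially the statement that character duals detect purity, closely related to the pure-injectivity of $N^\ast$ used elsewhere in the paper); what it costs is noticeably more scaffolding than the paper's direct adjunction juggling.
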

\begin{proof}
(1): Let \( M \in \modcat R \). By the adjunction formula we have
\[
\Hom_R(M, \Hom_k(F, X)) = \Hom_R(F \otimes_k M, X).
\]
The latter complex is acyclic, because \( F \otimes_k M \) is a finitely presented \( R \)-module again and $X$ is pure acyclic.

(2): Let \( M \in \Modcat R^{\op} \). We have
\[
(F \otimes_k X) \otimes_R M = F \otimes_k (X \otimes_R M) = (X \otimes_R M) \otimes_k F = X \otimes_R (M \otimes_k F),
\]
where the middle identity holds since \( k \) acts centrally on \( R \). The final tensor product is acyclic by definition of pure acyclicity.
\end{proof}

\begin{proposition} \label{prop.characterizations of homotopically pure proj/inj}
Let $R$ be a ring.
\begin{enumerate}
\item Let $P$ be a complex of $R$-modules. The functor \( \Hom_R(P, -) \) preserves pure acyclicity if and only if $P$ is homotopically pure-projective.

\item Let $I$ be a complex of $R$-modules. The following are equivalent
\begin{enumerate}[label=(\roman*)]
\item $I$ is homotopically pure-injective.
\item \( \Hom_R(-, I) \) preserves pure acyclicity.
\item \( \Hom_R(-, I) \) maps pure acyclic complexes to contractible complexes.
\end{enumerate}
\end{enumerate}
\end{proposition}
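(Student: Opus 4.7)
My plan is to treat parts (1) and (2) in parallel, leveraging the tensor--Hom adjunction at the level of complexes together with Lemma~\ref{closure prop of pure acyclics}. The key formal identities I will use are $\Hom_k(F, \Hom_R(P,A)) \cong \Hom_R(P, \Hom_k(F,A))$ for part (1), and $\Hom_k(F, \Hom_R(A,I)) \cong \Hom_R(A \otimes_k F, I)$ for the (i) $\Leftrightarrow$ (ii) portion of part (2); both are verified degree-wise, using that $k$ acts centrally on $R$.

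For part (1), one direction is trivial: pure acyclicity implies acyclicity (take $M = R$ in the definition), so if $\Hom_R(P,-)$ preserves pure acyclicity, then for pure acyclic $A$ and each $n \in \mathbb{Z}$ we have $\Hom_{\catK}(P, A[n]) = H^n \Hom_R(P,A) = 0$, i.e.\ $P \in {}^{\perp}\Kpac(\Modcat R)$. Conversely, assume $P$ is homotopically pure-projective. For $F \in \modcat k$ and $A$ pure acyclic, Lemma~\ref{closure prop of pure acyclics}(1) guarantees $\Hom_k(F,A)$ is pure acyclic, so the adjunction identifies $\Hom_k(F, \Hom_R(P,A))$ with $\Hom_R(P, \Hom_k(F,A))$, which is acyclic by homotopical pure-projectivity. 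Varying $F \in \modcat k$ yields pure acyclicity of $\Hom_R(P,A)$. The implication (i) $\Leftrightarrow$ (ii) in part (2) proceeds analogously, using Lemma~\ref{closure prop of pure acyclics}(2) in place of (1), while (iii) $\Rightarrow$ (ii) is immediate since contractible complexes are pure acyclic.

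The crux of the proposition is (ii) $\Rightarrow$ (iii). Given $A$ pure acyclic and $C := \Hom_R(A,I)$, my goal is to show $C \in \Kpac(\Modcat k) \cap \Kpac(\Modcat k)^{\perp}$: this forces $\Hom_{\catK(\Modcat k)}(C, C) = 0$, so $\id_C$ is null-homotopic and $C$ is contractible. Pure acyclicity of $C$ is given by (ii). To see $C$ is homotopically pure-injective over $k$, the tensor--Hom adjunction yields $\Hom_{\catK(\Modcat k)}(B, \Hom_R(A,I)) \cong \Hom_{\catK(\Modcat R)}(A \otimes_k B, I)$ for any $B \in \catK(\Modcat k)$; since $I$ is homotopically pure-injective over $R$ (via the already established (i) $\Leftrightarrow$ (ii)), vanishing of this expression for pure acyclic $B$ reduces to showing that $A \otimes_k B$ is pure acyclic as a complex of $R$-modules. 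For any $N \in \Modcat R^{\op}$, the central action of $k$ gives $(A \otimes_k B) \otimes_R N \cong (A \otimes_R N) \otimes_k B$; the first factor is acyclic by pure acyclicity of $A$, and K-flatness of the pure acyclic complex $B$ then delivers acyclicity of the tensor product.

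The main obstacle is this K-flatness step. Rather than citing it, I would include a direct argument: writing $B$ as a filtered colimit of bounded contractible complexes of finitely presented $k$-modules --- a standard description of pure acyclic complexes --- the tensor product with any acyclic complex becomes a filtered colimit of contractible complexes, hence has vanishing cohomology since cohomology commutes with filtered colimits.
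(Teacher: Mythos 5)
Your part (1), your equivalence of (i) and (ii) in part (2), and your (iii) $\Rightarrow$ (ii) all run exactly as in the paper: the point is that homotopical pure-projectivity (resp.\ pure-injectivity) is the same as sending pure acyclics to \emph{acyclics}, and the adjunction isomorphisms $\Hom_k(F,\Hom_R(P,X))\cong\Hom_R(P,\Hom_k(F,X))$ and $\Hom_k(F,\Hom_R(X,I))\cong\Hom_R(F\otimes_kX,I)$ combined with Lemma~\ref{closure prop of pure acyclics} upgrade acyclic to pure acyclic. Where you genuinely diverge is the implication towards (iii). The paper extracts it from the very computation you already carry out for (i) $\Rightarrow$ (ii): since Lemma~\ref{closure prop of pure acyclics}(2) holds for an \emph{arbitrary} $k$-module $F$, the complex $\Hom_k(F,\Hom_R(X,I))\cong\Hom_R(F\otimes_kX,I)$ is acyclic for every $F$, and taking $F$ to be the cycles of $C=\Hom_R(X,I)$ splits all the sequences $0\to Z^n\to C^n\to Z^{n+1}\to 0$, i.e.\ $C$ is contractible; this is one line and entirely self-contained. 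Your route instead places $C$ in $\Kpac(\Modcat k)\cap\Kpac(\Modcat k)^{\perp}$, so that $\id_C$ is null-homotopic --- a nice structural reformulation, and the adjunction bookkeeping you do for it is correct. But the price is the purity of $A\otimes_kB$ over $R$, which you reduce to $K$-flatness of the pure acyclic complex $B$, which you in turn reduce to the statement that pure acyclic complexes are filtered colimits of (bounded, finitely presented) contractible complexes. That last statement is a genuine theorem (due to Emmanouil, see also \v{S}\v{t}ov\'{\i}\v{c}ek), considerably deeper than anything else in this proposition, and your ``direct argument'' does not prove it --- it only derives $K$-flatness \emph{from} it. So your proof is valid, but only modulo a citation the paper deliberately avoids needing; if you keep your route, prove or properly reference the filtered-colimit description, and otherwise notice that letting $F$ range over all $k$-modules in the computation you already have closes the argument immediately.
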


\begin{proof}
(1): We observe first that \( P \) is homotopically pure-projective if and only if \( \Hom_R(P, -) \) maps pure acyclic complexes to acyclic complexes. This is just because the homology of the complex $\Hom_R(P, X)$, where $X$ is pure acyclic, is the \( \Hom \)-space in the homotopy category which is zero because $P$ lies in ${}^{\perp}\Kpac(\Modcat R) $.

So it only remains to show that $\Hom_R(P, X)$ is also pure exact. Let \( F \in \modcat k \). Then we have the isomorphism
\[
\Hom_k(F, \Hom_R(P, X)) = \Hom_R(F \otimes_k P, X) =  \Hom_R(P, \Hom_k(F, X)),
\]
and the claim follows from the fact that \( \Hom_k(F, X) \) is pure acyclic from Lemma~\ref{closure prop of pure acyclics}.

(2): As in (1), we see that $(ii)$ implies $(i)$. The implication from $(iii)$ to $(ii)$ is immediate. It remains to be shown that $(i)$ implies $(iii)$. So let \( X \) be a pure acyclic complex. Then for any \( k \)-module \( F \) we have
\[
\Hom_k(F, \Hom_R(X, I)) = \Hom_R(F \otimes_k X, I)
\]
which is acyclic, because \( F \otimes_k X \) is pure acyclic by Lemma~\ref{closure prop of pure acyclics}. It follows (picking \( F \) to be cycles of the complex \( \Hom_R(X, I) \)) that \( \Hom_R(X, I) \) is contractible.
\end{proof}

\begin{corollary}
Let \( R \) be a commutative ring and let \( I\in\Modcat R \) be pure-injective. If \( 0 \to A \to B \to C \to 0 \) is a pure exact sequence of $R$-modules, then
\[ 0 \to \Hom_R(C, I) \to \Hom_R(B, I) \to \Hom_R(A, I) \to 0 \]
is split exact.
\end{corollary}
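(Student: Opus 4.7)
The plan is to view the pure exact sequence $\mathbb E\colon 0\to A\to B\to C\to 0$ as a three-term complex in $\catK(\Modcat R)$. Pure exactness of the sequence says precisely that $\Hom_R(M,\mathbb E)$ is exact for every $M\in\modcat R$, so as a complex $\mathbb E$ is pure acyclic. My next step is to observe that $I$, viewed as a complex concentrated in degree zero, is homotopically pure-injective. Granting this, Proposition~\ref{prop.characterizations of homotopically pure proj/inj}(2) gives that $\Hom_R(\mathbb E,I)$ is a contractible complex of $R$-modules, and a contractible three-term complex is precisely a split short exact sequence. (Commutativity of $R$ ensures the contracting homotopy is $R$-linear, so the splitting really lives in $\Modcat R$.)

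The step requiring justification is thus that a pure-injective module $I$ yields a homotopically pure-injective complex. Given a pure acyclic complex $X$, any cycle of $\Hom_R(X,I)$ in degree $-n$ is an $R$-linear map $\varphi\colon X^{-n}\to I$ with $\varphi\circ d^{-n-1}=0$. Acyclicity of $X$ lets $\varphi$ factor through $X^{-n}/Z^{-n}X\cong Z^{-n+1}X\hookrightarrow X^{-n+1}$, yielding $\bar\varphi\colon Z^{-n+1}X\to I$. The crucial input is that pure acyclicity of $X$ makes the inclusion $Z^{-n+1}X\hookrightarrow X^{-n+1}$ a pure monomorphism; pure-injectivity of $I$ then supplies an extension $h\colon X^{-n+1}\to I$ of $\bar\varphi$, and this $h$ is the desired null-homotopy for $\varphi$. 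Thus $\Hom_{\catK}(X,I[n])=0$ for every $n$, i.e.\ $I\in\Kpac(\Modcat R)^\perp$.

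The main obstacle is the background fact, standard but worth signalling explicitly, that in a pure acyclic complex every cycle submodule sits purely in the ambient term; this is what legitimizes invoking pure-injectivity of $I$ at the extension step. For our three-term complex $\mathbb E$ this is completely transparent, while for a general pure acyclic $X$ one uses that $\Hom_R(M,-)$ commutes with kernels to see that $0\to Z^nX\to X^n\to Z^{n+1}X\to 0$ is pure exact whenever $\Hom_R(M,X)$ is acyclic for all $M\in\modcat R$. With this in hand the rest is bookkeeping of the same flavour as the proof of Proposition~\ref{prop.characterizations of homotopically pure proj/inj}.
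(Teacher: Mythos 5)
Your proof is correct and follows essentially the same route as the paper's: take $k=R$, regard the pure exact sequence as a pure acyclic complex and $I$ as a complex concentrated in degree $0$ which is homotopically pure-injective, and then apply the implication from (i) to (iii) in Proposition~\ref{prop.characterizations of homotopically pure proj/inj}(2) to get contractibility, hence split exactness. The only difference is that you explicitly justify the assertion that a pure-injective module is homotopically pure-injective --- via the observation that in a pure acyclic complex each inclusion $Z^{n}X\hookrightarrow X^{n}$ is a pure monomorphism, so pure-injectivity of $I$ yields the null-homotopy --- a step the paper states without proof; your verification of it is sound.
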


\begin{proof}
Since \( R \) is commutative, we may choose \( k = R \). The pure exact sequence is a pure acyclic complex, and \( I \) --- considered as a complex concentrated in degree \( 0 \) --- is homotopically pure-injective. Now the claim follows from the implication from (a) to (c) in Proposition~\ref{prop.characterizations of homotopically pure proj/inj}(2).
\end{proof}

\begin{remark}
In \cite[Theorem~5.4]{MR3220541} \v{S}\v{t}ov\'{\i}\v{c}ek stated a version of Corollary~\ref{corollary existence of pure resolutions} for an additive finitely accessible category $\mathcal{A}$, using the language of cotorsion pairs. More precisely, he proved that $(\catC(\PProj \mathcal{A}), \catC_{\mathsf{pac}}(\mathcal{A}))$ and $(\catC_{\mathsf{pac}}(\mathcal{A}), \catC(\PInj \mathcal{A}))$ are functorially complete hereditary cotorsion pairs in the category of complexes $\catC(\mathcal{A})$ with the induced
pure exact structure. It would be interesting to find a proof in the general context of additive finitely accessible categories using the direct approach of appealing to the stable t-structures in Theorem~\ref{theorem the two t-structures}.
\end{remark}

\section{Almost split triangles} \label{section AR-triangles}
Let $\catT$ be a triangulated category. Recall that a triangle \[A \stackrel{a}\to B \stackrel{b}\to C \to A[1]\] is called \emph{almost split} if $a$ is left almost split and $b$ is right almost split. In this case $\End_{\catT}(A)$ and $\End_{\catT}(C)$ are local rings.

\begin{theorem}[Beligiannis \cite{MR2079606}, Krause \cite{MR1803642}] \label{theorem Krause AR-theorem}
Let \( X \in \catT \) have local endomorphism ring. Denote by $I_X$ an injective envelope of the simple $\End_{\catT}(X)$-module. If the functor $\Hom_{\End_{\catT}(X)}(\catT(X,-), I_X)$ is representable, then $X$ appears in an almost split triangle
 \begin{align*} \label{align Krauses AR triangle}
 \tau X \to M \to X \to \tau X[1]. \tag{$\Delta_{\tau}$}
 \end{align*}
\end{theorem}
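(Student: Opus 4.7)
The plan is to define \( \tau X \) directly from the representability hypothesis, produce a canonical morphism \( \xi \colon X \to \tau X[1] \) from the socle inclusion \( S \hookrightarrow I_X \), and verify that its fiber triangle is almost split. Throughout, write \( E = \End_\catT(X) \) with maximal ideal \( \mathfrak{r} = \rad E \) and simple quotient \( S = E/\mathfrak{r} \).

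The functor \( F := \Hom_E(\catT(X,-), I_X) \) is cohomological, since \( \catT(X,-) \) is and \( \Hom_E(-, I_X) \) is exact. By hypothesis \( F \cong \catT(-, Y) \) for some \( Y \in \catT \); set \( \tau X := Y[-1] \). Plugging \( T = X \) into the representability isomorphism and using the canonical \( \Hom_E(E, I_X) \cong I_X \) yields
\[
\catT(X, \tau X[1]) \;\cong\; I_X.
\]
I take \( \xi \colon X \to \tau X[1] \) to correspond to the image of \( 1 \) under \( E \twoheadrightarrow S \hookrightarrow I_X \), and complete it to a triangle \( \tau X \to M \to X \xrightarrow{\xi} \tau X[1] \).

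The central step is right almost splitness of \( M \to X \). Suppose \( f \colon T \to X \) is not a split epimorphism; the claim is \( \xi \circ f = 0 \). By naturality of the representing isomorphism, \( \xi \circ f \in \catT(T, \tau X[1]) \) corresponds to the \( E \)-linear map \( \catT(X, T) \to I_X \), \( h \longmapsto \overline{f \circ h} \), with \( \overline{(\cdot)} \) denoting the composition \( E \twoheadrightarrow S \hookrightarrow I_X \). If some \( f \circ h \in E \) were a unit, then \( h \circ (f \circ h)^{-1} \) would split \( f \), contradicting the hypothesis; by locality of \( E \) each \( f \circ h \) lies in \( \mathfrak{r} \), whence \( \overline{f \circ h} = 0 \). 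So \( \xi \circ f = 0 \), and hence \( f \) factors through \( M \to X \). Combined with the non-vanishing \( \xi \neq 0 \) (which holds because \( \overline 1 \neq 0 \) in \( I_X \)), this establishes that \( M \to X \) is right almost split.

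A standard lemma in triangulated categories (Krause~\cite{MR1803642}, Beligiannis~\cite{MR2079606}) then upgrades right almost splitness at \( X \), combined with locality of \( \End_\catT(X) \), to genuine almost splitness: one obtains that \( \End_\catT(\tau X) \) is local and that \( \tau X \to M \) is left almost split. I expect the main obstacle to be not any individual calculation, but the careful bookkeeping required to track the canonical element \( \overline 1 \) through the Yoneda isomorphism and the \( E \)-module structure on \( \catT(X,-) \), thereby confirming that \( \xi \) really is the ``correct'' map; essentiality of \( S \hookrightarrow I_X \) enters only implicitly, ensuring that the resulting triangle is genuinely almost split rather than merely weakly non-degenerate.
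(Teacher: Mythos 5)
Your construction is exactly the paper's: you take \( \tau X[1] \) to represent \( \Hom_{\End_{\catT}(X)}(\catT(X,-),I_X) \) and let the connecting morphism correspond to \( \rho\colon E \twoheadrightarrow E/\!\rad E \hookrightarrow I_X \); your verification that \( M \to X \) is right almost split is correct and is precisely the ``routine check'' the paper leaves implicit for that half.

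The gap is in the final step. The ``standard lemma'' you invoke --- that right almost splitness of \( M \to X \) together with locality of \( \End_{\catT}(X) \) upgrades the triangle to an almost split one --- is false as stated. Take any almost split triangle \( A' \to B' \to X \xto{\gamma'} A'[1] \) and form its direct sum with the trivial triangle \( W \xto{\id} W \to 0 \to W[1] \): the resulting triangle still ends in \( X \) with local endomorphism ring, its middle map \( B' \oplus W \to X \) is still right almost split and the connecting map is still non-zero, yet \( A' \oplus W \to B' \oplus W \) is not left almost split (the projection \( A' \oplus W \to A' \) does not factor through it). This is exactly the phenomenon the paper isolates in Theorem~\ref{theorem our triangle summand of AR triangle} and Lemma~\ref{lemma AR triangle summand}: a triangle with right almost split middle map only \emph{contains} the almost split triangle as a direct summand in general. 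What rules out the spurious summand here is not a formal lemma but the choice of \( I_X \) as an injective \emph{envelope}: given \( g \colon \tau X[1] \to W \) with \( g \circ \xi \neq 0 \), the kernel of \( \catT(X,g) \) is an \( E \)-submodule of \( \catT(X,\tau X[1]) \cong I_X \) not containing \( \xi \), hence (by essentiality of the socle) is zero; injectivity of \( I_X \) then lets one extend \( \id_{I_X} \) along the monomorphism \( \catT(X,g) \) to produce a retraction of \( g \), so \( g \) is a split monomorphism and \( \tau X \to M \) is left almost split. So essentiality does not ``enter only implicitly'' --- it is the substance of the left-hand half of the proof, and omitting it leaves that half unproved.
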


\begin{proof}[Idea of proof]
Let $\rho\colon \End_{\catT}(X)\doublerightarrow \End_{\catT}(X)/\!\rad \End_{\catT}(X) \hookrightarrow I_X$ be the canonical map. By assumption the functor \( \Hom_{\End_{\catT}(X)}(\catT(X,-), I_X) \) is representable, and we can choose \( \tau X \) such that \( \tau X[1] \) is a representative. In other words, there is a natural isomorphism
\[ \phi \colon \Hom_{\End_{\catT}(X)}(\catT(X,-), I_X) \to \catT(-,\tau X [1]). \]
It is routine to check that we have an almost split triangle \[\tau X \to M \to X\xto{\phi_X(\rho)} \tau X[1]. \qedhere\]
\end{proof}

In Theorem~\ref{theorem Krause AR-theorem} the computation of the object $\tau X$ relies on intrinsic properties of the ring $\End_{\catT}(X)$. Our goal now is to show that in the presence of a partial Serre functor, a more unified approach to calculating $\tau$ is sometimes available. We keep our injective cogenerator $I$ of $\Modcat k$, and $(-)^\ast = \Hom_k (-,I)$.

\begin{lemma} \label{lemma our triangle with right almost split map}
Suppose $\mathbb S\colon \catX\to\catT$ is a partial Serre functor. Then each $X\in\catX$ with local endomorphism ring appears in a triangle \begin{align*} \label{align our triangle with right almost split map}
\mathbb SX[-1]\to N \stackrel{n}\to X \to \mathbb SX \tag{$\Delta_{\mathbb S}$}
\end{align*}
with $n$ right almost split.
\end{lemma}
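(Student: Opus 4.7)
The plan is to construct an explicit morphism $\eta \colon X \to \mathbb S X$ and take the triangle it determines; the content of the lemma lies in verifying that the first morphism $n$ of the resulting triangle is right almost split.

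First I would specialise partial Serre duality to $T = X$ to obtain the isomorphism $\catT(X, \mathbb S X) \cong \End_{\catT}(X)^\ast$. Since $\End_{\catT}(X)$ is local with unique maximal ideal $\mathfrak m := \rad \End_{\catT}(X)$, the quotient $\End_{\catT}(X)/\mathfrak m$ is a non-zero $k$-module, and since $I$ cogenerates $\Modcat k$ there is a non-zero $k$-linear map $\End_{\catT}(X)/\mathfrak m \to I$. Pulling this back along the canonical projection $\End_{\catT}(X) \twoheadrightarrow \End_{\catT}(X)/\mathfrak m$ yields a non-zero functional $\ell \in \End_{\catT}(X)^\ast$ which vanishes on $\mathfrak m$; I then let $\eta \in \catT(X, \mathbb S X)$ be the morphism corresponding to $\ell$ under Serre duality and complete $\eta$ to a triangle $\mathbb S X[-1] \to N \xto{n} X \xto{\eta} \mathbb S X$.

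It remains to show that $n$ is right almost split. That $n$ is not a split epimorphism is immediate: otherwise the triangle would split, forcing $\eta = 0$ and hence $\ell = 0$, contrary to construction. For the factorisation property, let $f \colon Y \to X$ be any morphism that is not a split epimorphism; the factorisation of $f$ through $n$ is equivalent to $\eta \circ f = 0$. By naturality of the Serre iso $\catT(Y, \mathbb S X) \cong \catT(X, Y)^\ast$ in the second variable, the morphism $\eta \circ f$ corresponds to the functional $g \mapsto \ell(f \circ g)$ on $\catT(X, Y)$; but $f$ not being a split epi together with locality of $\End_{\catT}(X)$ forces $f \circ g \in \mathfrak m$ for every $g \colon X \to Y$ (else a suitable rescaling would provide a section of $f$), so the functional vanishes and we conclude $\eta \circ f = 0$.

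The only real subtlety will be the careful tracking of the naturality of Serre duality in the second variable when translating ``$\eta \circ f$ vanishes in $\catT(Y, \mathbb S X)$'' into ``$\ell$ annihilates the image of postcomposition with $f$''; the remainder is a routine consequence of the local structure of $\End_{\catT}(X)$ together with the cogenerating property of $I$.
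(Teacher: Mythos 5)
Your proposal is correct and is essentially the paper's argument: choose a non-zero linear form on $\End_{\catT}(X)$ vanishing on the radical (possible since $I$ cogenerates $\Modcat k$), transport it to a morphism $X\to\mathbb S X$ via the duality, and use locality of $\End_{\catT}(X)$ to see that every non-split-epimorphism into $X$ composes to zero with it. The naturality bookkeeping you flag is exactly the step the paper leaves implicit, and you have carried it out correctly.
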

\begin{proof}
By assumption there is an isomorphism $\phi\colon \End_{\catT}(X)^\ast \to \catT(X,\mathbb SX)$. For each non-zero linear form $\gamma$ on $\End_{\catT}(X)$ which vanishes on $\rad \End_{\catT}(X)$, the triangle \[\mathbb SX[-1]\to N \to X \xto{\phi(\gamma)} \mathbb SX\] has the desired property. Indeed, it suffices to observe that any radical morphism $Y\to X$ composes to zero with $\phi(\gamma)$.
\end{proof}

Our first aim is to show that in the setup of Lemma~\ref{lemma our triangle with right almost split map} there is an almost split triangle ending in \( X \), and moreover that this triangle is a direct summand of (\ref{align our triangle with right almost split map}).

\begin{theorem} \label{theorem our triangle summand of AR triangle}
Assume \( \catT \) is idempotent closed. (Note that this is automatic for instance if \( \catT \) has countable products or coproducts.) Suppose $\mathbb S\colon \catX\to\catT$ is a partial Serre functor. Let \( X \in \catX \) be an object with local endomorphism ring. Then
\begin{enumerate}
\item The functor $\Hom_{\End_{\catT}(X)}(\catT(X,-), I_X)$ of Theorem~\ref{theorem Krause AR-theorem} is representable, so in particular the almost split triangle  (\ref{align Krauses AR triangle}) exists.
\item The triangle (\ref{align Krauses AR triangle}) is a direct summand of (\ref{align our triangle with right almost split map}).
\end{enumerate}
\end{theorem}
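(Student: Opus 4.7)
Setting \( \Gamma := \End_\catT(X) \) with residue division ring \( D := \Gamma/\!\rad\Gamma \), my starting point is the chain of isomorphisms
\[ \Hom_\Gamma(\catT(X,-), \Gamma^\ast) \cong \catT(X,-)^\ast \cong \catT(-, \mathbb S X), \]
coming from the tensor–hom adjunction followed by partial Serre duality. Note that \( \Gamma^\ast = \Hom_k(\Gamma, I) \) is an injective \( \Gamma \)-module, since the adjunction \( \Hom_\Gamma(-, \Gamma^\ast) \cong \Hom_k(-, I) \) is exact. Part (1) thus reduces to exhibiting \( I_X \) as a direct summand of \( \Gamma^\ast \): the linear form \( \gamma \) of Lemma~\ref{lemma our triangle with right almost split map} generates a cyclic \( \Gamma \)-submodule \( \Gamma\cdot\gamma \subset \Gamma^\ast \) which is isomorphic to \( D \) (as \( \gamma \) is non-zero and annihilates \( \rad\Gamma \)), and its injective envelope inside \( \Gamma^\ast \) furnishes a copy of \( I_X \) appearing as a summand by injectivity of \( \Gamma^\ast \). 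This produces a decomposition \( \Gamma^\ast = I_X \oplus J \); applying \( \Hom_\Gamma(\catT(X,-), -) \) realises \( \Hom_\Gamma(\catT(X,-), I_X) \) as a direct summand of the representable functor \( \catT(-, \mathbb S X) \), and by idempotent completeness of \( \catT \) the corresponding Yoneda idempotent on \( \mathbb S X \) splits, giving \( \mathbb S X = \tau X[1] \oplus Z \) with \( \catT(-, \tau X[1]) \cong \Hom_\Gamma(\catT(X,-), I_X) \). Theorem~\ref{theorem Krause AR-theorem} then produces the almost split triangle \( (\Delta_\tau) \).

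For Part (2), the crucial feature of the construction is that we have \emph{chosen} \( I_X \) to be the injective envelope of \( \Gamma\cdot\gamma \), so that \( \gamma \) itself lies in \( I_X \). Under the resulting identification \( \catT(X, \mathbb S X) \cong \Gamma^\ast = I_X \oplus J \cong \catT(X, \tau X[1]) \oplus \catT(X, Z) \), the map \( \phi(\gamma)\colon X \to \mathbb S X \) therefore takes the form \( (u, 0) \) for some \( u\colon X \to \tau X[1] \). Consequently \( (\Delta_{\mathbb S}) \) splits as the direct sum of the fibre triangle of \( u \) and the contractible triangle \( Z[-1] \to Z[-1] \oplus X \to X \xto{0} Z \). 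It remains to identify the fibre triangle of \( u \) with \( (\Delta_\tau) \): the right-almost-split property of \( \phi(\gamma) \) passes to \( u \) through the split inclusion \( \tau X[1] \hookrightarrow \mathbb S X \), and \( \End_\catT(\tau X[1]) \cong \End_\Gamma(I_X) \) is local since \( I_X \) is the injective envelope of a simple module, so by uniqueness of almost split triangles the fibre triangle of \( u \) is isomorphic to \( (\Delta_\tau) \).

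The main delicacy is precisely this pinning-down step: many non-conjugate copies of \( I_X \) sit inside \( \Gamma^\ast \), and only the one built from \( \Gamma\cdot\gamma \) forces the \( Z \)-component of \( \phi(\gamma) \) to vanish, which is what lets us split \( (\Delta_{\mathbb S}) \) cleanly. Once that choice is arranged, the remaining ingredients—injectivity of \( \Gamma^\ast \), the extension of the socle embedding \( \Gamma\cdot\gamma \hookrightarrow \Gamma^\ast \) to its injective envelope, and the Yoneda realisation of a summand of \( \catT(-, \mathbb S X) \) via an idempotent on \( \mathbb S X \)—are routine.
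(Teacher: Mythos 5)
Your argument is correct, and while part (1) follows the paper's own route almost verbatim (realise \( I_X \) as a direct summand of the injective module \( \Gamma^\ast \cong \catT(X,X)^\ast \), hence \( \Hom_\Gamma(\catT(X,-),I_X) \) as a summand of the representable \( \catT(-,\mathbb S X) \), and split the Yoneda idempotent using idempotent completeness), part (2) takes a genuinely different path. The paper proves an abstract comparison lemma: \emph{any} almost split triangle ending in \( X \) is a direct summand of \emph{any} triangle ending in \( X \) whose middle map is right almost split, via mutual factorisation of the two right almost split maps and left minimality of the connecting morphism (using locality of \( \End(\tau X) \)). You instead exploit the freedom in choosing \( I_X \): taking it to be the injective envelope of \( \Gamma\cdot\gamma \) \emph{inside} \( \Gamma^\ast \) forces \( \gamma \in I_X \), so under the induced decomposition \( \mathbb S X = \tau X[1] \oplus Z \) the connecting morphism is literally \( \phi(\gamma) = (u,0) \), and the splitting of \( (\Delta_{\mathbb S}) \) is visible on the nose. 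This is more explicit (it identifies the idempotent and the complementary summand), at the cost of two extra inputs the paper's route avoids: the ring isomorphism \( \End_\catT(\tau X[1]) \cong \End_\Gamma(I_X) \) (true, but it needs the small Yoneda computation transporting composition through the natural isomorphism \( \catT(-,\tau X[1]) \cong \Hom_\Gamma(\catT(X,-),I_X) \), which you should spell out), and the classical criterion that a triangle with right almost split middle map, non-zero connecting morphism and local endomorphism rings at both ends is almost split. Two small corrections: the complementary summand of \( (\Delta_{\mathbb S}) \) is the contractible triangle \( Z[-1] \xto{\id} Z[-1] \to 0 \to Z \) (the triangle over \( 0 \colon 0 \to Z \)), not \( Z[-1] \to Z[-1]\oplus X \to X \xto{0} Z \) — with your version the direct sum would have third term \( X \oplus X \); and strictly speaking it is the map \( N \to X \), not \( \phi(\gamma) \) itself, that is right almost split — what passes from \( \phi(\gamma) \) to \( u \) is the property that every non-retraction into \( X \) composes to zero, which is equivalent. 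Neither affects the validity of the argument.
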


For the proof, we prepare the following two lemmas.

\begin{lemma} \label{lemma functors summand}
Let \( X \in \catT \) be an object with local endomorphism ring. Then the functor \( \Hom_{\End_{\catT}(X)}(\catT(X,-), I_X) \) is a direct summand of \( \catT(X, -)^\ast \).
\end{lemma}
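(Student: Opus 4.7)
Write $E = \End_{\catT}(X)$ and consider $\catT(X,-)$ as an $E$-module valued functor via precomposition. The strategy is to first realise $I_X$ as a direct summand of the $E$-module $E^\ast = \Hom_k(E,I)$, and then promote this splitting functorially to one of $\catT$-functors by way of a tensor--hom adjunction.

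For the first step, note that $E^\ast$ is injective as an $E$-module: this follows from the natural isomorphism $\Hom_E(-,\Hom_k(E,I)) \cong \Hom_k(-,I)$ together with exactness of $(-)^\ast$. Next, since $E$ is local and $I$ cogenerates $\Modcat k$, we can pick a non-zero $k$-linear form $\gamma\colon E \to I$ that vanishes on $\rad E$; viewed as an element of $E^\ast$, this $\gamma$ gives a non-zero $E$-linear map $E/\!\rad E \to E^\ast$, which is automatically a monomorphism by simplicity of $E/\!\rad E$. Injectivity of $E^\ast$ extends this to a monomorphism $I_X \hookrightarrow E^\ast$, which then splits by injectivity of $I_X$.

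For the second step, the tensor--hom adjunction furnishes for every $E$-module $M$ a natural isomorphism \[\Hom_E(M, E^\ast) = \Hom_E(M, \Hom_k(E,I)) \cong \Hom_k(M, I) = M^\ast.\] Instantiating at $M = \catT(X,Y)$ and letting $Y$ vary produces a natural isomorphism of functors $\Hom_E(\catT(X,-), E^\ast) \cong \catT(X,-)^\ast$. Composing with the summand inclusion $\Hom_E(\catT(X,-), I_X) \hookrightarrow \Hom_E(\catT(X,-), E^\ast)$ obtained by applying $\Hom_E(\catT(X,-), -)$ to the split monomorphism of the previous paragraph, we exhibit $\Hom_E(\catT(X,-), I_X)$ as the claimed direct summand of $\catT(X,-)^\ast$.

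The most delicate point is bookkeeping of the left versus right $E$-module structures in the tensor--hom step: the $E$-action on $\catT(X,-)$ comes from precomposition, while the $E$-action on $E^\ast$ is induced by multiplication in $E$, and one must verify that these structures are compatible so that the adjunction lands in the correct category of $E$-modules. This is routine but demands a careful convention choice; once it is pinned down, the rest of the argument is formal.
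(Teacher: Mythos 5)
Your proof is correct and follows essentially the same route as the paper's: both reduce the statement to showing that \( I_X \) is a direct summand of \( E^\ast \) via the identification \( \catT(X,-)^\ast \cong \Hom_E(\catT(X,-), E^\ast) \), establish injectivity of \( E^\ast \) by tensor--hom adjunction, and produce a monomorphism from the simple \( E \)-module into \( E^\ast \) using that \( I \) cogenerates \( \Modcat k \). The only cosmetic difference is that you build the embedding \( S \hookrightarrow E^\ast \) from a form on \( E \) vanishing on \( \rad E \), whereas the paper starts from a non-zero map \( S \to I \); these are the same construction.
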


\begin{proof}
We write \( E = \End_{\catT}(X) \), and denote by \( S \) its simple module.

Since \( \catT(X, -)^\ast = \Hom_{E}(\catT(X, -), E^*) \) it suffices to show that \( I_X \) is a direct summand of \( E^\ast \). Observe that since $\Hom_E(-, E^\ast) = (- \otimes_E E)^\ast$, the $E$-module \( E^\ast \) is injective. Thus, by definition of \( I_X \) it suffices to show that there is a monomorphism from \( S \) to \( E^\ast \).

Since \( I \) is a cogenerator of \( \Modcat k \) there is a non-zero map \( \rho \colon S \to I \). Now we obtain the desired injection as
\[ S \to E^\ast \colon s \longmapsto [ e \mapsto \rho(se) ]. \qedhere \]
\end{proof}

\begin{lemma} \label{lemma AR triangle summand}
Let \( \catT \) be a triangulated category, let \( \tau X \to M \stackrel{m}\to X \stackrel{s}\to \tau X [1] \) be an almost split triangle, and let \( Y \to N \stackrel{n}\to X \to Y[1] \) be a triangle with \( n \) right almost split. Then the former triangle is a direct summand of the latter.
\end{lemma}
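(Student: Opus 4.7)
The plan is to manufacture two mutually inverse-up-to-isomorphism morphisms of triangles between the given almost split triangle and the triangle containing \( n \), and then split off the resulting idempotent using that \( \catT \) is idempotent complete.

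First, since neither \( m \) nor \( n \) is a split epimorphism (by the defining properties of right almost split maps), the right almost split property of \( n \) yields an \( f \colon M \to N \) with \( n f = m \), and the right almost split property of \( m \) yields a \( g \colon N \to M \) with \( m g = n \). By axiom (TR3), we extend \( f \) and \( g \) to morphisms of triangles
\[ (f', f, \id_X) \colon (\tau X \to M \to X \to \tau X[1]) \longrightarrow (Y \to N \to X \to Y[1]) \]
and
\[ (g', g, \id_X) \colon (Y \to N \to X \to Y[1]) \longrightarrow (\tau X \to M \to X \to \tau X[1]), \]
and compose them to obtain a self-morphism \( (g'f', gf, \id_X) \) of the almost split triangle.

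The key step, which I expect to be the main obstacle, is to show that this self-morphism is an isomorphism of triangles. The component at \( X \) is \( \id_X \). For the component \( g'f' \colon \tau X \to \tau X \), we use that \( \End(\tau X) \) is local (standard for almost split triangles). If \( g'f' \) were not an isomorphism, it would lie in the radical, so \( \id - g'f' \) would be invertible. Commutativity of the right-most square gives \( s = (g'f')[1] \circ s \), hence \( (\id - g'f')[1] \circ s = 0 \). But \( (\id - g'f')[1] \) is then invertible, forcing \( s = 0 \), which contradicts the fact that the connecting morphism of an almost split triangle is non-zero. Thus \( g'f' \) is an isomorphism, and then the triangulated five-lemma applied to our morphism of triangles forces \( gf \) to be an isomorphism as well.

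Finally, set \( \alpha = (f', f, \id_X) \) and \( \beta = (g', g, \id_X) \); then \( \beta \alpha \) is an isomorphism of triangles, so \( \alpha \circ (\beta \alpha)^{-1} \) is a section of \( \beta \), and
\[ e := \alpha \circ (\beta \alpha)^{-1} \circ \beta \]
is an idempotent endomorphism of the second triangle. Since \( \catT \) is idempotent complete, idempotents split coherently on the level of triangles, exhibiting the almost split triangle as a direct summand of \( Y \to N \xto{n} X \to Y[1] \), as required.
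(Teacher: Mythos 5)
Your proof is correct and takes essentially the same route as the paper: factor each right almost split map through the other, extend to morphisms of triangles, and use locality of \( \End(\tau X) \) together with \( s \neq 0 \) to see that the composite endomorphism of the almost split triangle is an isomorphism. The only deviation is your final appeal to idempotent completeness, which is not a hypothesis of the lemma and is also unnecessary --- once \( \beta\alpha \) is invertible the almost split triangle is a retract of the other, which is all the lemma (and the paper's own proof) asserts.
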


\begin{proof}
Consider the following diagram.
\[ \begin{tikzcd}
\Delta_1\colon & \tau X \ar[r] & M \ar[r,"m"] &  X  \ar[d,equal] \ar[r,"s"] & \tau X[1]  \\
\Delta_2\colon & Y \ar[r] & N \ar[r,"n"] &  X  \ar[r] & Y[1]
\end{tikzcd} \]
Since $m$ and $n$ are both right almost split, the former factors through the latter, and vice versa. This gives rise to morphisms of triangles $\iota\colon\Delta_1 \to \Delta_2$ and $\pi\colon\Delta_2 \to \Delta_1 $. In particular, there are morphisms $i\colon \tau X \to Y$ and $p\colon Y \to \tau X$ such that $(pi)[1] \circ s=s$. But since  $\End_{\catT}(\tau X)$ is local, the non-zero morphism $s$ is left minimal. Hence $i$ is a split monomorphism, i.e.\ (\( \Delta_1 \)) is a summand of (\( \Delta_2 \)).
\end{proof}

Now the proof of Theorem~\ref{theorem our triangle summand of AR triangle} is very short.

\begin{proof}[Proof of Theorem~\ref{theorem our triangle summand of AR triangle}]
The first point follows from Lemma~\ref{lemma functors summand}: Since \( \catT \) is idempotent closed, direct summands of representable functors are representable again. Once the first point is established, the second one is an immediate application of Lemma~\ref{lemma AR triangle summand}
\end{proof}

Our next aim is to show that in certain cases, there is no difference between the triangles (\ref{align Krauses AR triangle}) and (\ref{align our triangle with right almost split map}). More precisely, we will show the following.

\begin{theorem} \label{theorem.S triangle almost split}
Assume that \( k \) is noetherian, and let \( I = \coprod_{\mathfrak{m} \in \MaxSpec k} I(k / \mathfrak{m}) \) be the direct sum of the injective envelopes of the simple \( k \)-modules.

Let $\mathbb S\colon \catX\to\catT$ be a partial Serre functor. If the endomorphism ring of $X\in\catX$ is local and finite over $k$, then the triangle (\ref{align our triangle with right almost split map}) is almost split.
\end{theorem}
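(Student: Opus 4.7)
The plan is to identify the Serre dual $\mathbb S X$ with the object $\tau X[1]$ produced by Theorem~\ref{theorem Krause AR-theorem}: once this is done, our triangle $(\Delta_\mathbb{S})$ must coincide, up to isomorphism, with the almost split triangle $(\Delta_\tau)$. Writing $E = \End_\catT(X)$, the central step is to prove $E^\ast \cong I_X$ as left $E$-modules. Granted this, the adjunction
\[
\catT(X,-)^\ast \;=\; \Hom_k(\catT(X,-), I) \;\cong\; \Hom_E(\catT(X,-), E^\ast) \;\cong\; \Hom_E(\catT(X,-), I_X)
\]
identifies the functor represented by $\mathbb S X$ (via the partial Serre duality) with the one represented by $\tau X[1]$ (via Krause), so $\mathbb S X \cong \tau X[1]$ by Yoneda.

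To establish $E^\ast \cong I_X$, I would verify in order: (a) $E^\ast$ is injective as an $E$-module, directly from the adjunction $\Hom_E(-, E^\ast) \cong \Hom_k(-, I)$ and the injectivity of $I$ over $k$; (b) the socle of $E^\ast$ is isomorphic to $S_E = E/\rad E$, a single copy; and (c) $E^\ast$ is Artinian. Given (a)--(c), every nonzero $E$-submodule of $E^\ast$ is Artinian and therefore contains a simple submodule, which can only be $S_E$; hence $E^\ast$ is an essential extension of its simple socle and coincides with its injective envelope $I_X$. For (b), since $E$ is local and finite over $k$, the classical observation that a division ring finitely generated over a commutative noetherian subring forces the subring to be a field shows that $k \cap \rad E = \mathfrak{m}$ is a maximal ideal of $k$, and the $k$-support of $E$ is $\{\mathfrak{m}\}$. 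Matlis duality at the residue field $\bar{k} = k/\mathfrak{m}$ then yields
\[
\operatorname{soc}(E^\ast) \;=\; \Hom_k\bigl(S_E, I(k/\mathfrak{m})\bigr) \;=\; \Hom_{\bar{k}}(S_E, \bar{k}),
\]
and a $\bar k$-dimension count --- using that $S_E$ is the unique simple $E$-module --- produces exactly one copy of $S_E$. For (c), an epimorphism $k^n \twoheadrightarrow E$ dualizes to an embedding $E^\ast \hookrightarrow I(k/\mathfrak{m})^n$, and $I(k/\mathfrak{m})$ is Artinian over $k_\mathfrak{m}$ by Matlis.

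Given $\mathbb S X \cong \tau X[1]$, Theorem~\ref{theorem our triangle summand of AR triangle} provides a decomposition $(\Delta_\mathbb{S}) \cong (\Delta_\tau) \oplus \Delta'$; the third vertex of $\Delta'$ is necessarily $0$, so $\Delta'$ takes the trivial form $W[-1] \xto{\sim} W[-1] \to 0 \to W$. Comparing fourth vertices gives $\tau X[1] \oplus W \cong \mathbb S X \cong \tau X[1]$, and since $\tau X[1]$ has local endomorphism ring by construction of the almost split triangle, cancellation of objects with local endomorphism rings forces $W = 0$. Consequently $(\Delta_\mathbb{S})$ is almost split.

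The main obstacle is the identification $E^\ast \cong I_X$. Because $E$ need not be Artinian (for instance $k = \mathbb{Z}_{(p)}$ and $E = k[x]/(x^2)$), one cannot invoke the standard Matlis duality for Artin algebras directly, and the Artinianness of $E^\ast$ must instead be extracted from the Artinianness of the hull $I(k/\mathfrak{m})$ together with the finite generation of $E$ over $k$. A further subtle point is the careful tracking of left/right $E$-module structures when computing the socle, which is genuinely necessary in the possibly non-commutative setting.
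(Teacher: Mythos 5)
Your proof is correct and follows the same overall strategy as the paper's: everything reduces to showing that \( E^\ast = \Hom_k(E,I) \) is an injective envelope of \( E/\rad E \), after which \( \Hom_E(\catT(X,-),I_X)\cong\catT(X,-)^\ast \) identifies \( \tau X[1] \) with \( \mathbb S X \) and the two triangles coincide. The one genuinely different ingredient is how essentiality is established: the paper shows directly that every \( \phi\in E^\ast \) is annihilated by a power of \( \rad E \), using that \( \rad E \) is nilpotent modulo \( \mathfrak{m}E \) and that \( \Imm\phi \) is killed by a power of \( \mathfrak{m} \); you instead deduce it from the Artinianness of \( E^\ast \), obtained by embedding it into \( I(k/\mathfrak{m})^n \) and quoting Matlis. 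Both work; yours trades an elementary computation for a heavier standard input. One intermediate assertion in your writeup is false as stated: the \( k \)-support of \( E \) need not be \( \{\mathfrak{m}\} \) --- your own example \( E=\mathbb Z_{(p)}[x]/(x^2) \) over \( k=\mathbb Z_{(p)} \) is free of rank two over \( k \), so its support is all of \( \Spec k \). What you actually need (and what the paper proves in Lemma~\ref{lemma no maps to other maximal}) is only that \( \Hom_k(E,I(k/\mathfrak{n}))=0 \) for \( \mathfrak{n}\neq\mathfrak{m} \), which holds because elements of \( k\setminus\mathfrak{m} \) act invertibly on the local ring \( E \) while every finitely generated submodule of \( I(k/\mathfrak{n}) \) is killed by a power of \( \mathfrak{n} \); this repairs both your socle computation and the embedding \( E^\ast\hookrightarrow I(k/\mathfrak{m})^n \), so the slip is not load-bearing. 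Finally, your concluding cancellation argument routes through Theorem~\ref{theorem our triangle summand of AR triangle} and hence tacitly assumes \( \catT \) idempotent complete; the paper also offers the alternative of comparing the two constructions directly, which avoids that hypothesis.
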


Also for the proof of this theorem we prepare several lemmas.

\begin{lemma} \label{lemma.induces local map}
Let \( k \) be noetherian, and let \( E \) be a finite \( k \)-algebra which is local. Then \( \mathfrak{m} = \Ker\left( k \to E / \!\rad E\right) \) is a maximal ideal of \( k \).
\end{lemma}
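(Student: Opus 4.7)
The plan is to show that $\overline k := k/\mathfrak m$ is a field, which is equivalent to $\mathfrak m$ being maximal.

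Since $E$ is local, the quotient $D := E/\!\rad E$ is a division ring, and by construction the ring homomorphism $k \to D$ descends to an \emph{injection} $\overline k \hookrightarrow D$. Thus $\overline k$ is a commutative subring of the division ring $D$, hence an integral domain; moreover $D$ is a quotient of $E$ and so remains finitely generated as a $k$-module, equivalently as an $\overline k$-module (the $k$-action factors through $\overline k$ by construction of $\mathfrak m$).

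For the upgrade from prime to maximal, I would fix any nonzero $a \in \overline k$ and consider its inverse $a^{-1} \in D$. The commutative subring $\overline k[a^{-1}] \subseteq D$ is an $\overline k$-submodule of the finitely generated $\overline k$-module $D$; since $k$ is noetherian so is $\overline k$, and therefore $\overline k[a^{-1}]$ is itself finitely generated over $\overline k$. Consequently $a^{-1}$ satisfies an integrality relation $a^{-n} + c_{n-1}a^{-(n-1)} + \cdots + c_0 = 0$ with $c_i \in \overline k$. Multiplying by $a^{n-1}$ rearranges this into an expression for $a^{-1}$ as a polynomial in $a$ with coefficients in $\overline k$, so in fact $a^{-1} \in \overline k$. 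Hence every nonzero element of $\overline k$ is a unit, and $\overline k$ is a field.

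There is no genuine obstacle here: the argument is just the classical fact that a commutative subring $A \subseteq B$ over which $B$ is module-finite inherits the property of being a field from $B$. The only mild care needed is that $D$ itself is typically non-commutative, so one passes to the commutative subring $\overline k[a^{-1}]$ generated by the single element $a^{-1}$, rather than arguing inside $D$ directly.
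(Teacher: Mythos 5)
Your proof is correct and follows essentially the same route as the paper's: both observe that $\overline{k}=k/\mathfrak{m}$ is a domain sitting inside the skewfield $E/\!\rad E$, use noetherianity of $k$ together with module-finiteness of $E$ to see that the subring of $E/\!\rad E$ obtained by adjoining inverses of elements of $\overline{k}$ is a finitely generated $\overline{k}$-module, and conclude that $\overline{k}$ is a field. The only cosmetic difference is that you run the integrality argument one element at a time via $\overline{k}[a^{-1}]$, whereas the paper adjoins the whole quotient field at once and quotes the fact that a non-trivial localization of an integral domain is never module-finite over it.
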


\begin{proof}
Since the target of the map above is a skewfield we observe that \( \mathfrak{m} \) is a prime ideal. Moreover we note that the quotient field of \( k / \mathfrak{m} \) is a \( k \)-submodule of \( E / \rad E \). Since \( k \) is noetherian this quotient field is also finite over $k$, whence even over \( k / \mathfrak{m} \). However, no non-trivial localizations of integral domains are finite. The only remaining possibility is that \( \mathfrak{m} \) is a maximal ideal.
\end{proof}

\begin{lemma} \label{lemma no maps to other maximal}
Let \( k \) be noetherian, and let \( E \) be a finite \( k \)-algebra which is local. Let \( \mathfrak{m} \) be as in Lemma~\ref{lemma.induces local map} above. Then \( \Hom_{k}(E, \coprod_{\substack{\mathfrak{n} \in \MaxSpec k \\ \mathfrak{n} \neq \mathfrak{m}}} I(k / \mathfrak{n})) = 0. \)
\end{lemma}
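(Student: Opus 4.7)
The plan is first to use finite generation of \( E \) as a \( k \)-module to reduce to a single-summand statement. Since \( E \) is a finite \( k \)-algebra, it is in particular a finitely generated \( k \)-module, so \( \Hom_k(E,-) \) preserves coproducts. It thus suffices to show that \( \Hom_k(E, I(k/\mathfrak{n})) = 0 \) for each maximal ideal \( \mathfrak{n} \neq \mathfrak{m} \).

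Fix such an \( \mathfrak{n} \). Since \( \mathfrak{n} \) and \( \mathfrak{m} \) are distinct maximal ideals of \( k \), we have \( \mathfrak{n} \not\subseteq \mathfrak{m} \), so I can choose \( x \in \mathfrak{n} \setminus \mathfrak{m} \). By the definition of \( \mathfrak{m} \) in Lemma~\ref{lemma.induces local map}, the image of \( x \) in \( E/\!\rad E \) is non-zero; since \( E \) is local, \( E/\!\rad E \) is a skewfield, so this image is a unit, and consequently \( x \) itself is a unit in \( E \). Combined with \( x \in \mathfrak{n} \) this gives \( 1 = x^{-1}x \in \mathfrak{n} E \), whence \( \mathfrak{n} E = E \) and therefore \( \mathfrak{n}^N E = E \) for every \( N \geq 1 \).

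Finally, let \( \varphi \colon E \to I(k/\mathfrak{n}) \) be any \( k \)-linear map. The image \( \varphi(E) \) is a finitely generated \( k \)-submodule of \( I(k/\mathfrak{n}) \). Because \( k \) is noetherian and \( \mathfrak{n} \) is maximal, every element of \( I(k/\mathfrak{n}) \) is annihilated by some power of \( \mathfrak{n} \), so \( \varphi(E) \) is killed by \( \mathfrak{n}^N \) for \( N \) sufficiently large. Thus \( \varphi \) factors through \( E/\mathfrak{n}^N E \), which vanishes by the previous step, forcing \( \varphi = 0 \).

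I do not expect any substantive obstacle: the only external input is the \( \mathfrak{n} \)-torsion property of \( I(k/\mathfrak{n}) \) for maximal \( \mathfrak{n} \) over a noetherian ring, which is classical Matlis theory. If one wished to avoid invoking it, an essentially equivalent alternative would be to work with the localization \( k_{\mathfrak{n}} \): the element \( x \in \mathfrak{n} \) produced in the second paragraph acts as the zero map on \( E_{\mathfrak{n}} \) (being a unit in \( E \) whose image in \( \mathfrak{n} k_{\mathfrak{n}} \) is nilpotent modulo \( \mathrm{ann}_k(E) \)-considerations), giving \( E_{\mathfrak{n}} = 0 \) and hence \( \Hom_k(E, I(k/\mathfrak{n})) = \Hom_{k_{\mathfrak{n}}}(E_{\mathfrak{n}}, I(k/\mathfrak{n})) = 0 \); the first route above is cleaner.
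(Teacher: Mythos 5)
Your proof is correct and follows essentially the same route as the paper's: reduce to a single summand using that \( E \) is finitely generated over \( k \), observe via Lemma~\ref{lemma.induces local map} that any \( x \in \mathfrak{n} \setminus \mathfrak{m} \) is a unit in \( E \), and combine this with the fact that finitely generated submodules of \( I(k/\mathfrak{n}) \) are killed by a power of \( \mathfrak{n} \). The paper phrases the final step as "\( x \) acts both nilpotently and invertibly on \( \Imm \varphi \)", which is the same contradiction you obtain by factoring \( \varphi \) through \( E/\mathfrak{n}^N E = 0 \).
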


\begin{proof}
For \( x \in k \setminus \mathfrak{m} \) we observe that \( x \) becomes invertible in \( E \) by Lemma~\ref{lemma.induces local map}.

Let \( \varphi \colon E \to I(k / \mathfrak{n} ) \) for some maximal ideal \( \mathfrak{n} \neq \mathfrak{m} \). Since \( E \) is finitely generated, so is \( \Imm \varphi \). It follows that \( (\Imm \varphi) \mathfrak{n}^s = 0 \) for some \( s \). Choose \( x \in \mathfrak{n} \setminus \mathfrak{m} \). Now \( x \) acts both nilpotently and invertibly on \( \Imm \varphi \), whence \( \varphi = 0 \).
\end{proof}

\begin{lemma} \label{lemma Hom preserves injective envelope}
Let $k$ be a commutative noetherian ring, and let $E$ be a finite \( k \)-algebra which is local. Let \( I \) be as in Theorem~\ref{theorem.S triangle almost split}.

Then $\Hom_k(E,I)$ is an injective envelope of $E/\!\rad E$.
\end{lemma}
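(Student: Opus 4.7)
The plan is to show that \( \Hom_k(E, I) \) is an injective \( E \)-module whose socle is simple and essential; by uniqueness of injective envelopes, this identifies \( \Hom_k(E, I) \) with the injective envelope of \( E/\!\rad E \).

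First I would reduce to the case where \( k \) is local Noetherian. Let \( \mathfrak{m} \) be the maximal ideal of \( k \) furnished by Lemma~\ref{lemma.induces local map}. Since \( E \) is finitely generated over Noetherian \( k \), the functor \( \Hom_k(E, -) \) preserves coproducts; combined with Lemma~\ref{lemma no maps to other maximal} this gives \( \Hom_k(E, I) \cong \Hom_k(E, I(k/\mathfrak{m})) \). Both \( E \) and \( I(k/\mathfrak{m}) \) are naturally modules over the localization \( k_{\mathfrak{m}} \): the former because every element of \( k \setminus \mathfrak{m} \) acts invertibly on \( E \) (cf.\ the proof of Lemma~\ref{lemma no maps to other maximal}), the latter because \( I(k/\mathfrak{m}) \) is \( \mathfrak{m} \)-torsion. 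Hence the \( k \)-linear \( \Hom \) agrees with the \( k_{\mathfrak{m}} \)-linear \( \Hom \), and \( I(k/\mathfrak{m}) = I_{k_{\mathfrak{m}}}(\kappa) \) for \( \kappa = k/\mathfrak{m} \), so throughout the remainder of the argument I may assume that \( k \) is local with residue field \( \kappa \) and \( I = I(\kappa) \).

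Next, viewing \( E \) as a \( (k, E) \)-bimodule, \( \Hom_k(E, I) \) carries a left \( E \)-module structure via the right \( E \)-action on \( E \). The standard \( \otimes \)--\( \Hom \) adjunction gives \( \Hom_E(M, \Hom_k(E, I)) \cong \Hom_k(M, I) \) naturally in left \( E \)-modules \( M \), and the \( k \)-injectivity of \( I \) transports to the \( E \)-injectivity of \( \Hom_k(E, I) \). The socle \( \{ \phi \mid \rad E \cdot \phi = 0 \} \) unfolds to \( \Hom_k(E/\!\rad E, I) \), and writing \( D = E/\!\rad E \), the identities \( \mathfrak{m} D = 0 \) and \( I[\mathfrak{m}] = \kappa \) identify this with \( \Hom_\kappa(D, \kappa) \). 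The latter is the \( \kappa \)-dual of the right regular representation of \( D \), hence a left \( D \)-module free of rank one, therefore simple as an \( E \)-module and isomorphic to the unique simple \( E \)-module \( E/\!\rad E \).

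It remains to verify that this socle is essential, for which I would invoke the classical fact of Matlis that \( I(\kappa) \) is Artinian as a \( k \)-module over local Noetherian \( k \). A surjection \( k^n \twoheadrightarrow E \) dualizes to an embedding \( \Hom_k(E, I) \hookrightarrow I^n \), so \( \Hom_k(E, I) \) is Artinian as a \( k \)-module and \emph{a fortiori} as an \( E \)-module; any Artinian module has essential socle. The main technical input is the Artinianness of \( I(\kappa) \); beyond this, the only subtlety is the careful bookkeeping of left/right \( E \)-actions under the \( k \)-dual, after which all the required verifications become routine.
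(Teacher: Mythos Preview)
Your proof is correct, and its overall shape matches the paper's: reduce from \( I \) to \( I(k/\mathfrak{m}) \) via Lemmas~\ref{lemma.induces local map} and \ref{lemma no maps to other maximal}, identify \( \Hom_k(E/\!\rad E, I(k/\mathfrak{m})) \) with the simple \( E \)-module, and then argue essentiality. The one genuine difference is in that last step. The paper shows by hand that every \( \phi \in \Hom_k(E, I(k/\mathfrak{m})) \) is annihilated by some power of \( \rad E \): since \( E/\mathfrak{m}E \) is finite-dimensional local over \( k/\mathfrak{m} \), a power of \( \rad E \) lies in \( \mathfrak{m}E \), while any finitely generated submodule of \( I(k/\mathfrak{m}) \) is killed by a power of \( \mathfrak{m} \). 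You instead invoke Matlis's theorem that \( I(\kappa) \) is Artinian over a Noetherian local ring, embed \( \Hom_k(E, I) \hookrightarrow I^n \), and conclude that an Artinian module has essential socle. Your route is cleaner and immediately gives the stronger conclusion that \( \Hom_k(E,I) \) is itself Artinian, at the cost of importing a structural result; the paper's argument is more elementary and self-contained, relying only on the \( \mathfrak{m} \)-primary nature of \( I(k/\mathfrak{m}) \) rather than its full Artinianness.
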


\begin{proof}
We have already established, in the proof of Lemma~\ref{lemma functors summand}, that there is a monomorphism from \( E /\!\rad E \) to \( \Hom_k(E, I) \). It follows immediately from its description that this factors through $ \Hom_k(E / \!\rad E, I) \hookrightarrow \Hom_k(E, I).$

Let \( \mathfrak{m} \) be as in Lemma~\ref{lemma.induces local map}. Note that by Lemma~\ref{lemma no maps to other maximal} we may replace \( I \) by \( I(k / \mathfrak{m}) \) without affecting the \( \Hom \)-sets.

Observe that \( \Hom_k(E / \!\rad E, I(k / \mathfrak{m})) = \Hom_k(E / \!\rad E, k/\mathfrak{m}) \), since \( \mathfrak{m} \) annihilates \( E / \!\rad E \) by construction. It follows in particular that the induced monomorphism \( E / \!\rad E \to \Hom_k(E / \!\rad E, I(k/\mathfrak{m})) \) is an isomorphism, since these two objects are finite dimensional of the same dimension over \( k / \mathfrak{m} \).

Thus we need to show that \( \Hom_k(E / \!\rad E, I(k/\mathfrak{m})) \) is an essential submodule of \( \Hom_k(E, I(k/\mathfrak{m})) \). In other words, we need to show that any non-zero submodule of $\Hom_R(E,I(k/\mathfrak{m}))$ contains a morphism which vanishes on $\rad E$.

To this end, we show that for each $\phi\in\Hom_R(E,I(k/\mathfrak{m}))$ there is some $n$ such that $\phi(\rad E)^n=0$. Note that \( E / \mathfrak{m} E \) is local with radical \( \rad E / \mathfrak{m} E \), and moreover finite dimensional over $k / \mathfrak{m}$. It follows that \( \rad E / \mathfrak{m} E \) is nilpotent, that is there is \( s \) such that \( \rad E \subseteq \mathfrak{m}^s E\). Finally, observe that there is some $t$ such that $\phi \mathfrak{m}^t=0$. Indeed, since $E$ is finitely generated over $k$, so is \( \Imm \phi \), so there is a $t$ such that $(\Imm \phi)(\rad k)^t=0$.
\end{proof}

\begin{proof}[Proof of Theorem~\ref{theorem.S triangle almost split}]
We write $\End_{\catT}(X)=E$. By Lemma~\ref{lemma Hom preserves injective envelope} we know that \( \Hom_k(E,I) \) is an injective envelope of \( E/\!\rad E \). The argument in the proof of Lemma~\ref{lemma functors summand} shows that
\[ \Hom_E(\catT(X,-), I_X) \cong \catT(X, -)^\ast. \]
Thus \( \tau X [1] \) of Theorem~\ref{theorem Krause AR-theorem} coincides with \( \mathbb{S} X \). It follows that the triangles (\ref{align Krauses AR triangle}) and (\ref{align our triangle with right almost split map}) coincide (by Theorem~\ref{theorem our triangle summand of AR triangle} or directly by comparing the two constructions).
\end{proof}

Connecting back to Section~\ref{section construction  of rel serre}, we obtain the following application.

\begin{corollary} \label{corollary Kb(Lambda) has AR}
If $\Lambda$ is an Artin algebra, then $\Kb(\modcat\Lambda)$ has almost split triangles.
\end{corollary}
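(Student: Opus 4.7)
The plan is to deduce this corollary from Theorem~\ref{theorem.S triangle almost split} applied to the partial Serre functor on $\Kb(\modcat\Lambda)$ produced in Observation~\ref{observation 0-cocompacts in Kb(Lambda)}. First I would fix the setup: an Artin $k$-algebra $\Lambda$ is finite over a commutative artinian (hence noetherian) ring $k$, and I choose $I$ to be an injective envelope of $k/\!\rad k$. Because $k$ is artinian, $\MaxSpec k$ is finite and $k/\!\rad k$ is semisimple, so this $I$ agrees with $\coprod_{\maxm\in\MaxSpec k} I(k/\maxm)$, which is precisely the injective cogenerator required by Theorem~\ref{theorem.S triangle almost split}.

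Next I would invoke Observation~\ref{observation 0-cocompacts in Kb(Lambda)}, which says that with this choice of $I$ the functor $\mathbb S_{\mathsf M}$ restricts to an autoequivalence $\mathbb S\colon \Kb(\modcat\Lambda)\to\Kb(\modcat\Lambda)$, while remaining a partial Serre functor into $\catK(\Modcat\Lambda)$. Let $X\in\Kb(\modcat\Lambda)$ be indecomposable. Since $\modcat\Lambda$ is a $k$-linear Krull--Schmidt category with $k$-finite $\Hom$-spaces, the same holds for $\Kb(\modcat\Lambda)$, so $\End_{\catK}(X)$ is local and finite over $k$; in particular $\Kb(\modcat\Lambda)$ is idempotent complete.

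Applying Theorem~\ref{theorem.S triangle almost split} with $\catT=\catK(\Modcat\Lambda)$ and $\catX=\Kb(\modcat\Lambda)$ then produces an almost split triangle
\[ \mathbb S X[-1] \to N \to X \to \mathbb S X \]
in $\catK(\Modcat\Lambda)$. The key point is that both end-terms $\mathbb S X[-1]$ and $\mathbb S X$ lie in $\Kb(\modcat\Lambda)$ by the autoequivalence statement, and $\Kb(\modcat\Lambda)$ is a triangulated subcategory of $\catK(\Modcat\Lambda)$, hence closed under taking cones and cocones of its morphisms. Therefore $N\in\Kb(\modcat\Lambda)$, and the triangle lies entirely in $\Kb(\modcat\Lambda)$. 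Since almost splitness is intrinsic to the diagram and to the vanishing of certain compositions with radical morphisms from $\Kb(\modcat\Lambda)$, it is preserved when we view the triangle inside $\Kb(\modcat\Lambda)$.

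This gives an almost split triangle ending at every indecomposable object of $\Kb(\modcat\Lambda)$; dually, since $\mathbb S$ is an autoequivalence, replacing $X$ by $\mathbb S^{-1} Y$ for an indecomposable $Y$ produces an almost split triangle starting at $Y[-1]$, so $\Kb(\modcat\Lambda)$ has almost split triangles. There is essentially no obstacle here: the only subtlety is the verification that $I$ satisfies both the hypothesis of Observation~\ref{observation 0-cocompacts in Kb(Lambda)} and that of Theorem~\ref{theorem.S triangle almost split} simultaneously, which is exactly what artinianness of $k$ guarantees.
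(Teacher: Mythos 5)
Your proof is correct and follows essentially the same route as the paper: choose $I$ as an injective envelope of $k/\!\rad k$, invoke Observation~\ref{observation 0-cocompacts in Kb(Lambda)} to see that the partial Serre functor of Section~\ref{section construction  of rel serre} restricts to an autoequivalence of $\Kb(\modcat\Lambda)$, and then apply Theorem~\ref{theorem.S triangle almost split}. Your extra checks --- that artinianness of $k$ makes the two choices of $I$ agree, and that the resulting triangle lies in and remains almost split within $\Kb(\modcat\Lambda)$ --- are exactly the details the paper's terser proof leaves implicit.
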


\begin{proof}
We found a partial Serre functor for \( \catK(\Modcat \Lambda) \) in Section~\ref{section construction  of rel serre}, and argued in Observation~\ref{observation 0-cocompacts in Kb(Lambda)} that if we choose \( I \) to be an injective envelope of the semisimple \( k / \!\rad k \), then this functor induces an auto-equivalence on \( \Kb(\modcat \Lambda) \). As the assumptions of Theorem~\ref{theorem.S triangle almost split} are satisfied, we have almost split triangles completely inside \( \Kb(\modcat \Lambda) \), starting and ending in any object with local endomorphism ring in that subcategory.
\end{proof}

\section{Non-degeneracy} \label{section non-degeneracy}

For a partial Serre functor $\mathbb S$, there is no symmetry between the objects \( X \) and \( \mathbb{S} X \). For instance we have seen in Theorem~\ref{theorem partial Serre implies X compact and Y 0-cocompact} that \( X \) is compact, while \( \mathbb{S} X \) is only \( 0 \)-cocompact. Similarly, in the construction of almost split triangles (Theorem~\ref{theorem BIKP}) the third term is required to be compact, while the first term will typically not be cocompact. However the definition of almost split triangles is completely self-dual.

In this section we study the following concept, which will serve as a weaker but symmetric version of partial Serre duality.

\begin{definition}
  Let $X,Y\in \catT$. We say that \emph{composition from \( X \) to \( Y \) is non-degenerate} if the following conditions are satisfied.
  \begin{enumerate}
    \item For each $0\neq f \colon X\to T$ there is some $g\colon T \to Y$ such that $gf\neq 0$.
    \item For each $0\neq g \colon T\to Y$ there is some $f\colon X \to T$ such that $gf\neq 0$.
  \end{enumerate}
\end{definition}

\begin{remark}
  Composition from \( X \) to \( Y \) is non-degenerate if and only if any non-zero $\catT$-submodule of $\catT(X,-)$ or $\catT(-,Y)$ contains a non-zero map $X\to Y$.
\end{remark}

Our aim is to show that composition being non-degenerate is closely linked to almost split triangles (Theorem~\ref{theorem almost split vs non deg}) and partial Serre functors (Proposition~\ref{proposition comp to T(X,SX) is non-degenerate}). Then, in Theorem~\ref{theorem non-degeneracy implies 0-cocompactness with correct def}, we will show that even this weak notion of duality implies that the two objects are \( 0 \)-compact and \( 0 \)-cocompact, respectively.

\begin{theorem} \label{theorem almost split vs non deg}
Let $X,Y\in\catT$ be such that $\End_{\catT}(X)$ and $\End_{\catT}(Y)$ are local rings. Let \( f \colon X \to Y \) be a non-zero morphism. We denote by
\[ \Delta \colon \hspace{.5em} Y[-1] \stackrel{d}\to E \stackrel{e}\to X \stackrel{f}\to Y \]
the triangle ending in \( f \). Then the following are equivalent.
\begin{enumerate}[label=(\roman*)]
\item \( \Delta \) is an almost split triangle;
\item \( d \) is left almost split;
\item \( e \) is right almost split;
\item \( gf = 0 \) whenever \( g \) is not a split monomorphism;
\item \( fh = 0 \) whenever \( h \) is not a split epimorphism;
\item for each \( 0 \neq t \colon T \to Y \) there is \( s \colon X \to T \) such that \( ts = f \);
\item for each \( 0 \neq s \colon X \to S \) there is \( t \colon S \to Y \) such that \( ts = f \);
\item composition from \( X \) to \( Y \) is non-degenerate, and \( f \cdot \rad \End_{\catT}(X) = 0 \);
\item composition from \( X \) to \( Y \) is non-degenerate, and \( \rad \End_{\catT}(Y) \cdot f = 0 \);
\item composition from \( X \) to \( Y \) is non-degenerate, and any non-zero \( \End_{\catT}(X) \)-submodule of \( \catT(X, Y) \) contains \( f \);
\item composition from \( X \) to \( Y \) is non-degenerate, and any non-zero \( \End_{\catT}(Y)^{\op} \)-submodule of \( \catT(X, Y) \) contains \( f \).
\end{enumerate}
\end{theorem}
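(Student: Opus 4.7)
The plan is to exploit two structural observations to cut the eleven-way equivalence down to a manageable cycle. First, the conditions split into five dual pairs (ii)--(iii), (iv)--(v), (vi)--(vii), (viii)--(ix), (x)--(xi); passing to the opposite triangulated category exchanges $X$ and $Y$ and reverses every arrow, mapping each condition to its partner, so half of all implications come for free. Second, (i) $\Leftrightarrow$ (ii)\,\&\,(iii) is essentially definitional: since $f \neq 0$ forces $d$ to be a non-split mono and $e$ to be a non-split epi, $\Delta$ is almost split precisely when both (ii) and (iii) hold.

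I would then obtain (ii) $\Leftrightarrow$ (iv) and (iii) $\Leftrightarrow$ (v) by the standard triangulated translation. Applying $\catT(-, T)$ to $\Delta$ and using the long exact sequence, a morphism $d' \colon Y[-1] \to T$ factors through $d$ if and only if $d' \circ f[-1] = 0$, which---after the shift $g = d'[1]$, $T' = T[1]$---is precisely $gf = 0$. The non-split-mono condition on $d'$ corresponds to the non-split-mono condition on $g$, giving (ii) $\Leftrightarrow$ (iv); the argument for (iii) $\Leftrightarrow$ (v) is dual, using $\catT(T, -)$ instead.

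The heart of the argument is closing the loop through (iv) $\Rightarrow$ (vi) $\Rightarrow$ (v), and then the dual (v) $\Rightarrow$ (vii) $\Rightarrow$ (iv). For the first step, given $0 \neq t \colon T \to Y$ I form the triangle $T \xto{t} Y \xto{q} Q \to T[1]$; since $t \neq 0$ the map $q$ is not a split mono (as $qt=0$ would otherwise force $t=0$), so (iv) yields $qf = 0$, and exactness provides $s \colon X \to T$ with $ts = f$. For the second step, suppose $fh \neq 0$ for some non-split-epi $h \colon T \to X$; applying (vi) to $t = fh$ produces $s$ with $f(hs) = f$, so $f(1_X - hs) = 0$. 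Locality of $\End_{\catT}(X)$ now forces a contradiction: either $hs$ is a unit---in which case $s(hs)^{-1}$ splits $h$---or $hs \in \rad \End_{\catT}(X)$ and $1_X - hs$ is a unit, making $f = 0$. This \emph{locality trick} is the single substantive piece of mathematics in the proof, and it will be reused several times. As a by-product, (vi) together with its dual (vii) force non-degeneracy of composition from $X$ to $Y$: given $0 \neq g' \colon T \to Y$, (vi) produces $f''$ with $g'f'' = f \neq 0$, and dually for the other defining property.

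It remains to fold in (viii)--(xi). From (v) the identity $f \cdot \rad \End_{\catT}(X) = 0$ is immediate, since in a local ring non-units fail to be one-sided units and hence cannot be split epis; combined with the non-degeneracy just established, this yields (viii). Conversely, assume (viii) and suppose $fh \neq 0$ for some non-split-epi $h$; non-degeneracy supplies $f'' \colon X \to T$ with $fh f'' \neq 0$, whence $hf'' \notin \rad \End_{\catT}(X)$ is a unit, making $h$ a split epi---contradiction. For (x) $\Leftrightarrow$ (vi), the implication (vi) $\Rightarrow$ (x) holds because any $0 \neq g \in \catT(X, Y)$ generates a cyclic $\End_{\catT}(X)$-submodule $g \cdot \End_{\catT}(X)$ containing $f$ (apply (vi) with $T = X$, $t = g$), while (x) $\Rightarrow$ (viii) reduces to the same locality trick applied to the submodule $f \cdot \rad \End_{\catT}(X)$: if non-zero it would contain $f$, yielding $f = 0$. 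The dual pair (ix) $\Leftrightarrow$ (iv) and (xi) $\Leftrightarrow$ (vii) follow formally. The main obstacle throughout is organisational rather than technical: there is a genuine asymmetry in which endomorphism ring's locality is invoked at each crossover step, and keeping these asymmetries straight across the dual versions is the principal source of potential error.
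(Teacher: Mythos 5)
Your proof is correct and uses essentially the same ingredients as the paper's: the rotation of the triangle to translate factorization conditions, the locality trick ``\( f(1-u)=0 \) forces \( u \) to be a unit or \( f=0 \)'', and non-degeneracy extracted from (vi) and (vii). The only differences are organisational --- you route the last four conditions through a direct proof of \( (viii)\Rightarrow(v) \) rather than the paper's chain \( (viii)\Rightarrow(xi)\Rightarrow(vii) \) via the cone of \( g \), and you order the equivalences among (iv)--(vii) as a cycle rather than proving \( (iv)\Leftrightarrow(v) \) first --- but the mathematics is the same.
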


\begin{proof}
$(iv) \iff (v)$: Suppose $(iv)$ holds and let $h\colon H\to X$ be such that $fh\neq 0$. Consider the following diagram.
\[ \begin{tikzcd}[scale=.8]
 X \ar[r,"f"] & Y \ar[d,equal]  \\
 H \ar[r,"fh"] \ar[u,swap,"h"] & Y \ar[r,"g"] &  \Cone(fh)  \ar[r] & \text{}
\end{tikzcd} \]
Then $g$ is not split mono which by assumption implies $g f = 0$. Thus there is some $s\colon X\to H$ such that $f=fhs$, which in turn implies that $hs$ is invertible in $\End_{\catT}(X)$, since this ring is local. So $h$ is a split epimorphism. A dual argument shows that $(v)\implies (iv)$.

\smallskip \noindent
\( (iii) \iff (v) \): This is just the fact that a morphism factors through \( e \) if and only if it becomes \( 0 \) when composing with \( f \) --- a basic property of triangles.

\smallskip \noindent
\( (ii) \iff (iv) \) is the dual of \( (iii) \iff (v) \).

\smallskip
Now we have seen that \( (ii) \) to \( (v) \) are equivalent. Since \( (i) \iff (ii) \wedge (iii) \) by definition, it follows that also \( (i) \) is equivalent to these statements.

\smallskip \noindent
\( (iv) \iff (vi) \): Consider the triangle \( T \stackrel{t}\to Y \stackrel{g}\to G \to T[1] \). Note that we can construct \( t \) from \( g \) and vice versa. Moreover \( t \) is non-zero if and only if \( g \) is not split mono. Now the claimed equivalence is the fact that \( f \) factors through \( t \) if and only if it becomes zero when composing with \( g \).

\smallskip \noindent
\( (v) \iff (vii) \) is the dual of \( (iv) \iff (vi) \).

\smallskip
Now we know that \( (i) \) to \( (vii) \) are equivalent. Clearly \( (vi) \) and \( (vii) \) combined imply that composition from \( X \) to \( Y \) is non-degenerate. Moreover \( f \cdot \rad \End_{\catT}(X) = 0 \) is a special case of \( (v) \), and \( \rad \End_{\catT}(Y) \cdot f = 0 \) is a special case of \( (iv) \). Thus we know that \( (i) \) through \( (vii) \) imply \( (viii) \) and \( (ix) \).

\medskip \noindent
$(viii)\implies(xi)$: It clearly suffices to consider cyclic submodules $\End_{\catT}(Y)\cdot g$ for $0\neq g \in \catT(X,Y)$. Consider the triangle $\Cone(g)[-1] \stackrel{\alpha}\to X \stackrel{g}\to Y \to \Cone(g)$. Suppose $f\alpha \neq 0$. By non-degeneracy of composition from \( X \) to \( Y \) there is some $\beta\colon X\to \Cone(g)[-1]$ such that $f\alpha \beta \neq 0$. But since $g\neq 0$, $\alpha$ is not split epi, which implies $\alpha\beta\in \rad \End_{\catT}(X)$. This contradicts $(viii)$, hence $f\alpha=0$. Thus $f$ factors through $g$, i.e.\ $f \in \End_{\catT}(Y)\cdot g$.

\smallskip \noindent
\( (ix) \implies (x) \) is the dual of \( (viii) \implies (xi) \).

\smallskip \noindent
\( (x) \implies (vi) \): Let \( 0 \neq t \colon T \to Y \). By non-degeneracy there is a map \( s_1 \colon X \to T \) such that \( t s_1 \neq 0 \). By assumption we thus have \( f \in t s_1 \End_{\catT}(X) \), i.e.\ there is \( s_2 \in \End_{\catT}(X) \) such that \( t s_1 s_2 = f \).

\smallskip \noindent
\( (xi) \implies (vii) \) is the dual of \( (x) \implies (vi) \).
\end{proof}

In particular the above theorem says that any almost split triangle gives rise to a non-degenerate composition. In case that one of the endomorphism rings is artinian, we have the following converse.

\begin{corollary} \label{corollary non-deg implies almost split triangle}
Let \( X \) and \( Y \) be objects in \( \catT \) with local endomorphism rings, and assume that at least one of these two endomorphism rings is artinian.

If composition from \( X \) to \( Y \) is non-degenerate, then there is an almost split triangle
\[ Y[-1] \to E \to X \to Y. \]
\end{corollary}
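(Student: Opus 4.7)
The plan is to invoke the rich list of equivalences in Theorem~\ref{theorem almost split vs non deg}. Specifically, condition $(viii)$ there asserts that the distinguished triangle ending in a non-zero morphism $f \colon X \to Y$ is almost split precisely when composition from $X$ to $Y$ is non-degenerate and $f \cdot \rad \End_{\catT}(X) = 0$. Thus the task reduces to producing a non-zero $f \colon X \to Y$ which is annihilated on the right by $\rad \End_{\catT}(X)$---or, dually, on the left by $\rad \End_{\catT}(Y)$, in which case one invokes condition $(ix)$ instead.

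By symmetry we may assume $\End_{\catT}(X)$ is artinian. First I would verify that $\catT(X, Y) \neq 0$: since $\End_{\catT}(X)$ is local it is in particular non-zero, so $X \neq 0$ and $\id_X$ is a non-zero morphism. Non-degeneracy then forces some $g \colon X \to Y$ with $g \circ \id_X = g \neq 0$.

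Next, I would regard $\catT(X, Y)$ as a right $\End_{\catT}(X)$-module. Because $\End_{\catT}(X)$ is local and artinian, its Jacobson radical $J$ is nilpotent. Thus for any non-zero $f_0 \in \catT(X, Y)$ there is a largest integer $k \geq 0$ with $f_0 \cdot J^k \neq 0$; choosing any non-zero $f \in f_0 \cdot J^k$ produces $f \cdot J \subseteq f_0 \cdot J^{k+1} = 0$, which is exactly the property we need. Feeding this $f$ into the implication $(viii) \Rightarrow (i)$ of Theorem~\ref{theorem almost split vs non deg} then delivers an almost split triangle of the form $Y[-1] \to E \to X \stackrel{f}{\to} Y$.

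I do not anticipate a genuine obstacle: the artinian hypothesis is exactly what guarantees a non-trivial socle in $\catT(X, Y)$ over the relevant local endomorphism ring, and the heavy lifting---translating from an annihilation condition to the almost split property---has already been absorbed into Theorem~\ref{theorem almost split vs non deg}. The only point requiring care is that non-degeneracy is visibly symmetric in $X$ and $Y$, so the argument transfers verbatim to the case where $\End_{\catT}(Y)$ (rather than $\End_{\catT}(X)$) is the artinian one by instead viewing $\catT(X, Y)$ as a left $\End_{\catT}(Y)$-module.
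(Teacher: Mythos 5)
Your proposal is correct and follows essentially the same route as the paper: establish $\catT(X,Y)\neq 0$ from non-degeneracy, use nilpotency of the radical of the artinian local endomorphism ring to find a non-zero $f$ with $f\cdot\rad\End_{\catT}(X)=0$ (or dually $\rad\End_{\catT}(Y)\cdot f=0$), and apply the implication $(viii)\Rightarrow(i)$ (resp.\ $(ix)\Rightarrow(i)$) of Theorem~\ref{theorem almost split vs non deg}. Your explicit verification that $\catT(X,Y)\neq0$ via $\id_X$ and the socle argument are welcome elaborations of details the paper leaves implicit.
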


\begin{proof}
Note that \( \catT(X, Y) \neq 0 \) by definition of non-degeneracy. Assume \( \End_{\catT}(X) \) is artinian. This implies that \( \rad \End_{\catT}(X) \) is nilpotent. It follows that there is some non-zero \( f \in \catT(X, Y) \) such that \( f \cdot \rad \End_{\catT}(X) = 0 \). The claim now follows from implication \( (viii) \implies (i) \) in Theorem~\ref{theorem almost split vs non deg} above.
\end{proof}

\begin{proposition} \label{proposition comp to T(X,SX) is non-degenerate}
    Let $\mathbb S\colon\catX\to\catT$ be a partial Serre functor. Then composition from \( X \) to \( \mathbb S X \) is non-degenerate for each $X\in\catX$.
\end{proposition}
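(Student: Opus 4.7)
The plan is to exploit the naturality (in $T$) of the partial Serre isomorphism
\[ \Phi_T \colon \catT(X, T)^\ast \xto{\sim} \catT(T, \mathbb{S} X). \]
For any morphism $f \colon X \to T$, naturality gives the commutative square
\[
\begin{tikzcd}
\catT(X,T)^\ast \ar[r, "\Phi_T"] \ar[d, "(-)\circ\catT(X{,}f)"'] & \catT(T,\mathbb{S} X) \ar[d, "(-)\circ f"] \\
\catT(X,X)^\ast \ar[r, "\Phi_X"'] & \catT(X,\mathbb{S} X)
\end{tikzcd}
\]
where the left vertical arrow sends $\phi$ to the linear form $h \longmapsto \phi(fh)$ on $\End_{\catT}(X)$. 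Evaluating that linear form at $h = \id_X$ returns $\phi(f)$. Thus the key identity, for any $\phi \in \catT(X,T)^\ast$ and any $f \in \catT(X,T)$, is that $\Phi_T(\phi) \circ f$ corresponds under $\Phi_X$ to the form $h \mapsto \phi(fh)$, which in particular takes the value $\phi(f)$ at $\id_X$.

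For condition~(1), suppose $0 \neq f \colon X \to T$. Since $I$ is an injective cogenerator of $\Modcat k$ the natural map $\catT(X,T) \to \catT(X,T)^{\ast\ast}$ is monic, so there exists $\phi \in \catT(X,T)^\ast$ with $\phi(f) \neq 0$. Setting $g = \Phi_T(\phi) \colon T \to \mathbb{S} X$, the diagram above shows that $gf = \Phi_X(\psi)$ where $\psi \in \End_{\catT}(X)^\ast$ satisfies $\psi(\id_X) = \phi(f) \neq 0$. Since $\Phi_X$ is an isomorphism, $gf \neq 0$.

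For condition~(2), suppose $0 \neq g \colon T \to \mathbb{S} X$, and write $g = \Phi_T(\phi)$ for the unique $\phi \in \catT(X,T)^\ast$. Because $\Phi_T$ is an isomorphism and $g \neq 0$, we have $\phi \neq 0$, so there is $f \in \catT(X,T)$ with $\phi(f) \neq 0$. The same naturality argument then gives $gf = \Phi_X(\psi)$ with $\psi(\id_X) = \phi(f) \neq 0$, hence $gf \neq 0$.

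There is no real obstacle beyond recording the naturality square carefully; the only subtlety is deciding the variance and writing the vertical map on the left as $\phi \mapsto \phi \circ \catT(X,f)$, i.e.\ as precomposition with postcomposition by $f$. Once that is in place, both parts of non-degeneracy reduce immediately to the fact that $I$ cogenerates $\Modcat k$.
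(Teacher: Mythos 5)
Your proof is correct. For condition~(2) it is essentially identical to the paper's argument: invert the Serre isomorphism, observe the resulting linear form is non-zero, and chase the naturality square to see that $\phi_X(gf)(\id_X)=\phi(f)\neq 0$. For condition~(1) you take a more direct route than the paper: you use that $I$ cogenerates $\Modcat k$, so the natural map $\catT(X,T)\to\catT(X,T)^{\ast\ast}$ is monic, pick $\phi$ with $\phi(f)\neq 0$, and read off $gf\neq 0$ from the same naturality square. The paper instead completes $f$ to a triangle $\Cone(f)[-1]\to X\xto{f}T\to\Cone(f)$, notes that $\catT(X,\Cone(f)[-1])\to\catT(X,X)$ is not onto, dualizes to get a non-zero map $X\to\mathbb{S}X$ killing $\Cone(f)[-1]\to X$, and concludes that this map factors through $f$. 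Your version of~(1) is arguably cleaner: it makes no use of the triangulated structure of $\catT$ (so it works verbatim in any additive $k$-category carrying such a duality), and it isolates the one fact actually needed, namely the monomorphism $\id\to(-)^{\ast\ast}$ that the paper itself records among the basic properties of $(-)^\ast$. The naturality square you write down, with the left vertical map $\phi\mapsto\phi\circ\catT(X,f)$, is set up with the correct variance, so there is no gap.
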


\begin{proof}
Let us start with a non-zero morphism \( f \colon X \to T \), and complete it to a triangle \( \Cone(f)[-1] \to X \stackrel{f}\to T \to \Cone(f) \). By the naturality of the isomorphism defining partial Serre duality we have the following commutative square.
\[ \begin{tikzcd}
\catT(X, \mathbb S X) \ar[r,"\cong"] \ar[d] & \catT(X, X)^\ast \ar[d] \\
\catT(\Cone(f)[-1], \mathbb S X) \ar[r,"\cong"] & \catT(X, \Cone(f)[-1])^\ast
\end{tikzcd} \]
Since \( f \) is non-zero the map \( \catT(X, \Cone(f)[-1]) \to \catT(X, X) \) is not onto, and hence its dual is not mono. It follows that the left vertical map above is not mono either, that is there is a non-zero map from \( X \) to \( \mathbb{S} X \) such that the comosition with \( \Cone(f)[-1] \to X \) vanishes. It follows that this map factors through \( f \).

  Now take a non-zero $g\colon T\to\mathbb SX$. By assumption we have a natural isomorphism \[\phi\colon\catT(-,\mathbb SX)\to\catT(X,-)^\ast.\] Let $\eta=\phi_T(g)$. Then $\eta$ is non-zero, so in particular there is some $f\colon X\to T$ such that $\eta(f)\neq 0$. We claim that $gf\neq 0$. Of course, it suffices to show that $\phi_X(gf)\neq0$. But by the commutative diagram
  \[ \begin{tikzcd}
  \catT(T,\mathbb SX) \ar[r,"\phi_T"] \ar[d]& \catT(X,T)^\ast \ar[d]  \\
  \catT(X,\mathbb SX) \ar[r,"\phi_X"] & \catT(X,X)^\ast
  \end{tikzcd} \]
  we have $\phi_X(gf)=\eta(f\circ -)$, which is non-zero since $\phi_X(gf)(\id_X)=\eta(f)\neq 0$.
  \end{proof}

  \begin{remark}
    An object $X$ may have several `non-degenerate partners'. Indeed, if $\mathbb S X$ and $\tau X$ exist, then composition from $X$ to either is non-degenerate. However, in general $\tau X$ is only a summand of $\mathbb S X$.
  \end{remark}

\begin{theorem} \label{theorem non-degeneracy implies 0-cocompactness with correct def}
Let $X,Y\in\catT$ be such that composition from \( X \) to \( Y \) is non-degenerate. Then $X$ is $0$-compact and $Y$ is $0$-cocompact.
\end{theorem}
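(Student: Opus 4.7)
By passing to $\catT^{\op}$, which exchanges $0$-compactness with $0$-cocompactness and preserves the non-degeneracy hypothesis (since its defining conditions are symmetric in $X$ and $Y$), it will suffice to prove that $Y$ is $0$-cocompact. So I fix a tower $\mathbb{T} \colon \cdots \to T_2 \to T_1 \to T_0$ with $\colim \catT(\mathbb{T}, Y) = 0$ and $\catT(\mathbb{T}, Y[1])$ dual Mittag--Leffler, and aim to show $\catT(\holim \mathbb{T}, Y) = 0$. Non-degeneracy condition~(2) converts this into showing $\catT(X, \holim \mathbb{T}) = 0$: any non-zero $f \colon \holim \mathbb{T} \to Y$ would, by non-degeneracy, admit $h \colon X \to \holim \mathbb{T}$ with $fh \neq 0$, so in particular $h \neq 0$. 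Applying $\catT(X, -)$ to the defining triangle of the homotopy limit then yields the Milnor short exact sequence
\[ 0 \to \limit\nolimits^1 \catT(X, \mathbb{T}[-1]) \to \catT(X, \holim \mathbb{T}) \to \limit \catT(X, \mathbb{T}) \to 0, \]
so the problem splits into showing that both outer terms vanish.

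The $\limit$-term will be handled in the spirit of the proof of Theorem~\ref{theorem partial Serre implies X compact and Y 0-cocompact}, but using only non-degeneracy in place of the full duality. Given a non-zero compatible family $(h_i)$, I pick $j$ with $h_j \neq 0$ and, by condition~(1), some $g_j \colon T_j \to Y$ with $g_j h_j \neq 0$. The hypothesis $\colim \catT(\mathbb{T}, Y) = 0$ supplies $n$ with $g_j \alpha_j \cdots \alpha_{j+n-1} = 0$, while compatibility forces $h_j = \alpha_j \cdots \alpha_{j+n-1} h_{j+n}$, so that $g_j h_j = 0$, a contradiction.

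The $\limit^1$-term is the main technical obstacle, since non-degeneracy is strictly weaker than the isomorphism of pro-systems $\catT(X, \mathbb{T}[-1]) \cong \catT(\mathbb{T}, Y[1])^\ast$ used in Theorem~\ref{theorem partial Serre implies X compact and Y 0-cocompact}. My plan is to again argue by contradiction. A non-trivial class in $\limit^1$ gives $h \colon X \to \holim \mathbb{T}$ whose projections to each $T_i$ vanish, so $h$ factors as $h = cz$ through the connecting map $c \colon \prod T_i[-1] \to \holim \mathbb{T}$ for some $z \colon X \to \prod T_i[-1]$. Non-degeneracy condition~(1) supplies $g \colon \holim \mathbb{T} \to Y$ with $gh \neq 0$, and the identity $gc \circ (1 - \alpha[-1]) = 0$ from the triangle forces the components $\phi_i := gc\iota_i \in \catT(T_i, Y[1])$ to satisfy $\phi_{i+1} = \phi_i \alpha_i$, hence $\phi_i = \phi_0 \alpha_0 \cdots \alpha_{i-1}$ for every $i$. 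The dual Mittag--Leffler hypothesis will then pin down the long-term behaviour of this family, and after replacing $z$ by a cohomologous representative and splitting $\prod T_i[-1]$ as the finite biproduct $\prod_{i < N} T_i[-1] \oplus \prod_{i \geq N} T_i[-1]$ at the stabilisation index $N$, the residual tail contribution should be controllable and yield $gh = 0$, contradicting the choice of $g$. Making this last step precise --- converting dual Mittag--Leffler information on the $Y[1]$-side into effective vanishing on the $X$-side without access to the full duality of Theorem~\ref{theorem partial Serre implies X compact and Y 0-cocompact} --- is the delicate bookkeeping at the heart of the argument.
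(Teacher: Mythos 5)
Your reduction to the case of $Y$, the passage from $\catT(\holim\mathbb T,Y)=0$ to $\catT(X,\holim\mathbb T)=0$ via non-degeneracy, the Milnor sequence, and the vanishing of $\limit\catT(X,\mathbb T)$ all match the paper's proof and are correct. The problem is the $\limit^1$-term, which you explicitly leave open, and the route you sketch for it is unlikely to close. Two concrete obstructions: first, a morphism $gc\colon\prod T_i[-1]\to Y$ out of an infinite product is \emph{not} determined by its restrictions $\phi_i$ to the factors, so splitting off a finite sub-product at a ``stabilisation index'' gives you no control over the tail of $gc$; second, the dual Mittag--Leffler hypothesis on $\catT(\mathbb T,Y[1])$ only says that the increasing chain of kernels stabilizes --- it does not force your compatible family $(\phi_i)$, with $\phi_{i+1}$ the image of $\phi_i$, to eventually vanish (take $\phi_0$ outside the stable kernel), so no contradiction with $gh\neq0$ is in sight from a single class in $\limit^1$.

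The missing idea is to prove the stronger statement that the inverse system $\catT(X,\mathbb T[-1])$ is Mittag--Leffler, which forces $\limit^1$ to vanish; this is what the paper does. Suppose the images $\Imm\bigl(\catT(X,T_i[-1])\to\catT(X,T_0[-1])\bigr)$ drop strictly infinitely often. Then for each $i$ there is $\phi_i\colon X\to T_0[-1]$ factoring through $T_i[-1]$ but not through $T_{i+1}[-1]$. Completing $T_{i+1}[-1]\to T_0[-1]$ to a triangle with cone $C_i$, the composite $X\to T_0[-1]\to C_i$ is non-zero, so non-degeneracy of composition from $X$ to $Y$ yields a non-zero composite $X\to T_0[-1]\to C_i\to Y$; writing $\omega_i\colon T_0[-1]\to C_i\to Y$, one finds that $\omega_i$ lies in $\Ker\bigl(\catT(T_0[-1],Y)\to\catT(T_{i+1}[-1],Y)\bigr)$ (consecutive maps in a triangle compose to zero) but not in $\Ker\bigl(\catT(T_0[-1],Y)\to\catT(T_i[-1],Y)\bigr)$ (it is non-zero after precomposing with $X\to T_i[-1]\to T_0[-1]$). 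The resulting strictly increasing chain of kernels contradicts the dual Mittag--Leffler assumption on $\catT(\mathbb T,Y[1])\cong\catT(\mathbb T[-1],Y)$. Replacing your final paragraph with this argument completes the proof.
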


The proof of this result relies on the following observation.

\begin{lemma} \label{lemma non-degeneracy implies covariant and contravariant ghosts coincide}
Let $X,Y\in\catT$ be such that composition from \( X \) to \( Y \) is non-degenerate. Then the following statements hold.
\begin{enumerate}
\item An object is a covariant $X$-ghost if and only if it is a contravariant $Y$-ghost.
\item A morphism $f\colon S\to T$ is a covariant $X$-ghost if and only if it is a contravariant $Y$-ghost.
\end{enumerate}
\end{lemma}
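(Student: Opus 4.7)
The plan is to prove both parts by direct contrapositive arguments, invoking the two clauses of the definition of non-degeneracy exactly once each per direction. The key observation is that the two clauses are essentially ``ghost-detection'' statements for $X$ and $Y$ respectively, so they translate very directly into the desired biconditionals without any use of triangulated structure.

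For (1), suppose first that $G$ is a covariant $X$-ghost, i.e.\ $\catT(X,G)=0$. If $G$ were not a contravariant $Y$-ghost, pick $0\neq g\colon G\to Y$; clause~(2) of non-degeneracy then produces $f\colon X\to G$ with $gf\neq 0$, so in particular $f\neq 0$, contradicting $\catT(X,G)=0$. The reverse implication is completely symmetric: given $\catT(G,Y)=0$ and $0\neq f\colon X\to G$, clause~(1) yields $g\colon G\to Y$ with $gf\neq 0$, contradicting $\catT(G,Y)=0$.

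For (2), run the analogous argument on a single ``sandwich'' composition $X\to S\to T\to Y$. If $f\colon S\to T$ is a covariant $X$-ghost, i.e.\ $f\circ s=0$ for every $s\colon X\to S$, and there existed $t\colon T\to Y$ with $tf\neq 0$, then clause~(2) applied to the non-zero map $tf\colon S\to Y$ would yield $s\colon X\to S$ with $t(fs)\neq 0$, forcing $fs\neq 0$, a contradiction. Dually, if $f$ is a contravariant $Y$-ghost and some $s\colon X\to S$ satisfies $fs\neq 0$, apply clause~(1) to the non-zero morphism $fs\colon X\to T$ to find $t\colon T\to Y$ with $(tf)s\neq 0$, contradicting $tf=0$.

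Each of the four half-implications is one or two lines; the only thing to watch is which clause of non-degeneracy is invoked where. No deeper input --- no triangulated machinery, no exactness, no partial Serre functor --- enters the argument, so I do not expect any genuine obstacle.
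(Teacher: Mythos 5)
Your proof is correct and follows essentially the same route as the paper: the paper treats (1) as immediate and proves (2) via the chain of equivalences ``$f\alpha=0$ for all $\alpha\colon X\to S$ $\iff$ $\beta f\alpha=0$ for all $\alpha,\beta$ $\iff$ $\beta f=0$ for all $\beta\colon T\to Y$'', where the two nontrivial implications are exactly your two contrapositive applications of the clauses of non-degeneracy. Your write-up just makes explicit what the paper leaves as ``clear''.
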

\begin{proof}
Since composition from \( X \) to \( Y \) is non-degenerate, $(1)$ is clear and
\begin{align*}
\text{$f$ is a covariant $X$-ghost} & \text{$\iff$ $f\alpha=0$ for each $\alpha\colon X\to S$} \\
&\text{$\iff$ $\beta f\alpha=0$ for each $\alpha\colon X\to S$ and $\beta\colon T\to Y$} \\
&\text{$\iff$ $\beta f=0$ for each $\beta\colon T\to Y$}\\
&\text{$\iff$ $f$ is a contravariant $Y$-ghost.} \qedhere
\end{align*}
\end{proof}

\begin{proof}[Proof of Theorem~\ref{theorem non-degeneracy implies 0-cocompactness with correct def}] We show that $Y$ is $0$-cocompact; the proof that $X$ is $0$-compact is dual.

Take a sequence \[\mathbb T \colon \cdots \to T_2\to T_1\to T_0\] in $\catT$ such that $\catT(\mathbb T[-1], Y)$ is dual ML and $\colim \catT(\mathbb T, Y)=0$. It suffices to show that $\catT(X,\holim \mathbb T)$ vanishes. As in the proof of Theorem~\ref{theorem partial Serre implies X compact and Y 0-cocompact} there is a short exact sequence \[0\to\limit^1\catT(X,\mathbb T[-1])\to\catT(X,\holim\mathbb T)\to\limit\catT(X,\mathbb T)\to 0,\] so we need only prove that the outer terms are zero.

We first show that $\limit\catT(X,\mathbb T)$ vanishes. So assume to the contrary that there is some $\left(\dots, \phi_2,\phi_1,\phi_0\right)\in\limit\catT(X,\mathbb T)$ with $\phi_i\neq0$. Then, by non-degeneracy of composition from \( X \) to \( Y \), there is some $\psi\colon T_i\to Y$ such that $\psi\phi_i\neq0$. But by assumption, the image of $\psi$ in $\colim\catT(\mathbb T,Y)$ vanishes, that is the composition $T_j\to T_i\stackrel{\psi}\to Y$ is zero for sufficiently large $j$. In particular, the non-zero $\psi\phi_i$ factors through the zero morphism $T_j\to Y$, as indicated by the following diagram, and we have a contradiction.
\[ \begin{tikzcd}[scale=.8]
 X \ar[dr,swap,"\phi_j"] \ar[r,"\phi_i"] & T_i \ar[r,"\psi"] & Y \\
 & T_j \ar[u]
\end{tikzcd} \]

Let us now show that $\limit^1\catT(X,\mathbb T[-1])=0$. It suffices to demonstrate that \[\catT(X,\mathbb T[-1])= \cdots \to\catT(X,T_2[-1])\stackrel{t_2}\to\catT(X,T_1[-1])\stackrel{t_1}\to\catT(X,T_0[-1])\] is ML. Assume to the contrary that for some $k$, the sequence of subgroups
\[\Imm t_k \supset\Imm t_k t_{k+1} \supset \cdots\]
does not stabilize. Without loss of generality, we may assume that $k=0$ and that each image is properly contained in the previous one. In other words, for each $i$ there is some $\phi_i\colon X\to T_0[-1]$ such that $\phi_i$ factors through $T_i[-1]$, say via $\psi_i$, but not through $T_{i+1}[-1]$. The following diagram, with the bottom row a triangle,

\[ \begin{tikzcd}[scale=.8]
 X \ar[r,"\psi_i"]\ar[dr,"\phi_i"]\ar[d,dashed,"\nexists"] & T_i[-1] \ar[d]  \\
 T_{i+1}[-1] \ar[r]  & T_0[-1] \ar[r] &  \Cone  \ar[r] & T_{i+1}
\end{tikzcd} \]
reveals that the composition \[X\stackrel{\psi_i}\to T_i[-1]\to T_0[-1]\to \Cone\] is non-zero. By non-degeneracy of composition from \( X \) to \( Y \), there is some non-zero \[X\stackrel{\psi_i}\to T_i[-1]\to T_0[-1]\to \Cone\to Y.\] In particular, we can find a morphism $\omega_i\colon T_0[-1]\to Y$ such that the composition $T_i[-1]\to T_0[-1] \stackrel{\omega_i}\to Y$ is non-zero, while $T_{i+1}[-1]\to T_0[-1] \stackrel{\omega_i}\to Y$ does vanish. In other words, in the commutative diagram
\[ \begin{tikzcd}
 \Ker_i \ar[r,>->] &  \catT(T_0[-1],Y)  \ar[d,equal] \ar[r,] & \catT(T_i[-1],Y) \ar[d] \\
 \Ker_{i+1} \ar[r,>->] &  \catT(T_0[-1],Y) \ar[r,] & \catT(T_{i+1}[-1],Y)  \\
\end{tikzcd} \]
with exact rows, we have $\omega_i\in\Ker_{i+1}\setminus\Ker_i$. In particular, the sequence \[\Ker_1\subsetneq\Ker_2\subsetneq\Ker_3\subsetneq\cdots\] does not stabilize, contradicting the assumption that $\catT(\mathbb T[-1], Y)$ is dual ML.
\end{proof}

\begin{corollary} \label{corollary almost split forces 0-(co)compactness}
Let \( X \to Y \to Z \to X[1] \) be an almost split triangle in a triangulated category. Then \( X \) is \( 0 \)-cocompact and \( Z \) is \( 0 \)-compact.
\end{corollary}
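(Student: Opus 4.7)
The plan is to reduce the corollary to the two theorems already proved in this section: Theorem~\ref{theorem almost split vs non deg} converts the almost split data into non-degeneracy, and Theorem~\ref{theorem non-degeneracy implies 0-cocompactness with correct def} then yields the $0$-(co)compactness conclusions. The only bookkeeping task is matching the indices.

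First I would rewrite the given almost split triangle $X \to Y \to Z \to X[1]$ so that it fits the template $Y[-1] \to E \to X \to Y$ used in Theorem~\ref{theorem almost split vs non deg}. Concretely, set $X' := Z$ and $Y' := X[1]$, with $f \colon X' \to Y'$ the connecting morphism $Z \to X[1]$. Recall that in an almost split triangle the endomorphism rings of the two end terms are local, so $\End_{\catT}(X') = \End_{\catT}(Z)$ and $\End_{\catT}(Y') = \End_{\catT}(X[1]) \cong \End_{\catT}(X)$ are both local, and $f$ is non-zero (otherwise the triangle would split). Thus the hypotheses of Theorem~\ref{theorem almost split vs non deg} are satisfied. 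Applying the implication $(i) \Rightarrow (viii)$ (or any other listed item that contains non-degeneracy), I conclude that composition from $Z$ to $X[1]$ is non-degenerate.

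Next I invoke Theorem~\ref{theorem non-degeneracy implies 0-cocompactness with correct def} with this pair: $Z$ must be $0$-compact and $X[1]$ must be $0$-cocompact. Since the shift functor $[1]$ is a triangle auto-equivalence of $\catT$, it preserves and reflects both $0$-compactness and $0$-cocompactness (the conditions in Definition~\ref{definition 0-cocompactness and 0-compactness} are visibly shift-invariant). Therefore $X$ itself is $0$-cocompact, and the corollary follows.

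The one subtlety worth checking is that the formulation of Theorem~\ref{theorem almost split vs non deg} is naturally written with the non-zero map $f$ being the \emph{last} morphism of the triangle, so identifying $f$ with the connecting map $Z \to X[1]$ (rather than, say, $X \to Y$) is essential for the bookkeeping to work. There is no real obstacle here; the result is an immediate consequence of the previously established theorems once this identification is made.
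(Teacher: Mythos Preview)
Your proof is correct and follows exactly the route the paper takes: it too says the corollary is an immediate consequence of Theorem~\ref{theorem almost split vs non deg} and Theorem~\ref{theorem non-degeneracy implies 0-cocompactness with correct def}. Your additional bookkeeping---identifying the connecting map $Z \to X[1]$ as the morphism $f$, noting locality of the endomorphism rings, and observing shift-invariance of $0$-(co)compactness---just spells out details the paper leaves implicit.
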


\begin{proof}
This is an immediate consequence of Theorem~\ref{theorem almost split vs non deg} and Theorem~\ref{theorem non-degeneracy implies 0-cocompactness with correct def}.
\end{proof}

\appendix

\section{Dual Brown representability} \label{section dual brown rep}

The aim of this appendix is to give a proof of a `constructive' version of dual Brown representability for triangulated categories with enhancements, cogenerated by a set of \( 0 \)-cocompact objects. Note that it was already pointed out by Modoi in \cite{modoi2020weight} that these categories do satisfy dual Brown representability, so our original contribution here is only the explicit description of the representing objects.

Throughout this appendix, \( \catT \) is a triangulated category at the base of a stable derivator. Moreover \( \catT \) is cogenerated by a set of \( 0 \)-cocompact objects. By Lemma~\ref{lemma products} we know that products of \( 0 \)-cocompact objects are \( 0 \)-cocompact again, whence we may assume that \( \catT \) is cogenerated by a single \( 0 \)-cocompact object \( S \), which we may moreover assume to be invariant under suspension.

With this setup, we will prove the following.

\begin{theorem} \label{thm.brown}
Let \( F \colon \catT \to \Ab \) be a homological functor commuting with products. For any commutative diagram
\[ \begin{tikzcd}
\catT(T_0, -) \ar[r,"{\catT(f_1,-)}"] \ar[drr] & \catT(T_1, -) \ar[r,"{\catT(f_2,-)}"] \ar[dr] & \catT(T_2, -) \ar[r,"{\catT(f_3,-)}"] \ar[d] & \cdots \\
&& F
\end{tikzcd} \]
such that all the induced maps \( \Imm \catT(f_i, S) \to F(S) \) are isomorphisms, we have
\[ F \cong \catT( \holim T_i, -). \]
In particular \( \catT \) satisfies dual Brown representability.
\end{theorem}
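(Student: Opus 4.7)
The plan is to set $T = \holim T_i$ and construct a natural transformation $\alpha \colon \catT(T, -) \to F$, which will then be shown to be an isomorphism by reducing to the test object $S$. The Milnor triangle $T \to \prod_i T_i \to \prod_i T_i \to T[1]$ provided by the stable derivator, together with the Yoneda lemma and the fact that $F$ commutes with products, yields the short exact sequence
\[
0 \to \limit^1 F(T_i[-1]) \to F(T) \to \limit F(T_i) \to 0.
\]
The compatible system $x_i := \eta_{i, T_i}(\id_{T_i}) \in F(T_i)$ lives in $\limit F(T_i)$; any lift $x \in F(T)$ defines $\alpha$ by the Yoneda recipe $\alpha_Y(\varphi) = F(\varphi)(x)$, and by construction $\eta_i = \alpha \circ \catT(\pi_i, -)$, where $\pi_i \colon T \to T_i$ is the canonical morphism.

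To conclude that $\alpha$ is an isomorphism, consider the class $\mathcal{C} = \{Y \in \catT : \alpha_{Y[n]} \text{ is an isomorphism for all } n \in \mathbb Z\}$. This class is triangulated (by the five-lemma applied to the long exact sequences attached to any triangle) and closed under products (because both $\catT(T, -)$ and $F$ preserve products), so by $0$-cocompact cogeneration via \cite[Theorem~6.6]{MR3946864} it suffices to show that $\alpha_S$ is an isomorphism. Writing $g_i = \catT(f_i, S)$, the hypothesis immediately yields $F(S) \cong \colim_i \catT(T_i, S)$: each $\eta_{i,S}$ is surjective, and $g_{i+1}$ kills $K_i := \Ker \eta_{i, S}$, because $g_{i+1}(K_i) \subseteq \Imm g_{i+1}$ while $\eta_{i+1, S}\vert_{\Imm g_{i+1}}$ is injective. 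Combined with the canonical factorization $\colim \catT(T_i, S) \to \catT(T, S) \xrightarrow{\alpha_S} F(S)$, this already delivers surjectivity of $\alpha_S$ and injectivity of the left-hand map.

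The remaining and hardest step is the injectivity of $\alpha_S$; equivalently, that every morphism $\varphi \colon T \to S$ factors through some $\pi_i$, for then $\varphi = \psi \circ \pi_i$ with $\eta_{i,S}(\psi) = 0$, forcing $\psi \circ f_{i+1} = 0$ and hence $\varphi = \psi \circ f_{i+1} \circ \pi_{i+1} = 0$. Setting $C_i = \coCone(\pi_i)$, the coherent map of towers from the constant tower at $T$ to $(T_i)$ induces the identity on homotopy limits, so $\holim C_i = 0$; from the long exact sequences $\catT(T_i, S) \to \catT(T, S) \to \catT(C_i[-1], S)$ the desired factorization is equivalent to $\colim_i \catT(C_i[-1], S) = 0$. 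This vanishing is the technical heart of the argument and is where the $0$-cocompactness of $S$ must be invoked, applied to a tower derived from $(C_i[-1])$: the triviality of the tower $(K_i)$ and the suspension-invariance of $S$ are precisely what is needed to verify the Mittag-Leffler and colimit-vanishing hypotheses in the definition of $0$-cocompactness. Once this is in place, the ``in particular'' conclusion follows by the standard Brown-type iterative construction: start with $T_0 = 0$ and at each stage take cones so as to adjoin preimages of elements of $F(S)$ and to kill obstructions, with coherence of the construction provided by the stable derivator.
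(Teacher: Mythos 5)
Your construction of the comparison map \( \alpha \colon \catT(\holim T_i, -) \to F \) via a lift of the compatible system \( (x_i) \in \limit F(T_i) \), the reduction to the single cogenerator \( S \) (the class where \( \alpha \) is invertible is triangulated and closed under products, hence equals \( \Coloc(S) = \catT \)), and the surjectivity of \( \alpha_S \) together with the identification \( \colim \catT(T_i, S) \cong F(S) \) are all correct, and the reduction to \( S \) is a genuinely attractive shortcut. But the injectivity of \( \alpha_S \) --- which you yourself identify as the technical heart --- has a real gap, and it is not one that can be patched by the mechanism you indicate. You reduce injectivity to \( \colim_i \catT(C_i, S) = 0 \) for \( C_i = \coCone(\pi_i) \), and propose to deduce this from \( \holim C_i = 0 \) by ``invoking the \( 0 \)-cocompactness of \( S \).'' This inverts the implication: \( 0 \)-cocompactness says that \emph{if} \( \colim \catT(\mathbb T, S) = 0 \) and \( \catT(\mathbb T, S[1]) \) is dual ML, \emph{then} \( \catT(\holim \mathbb T, S) = 0 \). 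It gives no information in the direction from \( \holim C_i = 0 \) to the vanishing of \( \colim \catT(C_i, S) \). Worse, unwinding the long exact sequences of the triangles \( C_i \to T \to T_i \) shows that \( \colim \catT(C_i, S) \) is exactly the cokernel of \( \colim \catT(T_i, S) \to \catT(T, S) \), i.e.\ the vanishing you want \emph{is} the surjectivity of that map, which is precisely the statement you are trying to prove; the reduction is circular rather than a reduction.

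This is exactly the point where the paper's proof takes a completely different route, and for a reason: injectivity of the comparison map is not proved pointwise at all. The paper first shows only that \( \catT(\holim T_i, -) \to F \) is an \emph{epimorphism} of functors (Proposition~\ref{prop epi for brown}, which requires the auxiliary resolutions of \( \catT(X,-) \) and the lifting Lemma~\ref{lemma for brown epi}), then applies Neeman's trick: the kernel is again a functor of the same type, so \( F \) acquires a presentation by two such representables, and the homological property of \( F \) exhibits it as a direct summand of \( \catT(C,-) \) for \( C \) a cocone between the two homotopy limits --- hence \( F \) is representable. Only \emph{then} is the representing object identified with \( \holim T_i \) (Proposition~\ref{prop representing object for brown}), via a Keller--Nicol\'as triangle \( \holim X \to \holim T_i \to \holim \Cone(\psi_i) \) whose third term vanishes because the cone morphisms are contravariant \( S \)-ghosts --- this is where \( 0 \)-cocompactness enters, in the correct direction (Lemma~\ref{lemma 0-cocompacts implies ghosts closed under holim}). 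If you want to keep your reduction-to-\( S \) strategy, you would still need an independent argument that \( \colim \catT(T_i, S) \to \catT(\holim T_i, S) \) is onto, and I do not see how to obtain that without essentially reconstructing the paper's representability argument.
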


The proof of this theorem is less direct than one might imagine: We first show, in Proposition~\ref{prop representing object for brown}, that if \( F \) already is representable then we do get the desired isomorphism of functors. Then we show, in Proposition~\ref{prop epi for brown}, that in general \( F \) is at least an epimorphic image of \( \catT( \holim T_i, -) \). The argument for this part comes from \cite{MR3085026}. Finally we complete the proof by employing a trick of Neeman's \cite{MR2529296}.

While large parts of the argument are available in the literature, we found that the varying notation made it slightly challenging to read the entire proof. Therefore we believe it might be worthwhile to give a complete account here.

\medskip

Let us start with two brief observations translating our assumptions on \( F \).

\begin{observation} \label{obs prod coprod for nat to functor}
For any set of objects \( T_i \), any natural transformation from \( \coprod \catT(T_i, -) \) to \( F \) factors uniquely through \( \coprod \catT(T_i, -) \to \catT( \prod T_i, -) \).

Indeed  we have the following commutative square
\[ \begin{tikzcd}
\{ \coprod \catT(T_i, -)  \to F \} \ar[r] \ar[d,"\cong"] & \{ \catT(\prod  T_i, -) \to F \} \ar[d, "\cong"] \\
\prod F(T_i) \ar[r] & F( \prod T_i)
\end{tikzcd} \]
where the lower horizontal map is an isomorphism by assumption.
\end{observation}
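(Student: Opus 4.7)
The plan is to reduce the observation to a double application of the Yoneda lemma together with the product-preservation hypothesis on $F$. By the covariant Yoneda lemma, the set of natural transformations $\{\catT(X,-)\to F\}$ is naturally in bijection with $F(X)$ for each $X\in\catT$. Applying this termwise identifies
$$\left\{\coprod_i \catT(T_i,-) \to F\right\} \;\cong\; \prod_i \{\catT(T_i,-) \to F\} \;\cong\; \prod_i F(T_i),$$
which is the left vertical isomorphism of the square. A second application of Yoneda with $X = \prod T_i$ supplies the right vertical isomorphism $\{\catT(\prod T_i,-) \to F\} \cong F(\prod T_i)$.

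Next, I would identify the bottom horizontal as the canonical comparison map $\prod_i F(T_i) \to F(\prod T_i)$, i.e.\ the inverse of the map $F(\prod T_i) \to \prod_i F(T_i)$ induced by the projections $\prod T_i \to T_i$. This inverse exists and is an isomorphism precisely because $F$ commutes with products by hypothesis, which is exactly where the assumption on $F$ enters.

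The last step is to verify that the square commutes, which is a brief Yoneda chase: the canonical arrow $\iota \colon \coprod_i \catT(T_i,-) \to \catT(\prod T_i,-)$ is Yoneda-dual to the tuple of projections $\prod T_i \to T_i$, so precomposition with $\iota$ on natural transformations corresponds, under the two vertical isomorphisms, precisely to applying $F$ to those projections. Since three sides of the square are isomorphisms, so is the fourth; unwinding this gives both existence and uniqueness of the factorization of any natural transformation $\coprod_i \catT(T_i,-) \to F$ through $\iota$. I do not foresee any real obstacle here, as the whole argument is purely formal.
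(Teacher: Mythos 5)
Your proposal is correct and follows essentially the same route as the paper: both identify the two sets of natural transformations with $\prod F(T_i)$ and $F(\prod T_i)$ via Yoneda, recognize the bottom map as the canonical comparison that is invertible because $F$ commutes with products, and conclude that precomposition with $\coprod \catT(T_i,-)\to\catT(\prod T_i,-)$ is a bijection. You merely spell out the Yoneda chase and the commutativity check that the paper leaves implicit.
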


\begin{observation} \label{obs homological weak cockernel}
For any triangle \( T_1 \to T_2 \to T_3 \to T_1[1] \), and any natural transformation \( \catT(T_2,-) \to F \) such that composition \( \catT(T_3, -) \to \catT(T_2, -) \to F \) vanishes, there is a factorization as indicated by the following diagram.
\[ \begin{tikzcd}
\catT(T_1, -) \ar[rd,dashed,swap,"\exists"] & \catT(T_2, -) \ar[l] \ar[d] & \catT(T_3, -) \ar[l] \ar[dl,-,dotted,bend right,xshift=-.3em,yshift=-.3em,shorten >=.6em] \\
& F
\end{tikzcd} \]
This follows from the fact that \( F \) is cohomological by employing the Yoneda lemma.
\end{observation}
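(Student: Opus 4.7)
The plan is to reduce everything to elements of abelian groups via the Yoneda lemma, and then apply the long exact sequence coming from the fact that \( F \) is homological.

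First I would name the maps: write \( T_1 \xrightarrow{u} T_2 \xrightarrow{v} T_3 \to T_1[1] \) for the given triangle, and let \( \alpha \colon \catT(T_2, -) \to F \) be the given natural transformation. By the Yoneda lemma, \( \alpha \) is uniquely determined by \( x := \alpha_{T_2}(\id_{T_2}) \in F(T_2) \), and explicitly \( \alpha_Y(g) = F(g)(x) \) for any \( g \colon T_2 \to Y \). Similarly, any natural transformation \( \beta \colon \catT(T_1, -) \to F \) corresponds to some \( y := \beta_{T_1}(\id_{T_1}) \in F(T_1) \), and the composition \( \beta \circ \catT(u, -) \) evaluated at \( g \colon T_2 \to Y \) gives \( F(g u)(y) = F(g)(F(u)(y)) \). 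Thus the desired factorization \( \alpha = \beta \circ \catT(u, -) \) is equivalent, again by Yoneda, to the single equation \( F(u)(y) = x \) in \( F(T_2) \).

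Next I would translate the hypothesis. The composition \( \catT(T_3, -) \xrightarrow{\catT(v, -)} \catT(T_2, -) \xrightarrow{\alpha} F \) evaluated at \( f \colon T_3 \to Y \) yields \( \alpha_Y(f v) = F(f)(F(v)(x)) \); setting \( f = \id_{T_3} \) we see that vanishing of this composition forces \( F(v)(x) = 0 \) (and conversely this condition obviously suffices).

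Finally, since \( F \) is homological, applying it to the triangle produces the exact sequence
\[ F(T_1) \xrightarrow{F(u)} F(T_2) \xrightarrow{F(v)} F(T_3). \]
Exactness at \( F(T_2) \) combined with \( F(v)(x) = 0 \) gives some \( y \in F(T_1) \) with \( F(u)(y) = x \). The natural transformation \( \beta \colon \catT(T_1, -) \to F \) corresponding to \( y \) under Yoneda is then the required factorization. There is no genuine obstacle here; the only care needed is to keep track of variance so that precomposition with \( u \) and \( v \) on representables matches postcomposition with \( F(u) \) and \( F(v) \) on the Yoneda elements, after which the statement is immediate.
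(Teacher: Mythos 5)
Your proof is correct and is precisely the argument the paper intends: the paper's justification consists of the single sentence ``this follows from the fact that \( F \) is cohomological by employing the Yoneda lemma,'' and your write-up simply fills in that outline (Yoneda translates both the hypothesis and the desired factorization into the conditions \( F(v)(x)=0 \) and \( F(u)(y)=x \), which the exact sequence \( F(T_1)\to F(T_2)\to F(T_3) \) resolves). The variance bookkeeping is handled correctly, so there is nothing to add.
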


Of course the ``in particular'' part of Theorem~\ref{thm.brown} above only follows if we can find at least one such diagram, given \( F \). There is a straight-forward way of doing so:

\begin{construction} \label{const functor sequence for Brown}
Pick an epimorphism of \( \End(S) \)-modules \( \End(S)^{(I_0)} \to F(S) \). Equivalently, we have a natural transformation \( \catT(S, -)^{(I_0)} \to F \) which induces an epimorphism on \( S \). By Observation~\ref{obs prod coprod for nat to functor} this gives rise to a natural transformation \( \catT(S^{I_0}, -) \to F \) which induces an epimorphism on \( S \). We pick \( T_0 = S^{I_0} \) and this natural transfomation.

Now assume a natural transformation \( \psi_i \colon \catT(T_i, - ) \to F \) inducing an epimorphism \( \psi_i^S \colon \catT(T_i, S) \to F(S) \) is already constructed. Pick an epimorphism of \( \End(S) \)-modules \( \End(S)^{(I_{i+1})} \to \Ker \psi_i^S \). Similarly to the first step, this gives rise to a natural transformation \( \catT( S^{I_{i+1}}, -) \to \catT(T_i, -) \) such that the sequence
\[ \catT(S^{I_{i+1}}, -) \to \catT(T_i, -) \xto{\psi_i} F \]
is exact on \( S \). In particular the composition vanishes. Picking  \( T_{i+1} \) to be the object fitting in the triangle \( T_{i+1} \to T_i \to S^{I_{i+1}}) \to T_{i+1}[1] \) we can employ Observation~\ref{obs homological weak cockernel} and obtain a natural transformation \( \catT(T_{i+1}, -) \to F \). Moreover, evaluating at \( S \) we can use the exactness observed above to conclude that we have the desired image.
\end{construction}

As an intermediate step towards Theorem~\ref{thm.brown} we will prove the following.

\begin{proposition} \label{prop representing object for brown}
Theorem~\ref{thm.brown} holds under the additional assumption that \( F \) is representable.
\end{proposition}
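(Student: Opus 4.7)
The plan is to fix a representing object $X$ with $F = \catT(X,-)$, convert the given data to Yoneda form, and prove that the induced comparison $g \colon X \to \holim T_i$ is invertible. By Yoneda, the natural transformations $\catT(T_i, -) \to F$ correspond to a compatible family $g_i \colon X \to T_i$ satisfying $g_{i-1} = f_i g_i$. Using the stable derivator enhancing $\catT$, this family assembles into the morphism $g$, and the pointwise cofibre triangles $X \xto{g_i} T_i \to C_i \to X[1]$ combine into a triangle of homotopy limits
\[ X \xto{g} \holim T_i \to \holim C_i \to X[1]. \]
Setting $C = \holim C_i$, the task reduces to showing $C = 0$, since this will make $g$ invertible and give $F = \catT(X,-) \cong \catT(\holim T_i,-)$.

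Since $\catT$ is cogenerated by the suspension-invariant object $S$, it suffices to verify $\catT(C, S) = 0$. This I would deduce from the $0$-cocompactness of $S$ applied to the tower $(C_i)$, which demands the two conditions $\colim \catT(C_i, S) = 0$ and $\catT(C_i, S[1])$ dual Mittag--Leffler.

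Both conditions follow from a two-step chase in the long exact sequences
\[ \catT(X[1],S) \xto{\alpha_i} \catT(C_i,S) \xto{\beta_i} \catT(T_i,S) \xto{g_i^\ast} \catT(X,S). \]
Given $\psi \in \catT(C_i, S)$, the element $\beta_i(\psi)$ lies in $\Ker g_i^\ast$. Pushing $\psi$ one step down the tower, its image in $\catT(T_{i+1}, S)$ equals $\beta_i(\psi) \circ f_{i+1} \in \Imm f_{i+1}^\ast \cap \Ker g_{i+1}^\ast$; the hypothesis says precisely that $g_{i+1}^\ast$ is injective on $\Imm f_{i+1}^\ast$, so this intersection vanishes, and the pushforward of $\psi$ to $\catT(C_{i+1}, S)$ takes the form $\alpha_{i+1}(\eta)$ for some $\eta \in \catT(X[1], S)$. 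Applying the hypothesis again at index $i+3$, where the suspension-invariance $S[-1] \cong S$ lets us read it as a statement about $\catT(-[1], S)$, yields $\tilde\eta \in \catT(T_{i+2}[1], S)$ with $\tilde\eta \circ g_{i+2}[1] = \eta$, i.e.\ $\eta \in \Ker \alpha_{i+2}$. Consequently $\psi$ is killed after exactly two tower steps. This gives $\colim \catT(C_i, S) = 0$, and running the same argument with $S$ replaced by $S[1]$ yields uniform two-step vanishing in $\catT(C_i, S[1])$, which is strictly stronger than dual ML.

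The main obstacle I anticipate is not the chase itself but the derivator-theoretic setup: one needs that the constant tower at $X$ has homotopy limit $X$, and that a levelwise cofibre sequence of towers (here, from the compatible family $X \to T_i$ with cone $C_i$) induces a triangle on homotopy limits. Both are standard consequences of the stable derivator axioms, but they must be articulated so that the triangle $X \to \holim T_i \to \holim C_i \to X[1]$ is genuinely available and natural in the tower variable. Once this is in place, the proof collapses to the two-step chase above, driven directly by the isomorphism-on-images hypothesis.
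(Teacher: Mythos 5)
Your proof is correct and follows essentially the same route as the paper: reduce via Yoneda and the Keller--Nicol\'as triangle of homotopy limits to showing \( \holim C_i = 0 \), then verify this using the \( 0 \)-cocompactness of the suspension-invariant cogenerator \( S \). The only (immaterial) difference is in the diagram chase: the paper shows each single transition map \( C_{i+1} \to C_i \) is already a contravariant \( S \)-ghost, whereas you show the two-fold composites induce zero on \( \catT(-,S) \) and \( \catT(-,S[1]) \), which equally yields the vanishing colimit and dual ML conditions.
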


The proof is based on the following result due to Keller and Nicol\'as \cite{MR3031826}.

\begin{theorem} \label{theorem Keller-Nicolas}
Let \( \catT \) be a triangulated category at the base of a stable derivator. Given a commutative diagram
\[ \begin{tikzcd}
\cdots \ar[r] & X_2 \ar[r] \ar[d,"\varphi_2"] & X_1 \ar[r] \ar[d,"\varphi_1"] & X_0 \ar[d,"\varphi_0"] \\
\cdots \ar[r] & Y_2 \ar[r] & Y_1 \ar[r] & Y_0
\end{tikzcd} \]
there is a choice of cone morphisms
\[ \cdots \to \Cone(\varphi_2) \to \Cone(\varphi_1) \to \Cone(\varphi_1) \]
such that there is a triangle
\[ \holim X_i \to \holim Y_i \to \holim \Cone(\varphi_i) \to \holim X_i[1]. \]
\end{theorem}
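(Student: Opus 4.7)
The point of the stable-derivator hypothesis is to circumvent the well-known non-functoriality of the cone in a mere triangulated category: it allows us to treat the whole commutative ladder as a single morphism $\varphi \colon X \to Y$ in the triangulated category $\mathbb{D}(\mathbb{N}^{\op})$ sitting at level $\mathbb{N}^{\op}$ of the derivator, whose underlying object at $i \in \mathbb{N}$ recovers $X_i$, respectively $Y_i$, with the prescribed transition maps. Under this identification, $\mathrm{holim}\, X_i$ agrees with $\mathrm{holim}(X)$, where $\mathrm{holim} \colon \mathbb{D}(\mathbb{N}^{\op}) \to \mathbb{D}(\ast) = \catT$ is the right adjoint to the constant-diagram functor.

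My plan is then the following. First I would complete $\varphi$ to a distinguished triangle
\[
X \xrightarrow{\varphi} Y \to Z \to X[1]
\]
in the triangulated category $\mathbb{D}(\mathbb{N}^{\op})$. Next, for each $i \in \mathbb{N}$ I would apply the evaluation functor $i^* \colon \mathbb{D}(\mathbb{N}^{\op}) \to \mathbb{D}(\ast) = \catT$; since every restriction functor in a stable derivator is an exact triangle functor, this yields a distinguished triangle $X_i \xrightarrow{\varphi_i} Y_i \to Z_i \to X_i[1]$ in $\catT$. We may therefore identify each $Z_i$ with a choice of $\Cone(\varphi_i)$, and the transition maps of the coherent diagram $Z$ automatically provide the required compatible system of cone morphisms $\cdots \to \Cone(\varphi_2) \to \Cone(\varphi_1) \to \Cone(\varphi_0)$. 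Finally I would apply $\mathrm{holim}$ to the triangle above; being a right adjoint between stable derivator bases, this functor is exact, and we obtain the desired distinguished triangle
\[
\mathrm{holim}\, X_i \to \mathrm{holim}\, Y_i \to \mathrm{holim}\, \Cone(\varphi_i) \to \mathrm{holim}\, X_i[1]
\]
in $\catT$.

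The only delicate point is really the very first step, namely promoting the given ladder of morphisms in $\catT$ to a genuine morphism $\varphi \colon X \to Y$ in $\mathbb{D}(\mathbb{N}^{\op})$. In an ordinary triangulated category this is impossible in general: termwise-chosen cones come with no canonical transition maps, and different choices need not assemble to a tower at all. The derivator axioms --- exactness of restrictions, existence and exactness of homotopy Kan extensions, and the fact that the morphism categories of the base are pieced together from those at $\mathbb{N}^{\op}$ via coherent diagrams --- are precisely what make this coherent lift, the subsequent triangle completion, and the exactness of $\mathrm{holim}$ all simultaneously available. Once that input is in hand, the remainder of the argument is formal.
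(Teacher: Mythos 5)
The paper does not actually prove this statement: it is quoted from Keller--Nicol\'as \cite{MR3031826}. Your sketch is essentially the argument underlying that citation, and it is correct in outline: lift the ladder to a coherent morphism, complete it to a triangle in $\mathbb{D}(\mathbb{N}^{\op})$, evaluate at each $i$ (restrictions being exact) to identify the third term degreewise with $\Cone(\varphi_i)$, and apply the exact homotopy limit functor. The step you flag as delicate is indeed the only real content, and it deserves a sharper justification than a general appeal to ``the derivator axioms'': the incoherent ladder lifts because $\mathbb{N}^{\op}$ is a \emph{free} category, so the underlying-diagram functor $\mathbb{D}(\mathbb{N}^{\op}) \to \catT^{\mathbb{N}^{\op}}$ is full and essentially surjective --- one first lifts the two towers to coherent objects $X$ and $Y$ (essential surjectivity) and then lifts the morphism of underlying diagrams to a coherent $\varphi \colon X \to Y$ (fullness); for a non-free shape such as the full grid $[1]\times\mathbb{N}^{\op}$ this would fail. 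Two further points are standard but not free and should at least be recorded: exactness of $\holim = p_{*}$ for $p \colon \mathbb{N}^{\op} \to \ast$ is a theorem about stable derivators (all homotopy Kan extensions are exact), and the derivator homotopy limit of a tower has to be identified with the Milnor-triangle $\holim$ used elsewhere in the paper.
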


Moreover, we will utilize the following observation.

\begin{lemma}\label{lemma 0-cocompacts implies ghosts closed under holim}
Let \( \cdots \to X_2 \to X_1 \to X_0 \) be a sequence of contravariant \( S \)-ghosts. Then \( \holim X_i = 0 \).
\end{lemma}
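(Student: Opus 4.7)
The plan is to exploit the fact that \( S \) is simultaneously a shift-invariant cogenerator and \( 0 \)-cocompact. Since \( S \cong S[n] \) for every \( n \), the cogeneration condition \( {}^{\perp}S = 0 \) collapses to a single vanishing: any object \( T \) with \( \catT(T,S) = 0 \) must be zero. It therefore suffices to prove that \( \catT(\holim X_i, S) = 0 \).

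To establish this, I would apply the \( 0 \)-cocompactness of \( S \) to the sequence \( \mathbb T := (\cdots \to X_2 \to X_1 \to X_0) \). Per Definition~\ref{definition 0-cocompactness and 0-compactness}, this requires two properties of \( \mathbb T \): that \( \colim \catT(\mathbb T, S) = 0 \), and that \( \catT(\mathbb T, S[1]) \) is dual Mittag--Leffler. Both follow directly from the ghost hypothesis: each transition morphism \( X_{i+1} \to X_i \) is a contravariant \( S \)-ghost, hence the induced map \( \catT(X_i,S) \to \catT(X_{i+1},S) \) is zero. The colimit of an \( \omega \)-sequence whose transition maps are all zero is itself zero, handling the first condition. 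By the shift invariance \( S \cong S[1] \) the same reasoning applies to \( \catT(\mathbb T, S[1]) \), so every composition \( a_{i+k} \cdots a_i \) of transition maps equals zero and has kernel equal to the whole source; the chain \( \Ker a_i \subseteq \Ker a_{i+1}a_i \subseteq \cdots \) stabilises at the first step, which is the dual ML condition. Consequently \( \holim \mathbb T \) is a contravariant \( S \)-ghost, i.e.\ \( \catT(\holim X_i, S) = 0 \), and combining with the first paragraph gives \( \holim X_i = 0 \).

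No step here is expected to present a serious obstacle; the one point worth flagging is that shift invariance of \( S \) is used twice in an essential way: once to reduce the cogeneration perp to a single \( \Hom \)-functor, and once to promote the \( S \)-ghost property of the transition maps to an \( S[1] \)-ghost property. Without this shift invariance, one would either have to require the maps to be \( S[n] \)-ghosts for every \( n \) (stronger than the stated hypothesis), or handle the various shifts of \( S \) separately using that a set-indexed product of \( 0 \)-cocompacts is \( 0 \)-cocompact (Lemma~\ref{lemma products}).
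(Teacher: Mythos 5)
Your proof is correct and follows the same route as the paper: the ghost hypothesis forces the induced sequence $\catT(X_0,S)\to\catT(X_1,S)\to\cdots$ to have zero transition maps, hence vanishing colimit and (using $S\cong S[1]$) the dual ML property, so $0$-cocompactness of $S$ makes $\holim X_i$ an $S$-ghost, and cogeneration by the shift-invariant $S$ then kills it. The paper's proof is exactly this argument in compressed form, and your explicit flagging of where suspension-invariance is used is a faithful elaboration rather than a deviation.
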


\begin{proof}
The vanishing of all maps implies that the sequence
\[ \cdots \longleftarrow \catT(X_2, S) \longleftarrow \catT(X_1, S) \longleftarrow \catT(X_0, S) \]
has vanishing colimit and is dual ML. It follows, since \( S \) is \( 0 \)-cocomapact and invariant under suspension, that \( \holim X_i \) is an \( S \)-ghost. But since \( S \) is a cogenerator, this means that \( \holim X_i = 0 \).
\end{proof}

\begin{proof}[Proof of Proposition~\ref{prop representing object for brown}]
Let \( F = \catT(X, -) \). Thus, via the inverse of the Yoneda functor, we have the commutative diagram
\[ \begin{tikzcd}
\cdots \ar[r,equal] & X \ar[r,equal] \ar[d,"\psi_2"] & X \ar[r,equal] \ar[d,"\psi_1"] & X \ar[d,"\psi_0"] \\
\cdots \ar[r] & T_2 \ar[r] & T_1 \ar[r] & T_0
\end{tikzcd} \]
Applying Theorem~\ref{theorem Keller-Nicolas}, we obtain a triangle
\[ \holim X \to \holim T_i \to \holim \Cone(\psi_i) \to \holim X[1] \]
for suitable cone morphisms. Clearly \( \holim X = X \), so it remains to show that \( \holim \Cone(\psi_i) = 0 \).

By definition, a cone morphism makes the following diagram commutative.
\[ \begin{tikzcd}
X \ar[r] & T_i \ar[r] & \Cone \psi_i \ar[r] & X[1] \\
X \ar[r] \ar[u,equal] & T_{i+1} \ar[r] \ar[u] & \Cone \psi_{i+1} \ar[r] \ar[u] & X[1] \ar[u,equal]
\end{tikzcd} \]
Applying \( \catT(-, S) \) this turns into
\[ \begin{tikzcd}
\catT( \Cone \psi_i, S) \ar[r,>->] \ar[d] & \catT(T_i, S) \ar[r,->>] \ar[d] & \catT(X, S) \ar[d,equal] \\
\catT( \Cone \psi_{i+1}, S) \ar[r,>->] & \catT(T_{i+1}, S) \ar[r,->>] & \catT(X, S).
\end{tikzcd} \]
To see this, note that the epimorphisms follow from the fact that the image of the middle vertical map is \( \catT(X, S) \) by assumption. Since \( S \) is assumed to be invariant under suspension, it follows that we also get the claimed monomorphisms.

Again invoking the fact that the image of the middle vertical map is \( \catT(X, S) \), we see that the left vertical map needs to vanish. In other words, all our cone morphisms are contravariant \( S \)-ghosts. Now the claim follows from Lemma~\ref{lemma 0-cocompacts implies ghosts closed under holim}.
\end{proof}

Now we return to the general situation, where \( F \) is not assumed to be representable a priori.

\begin{proposition} \label{prop epi for brown}
In the situation of Theorem~\ref{thm.brown}, there is a natural epimorphism \( \catT(\holim T_i, -) \to F \).
\end{proposition}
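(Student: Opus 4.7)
The plan is to first construct a natural transformation \( \Phi \colon \catT( \holim T_i, -) \to F \) by a Yoneda-type lift, and then prove surjectivity by reducing the claim to a sublemma asserting that homological product-preserving functors which vanish on \( S \) vanish identically. The sublemma is where the \( 0 \)-cocompactness and cogeneration hypotheses on \( S \) (together with Construction~\ref{const functor sequence for Brown}) genuinely enter, and is the main obstacle.

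For the construction, I would apply \( F \) to the defining triangle of \( T := \holim T_i \). Since \( F \) is homological and commutes with products, this gives the short exact sequence
\[ 0 \to \limit^1 F(T_i[-1]) \to F(T) \to \limit F(T_i) \to 0. \]
The universal elements \( \eta_i := \psi_i^{T_i}( \id_{T_i}) \) form a compatible family by commutativity of the given diagram, so define an element of \( \limit F(T_i) \); any lift \( \eta \in F(T) \) corresponds by Yoneda to \( \Phi \), with \( \Phi( \beta) = F( \beta)( \eta) \). A quick check at the structure map \( \pi_i \colon T \to T_i \) gives \( \Phi \circ \catT( \pi_i, -) = \psi_i \), and in particular \( \Imm \Phi \supseteq \Imm \psi_i \) for every \( i \). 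Setting \( G := \Cok \Phi \), a diagram chase plus the exactness of products in \( \Ab \) shows that \( G \) is again homological and commutes with products; the hypothesis that \( \Imm \catT(f_i, S) \to F(S) \) is an isomorphism forces \( \psi_i^S \) to be surjective, whence \( G(S) = 0 \), and by suspension invariance of \( S \) also \( G(S[n]) = 0 \) for every \( n \).

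It then suffices to prove the sublemma. I would argue by contradiction: suppose \( 0 \neq y \in G(Y) \), and let \( \nu_y \colon \catT(Y, -) \to G \) be the associated natural transformation, which is trivially surjective on \( S \) since \( G(S) = 0 \). Running Construction~\ref{const functor sequence for Brown} starting from \( \tilde T_0 := Y \) and \( \tilde \psi_0 := \nu_y \) produces a tower \( Y = \tilde T_0 \xleftarrow{ \tilde f_1} \tilde T_1 \leftarrow \cdots \) with compatible \( \tilde \psi_i \), and by the very design of that construction the hypothesis that \( \Imm \catT( \tilde f_i, S) \to G(S) \) is an isomorphism holds for every \( i \geq 1 \). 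Since \( G(S) = 0 \) this forces \( \catT( \tilde f_i, S) = 0 \), and by suspension invariance also \( \catT( \tilde f_i, S[n]) = 0 \) for every \( n \). Consequently \( \colim \catT( \tilde T_i, S[n]) = 0 \) and the system is trivially dual Mittag-Leffler, so the \( 0 \)-cocompactness of \( S \) yields \( \catT( \holim \tilde T_i, S[n]) = 0 \) for all \( n \), whence \( \holim \tilde T_i = 0 \) by cogeneration. Applying \( G \) to the defining triangle of \( \holim \tilde T_i \) and using product preservation then produces a surjection \( 0 = G( \holim \tilde T_i) \twoheadrightarrow \limit G( \tilde T_i) \); but \( ( \tilde \psi_i^{ \tilde T_i}( \id))_i \) is a compatible family in \( \limit G( \tilde T_i) \) with first entry \( y \neq 0 \), the desired contradiction. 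Applying the sublemma to \( G = \Cok \Phi \) gives \( \Cok \Phi = 0 \), i.e.\ \( \Phi \) is an epimorphism.
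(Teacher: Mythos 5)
Your construction of the natural transformation \( \Phi\colon\catT(\holim T_i,-)\to F \) is fine (it agrees with the map the paper uses implicitly), and your sublemma --- a homological, product-preserving functor vanishing on all \( S[n] \) vanishes identically --- is correct and close in spirit to Lemma~\ref{lemma 0-cocompacts implies ghosts closed under holim}. The gap is the assertion that \( G=\Cok\Phi \) is ``again homological''. A cokernel (equivalently, an image) of a natural transformation between homological functors is in general \emph{not} homological: from the short exact sequences \( 0\to\Ker\Phi\to\catT(\holim T_i,-)\to\Imm\Phi\to0 \) and \( 0\to\Imm\Phi\to F\to\Cok\Phi\to0 \) one only gets \( H^n(\Cok\Phi)\cong H^{n+2}(\Ker\Phi) \) on each long exact sequence, so \( \Cok\Phi \) is homological if and only if \( \Ker\Phi \) is, and neither is automatic. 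Concretely, in \( \catD(\mathbb Z) \) the cokernel of \( \catT(\mathbb Z,-)\xto{\,2\,}\catT(\mathbb Z,-) \) is \( H^0(-)/2H^0(-) \), which applied to the triangle \( \mathbb Z\xto{\,2\,}\mathbb Z\to\mathbb Z/2\to\mathbb Z[1] \) yields the non-exact sequence \( \mathbb Z/2\xto{\,0\,}\mathbb Z/2\xto{\,\cong\,}\mathbb Z/2 \). Without homologicity of \( G \), both places where you use it break down: Construction~\ref{const functor sequence for Brown} cannot be run with target \( G \) (Observation~\ref{obs homological weak cockernel} needs the target to be homological in order to produce \( \tilde\psi_{i+1} \), i.e.\ to guarantee that \( G(\tilde T_{i+1})\to G(\tilde T_i) \) is onto when \( G(S^{I_{i+1}})=0 \)), and the final surjection \( G(\holim\tilde T_i)\twoheadrightarrow\limit G(\tilde T_i) \) also needs exactness of \( G \) on the homotopy limit triangle. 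Note that \( \Cok\Phi=0 \) is exactly the statement you are trying to prove, so you cannot obtain its homologicity from the conclusion.

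This is precisely why the paper avoids cokernels: it proves surjectivity of \( \Phi \) pointwise, writing each test object \( X \) as \( \holim X_i \) for a tower built from products of \( S \) (via Construction~\ref{const functor sequence for Brown} applied to \( \catT(X,-) \) together with Proposition~\ref{prop representing object for brown}) and then lifting the compatible family \( \catT(X_i,-)\to F \) through the \( T_i \) by Lemma~\ref{lemma for brown epi}, before concluding with a \( \limit \)/\( \limit^1 \) comparison. In the same vein, the proof of Theorem~\ref{thm.brown} passes to the \emph{kernel} of the resulting epimorphism, which \emph{is} homological (kernels of epimorphisms of homological functors are homological by the long exact sequence), never to a cokernel. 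To salvage your global strategy you would need an argument that stays within the class of homological functors.
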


This result, as well as the argument here are based on \cite[Theorem~8]{MR3085026}.

\medskip
For the proof we will need to show that, for any given \( X \in \catT \), the induced map \( \catT( \holim T_i, X) \to F(X) \) is surjective.

Given \( X \), we consider the functor \( \catT(X, -) \), and construct the diagram of functors
\[ \begin{tikzcd}
\catT(X_0, -) \ar[r,"{\catT(g_1,-)}"] \ar[drr] & \catT(X_1, -) \ar[r,"{\catT(g_2,-)}"] \ar[dr] & \catT(X_2, -) \ar[r,"{\catT(g_3,-)}"] \ar[d] & \cdots \\
&& \catT(X, -)
\end{tikzcd} \]
as described in Construction~\ref{const functor sequence for Brown}. In particular \( X_0 = S^{I_0} \), and \( \Cone g_i = S^{I_i} \) for suitable sets \( I_i \). By Proposition~\ref{prop representing object for brown} we know that \( X = \holim X_i \).

With this setup, the key step in the proof of Proposition~\ref{prop epi for brown} is the following.

\begin{lemma} \label{lemma for brown epi}
Given a sequence of maps \( \catT(X_i, -) \to F \) making the solid part of the following diagram commutative, we can find the dashed arrows making the entire diagram commutative.
\[ \begin{tikzcd}
\catT(X_0, -) \ar[r,"{\catT(g_1,-)}"] \ar[drrrr] \ar[dd,"{\catT(\varphi_0, -)}",dashed] & \catT(X_1, -) \ar[r,"{\catT(g_2,-)}"] \ar[drrr] \ar[dd,"{\catT(\varphi_1,-)}",dashed]& \catT(X_2, -) \ar[r,"{\catT(g_3,-)}"] \ar[drr] \ar[dd,"{\catT(\varphi_3,-)}",dashed]& \cdots \\
&&&& F \\
\catT(T_0, -) \ar[r,swap,"{\catT(f_1,-)}"] \ar[urrrr] & \catT(T_1, -) \ar[r,swap,"{\catT(f_2,-)}"] \ar[urrr] & \catT(T_2, -) \ar[r,swap,"{\catT(f_3,-)}"] \ar[urr] & \cdots
\end{tikzcd} \]
\end{lemma}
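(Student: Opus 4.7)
I plan to construct $\varphi_i \colon T_i \to X_i$ by induction on $i$, translating the commutativity requirements through the Yoneda lemma. Writing $a_i = \psi^X_i(\id_{X_i}) \in F(X_i)$ and $b_i = \psi^T_i(\id_{T_i}) \in F(T_i)$, where $\psi^X_i$ and $\psi^T_i$ denote the given natural transformations to $F$, the commutativity of the solid horizontal triangles amounts to $F(g_{i+1})(a_{i+1}) = a_i$ and $F(f_{i+1})(b_{i+1}) = b_i$, while the outer commutativity to be achieved is $F(\varphi_i)(b_i) = a_i$. The crucial preliminary observation is that the hypothesis that $\Imm \catT(f_{i+1}, S) \to F(S)$ is an isomorphism carries two distinct pieces of information: surjectivity yields that $\psi^T_i$ restricted to $\catT(T_i, S)$ is surjective onto $F(S)$, while injectivity translates into a rigidity statement, namely that any $m \in \catT(T_i, S)$ with $\psi^T_i(m) = 0$ must already satisfy $m \circ f_{i+1} = 0$.

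For the base case $X_0 = S^{I_0}$, the element $a_0 \in F(S^{I_0}) \cong F(S)^{I_0}$ is by construction the fixed generating family $(x_\alpha)_{\alpha \in I_0}$. Surjectivity of $\psi^T_0$ on $\catT(T_0, S)$ lets me choose $\varphi_0^\alpha \colon T_0 \to S$ with $\psi^T_0(\varphi_0^\alpha) = x_\alpha$ for each $\alpha$, and these assemble into the required $\varphi_0$. For the inductive step I first arrange square commutativity $g_{n+1} \varphi_{n+1} = \varphi_n f_{n+1}$. Given the triangle $X_{n+1} \xrightarrow{g_{n+1}} X_n \xrightarrow{h} S^{I_{n+1}} \to X_{n+1}[1]$, this reduces to showing that $h \varphi_n f_{n+1}$ vanishes; since each component $h_\alpha$ was chosen in the kernel of $\psi^X_n$ on $\catT(X_n,S)$, one computes $\psi^T_n(h_\alpha \varphi_n) = F(h_\alpha)(a_n) = 0$, and the rigidity extracted above then forces $h_\alpha \varphi_n f_{n+1} = 0$. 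Having fixed any lift $\varphi_{n+1}$, the defect $\delta = F(\varphi_{n+1})(b_{n+1}) - a_{n+1}$ lies in $\Ker F(g_{n+1})$, hence by the long exact sequence for $F$ comes from an element of $F(S^{I_{n+1}}[-1]) \cong F(S[-1])^{I_{n+1}}$ via the connecting map $\kappa \colon S^{I_{n+1}}[-1] \to X_{n+1}$. Exploiting the suspension-invariance of $S$ and the resulting surjectivity of $\psi^T_{n+1}$ on $\catT(T_{n+1}, S[-1])$, I can realise this preimage as $F(\mu)(b_{n+1})$ for a suitable $\mu \colon T_{n+1} \to S^{I_{n+1}}[-1]$, and replacing $\varphi_{n+1}$ by $\varphi_{n+1} \pm \kappa \mu$ corrects the defect without disturbing square commutativity. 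Finally, since $F$ commutes with products, the Yoneda correspondence turns the element-level equality $F(\varphi_{n+1})(b_{n+1}) = a_{n+1}$ into the desired equality of natural transformations.

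The main obstacle is the rigidity step: proving that $h \varphi_n f_{n+1}$ \emph{genuinely vanishes}, rather than merely having image zero under $\psi^T_{n+1}$ at $S$, is exactly where the injectivity half of the $\Imm$-isomorphism hypothesis becomes indispensable. Once this rigidity is extracted, the remaining work is a fairly routine interplay of the long exact sequence for $F$, the surjectivity already used in the base case, and the fact that $F$ commutes with products.
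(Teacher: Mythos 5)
Your proof is correct and follows essentially the same strategy as the paper's: lift the element of $F(S)^{I_0}$ componentwise using surjectivity at $S$, use the injectivity half of the hypothesis on $\Imm\catT(f_{i+1},S)\to F(S)$ to force the vanishing of $h\circ\varphi_n\circ f_{n+1}$ and hence square-commutativity, then repair the triangle-commutativity defect through the rotated triangle $S^{I_{n+1}}[-1]\to X_{n+1}$ using surjectivity again. One small justification slip worth fixing: the components $h_\alpha$ were chosen in the kernel of $\catT(X_n,S)\to\catT(X,S)$, not of $(\psi^X_n)_S$ (these differ for general $F$), and likewise $a_0$ need not be the generating family of $\catT(X,S)$ --- the needed vanishing $F(h_\alpha)(a_n)=0$ instead follows from $h_\alpha\circ g_{n+1}=0$ (consecutive maps in a triangle) combined with the commutativity of the solid triangle at stage $n+1$, after which your rigidity step and the rest of the argument go through unchanged.
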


\begin{proof}
Constructing from left to right, observe first that we can find \( \varphi_0 \) since \( X_0 = S^{I_0} \) and \( \catT(T_0, S^{I_0}) \to F(S^{I_0}) \) is a surjection.

Next, assume we have already constructed \( \varphi_i \). Note that we have a triangle \( X_{i+1} \to X_i \to S^{I_{i+1}} \to X_{i+1} \). Thus, in order to obtain a map \( \varphi_{i+1} \) making the square between \( \catT(\varphi_i, -) \) and \( \catT(\varphi_{i+1}, -) \) commutative, it suffices to show that the composition
\[ T_{i+1} \xto{f_{i+1}} T_i \xto{\varphi_i} X_i \to S^{I_{i+1}} \]
vanishes. Equivalently we may consider the sequence
\[ \catT(S^{I_{i+1}}, -) \to \catT(X_i, -) \xto{\catT(\varphi_i, -)} \catT(T_i, -) \xto{\catT(f_{i+1}, -)} \catT(T_{i+1}, -), \]
and moreover it suffices to consider the evaluation at \( S \). Now note that, by assumption, \( \Imm \catT(f_{i+1}, S) = F(S) \), so the above vanishing is implied by the vanishing of the composition
\begin{align*}
& [ \catT(S^{I_{i+1}}, S) \to \catT(X_i, S) \xto{\catT(\varphi_i, S)} \catT(T_i, S) \to F(S) ] \\
& = [ \catT(S^{I_{i+1}}, S) \to \catT(X_i, S) \xto{\catT(g_{i+1}, S)} \catT(X_{i+1}, S) \to F(S) ],
\end{align*}
which holds since the first two maps come from consecutive maps in a triangle.

Note however that at this point we cannot be sure that the triangle involving \( \catT(\varphi_{i+1}, -) \) and \( F \) commutes. Let
\[ \delta = [ \catT(X_{i+1}, -) \to F] - [\catT(T_{i+1}, -) \to F] \circ \catT( \varphi_{i+1}, -) \]
be the obstruction to the triangle commuting. Of course \( \delta \circ \catT(g_{i+1}, -) = 0 \). Since \( F \) is homological, it follows that \( \delta \) factors through the map \[ \catT(X_{i+1}, -) \to \catT(S^{I_{i+1}}[-1], -), \] say via \( \delta' \). Since \( S \) is assumed to be invariant under suspension we may disregard the shift. Now recall that \( \catT(T_{i+1}, S) \to F(S) \) is a surjection. It follows that any map \( \catT(S^{I_{i+1}}[-1], - ) \to F \), in particular  \( \delta' \), factors through \( \catT(T_{i+1}, -) \to F \). Thus we find a map \( \delta'' \colon T_{i+1} \to S^{I_{i+1}}[-1] \) making the following diagram commutative.
\[ \begin{tikzcd}
\catT(X_{i+1}, -) \ar[rr] && \catT(S^{I_{i+1}}[-1], -) \ar[dd,"{\delta'}"] \ar[lldd,swap,very near end,"{\catT(\delta'',-)}"] \\
&& \\
\catT(T_{i+1}, -) \ar[rr] && F \ar[from=uull,near start,"\delta",crossing over]
\end{tikzcd} \]
It follows that we can replace \( \varphi_{i+1} \) by \( \varphi_{i+1} + [S^{I_{i+1}}[-1] \to X_{i+1}] \circ \delta'' \), fixing the commutativity of the triangle involving \( \catT(\varphi_{i+1}, -) \) and \( F \), while not affecting the previously commutative square involving \( \catT(\varphi_i, -) \) and \( \catT(\varphi_{i+1}, -) \).
\end{proof}

\begin{remark} \label{rem T_i is approx}
The iterative construction in the proof of Lemma~\ref{lemma for brown epi} does not actually require the entire diagram. In particular any map \( \catT(X_i, -) \to F \) factors through \( \catT(\varphi_i, -) \).
\end{remark}

Now we are ready to prove the proposition.

\begin{proof}[Proof of Proposition~\ref{prop epi for brown}]
Let \( X \in \catT \). We have \( X = \holim X_i \), with the sequence \( X_i \) as discussed directly below the proposition. Recall that \( F \) is a homological functor commuting with products. Thus the triangle \[ \holim X_i \to \prod X_i \to \prod X_i \to \holim X_i [1] \] gives rise to the exact sequence
\[ \prod F(X_i[-1]) \to \prod F(X_i[-1]) \to F(\holim X_i) \to \prod F(X_i) \to \prod F(X_i). \]
By definition, the kernel of the last map is \( \limit F(X_i) \), while the cokernel of the first map is \( \limit^1 F(X_i[-1]) \). Thus we have the lower sequence in the following diagram. The upper sequence exists by the same argument applied to the functor \( \catT(\holim T_i, -) \).
\[ \begin{tikzcd}
\limit^1 \catT(\holim T_i, X_i[-1]) \ar[r,>->] \ar[d] & \catT(\holim T_i, \holim X_i) \ar[r,->>] \ar[d] & \limit \catT(\holim T_i, X_i) \ar[d] \\
\limit^1 F(X_i[-1]) \ar[r,>->] & F( \holim X_i) \ar[r,->>] & \limit F(X_i)
\end{tikzcd} \]
In order to show that the middle vertical map is surjective it suffices to show that the outer two are surjective.

We first consider the left hand side. Note that Remark~\ref{rem T_i is approx} holds analogously for \( X_i[-1] \), so all the maps \( \catT(T_i, X_i) \to F(X_i) \) are surjective. In particular the same holds for the maps \( \catT(\holim T_i, X_i) \to F(X_i) \). Now the left vertical map is onto by right exactness of \( \limit^1 \).

Next we look at the right hand side. Note that an element of \( \limit F(X_i) \) is a sequence of elements \( x_i \in F(X_i) \) such that \( F(g_i)(x_i) = x_{i-1} \). Translating via the Yoneda lemma these are maps \( x_i \colon \catT(X_i, -) \to F \) such that assumptions of Lemma~\ref{lemma for brown epi} are satisfied. By that lemma we obtain maps \( \varphi_i \colon T_i \to X_i \). Now \( (\varphi_i \circ [\holim T_i \to T_i] ) \) is an element of \( \limit \catT( \holim T_i, X_i ) \), and moreover a preimage of \( (x_i) \).
\end{proof}

Now we are ready to complete the proof of Theorem~\ref{thm.brown}. The missing piece is \cite[Theorem 1.3]{MR2529296}.

\begin{proof}[Proof of Theorem~\ref{thm.brown}]
By Proposition~\ref{prop epi for brown} there is a natural epimorphism \[ \catT( \holim T_i, -) \to F. \] Note that its kernel again satisfies the assumptions of Proposition~\ref{prop epi for brown}, and thus is an epimorphic image of some \( \catT( \holim T_i', -) \). In particular \( F \) has a projective presentation as indicated in the left half of the following diagram.
\[ \begin{tikzcd}
\catT( \holim T_i', -) \ar[rr] && \catT(\holim T_i, -) \ar[rd,->>] \ar[rr] && \catT(C, -) \ar[ld,dashed,bend left] \\
&&& F \ar[ru,>->]
\end{tikzcd} \]
Let \( C \) denote the cocone of the map \( \colim T_i \to \colim T_i' \). We have the exact sequence as in the diagram above from this triangle. Finally, note that since \( F \) is homological we get the dashed morphism making the triangle to its left commutative. It follows that \( F \) is a direct summand of \( \catT(C, -) \), hence is representable.

Now the claim of Theorem~\ref{thm.brown} follows from Proposition~\ref{prop representing object for brown}.
\end{proof}

\section{Exactness of partial Serre functors} \label{section exactness of serre}

The aim of this appendix is to prove the following theorem, summing up functorial properties of partial Serre functors.

\begin{theoremappendix}
  Suppose \( \catT \) is a triangulated category, and let \( \catX \) be the full subcategory of all objects \( X \) such that \( \catT(X, -)^\ast \) is representable.

  Then \( \catX \) is a triangulated subcategory of \( \catT \), and there is a partial Serre functor \( \mathbb S \colon \catX \to \catT \). Moreover, \( \mathbb S \) is a triangle functor.
\end{theoremappendix}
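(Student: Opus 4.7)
The plan is to construct $\mathbb S$ on $\catX$ via the Yoneda lemma, verify that $\catX$ is closed under shifts and cones, and check that $\mathbb S$ preserves exact triangles.

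First, I would define $\mathbb S$ on objects and on morphisms. For each $X \in \catX$, choose a representing object $\mathbb S X \in \catT$ together with a chosen isomorphism $\eta_X \colon \catT(X, -)^\ast \cong \catT(-, \mathbb S X)$ of functors $\catT \to \Ab$, natural in the right variable. For a morphism $f \colon X \to Y$ in $\catX$, the dualized natural transformation $\catT(f, -)^\ast \colon \catT(X, -)^\ast \to \catT(Y, -)^\ast$ transports across $\eta$ to a natural transformation $\catT(-, \mathbb S X) \to \catT(-, \mathbb S Y)$, which by Yoneda is post-composition with a unique morphism $\mathbb S f$. Functoriality of $\mathbb S$ follows from uniqueness in Yoneda, and by construction $\eta$ is automatically natural in the first variable when the latter moves along $\mathbb S f$. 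Closure of $\catX$ under shifts is immediate from the identification $\catT(X[1], -)^\ast = \catT(X, (-)[-1])^\ast$, which is represented by $\mathbb S X[1]$.

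Next I would show $\catX$ is closed under cones. Let $X \xto{f} Y \xto{g} Z \xto{h} X[1]$ be a distinguished triangle with $X, Y \in \catX$, and complete $\mathbb S f$ to a triangle $\mathbb S X \xto{\mathbb S f} \mathbb S Y \xto{\alpha} W \xto{\beta} \mathbb S X[1]$ in $\catT$. For every $T \in \catT$, applying $\catT(-, T)$ and then $(-)^\ast$ to the first triangle (using exactness of $(-)^\ast$) yields one long exact sequence, while applying $\catT(T, -)$ to the second yields another; by the construction of $\mathbb S f$ and the naturality of $\eta$ in both variables, these two sequences share identical outer four terms and boundary morphisms. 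To compare the middle terms $\catT(Z, T)^\ast$ and $\catT(T, W)$, I would proceed as follows: exactness at $\catT(X[1], W)^\ast \cong \catT(W, \mathbb S X[1])$ places $\beta$ in the image of the connecting map from $\catT(Z, W)^\ast$; choosing a preimage $\omega \in \catT(Z, W)^\ast$ and applying the Yoneda lemma in the functor category, $\omega$ corresponds to a natural transformation $\psi \colon \catT(-, W) \to \catT(Z, -)^\ast$. The choice of $\omega$ guarantees that $\psi$ commutes with the surrounding boundary morphisms in the two long exact sequences, so the five lemma upgrades $\psi$ to a natural isomorphism. Hence $Z \in \catX$ with $\mathbb S Z := W$, and the triangle $(\mathbb S f, \alpha, \beta)$ identifies with $(\mathbb S f, \mathbb S g, \mathbb S h)$ up to the canonical isomorphism $W \cong \mathbb S Z$, making $\mathbb S$ a triangle functor.

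The main obstacle lies in the cone step: the five lemma alone cannot produce the comparison arrow $\psi$ between the two long exact sequences, so the Yoneda construction through the element $\omega$ is essential. Verifying that $\psi$ really does commute with the surrounding boundary morphisms requires careful tracking of the two parallel descriptions of the boundary (the dualized connecting morphism on one side and post-composition with $\beta$ on the other), together with the naturality of $\eta$ in both variables. Once $\psi$ is in place, closure under cocones follows by rotation, and all remaining assertions — functoriality, triangle-exactness, and compatibility with shifts — reduce to formal diagram chasing.
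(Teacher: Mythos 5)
Your overall strategy (Yoneda to define $\mathbb S$, closure under shift, then cone closure plus exactness in one stroke via a comparison of long exact sequences) is the classical Bondal--Kapranov/Van den Bergh route, and it is different from the paper's. But the cone step, which you yourself flag as the main obstacle, has a genuine gap. Choosing $\omega\in\catT(Z,W)^\ast$ to be a preimage of $\beta$ under the connecting map pins down $\omega$ only on the subgroup $\catT(X[1],W)\circ h$ of $\catT(Z,W)$, and by Yoneda this makes exactly \emph{one} of the two squares commute, namely the one comparing $\psi$ with $\beta\circ-$. The other square asks that $\omega(\alpha\circ t)$ agree with $\epsilon_Y(t\circ g)$ for all $t\colon Z\to\mathbb S Y$ (where $\epsilon_Y\in\catT(Y,\mathbb S Y)^\ast$ corresponds to $\mathrm{id}_{\mathbb S Y}$ under the Serre isomorphism); this is a constraint on $\omega$ along the \emph{different} subgroup $\alpha\circ\catT(Z,\mathbb S Y)$, and it is not implied by the first choice. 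Correcting $\psi$ to fix the left square can destroy the right one, and the five lemma needs both. Producing a \emph{simultaneous} solution is precisely the hard content here, and your proposal does not supply an argument for it.

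For comparison, the paper avoids this fill-in problem entirely when proving that $\catX$ is closed under cones: it passes to Freyd's Frobenius category $\modcat\catT$, characterizes representability of $\catT(T,-)^\ast$ as ``finitely presented'' (using that these functors are automatically injective and that injective finitely presented functors are summands of representables, whence idempotent completeness enters and has to be removed afterwards by passing to $\widehat{\catT}$), and then shows $\catT(Z,-)^\ast$ is finitely presented by an image/extension argument in the long exact sequence. Only \emph{after} $\mathbb S Z$ is known to exist does the paper prove exactness of $\mathbb S$, via a separate ``dualized TR3'' proposition which establishes that fill-in morphisms for the dual pre-triangles exist --- that proposition is exactly the missing ingredient your argument would need (together with the sign bookkeeping for $\mathbb S\circ[1]\cong[1]\circ\mathbb S$, which your proposal also does not address). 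If you can prove such a simultaneous fill-in statement directly, your route would have the advantage of not needing the idempotent completion detour; as written, however, the key step is asserted rather than proved.
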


Most of this theorem is actually fairly easily seen. In fact, the existence of a partial Serre functor \( \mathbb S \colon \catX \to \catT \) follows easily from our assumption on representability --- see Observation~\ref{obs.dualizable_gives_functor} below. The main technical challenges in proving the theorem are showing that \( \catX \) is triangulated, which we will show in Corollary~\ref{cor.dualizable_is_triang} for the case that \( \catT \) is idempotent closed and in the very last subsection in general, and that \( \mathbb S \) is a triangle functor---see Theorem~\ref{thm.Serre_is_triangle}.

\subsection*{General observations on partial Serre functors}

\begin{observation} \label{obs.dualizable_gives_functor}
Let \( \catX \) be a subcategory of \( \catT \) such that for any object \( X \in \catX \) the functor \( \catT(X, -)^\ast \) is representable. Fix for all \( X \) a representing object \( \mathbb S X \) and a natural isomorphism
\[ \eta_X \colon \catT(X, -)^\ast \to \catT(-, \mathbb S X ). \]

Then \( \mathbb S \) defines a functor \( \catX \to \catT \) by requiring the following square of functors to be commutative for any morphism \( f \colon X_1 \to X_2 \) in \( \catX \).
\[ \begin{tikzcd}
\catT(X_1, -)^\ast \ar[r,"f \cdot -"] \ar[d,"\eta_{X_1}"] & \catT(X_2, -)^\ast \ar[d,"\eta_{X_2}"] \\
\catT(-, \mathbb S X_1) \ar[r,"\mathbb S f \circ -"] & \catT(-, \mathbb S X_2)
\end{tikzcd} \]
Note that the lower natural transformation exists and is unique since \( \eta_{X_1} \) is an isomorphism, and that it is uniquely representable as composition with some map by the Yoneda Lemma.

It follows directly from the commutative square defining \( \mathbb S f \) that \( \eta \) becomes a natural isomorphism of functors on \( \catX \times \catT^{\op} \), i.e.\ that \( \mathbb S \) is a partial Serre functor.
\end{observation}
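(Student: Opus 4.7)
The plan is to address the three claims of the theorem in sequence: the existence of \(\mathbb S\), closure of \(\catX\) under the triangulated operations, and exactness of \(\mathbb S\). All three lean heavily on the Yoneda lemma and exactness of \((-)^\ast\).

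Existence of the partial Serre functor is essentially immediate from the definition of \(\catX\): for each \(X \in \catX\) one picks a representing object \(\mathbb S X\) and a natural isomorphism \(\eta_X \colon \catT(X,-)^\ast \cong \catT(-, \mathbb S X)\). A morphism \(f \colon X_1 \to X_2\) in \(\catX\) induces, by precomposition followed by applying \((-)^\ast\) pointwise, a natural transformation \(\catT(X_1,-)^\ast \to \catT(X_2,-)^\ast\); transporting across the \(\eta\)'s and invoking Yoneda produces a unique morphism \(\mathbb S f \colon \mathbb S X_1 \to \mathbb S X_2\). Functoriality is formal. This is precisely the content of Observation~\ref{obs.dualizable_gives_functor} and needs no further work.

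To show \(\catX\) is a triangulated subcategory, I first handle shifts via the chain
\[ \catT(X[1],-)^\ast = \catT(X,-[-1])^\ast \cong \catT(-[-1], \mathbb S X) = \catT(-, \mathbb S X[1]), \]
so \(X[1] \in \catX\) with \(\mathbb S(X[1]) \cong \mathbb S X [1]\); the same argument covers \(X[-1]\). For the cone axiom, take a triangle \(X \to Y \to Z \to X[1]\) with \(X, Z \in \catX\). Using the rotation \(Z[-1] \to X\) and the functoriality of \(\mathbb S\) just established, there is a morphism \(\mathbb S Z[-1] \to \mathbb S X\); complete it to a triangle
\[ \mathbb S Z[-1] \to \mathbb S X \to C \to \mathbb S Z. \]
The claim is that \(C\) represents \(\catT(Y,-)^\ast\). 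To verify this, I would apply \(\catT(-,T)\) to this constructed triangle and, separately, apply \(\catT(-,T)\) followed by the exact \((-)^\ast\) to the original triangle (all shifts), obtaining two long exact sequences which I assemble into a ladder; the known isomorphisms \(\eta_X\) and \(\eta_Z\) identify four out of every five terms, and a five-lemma argument yields a natural isomorphism \(\catT(Y,-)^\ast \cong \catT(-, C)\). Checking that the ladder actually commutes reduces, by Yoneda, to the fact that \(\mathbb S Z[-1] \to \mathbb S X\) was defined to be the Serre dual of \(Z[-1] \to X\).

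For \(\mathbb S\) being a triangle functor, the natural isomorphism \(\mathbb S \circ [1] \cong [1] \circ \mathbb S\) is already visible from the shift step; naturality in \(X\) is a direct calculation. That \(\mathbb S\) sends triangles to triangles is essentially a byproduct of the cone construction: the triangle \(\mathbb S X \to C \to \mathbb S Z \to \mathbb S X[1]\) above, combined with the canonical isomorphism \(C \cong \mathbb S Y\), gives a candidate triangle \(\mathbb S X \to \mathbb S Y \to \mathbb S Z \to \mathbb S X[1]\); one checks via Yoneda and naturality of \(\eta\) that the two outer morphisms agree with \(\mathbb S f\) and \(\mathbb S g\), and the connecting morphism with \(\mathbb S h\) up to the sign conventions of the shift isomorphism.

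The main obstacle is the issue flagged by the authors themselves: the five-lemma identification produces \(C\) representing \(\catT(Y,-)^\ast\), so any choice of representative \(\mathbb S Y\) is isomorphic to \(C\), and this is perfectly fine. However, a subtler point arises if one wants to avoid assumptions on \(\catT\): the step producing \(C\) as a cone in \(\catT\) is safe, but in some set-theoretic or non-idempotent-closed settings the comparison of candidates requires extra care. My plan above delivers the statement cleanly in the idempotent-closed case (as in Corollary~\ref{cor.dualizable_is_triang}); the fully general version, as the authors indicate, requires the more delicate argument postponed to the last subsection of the appendix.
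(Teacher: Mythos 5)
Your treatment of the statement at hand --- defining \( \mathbb S f \) by transporting the dual of precomposition with \( f \) across the isomorphisms \( \eta_{X_i} \) and invoking the Yoneda lemma, with functoriality of \( \mathbb S \) and binaturality of \( \eta \) following formally from the uniqueness in that construction --- is exactly the paper's argument for this observation. The remainder of your proposal (closure of \( \catX \) under cones and shifts, and exactness of \( \mathbb S \)) addresses the other parts of Theorem~\ref{theorem main general result on partial serre}, which the paper proves separately, so it is not what is being assessed here.
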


\begin{remark}
One may see that a Serre functor on \( \catX \) is unique up to unique natural isomorphism. Indeed, if \( \widetilde{\mathbb S} \) is a different choice of a partial Serre functor on \( \catX \), with corresponding natural isomorphism \( \widetilde{\eta} \colon \catT(-, -)^\ast \to \catT(- , \widetilde{\mathbb S} -) \), then \( \widetilde{\eta} \circ \eta^{-1} \) is a natural isomorphism \( \catT(-, \mathbb S - ) \to \catT(-, \widetilde{\mathbb S} - ) \), which, by the Yoneda lemma, comes from a natural isomorphism \( \mathbb S \to \widetilde{ \mathbb S } \).
\end{remark}

\begin{observation} \label{obs.dualizable_closed_automorphism}
Let \( \catT \) be a \( k \)-category, and \( [1] \) be an automorphism of \( \catT \). If \( \catT(X, -)^\ast \) is representable for some object \( X \), then so is \( \catT(X[1], -)^\ast \):
\[ \catT(X[1], -)^\ast \cong \catT(X, - )^\ast \circ [-1] \cong \catT(-, \mathbb S X) \circ [-1] \cong \catT(-, (\mathbb S X) [1]). \] In particular, the subcategory of all objects \( X \) such that \( \catT(X, -)^\ast \) is representable, is invariant under all automorphisms of \( \catT \).
\end{observation}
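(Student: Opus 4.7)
The statement to prove is that representability of $\catT(X,-)^\ast$ is preserved under any automorphism of $\catT$, with the chain of isomorphisms displayed in the observation serving as the blueprint. My plan is to verify each link in that chain as a genuine natural isomorphism of functors $\catT \to \Modcat k$ and then read off the representing object from the composite.

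First, I would note that since $[1]$ is an automorphism of $\catT$ (with inverse $[-1]$), we have the adjunction-style identity $\catT(X[1], Y) = \catT(X, Y[-1])$ for every $Y \in \catT$, natural in $Y$. Dualizing via $(-)^\ast = \Hom_k(-,I)$, which is functorial, yields the first isomorphism
\[ \catT(X[1], -)^\ast \cong \catT(X, -)^\ast \circ [-1]. \]
Next, the representability hypothesis gives a natural isomorphism $\eta_X \colon \catT(X,-)^\ast \xrightarrow{\sim} \catT(-,\mathbb S X)$; whiskering on the right with the functor $[-1]$ produces
\[ \catT(X,-)^\ast \circ [-1] \cong \catT(-,\mathbb S X) \circ [-1] = \catT((-)[-1], \mathbb S X). \]
Finally, using the automorphism property of $[1]$ once more in the form $\catT(Y[-1], \mathbb S X) = \catT(Y, (\mathbb S X)[1])$, naturally in $Y$, we close the chain with
\[ \catT((-)[-1], \mathbb S X) \cong \catT(-, (\mathbb S X)[1]). \]
Composing these three natural isomorphisms exhibits $(\mathbb S X)[1]$ as a representing object for $\catT(X[1], -)^\ast$.

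For the ``in particular'' claim, I would observe that the argument above used nothing about $[1]$ beyond the fact that it is an automorphism of $\catT$ (with inverse $[-1]$). Replacing $[1]$ by an arbitrary automorphism $F$ of $\catT$ and $[-1]$ by $F^{-1}$, the identical three-step chain
\[ \catT(FX, -)^\ast \cong \catT(X, -)^\ast \circ F^{-1} \cong \catT(-, \mathbb S X) \circ F^{-1} \cong \catT(-, F(\mathbb S X)) \]
shows that $\catT(FX, -)^\ast$ is representable whenever $\catT(X,-)^\ast$ is, with representing object $F(\mathbb S X)$. Hence the subcategory of objects whose Serre dual functor is representable is stable under every automorphism of $\catT$.

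There is no real obstacle here; the entire content is bookkeeping with the Yoneda lemma and the fact that whiskering a natural isomorphism by a functor remains a natural isomorphism. The only thing worth being careful about is that $(-)^\ast$ is contravariant, so one must track that dualizing commutes with precomposition by $[-1]$ (which is itself covariant), which it clearly does since $\Hom_k(-,I)$ is applied pointwise in the target variable.
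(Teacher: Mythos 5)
Your proposal is correct and follows exactly the paper's argument: the Observation's stated chain of isomorphisms is the entire proof, and you have simply verified each link (full faithfulness of the automorphism, whiskering the defining isomorphism $\eta_X$, and Yoneda) before noting that nothing depends on the automorphism being $[1]$ specifically. No gaps.
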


Note however that we may not be able to choose \( \eta_{X[1]} \) to be induced by \( \eta_X \) consistently over the entire subcategory \( \catX \). To account for this, we observe that at least there is a natural isomorphism controlling the difference between the two.

\begin{observation} \label{obs.zeta}
Let \( \mathbb S \) be a partial Serre functor on a subcategory \( \catX \) of \( \catT \). Assume \( \catT \) has an automorphism \( [1] \), and that \( \catX \) is invariant under \( [1] \). Then there is a unique natural isomorphism \( \zeta \colon \mathbb S \circ [1] \to [1] \circ \mathbb S \) such that the following diagram of isomorphisms commutes functorially in \( X \) and in \( T \).
\[ \begin{tikzcd}
\catT(X, T)^\ast \ar[d,"\eta_{X, T}"] && \catT(X[1], T[1])^\ast  \ar[ll,swap,"{[1]}"] \ar[d,"\eta_{X[1],T[1]}"] \\
\catT(T, \mathbb S X) \ar[rd,"{[1]}"] && \catT(T[1], \mathbb S(X[1])) \ar[ld,swap,"{\catT(T[1], \zeta_X)}"]\\
& \catT(T[1], (\mathbb S X)[1])
\end{tikzcd} \]
\end{observation}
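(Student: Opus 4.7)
The plan is to construct $\zeta_X$ object-wise via the Yoneda lemma, then verify naturality in $X$.

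For fixed $X \in \catX$, commutativity of the displayed diagram, read as an equation of functors in $T$, is equivalent to
\[
\catT(T[1], \zeta_X) \circ \eta_{X[1], T[1]} \;=\; s_{T, \mathbb{S}X} \circ \eta_{X,T} \circ (s_{X,T})^{\ast},
\]
where $s_{A,B} \colon \catT(A,B) \xrightarrow{\sim} \catT(A[1], B[1])$ is the suspension isomorphism and $(s_{X,T})^{\ast}$ is its $k$-dual (the top arrow of the diagram). Since $\eta_{X[1], T[1]}$ is invertible, this prescribes $\catT(T[1], \zeta_X)$ uniquely as a natural transformation in $T$, and the right-hand side is a composition of isomorphisms.

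Since $[1]$ is an automorphism of $\catT$, the assignment $T \mapsto T[1]$ is essentially surjective, so the above is equivalent to a natural isomorphism of representable functors $\catT(-, \mathbb{S}(X[1])) \xrightarrow{\sim} \catT(-, (\mathbb{S}X)[1])$ on $\catT^{\op}$. The Yoneda lemma then provides a unique isomorphism $\zeta_X \colon \mathbb{S}(X[1]) \to (\mathbb{S}X)[1]$ representing it, which establishes both existence and uniqueness at each object.

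For naturality in $X$: given $f \colon X \to Y$ in $\catX$, I would verify commutativity of
\[
\begin{tikzcd}
\mathbb{S}(X[1]) \ar[r,"\zeta_X"] \ar[d,swap,"\mathbb{S}(f[1])"] & (\mathbb{S}X)[1] \ar[d,"(\mathbb{S}f)[1]"] \\
\mathbb{S}(Y[1]) \ar[r,"\zeta_Y"] & (\mathbb{S}Y)[1]
\end{tikzcd}
\]
by applying $\catT(T[1], -)$, substituting the defining equation for both $\zeta_X$ and $\zeta_Y$, and invoking the bifunctorial naturality of $\eta$ (in the contravariant slot) together with the fact that $[1]$ acts functorially on Hom-spaces. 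I do not expect any serious obstacle here: the whole argument is a double application of Yoneda, and every naturality input needed is already encoded in the definition of a partial Serre functor; the only care required is in tracking the two different roles played by $[1]$ (pre-composition on $X$ and suspension of the target Hom-set).
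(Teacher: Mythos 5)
Your argument is correct and is precisely the (implicit) intended one: the paper states this as an Observation without proof, and the content is exactly the double Yoneda argument you give — the displayed square determines $\catT(T[1],\zeta_X)$ uniquely as a natural isomorphism of representable functors (using that $[1]$ is an automorphism, so every object and every morphism of $\catT$ is a suspension), whence $\zeta_X$ exists and is unique, and naturality in $X$ reduces to the defining square for $\mathbb{S}f$ from Observation~\ref{obs.dualizable_gives_functor} together with $(g\circ f)[1]=g[1]\circ f[1]$. No gaps.
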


\subsection*{\( \catX \) is triangulated}

Denote by \( \Modcat \catT \) the category of functors \( \catT^{\op} \to \Modcat k \), and by \( \modcat \catT \) the subcategory of finitely presented functors. Recall that by \cite[Theorem~3.1]{MR0211399} the category \( \modcat \catT \) is Frobenius abelian, and its injectives are precisely the direct summands of representable functors. (Note that in \cite{MR0211399} the objects of \( \modcat \catT \) are described as images of morphisms of representable functors, rather than as cokernels of such morphisms. However, since we can always complete triangles there is no difference between these categories.) Therefore we are particularly interested in injective functors.

\begin{lemma}
Let \( \catT \) be an additive category, and let \( T \) be  an object in $\catT$. Then \( \catT(T, -)^\ast \) is injective in \( \Modcat \catT \).
\end{lemma}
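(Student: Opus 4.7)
The plan is to establish a Yoneda-like natural isomorphism
\[ \Hom_{\Modcat \catT}(F, \catT(T,-)^\ast) \cong F(T)^\ast \]
for every $F \in \Modcat \catT$, and then deduce injectivity from the fact that both functors in the composition $F \mapsto F(T) \mapsto F(T)^\ast$ are exact.

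For the natural isomorphism, I would argue as follows. Given a natural transformation $\phi \colon F \to \catT(T,-)^\ast$ with components $\phi_X \colon F(X) \to \Hom_k(\catT(T,X), I)$, I assign to it the $k$-linear map $\psi \colon F(T) \to I$ defined by $\psi(f) = \phi_T(f)(\id_T)$. Conversely, given $\psi \colon F(T) \to I$, I define $\phi_X \colon F(X) \to \Hom_k(\catT(T,X), I)$ by the formula $\phi_X(f)(g) = \psi(F(g)(f))$ for $f \in F(X)$ and $g \in \catT(T,X)$, using that $F$ is contravariant so $F(g) \colon F(X) \to F(T)$. The fact that these are mutually inverse is a routine check: starting from $\phi$, naturality at $g \colon T \to X$ applied to $\id_T \in \catT(T,T)$ gives $\phi_X(f)(g) = \phi_T(F(g)(f))(\id_T)$, which is exactly the recipe for reconstructing $\phi$ from $\psi$. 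Naturality in $F$ is automatic.

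To conclude, I observe that the evaluation functor $F \mapsto F(T)$ on $\Modcat \catT$ is exact, since (co)limits in a functor category to an abelian category are computed pointwise. Since $I$ is an injective $k$-module, the functor $(-)^\ast = \Hom_k(-, I)$ is exact on $\Modcat k$. Therefore the composite $F \mapsto F(T)^\ast$ is an exact functor $\Modcat \catT \to \Modcat k$. By the natural isomorphism above, this composite agrees with $\Hom_{\Modcat \catT}(-, \catT(T,-)^\ast)$, which is thus exact; equivalently, $\catT(T,-)^\ast$ is injective in $\Modcat \catT$.

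I do not foresee any real obstacle here: the argument is essentially just Yoneda composed with the tensor-hom adjunction for the dualizing module $I$, together with the observation that evaluation at an object is exact. The only point requiring care is tracking the variance correctly, namely noting that $\catT(T,-)$, although covariant in the blank, becomes contravariant (hence an object of $\Modcat \catT$) after applying $(-)^\ast$.
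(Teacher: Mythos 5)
Your proof is correct and is essentially identical to the paper's: both establish the natural isomorphism $\Hom_{\Modcat \catT}(F, \catT(T,-)^\ast) \cong F(T)^\ast$ via the same two explicit mutually inverse assignments (evaluation at $\id_T$ in one direction, $\phi_X(f)(g) = \psi(F(g)(f))$ in the other), and then conclude from exactness of $F \mapsto F(T)^\ast$. No issues.
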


\begin{proof}
We observe that for \( F \in \Modcat \catT \) we have the natural isomorphism
\[ (\Modcat \catT)(F, \catT(T, -)^\ast) \stackrel{\cong}\longleftrightarrow F(T)^\ast. \]
The map from left to right is given by sending a natural transformation \( \eta \) to the composition \( \left( F(T) \stackrel{\eta_T}\to \catT(T, T)^\ast \xrightarrow{\ev_{\id}} I \right) \). The map from right to left sends a linear form \( \phi \) to the natural transformation
\begin{align*}
F(X) & \to \catT(T, X)^\ast \\
f & \longmapsto [ t \mapsto (\phi \circ F(t))(f)].
\end{align*}
Since \( F(T)^\ast \) is exact in \( F \) it follows that \( \catT(T, -)^\ast \) is injective.
\end{proof}

\begin{proposition} \label{prop.representable=fp}
Suppose \( \catT \) is triangulated and idempotent complete, and let \( T \in \catT \). Then the functor \( \catT(T, -)^\ast \) is representable if and only if it is finitely presented.
\end{proposition}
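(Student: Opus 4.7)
The plan is to combine the preceding injectivity lemma with Auslander's classical description of injectives in $\modcat \catT$. The forward direction is immediate: any representable functor $\catT(-, \mathbb S T)$ is finitely presented by definition (it is the cokernel of the zero map $0 \to \catT(-, \mathbb S T)$), so if $\catT(T,-)^{\ast}$ is representable, it is finitely presented.

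For the converse direction, suppose $\catT(T, -)^\ast$ lies in $\modcat \catT$. By the preceding lemma it is injective in the larger functor category $\Modcat \catT$. I would next observe that this injectivity descends to $\modcat \catT$: given any monomorphism $G \hookrightarrow H$ in $\modcat \catT$, it is also a monomorphism in $\Modcat \catT$, and the restriction $(\Modcat \catT)(-, \catT(T,-)^{\ast}) = (\modcat \catT)(-, \catT(T,-)^{\ast})$ is trivially compatible, so every extension problem in $\modcat \catT$ can be solved in $\Modcat \catT$. Now appealing to \cite[Theorem~3.1]{MR0211399}, which states that injective objects in the Frobenius abelian category $\modcat \catT$ are exactly the direct summands of representables, we obtain an object $S \in \catT$ and an idempotent natural transformation $\pi \colon \catT(-, S) \to \catT(-, S)$ whose image is isomorphic to $\catT(T, -)^\ast$.

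To finish, I would translate $\pi$ via the Yoneda lemma into an idempotent $e \in \End_{\catT}(S)$; since $\catT$ is idempotent complete, $e$ splits, producing a retract $\mathbb S T$ of $S$ with $\catT(-, \mathbb S T) \cong \Imm \pi \cong \catT(T, -)^\ast$. This exhibits $\catT(T,-)^{\ast}$ as representable, completing the argument.

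The only slightly delicate point is the transfer of injectivity between $\Modcat \catT$ and $\modcat \catT$, together with the need to invoke both Auslander's theorem and idempotent completeness at the end; neither step is hard, but skipping either would leave the representing object unobtainable. Once these ingredients are in place, the proof is essentially formal.
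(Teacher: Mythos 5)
Your proof is correct and follows the same route as the paper: the forward direction is trivial, and for the converse one uses the preceding lemma to get injectivity in \( \Modcat \catT \), observes that this descends to \( \modcat \catT \) (since the inclusion is exact and full), invokes the characterization of injectives in \( \modcat \catT \) as summands of representables, and finally splits the resulting idempotent via Yoneda using idempotent completeness of \( \catT \). The paper compresses the last two steps into a single citation, but the argument is the same.
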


\begin{proof}
Clearly any representable functor is finitely presented.

Assume conversely that \( \catT(T, -)^\ast \) is finitely presented. Since it is injective even in \( \Modcat \catT \), it is also injective in \( \modcat \catT \). Now, since $\catT$ is assumed to be idempotent complete, the claim follows from \cite[Proposition~15.1]{MR224090}.
\end{proof}

\begin{theorem} \label{thm.cone_dualizable}
Let \( \catT \) be triangulated and idempotent complete, and take a triangle \( X \to Y \to Z \to X[1] \) in $\catT$.

If both \( \catT(X, -)^\ast \) and \( \catT(Y, -)^\ast \) are representable, then \( \catT(Z, -)^\ast \) is representable.
\end{theorem}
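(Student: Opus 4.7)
The plan is to show that $\catT(Z,-)^{\ast}$ is finitely presented as a functor in $\Modcat \catT$, after which Proposition~\ref{prop.representable=fp} (using idempotent completeness of $\catT$) yields representability, producing the desired object $\mathbb S Z$.

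First I would apply $\catT(-,V)$ to the given triangle and dualise with the exact functor $(-)^{\ast}$, producing naturally in $V$ the five-term exact sequence
\[
\catT(X,V)^{\ast} \to \catT(Y,V)^{\ast} \to \catT(Z,V)^{\ast} \to \catT(X[1],V)^{\ast} \to \catT(Y[1],V)^{\ast}.
\]
By hypothesis together with Observation~\ref{obs.dualizable_closed_automorphism}, each of the four outer terms is representable in $\Modcat \catT$, and in particular finitely presented.

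The remaining work is a general fact: a functor $F_3$ sitting in an exact sequence $F_1 \to F_2 \to F_3 \to F_4 \to F_5$ with $F_1, F_2$ finitely presented and $F_4, F_5$ representable is itself finitely presented. I would split via the short exact sequence
\[
0 \to \Cok(F_1 \to F_2) \to F_3 \to \Ker(F_4 \to F_5) \to 0.
\]
The left term is finitely presented by a standard construction that assembles presentations of $F_1$ and $F_2$. For the right term, write $F_4 \to F_5$ as $\catT(-,u)$ for some $u \colon A \to B$; its kernel consists of those maps into $A$ that factor through $\Cone(u)[-1] \to A$, so equals the image of $\catT(-,\Cone(u)[-1]) \to \catT(-,A)$, and rotating the triangle once more identifies this image with the cokernel of a map between representables, hence finitely presented. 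A horseshoe argument in $\Modcat \catT$, using that representables are projective by the Yoneda lemma, then shows $F_3$ is finitely presented as an extension of two finitely presented functors. The main obstacle is precisely this kernel step: in a general abelian category, kernels of maps between finitely presented objects need not be finitely presented, so the argument genuinely relies on the triangulated structure on $\catT$ to resolve $\Ker(F_4 \to F_5)$ via $\Cone(u)[-1]$.
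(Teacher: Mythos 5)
Your proof is correct and follows essentially the same route as the paper: reduce to finite presentation via Proposition~\ref{prop.representable=fp}, dualize the triangle to get the five-term exact sequence, and split $\catT(Z,-)^\ast$ as an extension of $\Ker(F_4\to F_5)$ by $\Cok(F_1\to F_2)$. The only (harmless) divergence is at the kernel step, where the paper simply invokes that $\modcat\catT$ is abelian by Freyd's theorem, whereas you re-derive the needed instance directly by writing $F_4\to F_5$ as $\catT(-,u)$ and resolving its kernel through $\Cone(u)[-1]$ --- a slightly more self-contained justification of the same fact.
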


\begin{proof}
By Proposition~\ref{prop.representable=fp}, it suffices to show that \( \catT(Z, -)^\ast \) is finitely presented.

Consider the exact sequence
\[ \catT(X, -)^\ast \to \catT(Y, -)^\ast \to \catT(Z, -)^\ast \to \catT(X[1], -)^\ast \to \catT(Y[1],-)^\ast. \]
Since the leftmost two terms are finitely presented---in fact they are representable---it follows that the image of the map \( \catT(Y, -)^\ast \to \catT(Z, -)^\ast \) is finitely presented. Since the rightmost two terms are finitely presented by Observation~\ref{obs.dualizable_closed_automorphism}, and \( \modcat \catT \) is abelian, we also have that the image of the map \( \catT(Z, -)^\ast \to \catT(X[1], -)^\ast \) is finitely presented. Now the claim follows, since extensions of finitely presented functors are finitely presented.
\end{proof}

\begin{corollary} \label{cor.dualizable_is_triang}
Let \( \catT \) be triangulated and idempotent complete. Then the collection of objects \( X \) such that \( \catT(X, -)^\ast \) is representable, is a triangulated subcategory of \( \catT \).
\end{corollary}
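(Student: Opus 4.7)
The plan is to verify directly the three defining properties of a triangulated subcategory for the full subcategory \( \catX \) of all objects \( X \) for which \( \catT(X,-)^\ast \) is representable.

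First I will observe that the zero object is in \( \catX \), since \( \catT(0,-)^\ast \) is the zero functor, which is trivially represented by \( 0 \in \catT \). Next, closure of \( \catX \) under the suspension \( [1] \) and its inverse \( [-1] \) is exactly Observation~\ref{obs.dualizable_closed_automorphism}: applying the natural isomorphism from \( \catT(X,-)^\ast \) to \( \catT(-, \mathbb S X) \) and precomposing with \( [-1] \) exhibits \( (\mathbb S X)[1] \) as a representing object for \( \catT(X[1], -)^\ast \).

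The main step will be closure under completing distinguished triangles: given a triangle \( X \to Y \to Z \to X[1] \) with two of its three terms in \( \catX \), I need to conclude that the third term also lies in \( \catX \). The case in which \( X \) and \( Y \) lie in \( \catX \) is precisely Theorem~\ref{thm.cone_dualizable}. The two remaining cases reduce to this one by rotating the triangle so that the unknown term sits in the cone position and then invoking closure under shifts. Specifically, if \( Y, Z \in \catX \), then the rotated triangle \( Y \to Z \to X[1] \to Y[1] \) has its first two terms in \( \catX \), so Theorem~\ref{thm.cone_dualizable} gives \( X[1] \in \catX \), and hence \( X \in \catX \) by Observation~\ref{obs.dualizable_closed_automorphism}. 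Dually, if \( X, Z \in \catX \), then the rotated triangle \( Z[-1] \to X \to Y \to Z \) again has its first two terms in \( \catX \) by shift-closure, and Theorem~\ref{thm.cone_dualizable} gives \( Y \in \catX \).

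Since Theorem~\ref{thm.cone_dualizable}---whose proof already hinges on the non-trivial Proposition~\ref{prop.representable=fp} identifying representability with finite presentation in \( \modcat \catT \) (and it is here that idempotent completeness enters)---together with Observation~\ref{obs.dualizable_closed_automorphism} supply all the substantive content, no real obstacle remains; the corollary is a purely formal consequence of results already in hand.
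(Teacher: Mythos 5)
Your proposal is correct and follows exactly the route the paper intends: closure under shift is Observation~\ref{obs.dualizable_closed_automorphism}, closure under cones is Theorem~\ref{thm.cone_dualizable}, and the remaining two cases of triangle completion follow by rotation, which is why the paper states the corollary without a separate proof. The rotation bookkeeping and the remark that idempotent completeness enters only through Proposition~\ref{prop.representable=fp} are both accurate.
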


\subsection*{\( \mathbb S \) is a triangle functor}

\begin{proposition} \label{prop.dualize_TR3}
Let \( \catT \) be a category with an automorphism \( [1] \), and let \( \catX \subset \catT \) be a \( [1] \)-invariant subcategory admitting a partial Serre functor \( \mathbb S \). Let

\[ X \stackrel{x}\to Y \stackrel{y}\to Z \stackrel{z}\to X[1] \text{ \,and \, } T \stackrel{t}\to U \stackrel{u}\to V \stackrel{v}\to T[1] \] be sequences of objects and morphisms in \( \catX \) and \( \catT \), respectively.

Assume that for any \( f \) and \( g \) in the following diagram, there is a morphism \( h \) making the diagram commutative.
\[ \begin{tikzcd}
X \ar[r,"x"] \ar[d,"\forall f"] & Y \ar[r,"y"] \ar[d,"\forall g"] & Z \ar[r,"z"] \ar[d,dashed,"\exists h"] & X[1] \ar[d,"{f[1]}"] \\
T \ar[r,"t"] & U \ar[r,"u"] & V \ar[r,"v"] & T[1]
\end{tikzcd} \] Then also in the following diagram we have that for any \( f \) and \( g \) there is \( h \) making it commutative.
\[ \begin{tikzcd}
U \ar[r,"u"] \ar[d,dashed,"\exists h"] & V \ar[r,"v"] \ar[d,"\forall f"] & T[1] \ar[r,"{t[1]}"] \ar[d,"\forall g"] & U[1] \ar[d,dashed,"{h[1]}"] \\
\mathbb S X \ar[r,"\mathbb S x"] & \mathbb S Y \ar[r,"\mathbb S y"] & \mathbb S Z \ar[r,"\zeta_X \circ \mathbb S z"] & (\mathbb S X)[1]
\end{tikzcd} \]
\end{proposition}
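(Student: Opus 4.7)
The plan is to transfer the whole lifting problem across Serre duality into a question about $k$-linear forms on Hom-spaces, solve that question using the given hypothesis, and then transfer the solution back. The main technical point will be a single well-definedness check; everything else is bookkeeping with the natural isomorphism $\eta$ and the comparison $\zeta_X$.

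First, I will use $\eta$ together with Observation~\ref{obs.zeta} to translate the morphisms $f\colon V\to\mathbb S Y$, $g\colon T[1]\to\mathbb S Z$ and any candidate $h\colon U\to\mathbb S X$ into linear forms $\hat f\in\catT(Y,V)^\ast$, $\hat g\in\catT(Z,T[1])^\ast$ and $\hat h\in\catT(X,U)^\ast$. By naturality of $\eta$ in both arguments, the given compatibility $\mathbb S y\circ f=g\circ v$ translates into
\[
\text{(C)}\qquad \hat f(\beta y)=\hat g(v\beta)\qquad\text{for all }\beta\colon Z\to V,
\]
and the two squares we want $h$ to satisfy translate (using Observation~\ref{obs.zeta} for the second, to absorb $\zeta_X$) into
\[
\text{(L)}\ \hat h(\alpha x)=\hat f(u\alpha)\text{ for all }\alpha\colon Y\to U,\qquad \text{(R)}\ \hat h(t\gamma)=\hat g(\gamma[1]\,z)\text{ for all }\gamma\colon X\to T.
\]

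Next, I will define $\hat h$ first on the $k$-submodule $A\subseteq\catT(X,U)$ spanned by maps of the form $\alpha x$ and $t\gamma$, by declaring (L) and (R). The hard part is checking consistency: if $\alpha x=t\gamma$, one must show $\hat f(u\alpha)=\hat g(\gamma[1]\,z)$. This is precisely where the hypothesis gets used. The relation $\alpha x=t\gamma$ is exactly the commutativity of the left square of the hypothesis diagram, with the roles of $f$ and $g$ there played by $\gamma$ and $\alpha$ respectively, so the assumed lifting property yields some $h_0\colon Z\to V$ with $h_0 y=u\alpha$ and $v h_0=\gamma[1]\,z$. Applying (C) at $\beta=h_0$ then gives
\[
\hat f(u\alpha)=\hat f(h_0 y)=\hat g(v h_0)=\hat g(\gamma[1]\,z),
\]
so $\hat h|_A$ is a well-defined $k$-linear map.

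Finally, since $I$ is an injective $k$-module, $\hat h|_A\colon A\to I$ extends to a $k$-linear form $\hat h\in\catT(X,U)^\ast$; setting $h:=\eta_{X,U}(\hat h)$ and running the translations in reverse produces the two commutative squares of the conclusion. The only essential use of the hypothesis is in the well-definedness check, which is therefore the crux of the argument.
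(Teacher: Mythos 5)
Your proof is correct. The translation of the three squares into the conditions (C), (L), (R) on linear forms via $\eta$ and Observation~\ref{obs.zeta} is exactly right (in particular your formula $\hat h(t\gamma)=\hat g(\gamma[1]z)$ does correctly absorb $\zeta_X$), the well-definedness check is precisely where the hypothesis must enter, and the extension step is a legitimate use of the injectivity of $I$ over $k$.

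The strategy is the same as the paper's at its core --- dualize the lifting problem and exploit that $(-)^\ast=\Hom_k(-,I)$ with $I$ injective --- but your implementation is genuinely different and more elementary. The paper encodes the hypothesis as surjectivity of a map between kernels in a $3\times 3$ diagram of direct sums of $\Hom$-modules, converts this via the Snake Lemma into injectivity of a map between cokernels, and then applies $(-)^\ast$ (which is exact) to turn that monomorphism into the epimorphism expressing the conclusion; the whole construction of $h$ is hidden inside this formal duality. You instead produce $h$ by hand: you define the candidate linear form on the submodule $A=\Imm(-\circ x)+\Imm(t\circ -)$ of $\catT(X,U)$, verify consistency on the overlap using the hypothesized lift $h_0$ together with (C), and extend to all of $\catT(X,U)$ by injectivity of $I$. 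Your route avoids the Snake Lemma and the large matrix diagrams entirely and makes visible exactly where the hypothesis is used (only in the well-definedness check), at the cost of being less symmetric and not exhibiting the clean ``kernels dualize to cokernels'' picture that the paper reuses implicitly in the proof of Theorem~\ref{thm.Serre_is_triangle}. Both arguments need the same bookkeeping with $\eta$ and $\zeta$, so neither saves work there.
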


\begin{proof}
The assumption may be reformulated into the statement that the morphism between the kernels in the following commutative diagram, is onto.
\[ \begin{tikzcd}
 \Ker_1 \ar[r,>->] \ar[d,"\text{epi}"] & \catT(X, T) \oplus \catT(Y, U) \oplus \catT(Z, V) \ar[r] \ar[d,->>] & \catT(X,U) \oplus \catT(Y,V) \oplus \catT(Z, T[1]) \ar[d,->>] \\
 \Ker_2 \ar[r,>->] & \catT(X, T) \oplus \catT(Y, U) \ar[r] & \catT(X,U)
\end{tikzcd} \]
Here the vertical maps are projections to the respective summands, and the horizontal maps are given by
\[ \left[ \begin{matrix} t \circ \star & - (\star \circ x) & 0 \\ 0 & u \circ \star & - (\star \circ y) \\ - (\star [1] \circ z) & 0 & v \circ \star \end{matrix} \right] \text{ and \,} [ t \circ \star \; - (\star \circ x) ], \]
respectively. Here \( t \circ \star \) stands for \( f \longmapsto t \circ f \), and similar.

We may consider the kernels of the vertical projections, and a cokernel morphism as indicated in the following diagram.
\[ \begin{tikzcd}
& \catT(Z, V) \ar[r] \ar[d,>->] & \catT(Y, V) \oplus \catT(Z, T[1]) \ar[r,->>] \ar[d,>->] & \Cok_0 \ar[d,"\text{mono}" swap] \\
 \Ker_1 \ar[r,>->] \ar[d,"\text{epi}"] & \begin{smallmatrix} \catT(X, T) \oplus \catT(Y, U) \\ \oplus \catT(Z, V) \end{smallmatrix} \ar[r] \ar[d,->>] & \begin{smallmatrix} \catT(X, U) \oplus \catT(Y, V) \\ \oplus \catT(Z, T[1]) \end{smallmatrix} \ar[d,->>] \ar[r,->>] & \Cok_1 \\
 \Ker_2 \ar[r,>->] & \catT(X, T) \oplus \catT(Y, U) \ar[r] & \catT(X, U)
\end{tikzcd} \]
By the Snake Lemma we observe that the map \( \Ker_1 \to \Ker_2 \) being epi is equivalent to the map \( \Cok_0 \to \Cok_1 \) being mono.

Dualizing the upper two rows and the rightmost three columns of this diagram, and identifying via the natural isomorphism \( \eta \) defining the partial Serre functor, we obtain the diagram
\[ \begin{tikzcd}
 \Cok^\ast_1 \ar[r,>->] \ar[d,"\text{epi}"] & \begin{smallmatrix} \catT(U, \mathbb S X) \oplus \catT(V, \mathbb S Y) \\ \oplus \catT(T[1], \mathbb S Z)
\end{smallmatrix} \ar[r] \ar[d] & \begin{smallmatrix}\catT(T, \mathbb S X) \oplus \catT(U, \mathbb S Y) \\ \oplus \catT(V, \mathbb S Z) \end{smallmatrix} \ar[d] \\
 \Cok^\ast_0 \ar[r,>->] & \catT(V,\mathbb SY) \oplus \catT(T[1], \mathbb S Z) \ar[r] & \catT(V, \mathbb S Z)
\end{tikzcd} \]
with the horizontal maps given by
\[  \left[ \begin{matrix} \star \circ t & 0 & - (\zeta_X \circ \mathbb S z \circ \star)[-1] \\ - (\mathbb S x \circ \star) & \star \circ u & 0 \\ 0 & -(\mathbb S y \circ \star) & \star \circ v \end{matrix} \right] \text{ and } [- \mathbb S y \circ \star \; \star \circ v] . \]
Most of these entries are immediate from the naturality of \( \eta \), only the term in the right upper corner warrants further explanation. Note that the map in question is induced via \( \eta \) by the composition along the upper row of the following diagram. Hence it is precisely the composition appearing in the lower row of that diagram.
\[ \begin{tikzcd}[cramped]
\catT(Z, T[1])^\ast \ar[r,"(\star \circ z)^\ast"] \ar[d,"\eta_{Z,T[1]}"] & \catT(X[1], T[1])^\ast \ar[rr,"{[1]^{\ast}}"] \ar[d,"\eta_{X[1],T[1]}"] && \catT(X, T)^\ast \ar[d,"\eta_{X,T}"] \\
\catT(T[1], \mathbb S Z) \ar[r,"\mathbb{S} z \circ \star"] & \catT(T[1], \mathbb S (X[1])) \ar[rd,"\zeta_X \circ \star"] && \catT(T, \mathbb S X)  \ar[ld,swap,"{[1]}"] \\
&& \catT(T[1], (\mathbb S X)[1])
\end{tikzcd} \]
Here the left square commutes by the naturality of \( \eta \), and the pentagon on the right comes from Observation~\ref{obs.zeta}.

Now we can translate the statement that the morphism between kernels is surjective, back to a commutative diagram: It means that for any \( f \in \catT(V, \mathbb S Y) \) and \( g \in \catT(T[1], \mathbb S Z) \) such that \( \mathbb{S} y \circ f = g \circ v \)---i.e.\ for each element of the lower kernel---there is \( h \in \catT(U, \mathbb S X) \) such that \( h \circ t = (\zeta_X \circ \mathbb{S} z \circ g)[-1] \) and \( \mathbb{S} x \circ h = f \circ u \) --- i.e.\ a preimage in the upper kernel. In diagrammatic language we thus have
\[ \begin{tikzcd}
U \ar[r,"u"] \ar[d,dashed,"\exists h"] & V \ar[r,"v"] \ar[d,"\forall f"] & T[1] \ar[r,"{t[1]}"] \ar[d,"\forall g"] & U[1] \ar[d,dashed,"{h[1]}"] \\
\mathbb S X \ar[r,"\mathbb S x"] & \mathbb S Y \ar[r,"\mathbb S y"] & \mathbb S Z \ar[r,"\zeta_X \circ \mathbb S z"] & (\mathbb S X)[1]
\end{tikzcd} \]
as desired.
\end{proof}

\begin{theorem} \label{thm.Serre_is_triangle}
Let \( \catT \) be a triangulated category, and let \( \catX \) be a triangulated subcategory admitting a partial Serre functor \( \mathbb S \).

Then \( \mathbb S \colon \catX \to \catT \) is a triangle functor.
\end{theorem}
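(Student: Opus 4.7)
The plan is to verify the remaining exactness axiom of a triangle functor, the additivity and connecting natural isomorphism \( \zeta \colon \mathbb{S} \circ [1] \to [1] \circ \mathbb{S} \) already being supplied by Observations~\ref{obs.dualizable_gives_functor} and~\ref{obs.zeta}. Concretely, for any triangle \( X \xrightarrow{x} Y \xrightarrow{y} Z \xrightarrow{z} X[1] \) in \( \catX \), I must show that the sequence \( \mathbb{S}X \xrightarrow{\mathbb{S}x} \mathbb{S}Y \xrightarrow{\mathbb{S}y} \mathbb{S}Z \xrightarrow{\zeta_X \circ \mathbb{S}z} (\mathbb{S}X)[1] \) is a triangle in \( \catT \).

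First I would use TR1 to complete \( \mathbb{S}x \) to a genuine triangle \( \mathbb{S}X \xrightarrow{\mathbb{S}x} \mathbb{S}Y \xrightarrow{u} W \xrightarrow{v} (\mathbb{S}X)[1] \) in \( \catT \). The task then reduces to producing an isomorphism \( h \colon W \to \mathbb{S}Z \) satisfying both \( h \circ u = \mathbb{S}y \) and \( \zeta_X \circ \mathbb{S}z \circ h = v \); such an \( h \) exhibits the candidate sequence as isomorphic (as a sequence of four objects and three morphisms) to the genuine triangle, hence makes it a triangle in its own right.

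To produce such an \( h \) I would invoke Proposition~\ref{prop.dualize_TR3} applied with the doubly rotated triangle \( Z \to X[1] \to Y[1] \to Z[1] \) in \( \catX \) as top row and the singly rotated triangle \( \mathbb{S}Y \to W \to (\mathbb{S}X)[1] \to (\mathbb{S}Y)[1] \) in \( \catT \) as bottom row; the hypothesis of that proposition is immediate from TR3 in \( \catT \). In the resulting conclusion diagram the arrow constructed as \( h \) does have source \( W \) and target \( \mathbb{S}Z \), and choosing the two input maps \( f, g \) to be the components of \( \zeta^{-1} \) at \( X[1] \) and \( Y[1] \) makes the middle square commute by naturality of \( \zeta \). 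A direct computation using naturality of \( \zeta \) once more translates the resulting left and right squares (after cancelling \( \zeta \)'s and desuspending, and accommodating rotation signs) into exactly the two compatibilities required of \( h \).

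To conclude that \( h \) is an isomorphism I would apply \( \catT(T, -) \) to both sequences and compare. The completion yields a long exact sequence automatically; for the candidate sequence one applies \( \catT(-, T) \) to the original triangle in \( \catX \), dualizes by the exact functor \( (-)^{\ast} \), and translates through Serre duality to obtain the parallel long exact sequence, with the correct connecting morphism \( (\zeta_X \circ \mathbb{S}z) \circ - \). The morphism induced by \( h \) intertwines these two sequences with identities at every outer position, so the five-lemma in \( \Ab \) forces \( \catT(T, h) \) to be an isomorphism for every \( T \); Yoneda then forces \( h \) itself to be invertible. The main obstacle is step three: the naive factorization of \( \mathbb{S}y \) through the cone only produces \( h \circ u = \mathbb{S}y \), and Proposition~\ref{prop.dualize_TR3} with the above particular choice of rotations is precisely the tool needed to secure the second compatibility \( \zeta_X \circ \mathbb{S}z \circ h = v \) simultaneously.
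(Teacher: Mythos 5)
Your argument is essentially the paper's own proof: both complete one of the maps \( \mathbb S x,\mathbb S y \) to a genuine triangle, invoke Proposition~\ref{prop.dualize_TR3} to manufacture the comparison map \( h \), and finish with the five lemma plus Yoneda applied to the two long exact sequences (the second obtained by dualizing \( \catT(-,T) \) of the original triangle through Serre duality). The only differences are cosmetic --- the paper cones \( \mathbb S y \) backwards and feeds the unrotated triangles into Proposition~\ref{prop.dualize_TR3} with identity vertical maps, whereas you cone \( \mathbb S x \) and use rotated triangles with \( \zeta^{-1} \)-components --- and if you track the rotation signs you alluded to, they force the structural isomorphism to be \( -\zeta \) rather than \( \zeta \), exactly as the paper chooses.
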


\begin{proof}
The first ingredient to a triangle functor is a natural isomorphism \[ \mathbb{S} \circ [1] \to [1]\circ \mathbb S. \] We have already constructed such a natural isomorphism \( \zeta \) in Observation~\ref{obs.zeta}. However, here we will choose the natural isomorphism \( - \zeta \).

It remains for us to show that for any triangle \( X \stackrel{x}\to Y \stackrel{y}\to Z \stackrel{z}\to X[1] \) in \( \catX \), the sequence
\[ \mathbb S X \stackrel{\mathbb S x}\to \mathbb S Y \stackrel{\mathbb S y}\to \mathbb S Z \xrightarrow{(- \zeta_X) \circ \mathbb S z} (\mathbb S X)[1] \]
is a triangle in $\catT$.

Note that there exists a triangle \(  (\mathbb S Z)[-1] \to U \to \mathbb S Y \stackrel{\mathbb S y}\to \mathbb S Z \). By the axioms of triangulated categories, any two triangles satisfy the assumptions of Proposition~\ref{prop.dualize_TR3}. We will apply that proposition to our original triangle and the one involving \( U \). Choosing the two free vertical maps in the conclusion to be identities, we obtain the commutative diagram
\[ \begin{tikzcd}
U \ar[r,"u"] \ar[d,dashed,"\exists h"] & \mathbb S Y \ar[r,"\mathbb S y"] \ar[d,"\id"] & \mathbb S Z \ar[r,"{t[1]}"] \ar[d,"\id"] & U[1] \ar[d,dashed,"{h[1]}"] \\
\mathbb S X \ar[r,"\mathbb S x"] & \mathbb S Y \ar[r,"\mathbb S y"] & \mathbb S Z \ar[r,"\zeta_X \circ \mathbb S z"] & (\mathbb S X)[1]
\end{tikzcd} \]

Let \( T \in \catT \). Applying \( \catT(T, -) \) to the above diagram we obtain
\[\begin{tikzpicture}
  \node (tt1) at (0,8) {$\catT(T, (\mathbb S Y)[-1])$};
  \node (tt2) at (5,8) {$\catT(T, \mathbb S (Y[-1]))$};
  \node (tt3) at (9,8) {$\catT(Y[-1], T)^\ast$};
  \node (t1) at (0,6.5) {$\catT(T, (\mathbb S Z)[-1])$};
  \node (t2) at (5,6.5) {$\catT(T, \mathbb S (Z[-1]))$};
  \node (t3) at (9,6.5) {$\catT(Z[-1], T)^\ast$};
  \node (c1) at (0,5) {$\catT(T, U)$};
  \node (c2) at (5,5) {$\catT(T, \mathbb S X)$};
  \node (c3) at (9,5) {$\catT(X, T)^\ast$};
  \node (b1) at (0,3.5) {$\catT(T, \mathbb S Y)$};
  \node (b2) at (5,3.5) {$\catT(T, \mathbb S Y)$};
  \node (b3) at (9,3.5) {$\catT(Y, T)^\ast$};
  \node (bb1) at (0,2) {$\catT(T, \mathbb S Z)$};
  \node (bb2) at (5,2) {$\catT(T, \mathbb S Z)$};
  \node (bb3) at (9,2) {$\catT(Z, T)^\ast$.};
  \draw[->] (tt1) to node[right]{\scriptsize $(\mathbb Sy)[-1]\circ\star$}(t1);
  \draw[->] (t1) to node[right]{\scriptsize $ t\circ\star$}(c1);
  \draw[->] (c1) to node[right]{\scriptsize $ u\circ\star$}(b1);
  \draw[->] (b1) to node[right]{\scriptsize $ \mathbb Sy\circ\star$}(bb1);
  \draw[->] (tt2) to node[right]{\scriptsize $\mathbb S(y[-1])\circ\star$}(t2);
  \draw[->] (t2) to node[right]{\scriptsize $ \mathbb S(z[-1])\circ\star$}(c2);
  \draw[->] (c2) to node[right]{\scriptsize $ \mathbb Sx\circ\star$}(b2);
  \draw[->] (b2) to node[right]{\scriptsize $ \mathbb Sy\circ\star$}(bb2);
  \draw[->] (tt3) to node[right]{\scriptsize $y[-1]\cdot\star$}(t3);
  \draw[->] (t3) to node[right]{\scriptsize $ z[-1]\cdot\star$}(c3);
  \draw[->] (c3) to node[right]{\scriptsize $ x\cdot\star$}(b3);
  \draw[->] (b3) to node[right]{\scriptsize $ y\cdot\star$}(bb3);
  \draw[->] (tt1) to node[above]{\scriptsize $\zeta_{Y[-1]}[-1]\circ\star$}(tt2);
  \draw[->] (tt3) to node[above]{\scriptsize $ \eta_{Y[-1],T}$}  node[below]{\scriptsize$\cong$} (tt2);
  \draw[->] (t1) to node[above]{\scriptsize $\zeta_{Z[-1]}[-1]\circ\star$}(t2);
  \draw[->] (t3) to node[above]{\scriptsize $ \eta_{Z[-1],T}$}  node[below]{\scriptsize$\cong$} (t2);
  \draw[->] (c1) to node[above]{\scriptsize $h\circ\star$}(c2);
  \draw[->] (c3) to node[above]{\scriptsize $ \eta_{X,T}$}  node[below]{\scriptsize$\cong$} (c2);
  \draw[->] (b1) to node[above]{\scriptsize $\id$}(b2);
  \draw[->] (b3) to node[above]{\scriptsize $ \eta_{Y,T}$} (b2);
  \draw[->] (bb1) to node[above]{\scriptsize $\id$}(bb2);
  \draw[->] (bb3) to node[above]{\scriptsize $ \eta_{Z,T}$} (bb2);
\end{tikzpicture}\]
The leftmost upper square commutes by naturality of \( \zeta \). For the second left square from the top, note that \( \mathbb S( z [-1] ) \circ \zeta_{Z[-1]}[-1] = \zeta_X[-1] \circ (\mathbb S z)[-1] \) by naturality of \( \zeta \), and this in turn is equal to  \( h \circ t \).

The leftmost column of this diagram is exact, and so is the rightmost column, since it is the dual of an exact sequence. Thus the middle column is also exact, so the Five lemma applies, and tells us that \( h \circ \star \) is an isomorphism. By the Yoneda lemma this implies that \( h \) is an isomorphism.



Now
\[ \mathbb S X \stackrel{ \mathbb S x }\to \mathbb S Y \stackrel{\mathbb S y}\to \mathbb S Z \xrightarrow{(- \zeta_X) \circ \mathbb S z} (\mathbb S X)[1] \]
is isomorphic to the triangle \( U \stackrel{u}\to \mathbb S Y \stackrel{\mathbb S y}\to \mathbb S Y \xrightarrow{-t[1]} U[1] \), hence it is a triangle itself.
\end{proof}

\subsection*{The case that \( \catT \) in not idempotent closed} \label{sect.idempotent_closure}

By \cite{MR1813503}, any triangulated category \( \catT \) is canonically embedded into an idempotent closed triangulated category \( \widehat \catT \). More precisely, \( \widehat \catT \) is the category of injective objects in \( \modcat \catT \).

We denote by \( \catX \) and \( \widehat \catX \) the subcategories of objects $X$ and $\widehat X$ such that \( \catT(X, -)^\ast \) and \( \widehat \catT (\widehat X, -)^\ast \) are representable, respectively. One easily observes, using the fact that any object in \( \widehat \catT \) is a direct summand of an object in \( \catT \), that \( \catX \subset \widehat \catX \). Note that by Theorem~\ref{thm.cone_dualizable} we know that \( \widehat \catX \) is a triangulated subcategory of \( \widehat \catT \).

Now let \( X \to Y \to Z \to X[1] \) be a triangle in \( \catT \), such that \( \catT(X, -)^\ast \) and \( \catT(Y, -)^\ast \) are representable, say by \( \mathbb S X \) and \( \mathbb S Y \). By the above comment we know that \( \widehat \catT(Z, -)^\ast \) is representable, say by \( \widehat {\mathbb S Z} \in \widehat \catT \). Finally we apply Theorem~\ref{thm.Serre_is_triangle}, which tells us that \( \mathbb S X \to \mathbb S Y \to \widehat{ \mathbb S Z } \to \mathbb S X [1] \) is a triangle in \( \widehat \catT \). But since \( \catT \) is a triangulated subcategory of \( \widehat \catT \), and two of the terms of the triangle lie in \( \catT \), so does \( \widehat{ \mathbb S Z} \). Thus we have shown that \( \catT(Z, -)^\ast \) is representable.

\bibliographystyle{amsplain}
\bibliography{main}

\end{document}